\def\qqed{\hfill $\blacksquare$}
\DeclareMathOperator{\ncrank}{nc-rank}
\DeclareMathOperator{\kerL}{\ker_{\rm L}}
\DeclareMathOperator{\kerR}{\ker_{\rm R}}
\newcommand{\calC}{\mathcal{C}}
\newcommand{\calE}{\mathcal{E}}
\newcommand{\calF}{\mathcal{F}}
\newcommand{\calG}{\mathcal{G}}
\newcommand{\calP}{\mathcal{P}}
\newcommand{\calQ}{\mathcal{Q}}
\newcommand{\calR}{\mathcal{R}}
\newcommand{\calS}{\mathcal{S}}
\newcommand{\calT}{\mathcal{T}}
\newcommand{\calM}{\mathcal{M}}
\newcommand{\calV}{\mathcal{V}}
\newcommand{\calU}{\mathcal{U}}
\newcommand{\Ni}{{\rm N}_{\rm inner}}
\newcommand{\No}{{\rm N}_{\rm outer}}
\newcommand{\labeling}{(\{U_\alpha^+, U_\alpha^-\}, \{V_\beta^+, V_\beta^-\})_{\alpha, \beta}}
\newcounter{nodecount}
\providecommand{\nodecounter}[1]{
	\setcounter{nodecount}{-1}
	\foreach \iter in #1{
		\stepcounter{nodecount}
	}
}
\newtheorem{theorem}{\bfseries Theorem}[section] %[section]
\newtheorem{lemma}[theorem]{\bfseries Lemma}
\newtheorem{proposition}[theorem]{\bfseries Proposition}
\newtheorem*{claim*}{\bfseries Claim}
\theoremstyle{definition}
\newtheorem{remark}[theorem]{\bfseries Remark} %% are numbered consecutively
\crefname{theorem}{Theorem}{Theorems}
\crefname{proposition}{Proposition}{Propositions}
\crefname{lemma}{Lemma}{Lemmas}
\crefname{exmp}{Example}{Examples}
\crefname{corollary}{Corollary}{Corollarys}
\crefname{claim}{Claim}{Claims}
\crefname{remark}{Remark}{Remarks}
\crefname{section}{Section}{Sections}
\begin{document}
	\title{A combinatorial algorithm for computing the entire sequence of the maximum degree of minors of a generic partitioned polynomial matrix with $2 \times 2$ submatrices\thanks{A preliminary version of this paper~\cite{IPCO/I21} has appeared in the proceedings of the 22nd Conference on Integer Programming and Combinatorial Optimization (IPCO 2021).}}
	\author{Yuni Iwamasa\thanks{Department of Communications and Computer Engineering, Graduate School of Informatics, Kyoto University, Kyoto 606-8501, Japan.
			Email: \texttt{iwamasa@i.kyoto-u.ac.jp}}}
	\date{\today}
	\maketitle
	
\begin{abstract}
	In this paper, we consider the problem of computing the entire sequence of the maximum degree of minors of a block-structured symbolic matrix (a generic partitioned polynomial matrix)
	$A = (A_{\alpha\beta} x_{\alpha \beta} t^{d_{\alpha \beta}})$,
	where $A_{\alpha\beta}$ is a $2 \times 2$ matrix over a field $\mathbf{F}$, $x_{\alpha \beta}$ is an indeterminate,
	and $d_{\alpha \beta}$ is an integer for $\alpha = 1,2,\dots, \mu$ and $\beta = 1,2,\dots,\nu$,
	and $t$ is an additional indeterminate.
	This problem
	can be viewed as an algebraic generalization of
	the maximum weight bipartite matching problem.
	
	The main result of this paper is a combinatorial $O(\mu \nu \min\set{\mu, \nu}^2)$-time algorithm for computing the entire sequence of the maximum degree of minors
	of a $(2 \times 2)$-type generic partitioned polynomial matrix of size $2\mu \times 2\nu$.
	We also present a minimax theorem, which can be used as a good characterization (NP $\cap$ co-NP characterization) for the computation of the maximum degree of minors of order $k$.
	Our results generalize the classical primal-dual algorithm (the Hungarian method) and minimax formula (Egerv\'ary's theorem) for the maximum weight bipartite matching problem.
\end{abstract}
\begin{quote}
	{\bf Keywords: }
	Generic partitioned polynomial matrix, Degree of minor, Weighted Edmonds' problem, Weighted non-commutative Edmonds' problem
\end{quote}

\section{Introduction}\label{sec:intro}
The maximum weight bipartite matching problem is one of the most fundamental problems in combinatorial optimization,
which admits a minimax theorem, called Egerv\'ary's theorem~\cite{MFL/E31},
and a primal-dual augmenting path algorithm, called the Hungarian method~\cite{NRLQ/K55}.
In particular,
the Hungarian method outputs, for all possible values of $k$, a matching of size $k$ having maximum weight among all matchings with the same size.
This can be rephrased as:
the Hungarian method computes the entire sequence of the maximum degree of minors of a certain symbolic polynomial matrix.
Indeed,
for a bipartite graph $G = (\set{1,2,\dots, m}, \set{ 1,2,\dots, n }; E)$ with edge weights $d_{ij}$ for $ij \in E$,
we define the matrix $A(t)$
by $(A(t))_{ij} \defeq x_{ij} t^{d_{ij}}$ if $ij \in E$
and zero otherwise,
where $x_{ij}$ is a variable for each edge $ij$ and $t$ is another variable.
Then the maximum weight of a matching of size $k$ in $G$
is equal to the maximum degree $\delta_k(A(t))$ of the minors of order $k$,
i.e.,
\begin{align}\label{eq:delta_k}
    \delta_k(A(t)) \defeq \sup\set*{ \deg \det B(t) }[$B(t)$: $k \times k$ submatrix of $A(t)$],
\end{align}
where the determinant $\det B(t)$ of $B(t)$ is regarded as a polynomial in $t$ and $\delta_0(A(t)) \defeq 0$.
Thus, the entire sequence $\prn{\delta_0(A(t)), \delta_1(A(t)), \dots, \delta_{\min\set{m,n}}(A(t))}$
of the maximum degree of minors
equals the sequence of the maximum weights of a matching of size $k$ for $k = 0,1,\dots,\min \set{m, n}$;
the Hungarian method computes this.

The above algebraic interpretation is generalized to
\emph{weighted Edmonds' problem}\footnote[3]{The original definition of weighted Edmonds' problem is the computation of $\deg \det A(t)$ of a \emph{square} matrix $A(t)$ of the form~\eqref{eq:linear A(t)}. Using the valuated-bimatroid property, the problem in our definition is polynomially equivalent to the original problem; see e.g.,~\cite[Section~5.2.5]{book/Murota00}.}(see~\cite{SIAAG/H19}),
which
asks to compute the entire sequence of the maximum degree of minors of
\begin{align}\label{eq:linear A(t)}
A(t) = A_1(t) x_1 + A_2(t) x_2 + \cdots + A_l(t) x_l.
\end{align}
Here $A_k(t)$ is a polynomial matrix over a field $\setF$ with an indeterminate $t$,
i.e., each entry of $A_k(t)$ is a polynomial in $t$ over $\setF$,
and $x_k$ is a different variable from $t$
for each $k = 1,2,\dots,l$.
This problem is a weighted generalization of a well-studied algebraic problem called \emph{Edmonds' problem}~\cite{JRNBSS/E67}:
It asks 
to compute the rank of
\begin{align}\label{eq:linear A}
A = A_1 x_1 + A_2 x_2 + \cdots + A_l x_l,
\end{align} 
where $A_i$ is a matrix over $\setF$ and $x_i$ is a variable for $i = 1,2,\dots,l$.
(Weighted) Edmonds' problem can capture various matching-type tractable combinatorial optimization problems including not only the maximum (weight) bipartite matching problem but also the maximum (weight) nonbipartite matching, (weighted) linear matroid intersection, 
and (weighted) linear matroid parity problems; see~\cite{JLMS/T47,BSBM/L89,JOA/CCM92}.
Although a randomized polynomial-time algorithm for (weighted) Edmonds' problem
is known (if $\card{\setF}$ is large)~\cite{FCT/L79,JACM/S80},
a deterministic polynomial-time algorithm is not known even for Edmonds' problem,
which is a prominent open problem in theoretical computer science (see e.g.,~\cite{CC/KI04}).

In this paper,
we address the problem of computing the entire sequence 
of the maximum degree of minors (weighted Edmonds' problem)
of the following $(2 \times 2)$-block-structured matrix:
\begin{align}\label{eq:A(t)}
A(t) =
\begin{pmatrix}
A_{11} x_{11} t^{d_{11}} & A_{12} x_{12} t^{d_{12}} & \cdots & A_{1 \nu} x_{1 \nu} t^{d_{1 \nu}}\\
A_{21} x_{21} t^{d_{21}} & A_{22} x_{22} t^{d_{22}} & \cdots & A_{2\nu} x_{2 \nu} t^{d_{2 \nu}}\\
\vdots & \vdots & \ddots & \vdots \\
A_{\mu 1} x_{\mu 1} t^{d_{\mu 1}} & A_{\mu 2} x_{\mu 2} t^{d_{\mu 2}} & \cdots & A_{\mu \nu} x_{\mu \nu} t^{d_{\mu \nu}}
\end{pmatrix},
\end{align}
where $A_{\alpha\beta}$ is a $2 \times 2$ matrix over a field $\setF$,
$x_{\alpha \beta}$ is a variable,
and $d_{\alpha \beta}$ is an integer for $\alpha = 1,2,\dots, \mu$ and $\beta = 1,2,\dots, \nu$,
and $t$ is another variable.
A matrix $A(t)$ of the form~\eqref{eq:A(t)} is called a \emph{$(2 \times 2)$-type generic partitioned polynomial matrix}.

The main result of this paper is as follows,
where we define $\delta_k(A(t))$ as~\eqref{eq:delta_k} for a $(2 \times 2)$-type generic partitioned polynomial matrix $A(t)$.
\begin{theorem}\label{thm:main}
Let $A(t)$ be a $(2 \times 2)$-type generic partitioned polynomial matrix of the form~\eqref{eq:A(t)}.
	There exists a combinatorial $O(\mu\nu \min\set{\mu, \nu}^2)$-time
	algorithm for computing the entire sequence $\prn{\delta_0(A(t)), \delta_1(A(t)), \dots, \delta_{\min\set{2\mu, 2\nu}}(A(t))}$
	of the maximum degree of minors of $A(t)$.
\end{theorem}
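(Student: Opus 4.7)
My plan is to generalize the primal-dual Hungarian method to the $(2 \times 2)$-type setting. The algorithm will maintain, for the current target $k$, a feasible integer dual potential whose objective value equals $\delta_k(A(t))$; each iteration will either augment $k$ by one or strictly improve the dual. The three pillars will be: (i) an Egerv\'ary-type minimax formula for $\delta_k(A(t))$ whose dual variables correspond to the labeling $\labeling$, (ii) a subroutine for the unweighted case that computes $\ncrank$ of a $(2 \times 2)$-type generic partitioned matrix in $O(\mu \nu \min\set{\mu,\nu})$ time, and (iii) a structural lemma relating the $\ncrank$-certificate of an auxiliary tight matrix to either an augmentation or a dual update.

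First I would formulate and prove the minimax formula. Since each block is $2 \times 2$, the natural dual introduces, for each row block $\alpha$, two integer potentials $p_\alpha^+, p_\alpha^-$ together with a $\pm$-label on each of the two rows in $\alpha$; analogous data $q_\beta^+, q_\beta^-$ and $\pm$-labeled columns are kept for each column block $\beta$. Feasibility imposes $p_\alpha^\epsilon + q_\beta^\eta \ge d_{\alpha \beta}$ whenever a scalar entry of label pair $(\epsilon, \eta)$ occurs in block $(\alpha, \beta)$, and the dual objective sums the potentials of $k$ selected rows and $k$ selected columns. Weak duality then follows from a block-wise Laplace expansion: each term in a $k \times k$ minor's determinant picks up a factor $t^{d_{\alpha \beta}}$ in some block, whose exponent is bounded by $p_\alpha^\epsilon + q_\beta^\eta$ via feasibility, and summing over the permutation reproduces the dual objective. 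Strong duality will emerge from termination of the algorithm below.

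Next I would design the iteration. Given a feasible dual and current $k$, I form the \emph{tight auxiliary matrix} $\tilde A$ over $\setF$ by keeping, block by block, only those scalar entries whose label pair attains equality in the dual constraint; I then invoke the unweighted $\ncrank$ routine on $\tilde A$. A matching-degree argument shows that the current dual value equals $\delta_k(A(t))$ iff $\ncrank \tilde A \le 2k$. If the output exceeds $2k$, a nonsingular $\tilde A$-submatrix certifies $\delta_{k+1}$ and I record it; otherwise the routine returns a $\labeling$-style tight cover of $\tilde A$, from which I compute the largest dual shift preserving feasibility and then recurse.

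The main obstacle I expect is the quantitative progress argument controlling complexity. I must show that each dual update strictly enlarges a monotone combinatorial object (for instance, the family of blocks participating in the tight cover), so that only $O(\min\set{\mu,\nu})$ dual updates occur in total across all values of $k$; this is the $(2 \times 2)$-type analogue of the Hungarian shortest-path argument and is where the inner/outer label bookkeeping from the unweighted case must be carefully adapted. Granting this, the algorithm makes $O(\min\set{\mu,\nu})$ $\ncrank$ calls overall, each costing $O(\mu \nu \min\set{\mu, \nu})$, which yields the claimed $O(\mu \nu \min\set{\mu,\nu}^2)$ bound.
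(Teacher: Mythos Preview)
Your plan has the right primal--dual skeleton, but two genuine gaps prevent it from delivering the stated bound.

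\textbf{Complexity of the black-box reduction.} You propose to recompute $\ncrank$ of the tight auxiliary matrix from scratch after every dual update, and then argue that only $O(\min\{\mu,\nu\})$ such calls occur \emph{in total across all $k$}. That bound is not correct. Within a single augmentation phase (fixed $k$), the number of dual updates is $\Theta(\min\{\mu,\nu\})$ in the worst case: the monotone object you allude to (the growing forest/cover) lives \emph{inside} one phase and resets after each augmentation, so it cannot bound the total across all $k$. With $O(\min\{\mu,\nu\})$ augmentations and $O(\min\{\mu,\nu\})$ dual updates per augmentation, black-box calls at $O(\mu\nu\min\{\mu,\nu\})$ each give $O(\mu\nu\min\{\mu,\nu\}^3)$, one factor too many. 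The paper avoids this by \emph{not} restarting: it maintains an alternating forest $\calF$ in the auxiliary graph $\calG(\calV,p)$ that persists across dual updates within a phase; each primal or dual step costs $O(\mu\nu)$ and the forest grows monotonically, so the whole phase costs $O(\mu\nu\min\{\mu,\nu\})$. That integration is the content of Theorem~\ref{thm:augmenting path}, and it is not recoverable from a black-box $\ncrank$ oracle.

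\textbf{The labeling is an optimization variable, not data.} Your dual has fixed $\pm$-labels on the rows and columns and then imposes $p_\alpha^\epsilon+q_\beta^\eta\ge d_{\alpha\beta}$ on the labeled scalar entries. But in the $(2\times 2)$ setting, which scalar entries exist depends on the choice of one-dimensional subspaces $U_\alpha^\pm,V_\beta^\pm$, and the correct dual (the $c$-potential of Section~2) is a function on \emph{all} one-dimensional subspaces, with feasibility required for every pair $(X,Y)$ with $A_{\alpha\beta}(X,Y)\neq\{0\}$. The labeling $\calV$ must be updated during the search (step (P2) literally replaces $U_\alpha^{\overline{\sigma'}}$ by $(V_\beta^\sigma)^{\perp_{\alpha\beta}}$), and the tight auxiliary object changes with it. Your ``tight auxiliary matrix'' is therefore not well-defined without specifying how $\calV$ evolves, and the cover returned by the unweighted routine does not by itself tell you how to relabel. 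This bookkeeping is exactly what Lemmas~\ref{lem:edge} and~\ref{lem:>} and the (Reg)/(Tight) machinery handle, and it must be done inside the search, not as a post-processing of a black-box certificate.
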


Our problem and result are related to the noncommutative analog of weighted Edmonds' problem, called
\emph{weighted noncommutative Edmonds' problem}~\cite{SIAAG/H19}.
In this problem,
given a matrix $A(t)$ of the form~\eqref{eq:linear A(t)},
in which $x_i$ and $x_j$ are supposed to be noncommutative
but $t$ is commutative for any variable $x_i$,
we are asked to compute the entire sequence of the maximum degree of minors,
where the determinant $\deg B(t)$ is replaced with the \emph{Dieudonn\'e determinant}~\cite{BSMF/D43,book/Cohn95} (a determinant concept of a matrix over a skew field) of $B(t)$,
denoted by $\Det B(t)$.
Oki~\cite{ICALP/O20} developed a
pseudopolynomial-time algorithm for this problem.
Hirai~\cite{SIAAG/H19} established a minimax theorem on the degree of the Dieudonn\'e determinant,
and developed another
pseudopolynomial-time algorithm by solving the dual problem.
By combining the above Hirai's algorithm with cost scaling and perturbation techniques,
Hirai and Ikeda~\cite{arxiv/HI20} presented a strongly polynomial-time algorithm for
weighted noncommutative Edmonds' problem for $A(t)$ having the following special form
\begin{align}\label{eq:linear A(t) t^d}
A(t) = A_1 x_1 t^{d_1} + A_2 x_2 t^{d_2} + \cdots + A_l x_l t^{d_l},
\end{align}
where $A_i$ is a square matrix over $\setF$ and $d_i$ is an integer for $i = 1,2,\dots, l$.
A $(2 \times 2)$-type generic partitioned polynomial matrix (with noncommutative variables $x_{\alpha \beta}$)
can be represented as~\eqref{eq:linear A(t) t^d}.

Although the degree of the Dieudonn\'e determinant is an upper bound of that of the determinant,
i.e., $\deg \det A(t) \leq \deg \Det A(t)$ for a matrix $A(t)$ of the form~\eqref{eq:linear A(t)},
and in general the inequality is strict,
Hirai and Ikeda~\cite{arxiv/HI20} also showed that the equality $\deg \det A(t) = \deg \Det A(t)$ holds for a $(2 \times 2)$-type generic partitioned polynomial matrix $A(t)$.
Therefore, the strongly polynomial-time solvability of
our problem
follows from that of weighted noncommutative Edmonds' problem for a matrix of the form~\eqref{eq:linear A(t) t^d} mentioned above.
Hirai--Ikeda's algorithm is conceptually simple but is slow and not combinatorial.
Let $A(t)$ be a $(2 \times 2)$-type generic partitioned polynomial matrix of the form~\eqref{eq:A(t)}.
They present an $O(\min\set{\mu, \nu}^6 \log D)$-time algorithm
for the computation of $\deg \Det A(t)$
via a cost scaling technique,
where $D \defeq \log \max_{\alpha, \beta} |d_{\alpha \beta}|$.
Then,
by utilizing the perturbation technique in~\cite{Comb/FT87} for $d_{\alpha \beta}$
so that $\log D$ is bounded by $O(\mu^3\nu^3)$ in polynomial time,
they devise a strongly polynomial-time algorithm
for computing $\deg \Det A(t)$.
To compute the entire sequence of the maximum degree of minors,
we further need to call the above algorithm $O(\mu \nu\min\set{\mu, \nu})$ times (see e.g.,~\cite[Section~5.2.5]{book/Murota00}).
Moreover,
in case of $\setF = \setQ$,
their algorithm requires an additional procedure (used in~\cite{SICOMP/IK20})
for bounding the bit-complexity.
The minimax theorem on the degree of the Dieudonn\'e determinant provided in~\cite{SIAAG/H19} does \emph{not} provide a good characterization for the deg-det computation even if we restrict to the input as a $(2 \times 2)$-type generic partitioned polynomial matrix $A(t)$ (explicitly described in \cite{arxiv/HI20}).
That is, the formula does \emph{not} imply that the problem of deciding if $\delta_k(A(t)) \geq \theta$ for a given threshold $\theta$
belongs to both NP and co-NP.

In this article,
we establish a new duality theorem on the degree of the determinant of a $(2 \times 2)$-type generic partitioned polynomial matrix $A(t)$,
which is a refinement of the minimax formula provided in~\cite{SIAAG/H19,arxiv/HI20}.
This plays an important role in devising our algorithm.
The proposed theorem consists of the primal concept of \emph{matching-pair}
and the dual concept of \emph{potential}.
The former is a pair of edge subsets of a graph consisting of edges $\alpha \beta$ with nonzero $A_{\alpha \beta}$ in $A(t)$ satisfying some combinatorial and algebraic conditions,
and the latter is a function defined on vector spaces that satisfies some inequalities.
We show that the maximum weight of a matching-pair of size $k$
is equal to the minimum value of a potential with respect to $k$,
and that they coincide with $\delta_k(A(t))$;
this is an algebraic generalization of Egerv\'ary's theorem.
Our minimax formula can be used as a good characterization for the computation of $\delta_k(A(t))$.

The proposed algorithm is a combinatorial primal-dual augmenting path algorithm,
which is an algebraic generalization of the Hungarian method.
An optimal matching-pair of size $k$ and an optimal potential with respect to $k$
enable us to define the auxiliary graph.
If we find an augmenting path on it,
then we can compute $\delta_{k+1}(A(t))$, particularly, we can obtain an optimal matching-pair of size $k+1$ and an optimal potential with respect to $k+1$
by using the augmenting path.
Otherwise, we can verify $\delta_{k+1}(A(t)) = - \infty$ (or equivalently $\rank A(t) = k$).
By repeating the above augmentations,
we finally obtain the entire sequence $\prn{\delta_0(A(t)), \delta_1(A(t)), \dots, \delta_{\min\set{2\mu, 2\nu}}(A(t))}$ of the maximum degree of minors of $A(t)$.
The validity of the algorithm provides a constructive proof of
our minimax theorem.
Our algorithm is simpler and faster than Hirai--Ikeda's algorithm;
ours requires no perturbation of the weight and no additional care for bounding the bit size.

\paragraph{Related work.}
A line of research on the noncommutative setting of an algebraic formulation of combinatorial optimization problems
was
initiated by Ivanyos, Qiao, and Subrahmanyam~\cite{CC/IQS17}
who introduced
\emph{noncommutative Edmonds' problem}:
It asks to compute 
the rank of a matrix of the form~\eqref{eq:linear A},
where 
$x_i$ and $x_j$ are supposed to be noncommutative, 
i.e., $x_ix_j \neq x_jx_i$ for $i \neq j$.
Here the ``rank'' is defined
via the inner rank of a matrix over a free skew field
and is called the \emph{noncommutative rank} or \emph{nc-rank}.
The duality theorem on the nc-rank was established by Fortin and Reutenauer~\cite{SLC/FR04}.
The nc-rank is an upper bound of the rank, i.e., $\rank A \leq \ncrank A$
for a matrix $A$ of the form~\eqref{eq:linear A},
and the inequality is generally strict.
Garg, Gurvits, Oliveira, and Wigderson~\cite{FCT/GGOW20},
Ivanyos, Qiao, and Subrahmanyam~\cite{CC/IQS18},
and Hamada and Hirai~\cite{arxiv/HH17,SIAAG/HH21} independently developed
deterministic polynomial-time algorithms for 
noncommutative Edmonds' problem.
Their algorithms are conceptually different.
Garg, Gurvits, Oliveira, and Wigderson showed that Gurvits' operator scaling algorithm~\cite{JCSS/G04},
which is also known as the flip-flop algorithm in statistics~\cite{JSCS/D99,SPL/LZ05} (see also \cite[Section~4.5]{SIAAG/AKRS21}),
can be used as the nc-rank-computation.
This
works
for the case of $\setF = \setQ$ or $\setC$.
The algorithm of Ivanyos, Qiao, and Subrahmanyam
is an algebraic generalization of an augmenting-path algorithm for the maximum bipartite matching problem,
which works
for an arbitrary field.
Hamada and Hirai reduced the nc-rank-computation to a geodesically-convex optimization on a CAT(0)-space;
the algorithm proposed
in~\cite{arxiv/HH17} works for an arbitrary field $\setF$
provided the arithmetic operations on $\setF$ can be performed in constant time,
while the bit-length may be unbounded if $\setF = \setQ$;
in~\cite{SIAAG/HH21},
the above bit-length issue is resolved.

The block-structured matrix (without an additional indeterminate $t$) was introduced by Ito, Iwata, and Murota~\cite{SIMAA/IIM94} for representing and analyzing a physical system.
In particular, its $(2 \times 2)$-restriction, called a \emph{$(2 \times 2)$-type generic partitioned matrices}, was considered in detail by Iwata and Murota~\cite{SIMAA/IM95}.
They established the minimax theorem on the rank of a $(2 \times 2)$-type generic partitioned matrix,
which is essentially the same as the duality theorem on the nc-rank proposed by Fortin and Reutenauer.
This implies that,
for a $(2 \times 2)$-type generic partitioned matrix, its rank and nc-rank coincide.
Therefore,
we can compute the rank of a $(2 \times 2)$-type generic partitioned matrix in polynomial time
by solving noncommutative Edmonds' problem.
In the previous paper~\cite{MPA/HI21},
Hirai and the author devised a simpler and faster combinatorial algorithm for the rank-computation of a $(2 \times 2)$-type generic partitioned matrix,
which is a combinatorial enhancement of Ivanyos--Qiao--Subrahmanyam's algorithm.
The proposed algorithm in this study is a weighted generalization of this previous algorithm.
We note that, in~\cite{SIMAA/IM95},
Iwata and Murota gave a block-structured matrix consisting only of $2 \times 2$ and $3 \times 2$ blocks
such that its rank and nc-rank are different.
It is known~\cite{SIAAG/H19} that the rank-computation of a general block-structured matrix is equivalent to Edmonds' problem;
its polynomial-time solvability is still open.

The entire sequence of the maximum degree of minors
plays an important role in engineering.
Such a sequence of a rational matrix determines its Smith--McMillan form at infinity,
which is used in control theory~\cite{IEEETAC/VK81},
and that of a matrix pencil determines its Kronecker form,
which is used in analyzing DAEs~\cite{book/KunkelMehrmann06}.
In this literature,
many combinatorial algorithms for computing (the entire sequence of) the maximum degree of minors
has been proposed for rational matrices~\cite{AAECC/M95,SISC/IMS96,Algo/S17},
for matrix pencils~\cite{Algo/I03}, and mixed polynomial matrices~\cite{Algo/IT13,JSIAMLett/S15};
see also \cite[Chapters 5 and 6]{book/Murota00}.

\paragraph{Organization.}
The remainder of this paper is organized as follows.
In \cref{sec:matching and potential},
we introduce the primal concepts called \emph{pseudo-matching} and \emph{matching-pair}
and the dual concept called \emph{potential}.
Then
we provide a minimax theorem between the weight of a matching-pair and a potential,
which leads to a good characterization for the computation of the maximum degree of minors of $A(t)$.
In \cref{sec:augmenting path},
we introduce an augmenting path for a matching-pair and a potential,
and develop an algorithm for finding an augmenting path for the current matching-pair and potential.
The rest of sections
(Sections~\ref{sec:preliminaries augmentation}--\ref{sec:non violate})
are devoted to devising an augmenting algorithm.

\paragraph{Notations.}
For a positive integer $k$,
we denote $\set{1,2,\dots, k}$ by $\intset{k}$.
Let $A(t)$ be
a $(2 \times 2)$-type generic partitioned polynomial matrix of the form~\eqref{eq:A(t)}.
The matrix $A(t)$ is regarded as
a matrix over the field $\setF(x, t)$ of rational functions with variables $t$ and $x_{\alpha\beta}$ for $\alpha \in \intset{\mu}$
and $\beta \in \intset{\nu}$.
The symbols $\alpha$, $\beta$, and $\gamma$ 
are used to represent a row-block index in $\intset{\mu}$, column-block index in $\intset{\nu}$, and row- or column-block index in $\intset{\mu} \sqcup \intset{\nu}$ of $A(t)$, respectively,
where $\sqcup$ denotes the direct sum.
We often drop ``$\in \intset{\mu}$" from the notation of ``$\alpha \in \intset{\mu}$" if it is clear from the context.
Each $\alpha$ and $\beta$
is endowed with the 2-dimensional $\setF$-vector space $\setF^2$, denoted by $U_{\alpha}$ and $V_{\beta}$,
respectively. 
Each submatrix $A_{\alpha \beta}$ is considered 
as the bilinear map $\doms{U_{\alpha} \times V_{\beta}}{\setF}$
defined by
$A_{\alpha \beta}(u, v) \defeq u^\top A_{\alpha \beta}v$
for $u \in U_\alpha$ and $v \in V_\beta$.
We denote by $\kerL(A_{\alpha \beta})$ and $\kerR(A_{\alpha \beta})$
the left and right kernels of $A_{\alpha \beta}$,
respectively.
Let us denote by $\calM_\alpha$ and $\calM_\beta$
the sets of 1-dimensional vector subspaces of $U_\alpha$ and $V_\beta$,
respectively.

We define the (undirected) bipartite graph $G \defeq (\intset{\mu}, \intset{\nu}; E)$
by $E \defeq \set{\alpha \beta}[A_{\alpha\beta} \neq O]$.
For $M \subseteq E$,
let $A_M(t)$ denote
the matrix obtained from $A(t)$ by replacing  
each submatrix $A_{\alpha\beta}$ with $\alpha \beta \not\in M$ by the $2 \times 2$ zero matrix.
An edge $\alpha \beta \in E$ is said to be \emph{rank-$k$} $(k=1,2)$
if $\rank A_{\alpha\beta} = k$.
For notational simplicity, 
the subgraph $(\intset{\mu}, \intset{\nu}; M)$ for $M \subseteq E$
is also denoted by $M$. 
For a node $\gamma$, let $\deg_M(\gamma)$ denote the degree of $\gamma$ in $M$, i.e.,
the number of edges in $M$ incident to $\gamma$. 
An edge $\alpha \beta \in M$ is said to be \emph{isolated}
if $\deg_M(\alpha) = \deg_M(\beta) = 1$.

\section{Duality theorem}\label{sec:matching and potential}
In this section,
we introduce a matching concept and a potential concept suitable
for a $(2 \times 2)$-type generic partitioned polynomial matrix $A(t)$ of the form~\eqref{eq:A(t)}.
They play a central role in devising our algorithm.
We also present a minimax theorem
between the weight of a ``matching'' and the value related to a ``potential'' in our setting,
which leads to a good characterization for the computation of the maximum degree of minors of $A(t)$.

\subsection{Matching concept}

We introduce a matching concept named {\it pseudo-matching}.
This is a weaker concept than {\it matching of a $(2 \times 2)$-type generic partitioned (not polynomial) matrix}
that
introduced in the previous work~\cite{MPA/HI21},
because of which, it is prefixed with
``pseudo.''
An edge subset $M \subseteq E$ is called a \emph{pseudo-matching}
if it satisfies
the following
combinatorial and algebraic conditions (Deg), (Cycle), and (VL):
\begin{description}
	\item[{\rm (Deg)}] $\deg_M(\gamma) \leq 2$ for each node $\gamma$ of $G$.
\end{description}
Suppose that $M$ satisfies (Deg). Then each connected component of $M$ forms a path or a cycle.
Thus $M$ is 2-edge-colorable; i.e.,
there are two edge classes such that any two incident edges are in different classes.
An edge in one color class is called a {\it $+$-edge},
and an edge in the other color class is called a {\it $-$-edge}.
\begin{description}
	\item[{\rm (Cycle)}]
	Each cycle component of $M$
	has at least one rank-1 edge.
\end{description}
A {\em labeling} $\calV = \labeling$ is a node-labeling that assigns
two distinct 1-dimensional subspaces to each node,
$U_\alpha^+, U_\alpha^- \in \calM_{\alpha}$ with $U_\alpha^+ \neq U_\alpha^-$
for $\alpha$
and $V_\beta^+, V_\beta^- \in \calM_{\beta}$ with $V_\beta^+ \neq V_\beta^-$ for $\beta$.
A labeling $\calV$
is said to be {\em valid} for $M$ if,
for each edge $\alpha \beta \in M$,
\begin{align}
&A_{\alpha \beta}(U_{\alpha}^+, V_{\beta}^-) = A_{\alpha
	\beta}(U_{\alpha}^-, V_{\beta}^+) = \set{0},\label{eq:+-}\\
&(\kerL(A_{\alpha \beta}), \kerR(A_{\alpha \beta})) =
\begin{cases}
(U_{\alpha}^+, V_{\beta}^+) & \text{if $\alpha \beta$ is a rank-1 $+$-edge},\\
(U_{\alpha}^-, V_{\beta}^-) & \text{if $\alpha \beta$ is a rank-1 $-$-edge}.
\end{cases}\label{eq:++ --}
\end{align}
For $\alpha$,
we refer to $U_\alpha^+$ and $U_\alpha^-$ as the {\it $+$-space} and {\it $-$-space} of $\alpha$ with respect to $\calV$,
respectively.
The same terminology is also used for $\beta$.
\begin{description}
	\item[{\rm (VL)}]
	$M$ admits a valid labeling.
\end{description}

In the following sections,
we use the symbol $\sigma$ as one of the signs $+$ and $-$.
The opposite sign of $\sigma$ is denoted by $\overline{\sigma}$,
i.e., $\overline{\sigma} = -$ if $\sigma = +$,
and $\overline{\sigma} = +$ if $\sigma = -$.

\begin{remark}\label{rmk:relabeling}
	Suppose that $M$ satisfies (Deg)
	and that $\alpha \beta$ is a rank-1 $\sigma$-edge in $M$.
	The condition~\eqref{eq:++ --} determines
	$U_{\alpha}^\sigma$ and $V_{\beta}^\sigma$,
	and the condition~\eqref{eq:+-}
	determines
	$V_{\beta'}^{\overline{\sigma}}$ and $U_{\alpha'}^\sigma$
	(resp. $U_{\alpha'}^{\overline{\sigma}}$ and $V_{\beta'}^\sigma$)
	for $\alpha'$ and $\beta'$ belonging to the path in $M$ which starts with $\alpha$ (resp. $\beta$) and consists of rank-2 edges.

	Suppose further that $M$ satisfies (Cycle).
	For each node in some cycle component of $M$,
	its $+$-space and $-$-space are uniquely determined
	by the above argument,
	since every cycle component has a rank-1 edge by (Cycle).
	Let $C$ be a path component of $M$,
	which has the end nodes $\gamma$ and $\gamma'$ incident to a $\sigma$-edge and a $\sigma'$-edge, respectively.
	When we set the $\overline{\sigma}$-space of $\gamma$
	and $\overline{\sigma'}$-space of $\gamma'$,
	the $+$-space and $-$-space of every node belonging to $C$
	are uniquely determined.
	\qqed
\end{remark}
By the argument in \cref{rmk:relabeling},
we can check if an edge subset $M$ is a pseudo-matching in polynomial time.

Let $M \subseteq E$ be a pseudo-matching, and $I$ a set of isolated rank-2 edges in $M$.
We refer to such a pair $(M, I)$ as a \emph{matching-pair}.
The \emph{size} of a matching-pair $(M, I)$ is $|M| + |I|$.
The \emph{weight} $w(M, I)$ of $(M, I)$
is defined
by
\begin{align*}
    w(M, I) \defeq \sum_{\alpha \beta \in M} d_{\alpha \beta} + \sum_{\alpha \beta \in I} d_{\alpha \beta}.
\end{align*}
Let $\labeling$ be a valid labeling for $M$.
We say that $U_\alpha^\sigma$ (resp. $V_\beta^\sigma$) is \emph{matched by $(M, I)$}
if
$\alpha$ (resp. $\beta$) is incident to a $\overline{\sigma}$-edge in $M$
or to a $\sigma$-edge in $I$.
That is, the set of all spaces matched by $(M, I)$ is representable as
\begin{align}\label{eq:covered}
\bigcup_{\sigma \in \set{+, -}}
    \set*{ U_\alpha^{\overline{\sigma}}, V_\beta^{\overline{\sigma}} }[$\alpha \beta \in M$: $\sigma$-edge] \cup \bigcup_{\sigma \in \set{+, -}}
    \set*{ U_\alpha^{\sigma}, V_\beta^{\sigma}}[$\alpha \beta \in I$: $\sigma$-edge].
\end{align}
Thus the number of $U_\alpha^\sigma$ that are matched by $(M, I)$ coincides with that of $V_\beta^\sigma$,
which are equal to the size $|M| + |I|$ of $(M, I)$.

\subsection{Minimax formula}
In this subsection,
we provide a minimax formula
between the maximum weight of a matching-pair of size $k$
and the minimum value corresponding to a potential
(defined below)
and $k$,
which coincides with $\delta_k(A(t))$.
This formula is an algebraic generalization of Egerv\'ary's theorem~\cite{MFL/E31} that is a minimax theorem for the maximum weight perfect bipartite matching problem.

For $c \in \setR$,
a function $\funcdoms{p}{\bigcup_\gamma \calM_\gamma}{\setR}$
is called a {\it $c$-potential}
if
\begin{itemize}
    \item $p$ is nonnegative, i.e., $p(Z) \geq 0$ for all $Z \in \bigcup_\gamma \calM_\gamma$, and
    \item $p(X) + p(Y) + c \geq d_{\alpha \beta}$
for all $\alpha \beta \in E$, $X \in \calM_\alpha$, and $Y \in \calM_\beta$
such that $A_{\alpha \beta}(X, Y) \neq \set{0}$.
\end{itemize}
We can omit the parameter $c$ from the notation if it is not important in the context.
For a potential $p$ and a labeling $\calV = (\{ U_\alpha^+, U_\alpha^- \}, \{ V_\beta^+, V_\beta^- \})_{\alpha, \beta}$,
we define
\begin{align*}
    p(\calV) \defeq \sum_\alpha \left(p(U_\alpha^+) + p(U_\alpha^-)\right) + \sum_\beta\left(p(V_\beta^+) + p(V_\beta^-)\right).
\end{align*}

The following minimax formula is a generalization of Egerv\'ary's theorem:
\begin{theorem}\label{thm:minmax}
    Let $k$ be a nonnegative integer.
    The following values {\rm (i)}--{\rm (iii)} are the same:
    \begin{description}
        \item[{\rm (i)}] $\delta_k(A(t))$.
        \item[{\rm (ii)}] $\sup \set*{ w(M, I)}[$(M, I)$: matching-pair of size $k$]$.
        \item[{\rm (iii)}] $\inf \set*{ p(\calV) + kc }[$\calV$: labeling,\ $c \in \setR$,\ $p$: $c$-potential]$.
    \end{description}
\end{theorem}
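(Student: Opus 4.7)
The plan is to establish the equalities via the chain \mbox{(ii) $\le$ (i) $\le$ (iii) $\le$ (ii)}: the first two are weak-duality statements that I would prove directly from the definitions, while the third (strong duality) will be supplied constructively by the augmenting-path algorithm developed in the subsequent sections, in the spirit of the classical Hungarian method for bipartite matching.

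For (ii) $\le$ (i), given a matching-pair $(M, I)$ of size $k$ together with a valid labeling $\calV = \labeling$, I would change basis on each $U_\alpha$ and $V_\beta$ to the labeled basis. By conditions~\eqref{eq:+-}--\eqref{eq:++ --}, every block $A_{\alpha\beta}$ with $\alpha\beta \in M$ becomes diagonal in the labeled basis, and the $(\sigma,\sigma)$-entry vanishes additionally for a rank-$1$ $\sigma$-edge. Form the $k \times k$ submatrix $B(t)$ whose rows and columns are indexed by the spaces matched by $(M, I)$ as enumerated in~\eqref{eq:covered}, and consider the permutation $\pi$ that pairs $U^{\overline{\sigma}}_\alpha$ with $V^{\overline{\sigma}}_\beta$ through each $\sigma$-edge $\alpha\beta \in M$, and additionally pairs $U^{\sigma}_\alpha$ with $V^{\sigma}_\beta$ through each $\sigma$-edge $\alpha\beta \in I$. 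The contribution of $\pi$ to $\det B(t)$ is a nonzero element of $\setF$ times the monomial $\prod_{\alpha\beta \in M \setminus I} x_{\alpha\beta} \cdot \prod_{\alpha\beta \in I} x_{\alpha\beta}^{2} \cdot t^{w(M, I)}$. To rule out cancellation I would isolate the coefficient of this specific $x$-monomial in $\det B(t)$: any competing permutation contributing to it must use exactly the same multiset of blocks, and hence stays entirely within $M$-edges. A component-by-component analysis of $M$ then shows that on a path component the competitor's pairing is uniquely determined starting from the endpoints; on a cycle component there are exactly two globally compatible pairings, the alternative of which necessarily hits the zero $(\sigma, \sigma)$-entry of the rank-$1$ $\sigma$-edge guaranteed by condition (Cycle); and the pairings at isolated edges (in $M \setminus I$ and in $I$) are forced as well. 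Consequently the coefficient is a nonzero scalar times $t^{w(M, I)}$, yielding $\deg \det B(t) \ge w(M, I)$ and thus $\delta_k(A(t)) \ge w(M, I)$.

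For (i) $\le$ (iii), fix any labeling $\calV$ and any $c$-potential $p$. Since $\delta_k(A(t))$ is invariant under left and right multiplication of $A(t)$ by constant invertible matrices, I may pass again to the labeled basis. In it, whenever a $(U^{\sigma}_\alpha, V^{\sigma'}_\beta)$-entry of $A(t)$ is nonzero we have $A_{\alpha\beta}(U^\sigma_\alpha, V^{\sigma'}_\beta) \ne \{0\}$, and the potential inequality gives $d_{\alpha\beta} \le p(U^\sigma_\alpha) + p(V^{\sigma'}_\beta) + c$. Expanding $\det B(t)$ for any $k \times k$ submatrix $B(t)$ as a sum over bijections, each nonzero term has $t$-degree at most $\sum_i \bigl(p(U^{\sigma_i}_{\alpha(i)}) + p(V^{\tau_i}_{\beta(i)}) + c\bigr)$, which by nonnegativity of $p$ is bounded by the total $p(\calV) + kc$; hence $\delta_k(A(t)) \le p(\calV) + kc$.

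The closing inequality (iii) $\le$ (ii) is the main obstacle, and I would defer it to the constructive algorithm of Sections~\ref{sec:augmenting path}--\ref{sec:non violate}. Starting from $(M, I) = (\emptyset, \emptyset)$ with a suitably large feasible potential, each successful augmentation will produce a matching-pair of size $k + 1$ together with an updated feasible $c$-potential whose primal value $w(M, I)$ and dual value $p(\calV) + (k+1)c$ agree, whereas failure to find an augmenting path on the auxiliary graph certifies $\delta_{k+1}(A(t)) = -\infty$. The hard technical content, carried out in the remaining sections, lies precisely in the design of this dual update after each augmentation so that potential feasibility is preserved while the primal-dual gap stays zero.
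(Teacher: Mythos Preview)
Your proposal is correct and mirrors the paper's proof: the weak-duality chain (ii) $\le$ (i) $\le$ (iii) is argued directly after passing to the labeled basis, and strong duality is deferred to the augmenting-path algorithm in the later sections. Your argument for (ii) $\le$ (i) via isolating the $x$-monomial $\prod_{\alpha\beta\in M\setminus I} x_{\alpha\beta}\cdot\prod_{\alpha\beta\in I} x_{\alpha\beta}^{2}$ is a mild rephrasing of the paper's component-by-component determinant computation (which writes the cycle-component determinant explicitly as in~\eqref{eq:EAF}), but the content is the same.
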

\begin{proof}
    We only show the weak duality $\text{(ii)} \leq \text{(i)} \leq \text{(iii)}$.
    The strong duality $\text{(ii)} = \text{(iii)}$ follows from the validity of our proposed algorithm.
    
    In the proof, we perform the following basis transformation with respect to a labeling $\calV = \labeling$.
	Take nonzero vectors $u_\alpha^+ \in U_\alpha^+$, $u_\alpha^- \in U_\alpha^-$, $v_\beta^+ \in V_\beta^+$, and $v_\beta^- \in V_\beta^-$ for each $\alpha$ and $\beta$.
	By $U_\alpha^+ \neq U_\alpha^-$ and $V_\beta^+ \neq V_\beta^-$,
	the $2 \times 2$ matrices
	$S_\alpha \defeq
	\left[
	\begin{array}{c}
	u_\alpha^+\\
	u_\alpha^-
	\end{array}
	\right]$
	and
	$T_\beta \defeq
	\begin{bmatrix}
	v_\beta^+ &
	v_\beta^-
	\end{bmatrix}$
	are both nonsingular.
	Let $S$ and $T$ be the block-diagonal matrices with diagonal blocks $S_{\alpha}$
	and $T_{\beta}$,
	respectively.
	Then, via the basis transformation with respect to $S$ and $T$,
	we obtain a $(2 \times 2)$-type generic partitioned polynomial matrix $S A(t) T = 
	(S_\alpha A_{\alpha \beta} x_{\alpha \beta} t^{d_{\alpha \beta}} T_\beta)$,
	in which the $\alpha^{\sigma} \beta^{\sigma'}$-th entry of $S A(t) T$ is $A_{\alpha \beta}(u_\alpha^\sigma, v_\beta^{\sigma'}) x_{\alpha \beta} t^{d_{\alpha \beta}}$.
	Note that the $\alpha^{\sigma} \beta^{\sigma'}$-th entry of $S A(t) T$ is of the form $a x_{\alpha \beta} t^{d_{\alpha \beta}}$ with some $a \in \setF$,
	and it is nonzero if and only if $A_{\alpha \beta}(U_\alpha^{\sigma}, V_\beta^{\sigma'}) \neq \set{0}$.

    $\text{(ii)} \leq \text{(i)}$.
    Take any matching-pair $(M, I)$ of size $k$
    and valid labeling $\labeling$ for $M$.
    We consider the basis transformation with respect to the valid labeling.
	By conditions~\eqref{eq:+-} and~\eqref{eq:++ --},
	we have
	\begin{numcases}
	{S_\alpha (A_{\alpha \beta} x_{\alpha \beta} t^{d_{\alpha \beta}}) T_\beta =}
	\kbordermatrix{
		& \beta^+ & \beta^- \\
		\alpha^+ & \bullet & 0 \\
		\alpha^- & 0 & \bullet
	}
	& if $\alpha\beta$ is rank-2,\label{eq:rank 2}\\
	\kbordermatrix{
		& \beta^+ & \beta^- \\
		\alpha^+ & 0 & 0 \\
		\alpha^- & 0 & \bullet
	}
	& if $\alpha\beta$ is a rank-1 $+$-edge\label{eq:rank 1 +},\\
	\kbordermatrix{
		& \beta^+ & \beta^- \\
		\alpha^+ & \bullet & 0 \\
		\alpha^- & 0 & 0
	}
	& if $\alpha\beta$ is a rank-1 $-$-edge\label{eq:rank 1 -}
	\end{numcases}
	for each $\alpha\beta \in M$,
	where $\bullet$ represents some nonzero element in $\setF(x, t)$.
	
	Define $\tilde{A}_M(t) \defeq SA_M(t)T$.
	Note that $\delta_k(\tilde{A}_M(t)) = \delta_k(A_M(t))$.
	Moreover, by $\delta_k(A_M(t)) \leq \delta_k(A(t))$,
    it suffices to show that $w(M, I) \leq \delta_k(\tilde{A}_M(t))$.
	Let $\mathcal{X}$ (resp. $\mathcal{Y}$) denote the set of $\alpha^\sigma$ (resp. $\beta^{\sigma}$) such that $U_\alpha^\sigma$ (resp. $V_\beta^\sigma$) is matched by $(M, I)$.
	By $|M| + |I| = k$, we have $|\mathcal{X}| = |\mathcal{Y}| = k$.
	Let $\tilde{A}_M(t)[\mathcal{X}, \mathcal{Y}]$ denote the submatrix of $\tilde{A}_M(t)$ with row set $\mathcal{X}$ and column set $\mathcal{Y}$.
	Furthermore, let $\calC$ be the set of connected components of $M \setminus I$.
	For each $C \in \calC$,
	we denote $\mathcal{X}_C$ and $\mathcal{Y}_C$ by the restrictions of $\mathcal{X}$ and $\mathcal{Y}$
	to $C$, respectively.
	Then we have
	\begin{align}\label{eq:det tilde AM}
	    \det \tilde{A}_M(t)\sqbr{\mathcal{X}, \mathcal{Y}} = \prod_{\alpha \beta \in I} \det \tilde{A}_M(t)\sqbr{\set{\alpha^+, \alpha^-}, \set{\beta^+, \beta^-}} \cdot \prod_{C \in \calC} \det \tilde{A}_M(t)\sqbr{\mathcal{X}_C, \mathcal{Y}_C}.
	\end{align}
	
	In the following, we prove that
    $\deg \det\tilde{A}_M(t)\sqbr{\set{\alpha^+, \alpha^-}, \set{\beta^+, \beta^-}} = 2 d_{\alpha \beta}$ for $\alpha \beta \in I$, and that $\deg \det \tilde{A}_M(t)[\mathcal{X}_C, \mathcal{Y}_C] \geq \sum_{\alpha \beta \in C} d_{\alpha \beta}$ for $C \in \calC$;
    these imply $w(M, I) \leq \deg \det \tilde{A}_M(t)[\mathcal{X}, \mathcal{Y}]$ by~\eqref{eq:det tilde AM},
	and hence, we obtain $w(M, I) \leq \delta_k(\tilde{A}_M(t))$, as required.
	The former immediately follows from \eqref{eq:rank 2} and the fact that $\alpha \beta \in I$ is rank-2.
	For the latter, we only consider the case where $C \in \calC$ is a cycle component;
    the argument for a path component is simpler and we omit it.
    Suppose that $C$ consists of $+$-edges $\alpha_1 \beta_1, \alpha_2 \beta_2, \dots, \alpha_k \beta_k$
	and $-$-edges $\beta_1 \alpha_2, \beta_2 \alpha_3, \dots, \beta_{k} \alpha_1$.
	By~\eqref{eq:rank 2}--\eqref{eq:rank 1 -},
	we obtain
	\begin{align}\label{eq:EAF}
	\tilde{A}_M[\mathcal{X}_C, \mathcal{Y}_C] =
	\begin{blockarray}{ccccccccccc}
	& \beta_1^+ & \beta_2^+ & \cdots & \beta_{k-1}^+ & \beta_k^+  & \beta_1^- & \beta_2^- & \cdots & \beta_{k-1}^- & \beta_k^- \\
	\begin{block}{c[ccccc|ccccc]}
	\alpha_1^+ & \ast & & & & \bullet & & & & & \\
	\alpha_2^+ & \bullet & \ast & & & & & & & &\\
	\vdots &  & \ddots & \ddots & & & & & & &\\
	\alpha_{k-1}^+ & & & \bullet & \ast & & & & & &\\
	\alpha_k^+ & & & & \bullet & \ast & & & & &\\
	\cline{2-11}
	\alpha_1^- & & & & & & \bullet & & & & \ast \\
	\alpha_2^- & & & & & & \ast & \bullet & & &\\
	\vdots & & & & & & & \ddots & \ddots & &\\
	\alpha_{k-1}^- & & & & & & & & \ast & \bullet & \\
	\alpha_k^- & & & & & & & & & \ast & \bullet \\
	\end{block}
	\end{blockarray},
	\end{align}
	where $\bullet$ represents some nonzero element in $\setF(x, t)$
	and $\ast$ can be a zero/nonzero element;
	$\ast$ is nonzero if and only if the corresponding edge is rank-2.
	Let $C^+$ be the set of $+$-edges $\alpha_1 \beta_1, \alpha_2 \beta_2, \dots, \alpha_k \beta_k$ in $C$,
	and $C^-$ the set of $-$-edges $\beta_1 \alpha_2, \beta_2 \alpha_3, \dots, \beta_k \alpha_1$ in $C$.
	By (Cycle),
	one of $\ast$ elements in~\eqref{eq:EAF} is zero;
	we may assume that the $\alpha_1^+ \beta_1^+$-th entry of $\tilde{A}_M(t)[X_C, Y_C]$ is zero.
	Hence we obtain that
	\begin{align*}
	\det \tilde{A}_M(t)[\mathcal{X}_C, \mathcal{Y}_C] = \prn{a' \prod_{\alpha \beta \in C^-} x_{\alpha \beta} t^{d_{\alpha \beta}}} \cdot \prn{ a'' \prod_{\alpha \beta \in C^+} x_{\alpha \beta} t^{d_{\alpha \beta}} + a''' \prod_{\alpha \beta \in C^-} x_{\alpha \beta} t^{d_{\alpha \beta}} }
	\end{align*}
	for some $a''' \in \setF$ and nonzero $a', a'' \in \setF$.
	Thus $\deg \det \tilde{A}_M(t)[\mathcal{X}_C, \mathcal{Y}_C] \geq \sum_{\alpha \beta \in C^-} d_{\alpha \beta} + \sum_{\alpha \beta \in C^+} d_{\alpha \beta} = \sum_{\alpha \beta \in C} d_{\alpha \beta}$.
	
	This completes the proof.

	$\text{(i)} \leq \text{(iii)}$.
	It suffices to see the case of $\delta_k(A(t)) > -\infty$, i.e., $\rank A(t) \geq k$.
    Take any $t \in \setR$, $c$-potential $p$, and (not necessarily valid) labeling $\calV = \labeling$.
	We consider the basis transformation $\tilde{A}(t) \defeq S A(t) T$ with respect to $\calV$.
	Then $\delta_k(\tilde{A}(t)) = \delta_k(A(t))$ holds.
	
	Suppose $\delta_k(\tilde{A}(t)) = \deg \det \tilde{A}(t)[\mathcal{X}, \mathcal{Y}]$ for some $|\mathcal{X}| = |\mathcal{Y}| = k$,
	where $\mathcal{X} = \set{ \alpha_1^{\sigma_1}, \alpha_2^{\sigma_2}, \dots, \alpha_k^{\sigma_k} }$
	and $\mathcal{Y} = \set{ \beta_1^{\sigma_1'}, \beta_2^{\sigma_2'}, \dots, \beta_k^{\sigma_k'} }$.
	We may assume that the $\alpha_i^{\sigma_i} \beta_i^{\sigma_i'}$-th entry of $\tilde{A}(t)[\mathcal{X}, \mathcal{Y}]$ is nonzero (or equivalently, $A_{\alpha_i \beta_i}(U_{\alpha_i}^{\sigma_i}, V_{\beta_i}^{\sigma_i'}) \neq \set{0}$) for each $i = 1,2,\dots,k$
	and that $\deg \det \tilde{A}(t)[\mathcal{X}, \mathcal{Y}] = \sum_{i = 1}^k d_{\alpha_i \beta_i}$.
	Then
	we have $\delta_k(\tilde{A}(t)) = \sum_{i = 1}^k d_{\alpha_i \beta_i}
	\leq \sum_{i = 1}^k \prn{ p(U_{\alpha_i}^{\sigma_i}) + p(V_{\beta_i}^{\sigma_i'}) + c } \leq p(\calV) + kc$.
	Here the first
	and second inequalities follow from the fact that $p$ is a $c$-potential.
	
	This completes the proof.
\end{proof}

\subsection{Good characterization}
The minimax formula (\cref{thm:minmax}) states that
the decision problem of whether $\delta_k(A(t))$ is at least a threshold $\theta \in \setR$
belongs to NP.
Indeed,
a matching-pair $(M, I)$ of size $k$ such that $w(M, I) \geq \theta$
can be used as a proof for $\delta_k(A(t)) \geq \theta$,
which is verifiable in polynomial time.
In the following,
by introducing the concept of \emph{compatibility} for a potential,
we see that the problem of whether $\delta_k(A(t)) \geq \theta$
is also in co-NP by using \cref{thm:minmax}.
This implies that the minimax theorem can be used as a good characterization for the computation of $\delta_k(A(t))$.

Let $(M, I)$ be a matching-pair of size $k$
and $\calV$ a valid labeling for $M$.
A $c$-potential $p$ is said to be \emph{$(M, I, \calV)$-compatible}
if $p$ satisfies the following conditions (Reg) and (Tight):
\begin{description}
    \item[{\rm (Reg)}]
	For each $\alpha$ and $\beta$,
	\begin{align*}
	p(X) &= \max \{ p(U_\alpha^+), p(U_\alpha^-) \} \qquad (X \in \calM_\alpha \setminus \{ U_\alpha^+, U_\alpha^- \}),\\
	p(Y) &= \max \{ p(V_\beta^+), p(V_\beta^-) \} \qquad (Y \in \calM_\beta \setminus \{ V_\beta^+, V_\beta^- \}).
	\end{align*}
	\item[{\rm (Tight)}]
	For each $\alpha \beta \in M$,
	\begin{align*}
	d_{\alpha \beta} =
	\begin{cases}
	p(U_\alpha^-) + p(V_\beta^-) + c & \text{if $\alpha \beta$ is a $+$-edge},\\
	p(U_\alpha^+) + p(V_\beta^+) + c & \text{if $\alpha \beta$ is a $-$-edge},
	\end{cases}
	\end{align*}
	and for each $\alpha \beta \in I$,
	\begin{align*}
	d_{\alpha \beta} =
	\begin{cases}
	p(U_\alpha^+) + p(V_\beta^+) + c & \text{if $\alpha \beta$ is a $+$-edge},\\
	p(U_\alpha^-) + p(V_\beta^-) + c & \text{if $\alpha \beta$ is a $-$-edge},
	\end{cases}
	\end{align*}
\end{description}
An $(M, I, \calV)$-compatible $c$-potential $p$ is said to be \emph{optimal}
if the equality $w(M, I) = p(\calV) + kc$ holds,
namely, $(M, I)$ and $(p, \calV)$ attain
the supremum of (ii) and the infimum of (iii) in \cref{thm:minmax},
respectively.
The following theorem (\cref{thm:good characterization}) states that
such an optimal potential always exists if $\delta_{k}(A(t))$ is bounded;
its proof is given by the validity of our algorithm.
\begin{theorem}\label{thm:good characterization}
    Let $k$ be a nonnegative integer.
    If $\delta_k(A(t))$ is bounded,
    then there are a matching-pair $(M, I)$ of size $k$, a valid labeling $\calV$ for $M$,
    and an optimal $(M, I, \calV)$-compatible $c$-potential $p$ for some $c \in \setR$.
    In particular, the above $p$ and $c$ can be chosen to be integer-valued.
\end{theorem}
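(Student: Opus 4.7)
The plan is to prove \cref{thm:good characterization} by induction on $k$, running in parallel with the primal-dual algorithm promised by \cref{thm:main}. For the base case $k = 0$, I would take $(M, I) = (\emptyset, \emptyset)$ with any labeling $\calV$, set $c := \max_{\alpha\beta \in E} d_{\alpha\beta}$ and $p \equiv 0$; then $p$ is a $c$-potential, conditions (Reg) and (Tight) hold vacuously, and $w(M, I) = 0 = p(\calV) + 0 \cdot c$, so the quadruple is optimal and integer-valued. Inductively, assume an optimal integer quadruple $((M, I), \calV, p, c)$ of size $k$ is in hand and that $\delta_{k+1}(A(t))$ is bounded; the goal is to construct an optimal quadruple of size $k + 1$.

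For the inductive step, I would build an auxiliary directed graph on the vertex set of 1-dimensional subspaces $U_\alpha^\sigma, V_\beta^\sigma$, together with sources and sinks that represent the $+$/$-$ spaces not matched by $(M, I)$ (see~\eqref{eq:covered}). Arcs of length $0$ correspond to matched spaces (reversing the orientation dictated by~\eqref{eq:covered}) and to edges $\alpha\beta \in E$ tight under (Tight); arcs of positive length correspond to non-tight $\alpha\beta \in E$ and carry the slack $p(X) + p(Y) + c - d_{\alpha\beta}$ for $X \in \calM_\alpha, Y \in \calM_\beta$ with $A_{\alpha\beta}(X, Y) \neq \set{0}$. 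If a shortest source-to-sink path has length $0$, I would augment along it: toggle which spaces are matched and flip the $\pm$-labels along the path to produce $(M', I')$ of size $k + 1$, adjusting $I'$ so that rank-2 edges are added to or removed from the isolated set as needed, and verifying that (Deg), (Cycle), and the existence of a valid labeling $\calV'$ are preserved.

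If no shortest path has length $0$, let $\delta > 0$ be the minimum source-to-sink distance and decrement $p(Z)$ by $\delta$ on every space $Z$ reachable from the source side while increasing $c$ by $\delta$. Choosing $\delta$ as the minimum of this path distance and $\min_Z p(Z)$ over reachable $Z$ preserves nonnegativity of $p$ and the potential inequality across the cut between reachable and unreachable spaces; it strictly enlarges the set of tight arcs, so finitely many iterations produce an augmenting path or else exhaust the potential, in which case I would read off a dual certificate that $\delta_{k+1}(A(t)) = -\infty$, contradicting boundedness. Because every $d_{\alpha\beta}$ is an integer and every $\delta$ appears as an integer slack, integrality of $(p, c)$ is preserved.

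The main obstacle is the augmentation described in the second paragraph: unlike ordinary bipartite matching, flipping $\pm$-labels along a path can create cycle components whose rank-1 condition (Cycle) is not automatic, and can propagate relabelings through~\eqref{eq:+-}--\eqref{eq:++ --} that clash with the labels already fixed on edges adjacent to the path. I expect the bulk of the work — carried out in Sections~\ref{sec:augmenting path}--\ref{sec:non violate} — to lie in defining the correct notion of augmenting path so that (Deg), (Cycle), and (VL) survive, in checking that the updated $(M', I', \calV')$ remains compatible with the updated potential, and in verifying the complementary-slackness identity $w(M', I') = p'(\calV') + (k+1)c'$. Once that augmentation routine is in hand, both the strong duality (ii) $=$ (iii) of \cref{thm:minmax} and the existence assertion of \cref{thm:good characterization} follow by induction on the number of augmentations.
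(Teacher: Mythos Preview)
Your high-level plan coincides with the paper's: \cref{thm:good characterization} is proved constructively by running the primal-dual algorithm, with the base case $(M,I)=(\emptyset,\emptyset)$, $p\equiv 0$, $c=\max d_{\alpha\beta}$ and the integrality argument exactly as you describe, and with the hard combinatorics of augmentation deferred to Sections~\ref{sec:augmenting path}--\ref{sec:non violate} as you correctly anticipate.

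The one concrete error is in your dual update. You propose to \emph{decrement} $p(Z)$ on every reachable $Z$ and \emph{increase} $c$. For a tight edge with both endpoints reachable this changes the slack $p(X)+p(Y)+c-d_{\alpha\beta}$ by $-\delta-\delta+\delta=-\delta$, violating the $c$-potential inequality; and for an edge from a reachable $\beta^\sigma$ to an unreachable $\alpha^{\sigma'}$ the slack changes by $0-\delta+\delta=0$, so no new tight edge is created and the search cannot progress. This is also why you are forced to cap $\delta$ by $\min_Z p(Z)$---a cap that can be zero. The paper's rule (D2) is asymmetric: $p(U_\alpha^\sigma)$ is \emph{increased} for $\alpha^\sigma\in\calF$, $p(V_\beta^\sigma)$ is \emph{increased} for $\beta^\sigma\notin\calF$, and $c$ is \emph{decreased}. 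Then edges inside $\calF$ keep slack $+\epsilon+0-\epsilon=0$, edges leaving $\calF$ lose slack $0+0-\epsilon=-\epsilon$, nonnegativity is automatic since potentials only go up, and $\epsilon=+\infty$ yields the certificate $\delta_{k+1}(A(t))=-\infty$ directly. With this correction your plan is the paper's proof.
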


By \cref{thm:good characterization},
a pair $(p, \calV)$ of
a $c$-potential $p$ satisfying (Reg) and a valid labeling $\calV$ satisfying $p(\calV) + kc < \theta$
can be used as a proof for $\delta_k(A(t)) < \theta$.
The following shows that
the condition (Reg) enables us to check if
a given nonnegative function $p$ on $\bigcup_\gamma \calM_\gamma$ is a $c$-potential
in polynomial time.
\begin{lemma}\label{lem:(Reg)}
	Suppose that a nonnegative function $p$ on $\bigcup_\gamma \calM_\gamma$ satisfies {\rm (Reg)} for a labeling $\labeling$.
	If $p(U_{\alpha}^\sigma) + p(V_{\beta}^{\sigma'}) + c \geq d_{\alpha \beta}$ for all $\alpha \beta \in E$ with $A_{\alpha \beta}(U_{\alpha}^\sigma, V_{\beta}^{\sigma'}) \neq \set{0}$,
	then $p$ is a $c$-potential.
\end{lemma}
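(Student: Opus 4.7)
The plan is to fix an arbitrary edge $\alpha\beta \in E$ and arbitrary $X \in \calM_\alpha$, $Y \in \calM_\beta$ with $A_{\alpha\beta}(X, Y) \neq \set{0}$, and to establish $p(X) + p(Y) + c \geq d_{\alpha\beta}$. The central reduction is to locate a sign pair $(\sigma, \sigma') \in \set{+, -}^2$ satisfying the three conditions $A_{\alpha\beta}(U_\alpha^\sigma, V_\beta^{\sigma'}) \neq \set{0}$, $p(U_\alpha^\sigma) \leq p(X)$, and $p(V_\beta^{\sigma'}) \leq p(Y)$. Once such a pair is in hand, the hypothesized inequality at $(U_\alpha^\sigma, V_\beta^{\sigma'})$ yields $d_{\alpha\beta} \leq p(U_\alpha^\sigma) + p(V_\beta^{\sigma'}) + c \leq p(X) + p(Y) + c$, which is precisely what is needed. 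I would proceed by case analysis according to whether $X \in \set{U_\alpha^+, U_\alpha^-}$ and whether $Y \in \set{V_\beta^+, V_\beta^-}$.

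When both $X$ and $Y$ are labeled, choose $\sigma, \sigma'$ so that $X = U_\alpha^\sigma$ and $Y = V_\beta^{\sigma'}$; the three conditions are immediate (the first being the given nondegeneracy and the other two holding with equality). When neither $X$ nor $Y$ is labeled, condition (Reg) gives $p(X) \geq p(U_\alpha^\sigma)$ and $p(Y) \geq p(V_\beta^{\sigma'})$ for every $(\sigma, \sigma')$, so it suffices to exhibit one pair realizing the nondegeneracy. Such a pair exists for a purely algebraic reason: since $U_\alpha^+ \neq U_\alpha^-$, any nonzero representatives $u_\alpha^\pm \in U_\alpha^\pm$ form a basis of $U_\alpha$, and similarly on the $V_\beta$ side; expressing $A_{\alpha\beta}$ in these bases produces a $2 \times 2$ matrix whose entries are the scalars $A_{\alpha\beta}(u_\alpha^\sigma, v_\beta^{\sigma'})$, and since $A_{\alpha\beta} \neq O$, at least one of them must be nonzero.

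The only case that genuinely uses the bilinear hypothesis $A_{\alpha\beta}(X, Y) \neq \set{0}$ in nontrivial form is the mixed one; I expect it to be the main (and only) obstacle. Suppose, say, $X = U_\alpha^\sigma$ while $Y \notin \set{V_\beta^+, V_\beta^-}$. Then $p(U_\alpha^\sigma) \leq p(X)$ is automatic with this $\sigma$, and (Reg) yields $p(V_\beta^{\sigma'}) \leq p(Y)$ for either choice of $\sigma'$, so the remaining task is to secure $A_{\alpha\beta}(U_\alpha^\sigma, V_\beta^{\sigma'}) \neq \set{0}$ for a suitable $\sigma'$. Fixing a nonzero $u \in X$, the linear functional $v \mapsto u^\top A_{\alpha\beta} v$ on $V_\beta$ is nonzero (it is nonzero at any nonzero $v \in Y$ by hypothesis), hence its kernel is a single $1$-dimensional subspace of $V_\beta$; because $V_\beta^+ \neq V_\beta^-$, at least one of them evades this kernel and furnishes the desired $\sigma'$. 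The symmetric subcase ($X$ off-label, $Y = V_\beta^{\sigma'}$) is handled by an identical argument with the roles of the two sides reversed, completing the proof.
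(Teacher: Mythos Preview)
Your proof is correct and follows essentially the same approach as the paper's. The paper argues contrapositively (assume $p(X)+p(Y)+c<d_{\alpha\beta}$ and deduce $A_{\alpha\beta}(X,Y)=\{0\}$ via kernel considerations), while you argue directly (assume $A_{\alpha\beta}(X,Y)\neq\{0\}$ and locate a labeled pair $(U_\alpha^\sigma,V_\beta^{\sigma'})$ dominated by $(X,Y)$ in $p$-value with $A_{\alpha\beta}(U_\alpha^\sigma,V_\beta^{\sigma'})\neq\{0\}$); the underlying bilinear-algebra fact and the use of (Reg) are identical, and the case splits differ only cosmetically.
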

\begin{proof}
	Take arbitrary $X \in \calM_\alpha$ and $Y \in \calM_\beta$
	with $p(X) + p(Y) + c < d_{\alpha\beta}$.
	It suffices to show that $A_{\alpha \beta}(X, Y) = \set{0}$.
	Here
	we may assume $X \notin \{ U_\alpha^+, U_\alpha^- \}$;
	the case of $Y \notin \{ V_\beta^+, V_\beta^- \}$ is similar and we omit it.
	We also assume $p(U_\alpha^+) \leq p(U_\alpha^-)$ and $p(V_\beta^+) \leq p(V_\beta^-)$.
	
	By (Reg),
	we obtain $p(X) = p(U_\alpha^-) \geq p(U_\alpha^+)$
	and $p(Y) \geq p(V_\beta^+)$,
	implying
	$p(U_\alpha^+) + p(V_\beta^+) + c \leq p(U_\alpha^-) + p(V_\beta^+) + c \leq p(X) + p(Y) + c < d_{\alpha \beta}$.
	Therefore $A_{\alpha \beta}(U_\alpha^+, V_\beta^+) = A_{\alpha \beta}(U_\alpha^-, V_\beta^+) = \set{0}$,
	i.e., $\kerR(A_{\alpha\beta}) \supseteq V_\beta^+$.
	If $Y = V_\beta^+$,
	then we have $A_{\alpha \beta}(X, Y) = \set{0}$.
	If $Y \neq V_\beta^+$,
	then $p(Y) = p(V_\beta^-)$ holds by (Reg),
	which implies
	the inequalities $p(U_\alpha^+) + p(V_\beta^-) + c \leq p(U_\alpha^-) + p(V_\beta^-) + c < d_{\alpha \beta}$.
	Hence we have $A_{\alpha \beta}(U_\alpha^+, V_\beta^-) = A_{\alpha \beta}(U_\alpha^-, V_\beta^-) = \set{0}$;
	$A_{\alpha \beta}$ is the zero matrix.
	Thus we obtain $A_{\alpha \beta}(X, Y) = \set{0}$.
\end{proof}
It is known~\cite[Section~5]{MPA/HI21} that
the bit-length required for representing a valid labeling is polynomially bounded
even if $\setF = \setQ$.
Thus
the proof $(p, \calV)$ for $\delta_k(A(t)) < \theta$
is verifiable in polynomial time,
implying
the problem of whether $\delta_k(A(t)) \geq \theta$
is in co-NP.

We conclude this section with the observation that
the optimality of an $(M, I, \calV)$-compatible potential $p$
can be rephrased as the condition (Zero):
\begin{description}
    \item[{\rm (Zero)}]
	For all $U_\alpha^\sigma$ and $V_\beta^{\sigma'}$ that are unmatched by $(M, I)$,
	\begin{align*}
	    p(U_\alpha^\sigma) = p(V_\beta^{\sigma'}) = 0.
	\end{align*}
\end{description}
The definitions of (Tight) and (Zero) immediately imply the following.
\begin{lemma}\label{lem:compatible}
    Let $(M, I)$ be a matching-pair of size $k$, $\calV$ a valid labeling for $M$, and $p$ an $(M, I, \calV)$-compatible $c$-potential.
    Then we have $w(M, I) = \sum \set*{ p(Z) }[$Z$: matched by $(M, I)$] + kc$.
    In particular, $p$ is optimal if and only if $p$ satisfies {\rm (Zero)}.
\end{lemma}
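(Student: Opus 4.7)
The plan is to derive the first (equality) claim by a direct bookkeeping argument: expand $w(M,I)$ using the two halves of (Tight), and show that the resulting sum of $p$-values hits every matched space exactly once. The optimality characterization then drops out in one line by comparing with $p(\calV)$.

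Concretely, I would write $w(M,I)=\sum_{\alpha\beta\in M}d_{\alpha\beta}+\sum_{\alpha\beta\in I}d_{\alpha\beta}$ and substitute (Tight). Each edge contributes one $+c$ term, and since $|M|+|I|=k$ the constant totals $kc$; so only the $p$-values need to be tracked. For an edge $\alpha\beta\in M$ of sign $\sigma$, (Tight) introduces $p(U_\alpha^{\overline\sigma})+p(V_\beta^{\overline\sigma})$, which by the definition in \eqref{eq:covered} are exactly the two spaces declared matched \emph{through $M$} by this edge. For an edge $\alpha\beta\in I$ of sign $\sigma$, the ``$I$-part'' of (Tight) adds $p(U_\alpha^\sigma)+p(V_\beta^\sigma)$, which are precisely the spaces declared matched \emph{through $I$}. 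Note that the $M$-part is also applied to $\alpha\beta\in I$, since $I\subseteq M$, so edges in $I$ contribute all four of $U_\alpha^{\pm},V_\beta^{\pm}$; this is consistent because (Tight) for $I$ and for $M$ are both required to hold on $I$.

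The only thing to verify is that no labeled space is over- or under-counted. Fix a node $\alpha$ and split into cases on $\deg_M(\alpha)\in\{0,1,2\}$. If $\deg_M(\alpha)=0$, neither $U_\alpha^+$ nor $U_\alpha^-$ is matched and neither appears in the sum. If $\deg_M(\alpha)=2$, the two edges at $\alpha$ carry opposite signs by the 2-edge-coloring, matching both $U_\alpha^+,U_\alpha^-$ once each via the $M$-part; and $\alpha$ is not incident to any edge of $I$ (which would force isolation). If $\deg_M(\alpha)=1$ via a $\sigma$-edge $e$, the $M$-part matches $U_\alpha^{\overline\sigma}$ once; if additionally $e\in I$, the $I$-part matches $U_\alpha^\sigma$ once, and no other edge of $M$ or $I$ touches $\alpha$. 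In every case, each matched space appears exactly once and unmatched spaces do not appear, yielding
\[
w(M,I)=\sum\set*{p(Z)}[$Z$: matched by $(M,I)$]+kc.
\]
The argument for the nodes $\beta$ is symmetric.

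For the ``in particular'' statement, optimality $w(M,I)=p(\calV)+kc$ combined with the identity just proved reduces to
$\sum_{Z\text{ matched}}p(Z)=p(\calV)=\sum_{Z\in\calV}p(Z)$, i.e.\ $\sum_{Z\text{ unmatched}}p(Z)=0$. Since $p\geq 0$ by the definition of a potential, this is equivalent to $p(Z)=0$ for every unmatched labeled space, which is exactly (Zero). The main source of potential error is the bookkeeping in the second paragraph, where the double role of edges in $I$ (as elements of both $M$ and $I$) could easily be miscounted; the isolation condition $\deg_M(\alpha)=\deg_M(\beta)=1$ for $\alpha\beta\in I$ is what prevents any collision with $M\setminus I$.
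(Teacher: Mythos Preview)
Your proof is correct and follows the same approach the paper has in mind: the paper simply states that the lemma is immediate from the definitions of (Tight) and (Zero), and your argument is precisely the detailed bookkeeping that justifies this, together with the one-line reduction of optimality to the vanishing of $p$ on unmatched labeled spaces.
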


\section{Augmenting path}\label{sec:augmenting path}
Our proposed algorithm is a primal-dual one.
An outline of the algorithm is as follows;
the formal description is given in \cref{subsec:finding}.
Let $(M, I)$ be a matching-pair of size $k$,
$\calV$ a valid labeling for $M$,
and $p$ an optimal $(M, I, \calV)$-compatible $c$-potential.
We
\begin{itemize}
	\item
	verify $\delta_{k+1}(A(t))= - \infty$ (or equivalently $\rank A(t) = k$),
	\item
	find an optimal compatible potential so that a \emph{rearrangeable} component (introduced in \cref{subsec:rearrangement}) exists in $M \setminus I$, or
	\item
	find an \emph{augmenting path} (introduced in \cref{subsec:definition}).
\end{itemize}
In the first case, we output the entire sequence of the maximum degree of minors as
\begin{align*}
    (\delta_0(A(t)), \delta_1(A(t)), \dots, \delta_k(A(t)), -\infty, \dots, -\infty)
\end{align*}
and stop this procedure.
The others are cases where
we obtain a matching-pair $(M^*, I^*)$ of size $k+1$,
a valid labeling $\calV^*$ for $M^*$,
and an optimal $(M^*, I^*, \calV^*)$-compatible potential $p^*$,
which implies $\delta_{k+1}(A(t)) = w(M^*, I^*)$.
This is an augmentation in our setting.

Our augmentation is based on the \emph{auxiliary graph} $\calG(\calV, p)$:
The vertex set is $\set{ \alpha^+, \alpha^-}[\alpha] \cup \set{ \beta^+, \beta^- }[\beta]$,
and the edge set, denoted by $\calE(\calV, p)$, is $\set{ \alpha^{\sigma} \beta^{\sigma'} }[A_{\alpha \beta}(U_\alpha^{\sigma}, V_\beta^{\sigma'}) \neq \set{0},\ p(U_\alpha^{\sigma}) + p(V_\beta^{\sigma'}) + c = d_{\alpha\beta} ]$.
By~\eqref{eq:+-},
for each $\alpha \beta \in M$,
neither $\alpha^+ \beta^-$ nor $\alpha^- \beta^+$
belongs to $\calE(\calV, p)$.
The condition (Tight) implies that
for each $\sigma$-edge $\alpha \beta \in M$,
we have $\alpha^{\overline{\sigma}} \beta^{\overline{\sigma}} \in \calE(\calV, p)$.
In addition, if $\alpha \beta \in I$, then $\alpha^+ \beta^+, \alpha^- \beta^- \in \calE(\calV, p)$.
An edge $\alpha \beta \in M$ is said to be \emph{double-tight}
if $\alpha^+ \beta^+, \alpha^- \beta^- \in \calE(\calV, p)$,
i.e., $\alpha \beta$ is rank-2 and $p(U_\alpha^+) + p(V_\beta^+) + c = p(U_\alpha^-) + p(V_\beta^-) + c = d_{\alpha \beta}$.
A $\sigma$-edge $\alpha \beta \in M$ is said to be \emph{single-tight}
if it is not double-tight,
i.e.,
$\alpha^{\overline{\sigma}} \beta^{\overline{\sigma}} \in \calE(\calV, p)$ and $\alpha^\sigma \beta^\sigma \notin \calE(\calV, p)$.
Note that all edges in $I$ are double-tight
and all rank-1 edges in $M$ are single-tight.
We refer to $\alpha^\sigma$ and $\beta^\sigma$ as {\it $\sigma$-vertices}.
We denote by $\calG(\calV, p)|_M$
the subgraph of $\calG(\calV, p)$ such that its edge set $\calE(\calV, p)|_M$
is $\set{ \alpha^{\sigma} \beta^{\sigma'} \in \calE(\calV, p) }[\alpha \beta \in M ]$.
A \emph{$\sigma$-path}
is a path in $\calG(\calV, p)$
consisting of edges $\alpha^\sigma \beta^\sigma$.

As the initialization ($k = 0$),
we set both $M$ and $I$ as the empty sets,
and $\calV$
as any labeling.
We define $p$ and $c$ by
$p(Z) \defeq 0$ for every $Z \in \bigcup_\gamma \calM_\gamma$
and $c \defeq \max \set{ d_{\alpha \beta} }[ \alpha \beta \in E ]$,
respectively.

\subsection{Rearrangement}\label{subsec:rearrangement}
Let $C$ be a path component of $M \setminus I$ with odd length such that the end edges of $C$ are $\sigma$-edges.
We say that $C$ is \emph{rearrangeable with respect to $p$}
if every $\sigma$-edge in $C$
is double-tight.

Suppose that $C$ is rearrangeable with respect to $p$.
The \emph{rearrangement} of $(M, I)$ with respect to $C$
is an operation
of modifying $(M, I)$ to a matching-pair $(M^*, I^*)$ of size $k+1$ as follows:
\begin{align*}
    M^* &\defeq M \setminus \set{\text{all $\overline{\sigma}$-edges in $C$}},\\
    I^* &\defeq I \cup \set{\text{all $\sigma$-edges in $C$}}.
\end{align*}
Since $C$ has odd length,
we have $|M^*| = |M| - (|C|-1)/2$ and $|I^*| = |I| + (|C|+1)/2$,
implying that the size of $(M^*, I^*)$ is larger than that of $(M, I)$ by one.
Clearly $\calV$ is still a valid labeling for the resulting $M^*$.
Since every $\sigma$-edges in $C$ is double-tight,
$p$ satisfies (Tight) for any newly added edge to $I$.
Thus
$p$ is still an $(M^*, I^*, \calV)$-compatible $c$-potential.
Let $\alpha, \beta$ be the end nodes of $C$.
Then $U_\alpha^\sigma, V_\beta^\sigma$ are spaces that are newly matched by the resulting $(M^*, I^*)$.
This implies that, if $p$ is optimal for $(M, I, \calV)$,
then so is it for $(M^*, I^*, \calV)$ by \cref{lem:compatible}.
Therefore the following holds.
\begin{lemma}\label{lem:rearrangement}
	Let $(M, I)$ be a matching-pair of size $k$, $\calV$ a valid labeling for $M$,
	$p$ an optimal $(M, I, \calV)$-compatible $c$-potential,
	and $C$ a rearrangeable connected component with respect to $p$.
	Also let $(M^*, I^*)$ be the pair of edge subsets obtained from $(M, I)$ by the rearrangement
	with respect to $C$.
	Then $(M^*, I^*)$ is a matching-pair of size $k+1$,
	$\calV$ is a valid labeling for $M^*$,
	and $p$ is an optimal $(M^*, I^*, \calV)$-compatible $c$-potential.
\end{lemma}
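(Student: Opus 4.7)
The plan is to verify each assertion of the lemma by substituting into the relevant definitions, piggybacking on the observations already recorded in the paragraph preceding the statement. Write $\tau$ for the sign of the end edges of $C$, so that $M^* = M \setminus \{\overline{\tau}\text{-edges of }C\}$ and $I^* = I \cup \{\tau\text{-edges of }C\}$. I would organize the proof into three steps.

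First, I would check that $(M^*, I^*)$ is a matching-pair of size $k+1$. The size count is immediate from the odd length of $C$: there are $(|C|+1)/2$ edges of sign $\tau$ and $(|C|-1)/2$ of sign $\overline{\tau}$ in $C$, giving $|M^*| + |I^*| = |M| + |I| + 1$. For the pseudo-matching conditions on $M^*$: (Deg) is inherited from $M \supseteq M^*$; (Cycle) is inherited because every cycle component of $M$ coincides with a cycle component of $M \setminus I$ (since edges of $I$ are isolated in $M$) and is therefore disjoint from $C$, so cycle components of $M^*$ equal those of $M$; (VL) is inherited by keeping the coloring of $M$ on $M^* \subseteq M$, noting that the $\tau$-edges of $C$ become isolated rank-2 edges in $M^*$, for which \eqref{eq:++ --} is vacuous and \eqref{eq:+-} is symmetric in the two signs. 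Finally, $I^*$ consists of isolated rank-2 edges in $M^*$: every $\tau$-edge of $C$ is rank-2 because it is double-tight, and after removing the $\overline{\tau}$-edges of $C$ its endpoints have degree one in $M^*$; they cannot meet any edge of $I$ by the isolation of $I$ in $M$ (since they already meet an edge of $C$, which is disjoint from $I$).

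Second, I would verify that $p$ is $(M^*, I^*, \calV)$-compatible. Condition (Reg) refers only to $p$ and $\calV$, both unchanged. Condition (Tight) is inherited on $M^* \subseteq M$ and on $I \subseteq I^*$ from the original compatibility. For the newly added $\tau$-edges in $I^*$, double-tightness yields $d_{\alpha\beta} = p(U_\alpha^+) + p(V_\beta^+) + c = p(U_\alpha^-) + p(V_\beta^-) + c$, so the condition (Tight) for elements of $I^*$ holds no matter which sign the edge is now declared to carry in $M^*$.

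Third, for optimality I would invoke \cref{lem:compatible}. Unwinding \eqref{eq:covered}, the set of spaces matched by $(M^*, I^*)$ equals the set matched by $(M, I)$ together with $U_\alpha^{\tau}$ and $V_\beta^{\tau}$ at the two end nodes $\alpha, \beta$ of $C$. Hence every space unmatched by $(M^*, I^*)$ was also unmatched by $(M, I)$, so the condition (Zero) is preserved; optimality then follows from \cref{lem:compatible}. There is no serious obstacle here — the only small piece of bookkeeping is the behavior of the sign coloring on the $\tau$-edges of $C$ once they become isolated in $M^*$, and double-tightness precisely absorbs the ambiguity, so the proof goes through with routine checking.
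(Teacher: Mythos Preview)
Your proposal is correct and follows essentially the same approach as the paper: the paper's proof is precisely the paragraph preceding the lemma statement (ending with ``Therefore the following holds''), and you have faithfully expanded those observations with the routine bookkeeping on (Deg), (Cycle), (VL), the isolation of the new $I^*$-edges, (Tight) via double-tightness, and optimality via (Zero) and \cref{lem:compatible}.
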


\subsection{Definition of an augmenting path}\label{subsec:definition}

In this subsection, we introduce an augmenting path in our setting.
First, we define the {\it source set} and the {\it target set}
as follows,
in which nodes $\beta^\sigma$ in the former and $\alpha^{\sigma}$ in the latter can be the initial and the last nodes of an augmenting path,
respectively.
Let $\calU(M, I)$ denote the set of all nodes $\alpha^\sigma$ and $\beta^{\sigma'}$
such that $U_\alpha^\sigma$ and $V_\beta^{\sigma'}$ are unmatched by $(M, I)$.
For each $\gamma^\sigma \in \calU(M, I)$,
we denote by $\calC(\gamma^\sigma)$ the connected component of $\calG(\calV, p)|_M$ containing $\gamma^\sigma$.
The {\it source set} $\calS(M, I, \calV, p)$ and the {\it target set} $\calT(M, I, \calV, p)$ for $(M, I, \calV, p)$
are defined by
\begin{align*}
\calS(M, I, \calV, p) &\defeq \bigcup \set{\text{the nodes belonging to $\calC(\beta^\sigma)$}}[\beta^\sigma \in \calU(M, I)],\\
\calT(M, I, \calV, p) &\defeq \bigcup \set{\text{the nodes belonging to $\calC(\alpha^\sigma)$}}[ \alpha^\sigma \in \calU(M, I) ].
\end{align*}

We then define the components of an augmenting path.
An {\it outer path} $\calP$ for $(M, I, \calV, p)$ is a path in $\calG(\calV, p)$ of the form
\begin{align*}
(\beta_0^{\sigma_0} \alpha_1^{\sigma_1}, \alpha_1^{\sigma_1} \beta_1^{\sigma_1}, \dots, \beta_k^{\sigma_k} \alpha_{k+1}^{\sigma_{k+1}})
\end{align*}
such that
\begin{description}
	\item[{\rm (O1)}]
	$\beta_i \alpha_{i+1} \in E \setminus M$ for each $i = 0,1,\dots,k$
	and $\alpha_{i+1} \beta_{i+1} \in I$ for each $i = 0,1,\dots, k-1$,
	and
	\item[{\rm (O2)}]
	$A_{\alpha_{i+1} \beta_i} (U_{\alpha_{i+1}}^{\overline{\sigma_{i+1}}}, V_{\beta_i}^{\sigma_i}) = \{ 0 \}$
	for each $i = 0,1,\dots,k -1$.
\end{description}
Note that (O2) does not require $A_{\alpha_{k+1} \beta_k} (U_{\alpha_{k+1}}^{\overline{\sigma_{k+1}}}, V_{\beta_k}^{\sigma_k}) = \{ 0 \}$ on the last edge $\beta_k^{\sigma_k} \alpha_{k+1}^{\sigma_{k+1}}$.
The initial vertex $\beta_0^{\sigma_0}$
and last vertex $\alpha_{k+1}^{\sigma_{k+1}}$
are denoted by $\beta(\calP)$ and $\alpha(\calP)$,
respectively.

An {\it inner path} $\calQ$ for $(M, I, \calV, p)$ is a path in $\calG(\calV, p)$
of the form
\begin{align*}
(\alpha_0^{\sigma} \beta_1^{\sigma}, \beta_1^{\sigma} \alpha_1^{\sigma}, \dots, \alpha_k^{\sigma} \beta_{k+1}^{\sigma}),
\end{align*}
such that
\begin{description}
	\item[{\rm (I1)}]
	the underlying path $(\alpha_0 \beta_1, \beta_1 \alpha_1, \dots, \alpha_k \beta_{k+1})$ of $\calQ$ in $G$
	is included in a connected component of $M \setminus I$, and
	\item[{\rm (I2)}]
	$\alpha_0 \beta_1, \alpha_1 \beta_2, \dots, \alpha_k \beta_{k+1}$ are $\overline{\sigma}$-edges and
	$\beta_1 \alpha_1, \beta_2 \alpha_2, \dots, \beta_k \alpha_k$ are $\sigma$-edges.
\end{description}
The former condition implies that $\calQ$ can also be viewed as a $\sigma$-path in $\calG(\calV, p)|_M$,
and the latter implies that the $\sigma$-edges $\beta_1 \alpha_1, \beta_2 \alpha_2, \dots, \beta_k \alpha_k$ are double-tight (and hence rank-2).
The initial vertex $\alpha_0^{\sigma}$
and last vertex $\beta_{k+1}^{\sigma}$
are denoted by $\alpha(\calQ)$ and $\beta(\calQ)$,
respectively.

We are now ready to define an augmenting path.
Here, for paths $\calP$ and $\calQ$ in $\calG(\calV, p)$
such that the last node of $\calP$ coincides with the first node of $\calQ$,
we denote 
the concatenation of $\calP$ and $\calQ$ by $\calP \circ \calQ$.
An {\it augmenting path} $\calR$ for $(M, I, \calV, p)$ is a path in $\calG(\calV, p)$
such that
\begin{description}
	\item[{\rm (A1)}]
	$\calR$ is the concatenation $\calP_0 \circ \calQ_1 \circ \calP_1 \circ \cdots \circ \calQ_m \circ \calP_m$
	of outer paths $\calP_0, \calP_1, \dots, \calP_m$ and inner paths $\calQ_1, \dots, \calQ_m$ for $(M, I, \calV, p)$
	in which $\alpha(\calP_i) = \alpha(\calQ_{i+1})$ and $\beta(\calQ_{i+1}) = \beta(\calP_{i+1})$ for each $i$, and
	\item[{\rm (A2)}]
	$\beta(\calP_0) \in \calS(M, I, \calV, p)$ and $\alpha(\calP_m) \in \calT(M, I, \calV, p)$.
\end{description}

An augmenting path augments a matching-pair.
The following provides the validity of our augmenting procedure;
Sections~\ref{sec:preliminaries augmentation}--\ref{sec:non violate} are devoted to its proof.
\begin{theorem}\label{thm:augment}
	From a matching-pair $(M, I)$ of size $k$, a valid labeling $\calV$ for $M$,
	an optimal $(M, I, \calV)$-compatible $c$-potential $p$,
	and an augmenting path for $(M, I, \calV, p)$,
	we can obtain a matching-pair $(M^*, I^*)$ of size $k+1$ and a valid labeling $\calV^*$ for $M^*$
	such that $p$ is an optimal $(M^*, I^*, \calV^*)$-compatible $c$-potential
	in $O(\min \set{\mu, \nu}^3)$ time.
\end{theorem}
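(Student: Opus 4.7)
The plan is to construct the new matching-pair $(M^*, I^*)$ and valid labeling $\calV^*$ by a coordinated local modification along the augmenting path $\calR = \calP_0 \circ \calQ_1 \circ \calP_1 \circ \cdots \circ \calQ_m \circ \calP_m$. For each outer path $\calP_i$, I add to $M^*$ every non-$M$ edge $\beta_j \alpha_{j+1}$ it traverses and remove from $I^*$ every $I$-edge $\alpha_j \beta_j$ it traverses (these $I$-edges remain in $M^*$). For each inner path $\calQ_j$, which lies in a connected component of $M \setminus I$ and whose $\sigma$-edges are double-tight (hence rank-2) by (I2), I apply an analog of the rearrangement of \cref{subsec:rearrangement}: the $\overline{\sigma}$-edges on $\calQ_j$ are removed from $M^*$, and the $\sigma$-edges on $\calQ_j$ are placed into $I^*$ as isolated rank-2 edges. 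Each outer path contributes $+1$ and each inner path contributes $-1$ to the size, so $|M^*| + |I^*| = k+1$.

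The labeling $\calV^*$ coincides with $\calV$ away from the nodes of $\calR$, while at each traversed node the pair $\{U_\alpha^+, U_\alpha^-\}$ or $\{V_\beta^+, V_\beta^-\}$ is either kept or swapped so that the sign carried along $\calR$ matches the new role of the node in $M^*$. The key observation is that (O2) at each interior outer-path vertex is exactly what is needed to make the newly added edge $\beta_j \alpha_{j+1}$ satisfy \eqref{eq:+-}: the kernel condition forces $\beta_j \alpha_{j+1}$ to become a $\sigma_{j+1}$-edge under the relabeled spaces. The symmetric observation, based on (I1) and (I2), handles the inner-path $\sigma$-edges that become isolated rank-2 elements of $I^*$.

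I then verify (Deg), (Cycle), (VL) for $M^*$ with $\calV^*$, and $(M^*, I^*, \calV^*)$-compatibility of $p$. Condition (Deg) follows because $\calR$ visits each vertex at most once and the local modifications preserve the degree bound $2$. Condition (Cycle) holds because the inner-path operation only removes rank-2 double-tight edges, so every cycle component of $M^*$ still contains a rank-1 edge. Condition (VL) is witnessed by $\calV^*$ as above. For compatibility, (Reg) is inherited unchanged, and (Tight) holds because every edge of $\calR$ lies in $\calE(\calV, p)$---hence is tight with respect to its traversed sign---while every untouched edge of $M^*$ retains the tightness it had under $(M, I, \calV)$. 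Optimality then follows from the (Zero) characterization of \cref{lem:compatible}: exactly one new $U$-space and one new $V$-space become matched, arising at $\beta(\calP_0) \in \calS(M, I, \calV, p)$ and $\alpha(\calP_m) \in \calT(M, I, \calV, p)$, which trace back via $\calG(\calV, p)|_M$ to elements of $\calU(M, I)$; since the original optimality gives $p = 0$ on $\calU(M, I)$, $p$ still vanishes on every space unmatched by $(M^*, I^*)$.

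The main obstacle is the joint verification of (VL) and (Tight) after relabeling. The signs $\sigma_j$ along an outer path can change from step to step, so the $+/-$ swaps at consecutive traversed nodes must be propagated consistently and must not conflict with \eqref{eq:+-} or \eqref{eq:++ --} on edges of $M$ that are incident to $\calR$ but lie outside it. This calls for a careful local-to-global case analysis driven by (O1)--(O2) at outer boundaries and (I1)--(I2) at inner boundaries. For the running time, $\calR$ has $O(\min\{\mu, \nu\})$ vertices; the local updates of $M^*, I^*, \calV^*$ at each vertex involve only constant-size linear-algebra operations on $2 \times 2$ blocks, whereas propagating the relabeling to adjacent matched components and rechecking validity of the affected $\calE(\calV^*, p)$-neighborhood costs $O(\min\{\mu, \nu\}^2)$ per update, giving the stated $O(\min\{\mu, \nu\}^3)$ bound.
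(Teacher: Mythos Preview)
Your one-shot augmentation scheme has a genuine structural gap: it overlooks that $\calR$ is a path in the auxiliary graph $\calG(\calV,p)$, whose vertex set is $\{\alpha^+,\alpha^-\}\cup\{\beta^+,\beta^-\}$, not in $G$. Hence $\calR$ may visit both $\alpha^+$ and $\alpha^-$, so the underlying walk in $G$ is generally not a path. In particular an outer path $\calP_m$ need not be simple (its underlying walk may traverse the same $I$-edge twice), and different outer/inner pieces of $\calR$ can collide on the same $G$-vertex. Your verification of (Deg) (``$\calR$ visits each vertex at most once'') and of (Cycle) therefore does not go through: adding all the non-$M$ edges of the outer paths can raise a degree above $2$ or create a rank-$2$ cycle. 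The paper treats exactly this obstruction in \cref{subsec:Nouter} (the non-simple case of $\calP_m$) and in \cref{subsec:cycle -,subsec:path} (interactions with the component containing $Q_m$); these are not corner cases but the heart of the argument.

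Second, the relabeling cannot be realized by ``keep or swap'' of the existing pairs $\{U_\alpha^+,U_\alpha^-\}$, $\{V_\beta^+,V_\beta^-\}$. Already in the base case (\cref{sec:base}) the new labeling is
\[
\calV^* = \calV\!\left(C_{\alpha_1};\,(V_{\beta_0}^+)^{\perp_{\beta_0\alpha_1}}\right)\!\left(C_{\beta_0};\,(U_{\alpha_1}^+)^{\perp_{\alpha_1\beta_0}}\right),
\]
i.e.\ genuinely new $1$-dimensional subspaces propagated through entire components, and the general step requires the front- and back-propagation operators of \cref{subsec:front-propagation,subsec:back-propagation}. Without these, you cannot enforce \eqref{eq:+-}--\eqref{eq:++ --} on the newly added edges while keeping (Reg) and (Tight) for $p$. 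The paper's proof is accordingly \emph{iterative}: it introduces the no-short-cut conditions ($\No$), ($\Ni$) and the potentials $\theta,\phi$, and repeatedly simplifies $(M,I,\calV,\calR)$ until the base case is reached, each step costing $O(\min\{\mu,\nu\})$ and the number of steps being $O(\min\{\mu,\nu\}^2)$ because $\theta$ or $\phi$ strictly drops. Your time analysis arrives at the same cube only by assigning an unjustified $O(\min\{\mu,\nu\}^2)$ cost to a ``recheck'' that, as stated, is neither necessary nor sufficient.
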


\subsection{Finding an augmenting path}\label{subsec:finding}
In this subsection,
we present an algorithm for verifying $\delta_{k+1}(A(t)) = - \infty$,
finding an optimal potential so that a rearrangeable component exists, or
finding an augmenting path.

Suppose that we are given a matching-pair $(M_0, I_0)$ of size $k < \min \set{2\mu, 2\nu}$, a valid labeling $\calV_0$ for $M_0$,
and an optimal $(M_0, I_0, \calV_0)$-compatible potential $p_0$ as the input.
Suppose further that there is no rearrangeable component with respect to $p_0$. (If it exists, we can argument $(M_0, I_0)$ by \cref{lem:rearrangement}.)
We initialize $(M, I) \leftarrow (M_0, I_0)$, $\calV \leftarrow \calV_0$, and $p \leftarrow p_0$.
In addition, during the algorithm,
we maintain a forest $\calF$ in $\calG(\calV, p)$
such that each connected component of $\calF$ has exactly one node in $\calS(M, I, \calV, p)$;
we initialize $\calF \leftarrow \calS(M, I, \calV, p)$,
which is nonempty by $k < \min \{2\mu, 2\nu\}$ and \cref{lem:source set}~(3) below.

The algorithm consists of the primal update and the dual update.
While there is an edge $\beta^{\sigma} \alpha^{\sigma'} \in \calE(\calV, p)$ such that $\beta^\sigma \in \calF$ and $\alpha^{\sigma'} \not\in \calF$,
we execute the primal update.
If there is no such edge,
then we execute the dual update.
Here, for a vector space $X \subseteq U_\alpha$,
let $X^{\perp_{\alpha \beta}}$ (or $X^{\perp_{\beta \alpha}}$) denote
the orthogonal vector space with respect to $A_{\alpha\beta}$:
\begin{align*}
X^{\perp_{\alpha \beta}} (= X^{\perp_{\beta \alpha}}) \defeq \set*{ y \in V_\beta }[$A_{\alpha\beta}(x, y) = 0$ for all $x \in X$].
\end{align*}
For a vector space $Y \subseteq V_\beta$,
$Y^{\perp_{\alpha \beta}}$ (or $Y^{\perp_{\beta \alpha}}$) is analogously defined.
\begin{description}
	\item[Primal update:]
	We first add to $\calF$ an edge $\beta^{\sigma} \alpha^{\sigma'} \in \calE(\calV, p)$ such that $\beta^\sigma \in \calF$ and $\alpha^{\sigma'} \not\in \calF$.
	\begin{description}
		\item[{\rm (P1)}] If $\alpha^{\sigma'} \in \calT(M, I, \calV, p)$,
		then
		output $(M, I, \calV, p)$ and the unique path $\calR$ in $\calF$ from a vertex in $\calS(M, I, \calV, p)$ to $\alpha^{\sigma'}$.
		Stop this procedure.
		\item[{\rm (P2)}]
		Suppose that $\alpha^{\sigma'} \not\in \calT(M, I, \calV, p)$ and $\alpha$ is incident to an edge $\alpha \beta'$ in $I$.
		Then update the valid labeling $\calV$ for $M$ as
		\begin{align*}
		U_\alpha^{\overline{\sigma'}} \leftarrow (V_\beta^{\sigma})^{\perp_{\alpha \beta}},
		\qquad V_{\beta'}^{\sigma'} \leftarrow (U_\alpha^{\overline{\sigma'}})^{\perp_{\alpha \beta'}},
		\end{align*}
		and add $\alpha^{\sigma'} {\beta'}^{\sigma'}$ to $\calF$.
		Also update $\calG(\calV, p)$ for the resulting $\calV$.
		(This case will be an expansion of an outer path.)
		\item[{\rm (P3)}]
		Suppose that $\alpha^{\sigma'} \not\in \calT(M, I, \calV, p)$ and $\alpha$ belongs to a connected component of $M \setminus I$.
		Let $\calQ$ be the longest inner path in $\calG(\calV, p)|_M$ starting with $\alpha^{\sigma'}$ such that
		$\calQ$ does not meet $\calF$.
		Then add $\calQ$ to $\calF$.
		(This case will be an addition of an inner path.)
	\end{description}
	
	\item[Dual update:]
	For each $\beta^{\sigma} \in \calF$,
	define
	\begin{align*}
	\epsilon_{\beta^\sigma} \defeq \inf \set{ p(U_\alpha^{\sigma'}) + p(V_\beta^\sigma) + c - d_{\alpha \beta} }[ \alpha^{\sigma'} \not\in \calF,~A_{\alpha \beta}(U_\alpha^{\sigma'}, V_\beta^{\sigma}) \neq \set{0} ].
	\end{align*}
	Let $\epsilon$ be the minimum value of $\epsilon_{\beta^\sigma}$ over $\beta^{\sigma} \in \calF$.
	\begin{description}
		\item[{\rm (D1)}]
		If $\epsilon = +\infty$,
		then output ``$\delta_{k+1}(A(t)) = - \infty$'' and stop this procedure.
		\item[{\rm (D2)}]
		If $\epsilon < +\infty$,
		then update $p$ and $c$ as
		\begin{align*}
		p(V_\beta^\sigma) &\leftarrow p(V_\beta^\sigma) + \epsilon \quad  \text{if $\beta^\sigma \notin \calF$},\\
		p(U_\alpha^\sigma) &\leftarrow p(U_\alpha^\sigma) + \epsilon \quad  \text{if $\alpha^\sigma \in \calF$},\\
		c &\leftarrow c - \epsilon,
		\end{align*}
		and adjust $p$ so that $p$ satisfies (Reg),
		that is,
		for each $\alpha$ and $\beta$,
		\begin{align*}
		p(X) &\leftarrow \max \{ p(U_\alpha^+), p(U_\alpha^-) \} \qquad (X \in \calM_\alpha \setminus \{ U_\alpha^+, U_\alpha^- \}),\\
		p(Y) &\leftarrow \max \{ p(V_\beta^+), p(V_\beta^-) \} \qquad (Y \in \calM_\beta \setminus \{ V_\beta^+, V_\beta^- \}).
		\end{align*}
	\begin{description}
		\item[{\rm (D2-1)}]
		Suppose that there is a rearrangeable connected component $C$ with respect to the resulting $p$.
		Then we apply the rearrangement to $(M, I)$.
		Output the resulting $(M, I, \calV, p)$ and stop this procedure.
		\item[{\rm (D2-2)}]
		Otherwise, suppose that
		the resulting target set $\calT(M, I, \calV, p)$ is enlarged.
		In this case, we have
		$\calF \cap \calT(M, I, \calV, p) \neq \emptyset$.
		Output $(M, I, \calV, p)$
		and a minimal path $\calR$ with respect to inclusion in $\calF$
		from a vertex in $\calS(M, I, \calV, p)$ to a vertex in $\calF \cap \calT(M, I, \calV, p)$.
		Stop this procedure.
		\item[{\rm (D2-3)}]
		Otherwise,
		update
		\begin{align*}
		\calF \leftarrow \calF \cup \calS(M, I, \calV, p)
		\end{align*}
		if the resulting $\calS(M, I, \calV, p)$ is enlarged.
		\qqed
		\end{description}
	\end{description}
\end{description}

The following theorem states that the above algorithm correctly works,
the proof of which is given later.
\begin{theorem}\label{thm:augmenting path}
	\begin{itemize}
		\item[{\rm (1)}]
		Suppose that the algorithm reaches {\rm (P1)} or {\rm (D2-2)}.
		Then the output $(M, I)$ is a matching-pair of size $k$,
		$\calV$ is a valid labeling for $M$,
		$p$ is an optimal $(M, I, \calV)$-compatible $c$-potential,
		and $\calR$ is an augmenting path for $(M, I, \calV, p)$.
		
		\item[{\rm (2)}]
		If the algorithm reaches {\rm (D1)},
		then $\delta_{k+1}(A(t)) = -\infty$.
		
		\item[{\rm (3)}]
		Suppose that the algorithm reaches {\rm (D2-1)}.
		Then the output $(M, I)$ is a matching-pair of size $k+1$,
		$\calV$ is a valid labeling for $M$,
		and
		$p$ is an optimal $(M, I, \calV)$-compatible $c$-potential.
		\item[{\rm (4)}]
		The running-time of the algorithm is $O(\mu \nu \min \set{\mu, \nu})$.
	\end{itemize}
\end{theorem}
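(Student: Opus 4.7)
The plan is to prove each of the four assertions of \cref{thm:augmenting path} by identifying a collection of loop invariants maintained throughout the algorithm's execution, then reading off the four conclusions from the three termination cases. The invariants I would maintain between successive calls of the primal and dual updates are: (i) $(M, I)$ is a matching-pair of size $k$, $\calV$ is a valid labeling for $M$, and $p$ is an optimal $(M, I, \calV)$-compatible $c$-potential; (ii) $\calF$ is a forest in $\calG(\calV, p)$ whose connected components correspond bijectively to the elements of $\calS(M, I, \calV, p)$; (iii) $\calF$ is saturated, in the sense that every edge of $\calE(\calV, p)$ with at least one endpoint in $\calF$ actually has both endpoints in $\calF$; (iv) no rearrangeable connected component with respect to $p$ exists; and (v) every path in $\calF$ from a source root decomposes as an alternating concatenation $\calP_0 \circ \calQ_1 \circ \calP_1 \circ \cdots$ of outer and inner paths for $(M, I, \calV, p)$. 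Invariant (v), which is the bridge to \cref{subsec:definition}, follows from tracking how (P2) appends an outer-path extension (the primal edge together with an $I$-edge) and how (P3) appends an inner path.

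The core work is verifying that (P2), (P3), and (D2) preserve (i)--(v). For (P2), I would check that $(V_\beta^\sigma)^{\perp_{\alpha\beta}}$ and $(U_\alpha^{\overline{\sigma'}})^{\perp_{\alpha\beta'}}$ are one-dimensional (using $\alpha\beta \in E$ together with the fact that $\alpha\beta' \in I$ is rank-$2$), that the modified labeling is still valid for $M$ because only the $I$-edge $\alpha\beta'$ touches the altered spaces and both $U_\alpha$- and $V_{\beta'}$-labels are updated in tandem, and that $p$ remains compatible at the modified spaces by combining (Reg) with tightness of the newly added edge $\beta^\sigma \alpha^{\sigma'}$. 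For (D2), the slack $p(U_\alpha^{\sigma'}) + p(V_\beta^{\sigma}) + c - d_{\alpha\beta}$ of each edge behaves according to the $\calF$-membership of its two endpoints; only the case $\alpha^{\sigma'} \notin \calF$, $\beta^\sigma \in \calF$ sees a decrease, and $\epsilon$ is chosen precisely so that this slack stays nonnegative. Nonnegativity of $p$ is immediate because values only increase, and \cref{lem:(Reg)} then guarantees $p$ is a $c$-potential after the regularization step. Finally, (Tight) and (Zero) persist because unmatched $V$-spaces lie in $\calS \subseteq \calF$, so their $p$-values are untouched by (D2), while unmatched $U$-spaces lie in $\calT$, hence outside $\calF$ unless we trigger (D2-2).

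With the invariants in hand, parts (1) and (3) follow directly: in (P1) the newly added vertex $\alpha^{\sigma'}$ lies in $\calT$, so the unique path in $\calF$ from $\calS$ to $\alpha^{\sigma'}$ is an augmenting path by (v); (D2-2) is analogous; and (D2-1) is \cref{lem:rearrangement} applied to the discovered rearrangeable component. The main obstacle is part (2): when $\epsilon = +\infty$ in (D1), I would virtually execute the dual update with $\epsilon$ tending to infinity. Since no edge with $\alpha^{\sigma'} \notin \calF$ and $\beta^\sigma \in \calF$ is tight or ever becomes tight, the resulting $(p, c)$ remains a valid $c$-potential for every $\epsilon \geq 0$. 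The change in $p(\calV) + (k+1)c$ equals $(n_1 + n_2 - (k+1))\epsilon$, where $n_1 = |\{\beta^\sigma : \beta^\sigma \notin \calF\}|$ and $n_2 = |\{\alpha^\sigma : \alpha^\sigma \in \calF\}|$; a counting argument that tracks the rooted-forest structure of $\calF$ and the fact that matched spaces number exactly $k$ on each side will yield $n_1 + n_2 \leq k$, so $p(\calV) + (k+1)c \to -\infty$, and the weak-duality half of \cref{thm:minmax} forces $\delta_{k+1}(A(t)) = -\infty$. For part (4), each primal update adds at least one new vertex to $\calF$ and each dual update either enlarges $\calF \cup \calS \cup \calT$ or terminates, so the total number of iterations is $O(\mu + \nu)$; computing slacks per iteration costs $O(\mu\nu)$, yielding the claimed $O(\mu\nu\min\{\mu, \nu\})$ bound after amortizing inner-path additions in (P3).
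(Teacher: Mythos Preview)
Your overall strategy---maintain loop invariants and read off the four conclusions---is the paper's strategy, but two of your stated invariants are off in ways that break the argument.

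Invariant (iii) as you state it is simply false between primal updates: the primal phase is entered precisely because there \emph{is} an edge $\beta^\sigma\alpha^{\sigma'}\in\calE(\calV,p)$ with $\beta^\sigma\in\calF$ and $\alpha^{\sigma'}\notin\calF$. The invariant that actually holds (the paper's condition~4) is saturation only with respect to $\calE(\calV,p)|_M$: for $\alpha\beta\in M$ with $\alpha^\sigma\beta^\sigma\in\calE(\calV,p)$ one has $\alpha^\sigma\in\calF\iff\beta^\sigma\in\calF$. This weaker statement is what (P3) preserves, and it is exactly what is needed to conclude that the edge chosen in a primal step satisfies $\beta\alpha\in E\setminus M$, hence fits (O1). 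Your (iii) would also make the (P2) verification too easy: the delicate point there is that after relabelling $U_\alpha^{\overline{\sigma'}}$ and $V_{\beta'}^{\sigma'}$ one must check that any edge of $\calF$ already incident to $\alpha^{\overline{\sigma'}}$ survives in $\calE(\calV,p)$, which uses the (O2) structure recorded by the \emph{previous} (P2) step.

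More seriously, you are missing the counting invariant the paper calls condition~5, namely $|\calF\cap\{\beta^+,\beta^-\mid\beta\}|-|\calF\cap\{\alpha^+,\alpha^-\mid\alpha\}|=2\nu-k$. This identity is what makes your $n_1+n_2$ equal $k$ in part~(2); a generic ``rooted-forest structure'' argument does not give it, because nothing a priori prevents $\calF$ from containing many more $\alpha$-vertices than $\beta$-vertices. The same identity is what drives part~(4): it caps the number of $\alpha$-vertices in $\calF$ at $k\le 2\min\{\mu,\nu\}$, so the number of primal updates is $O(\min\{\mu,\nu\})$, not the $O(\mu+\nu)$ you claim. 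With $O(\mu+\nu)$ iterations at $O(\mu\nu)$ each you get $O(\mu\nu(\mu+\nu))$, and no amortization of (P3) repairs that. Condition~5 has to be checked separately at (P2), (P3), and (D2-3); in (P3) it holds because the appended inner path has even length, which in turn relies on \cref{lem:source set}(2) and the absence of rearrangeable components.
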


\cref{thm:augment,thm:augmenting path} imply \cref{thm:main,,thm:minmax,,thm:good characterization}.
Indeed,
the above algorithm detects $\delta_{k+1}(A(t)) = -\infty$
or outputs a matching-pair $(M, I)$ of size $k+1$, a valid labeling $\calV$, and an optimal compatible potential $p$ in $O(\mu \nu \min \set{\mu, \nu})$ time by \cref{thm:augment,thm:augmenting path}.
In the latter case,
the supremum and the infimum in the minimax formula in \cref{thm:minmax}
are attained by such $(M, I)$ and $(\calV, p)$.
In particular,
since every $d_{\alpha \beta}$ is integer,
so is
$\epsilon$ in the dual update (D2),
which implies that $p$ and $c$ are integer-valued.
Thus \cref{thm:minmax,thm:good characterization} follow.
Moreover, since at most $\min\{2\mu, 2\nu\}$ augmentations occur in the algorithm,
we obtain \cref{thm:main}.

For the proof of \cref{thm:augmenting path},
we present the following three lemmas (\cref{lem:source set,,lem:edge,,lem:>}).
In particular,
\cref{lem:edge,lem:>} are frequently used for the proofs of the validity of the augmentation procedure in Sections~\ref{sec:preliminaries augmentation}--\ref{sec:non violate}.
\begin{lemma}\label{lem:source set}
Suppose that there is no rearrangeable component in $M \setminus I$ with respect to $p$.
Then the following hold:
	\begin{itemize}
		\item[{\rm (1)}]
		$\calS(M, I, \calV, p) \cap \calT(M, I, \calV, p) = \emptyset$.
		\item[{\rm (2)}]
		For each $\gamma^\sigma \in \calU(M, I)$,
		$\calC(\gamma^\sigma)$ forms an even length $\sigma$-path in $\calG(\calV, p)|_M$ which starts with $\gamma^\sigma$.
		\item[{\rm (3)}]
		$\card{\calS(M, I, \calV, p) \cap \set{ \beta^+, \beta^- }[ \beta ]} - \card{\calS(M, I, \calV, p) \cap \set{ \alpha^+, \alpha^- }[ \alpha ]} = 2\nu - k$.
	\end{itemize}
\end{lemma}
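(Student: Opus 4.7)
The plan is to exploit the bipartite-like structure of $\calG(\calV, p)|_M$ and to trace each connected component $\calC(\gamma^\sigma)$ as a walk along the path component of $M$ containing $\gamma$. As a structural preliminary, condition~\eqref{eq:+-} implies that no edge of the form $\alpha^+ \beta^-$ or $\alpha^- \beta^+$ lies in $\calE(\calV, p)$, so $\calG(\calV, p)|_M$ splits into disjoint subgraphs on $+$-vertices and $-$-vertices. For a $\sigma$-edge $\alpha \beta \in M$, (Tight) forces $\alpha^{\overline{\sigma}} \beta^{\overline{\sigma}} \in \calE(\calV, p)$ automatically, whereas $\alpha^\sigma \beta^\sigma \in \calE(\calV, p)$ holds if and only if $\alpha \beta$ is double-tight; every edge of $I$ is double-tight. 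Since $\deg_M(\gamma) \leq 2$, each vertex has degree at most two in the $\sigma$-subgraph.

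For (2), I first observe that $\gamma^\sigma \in \calU(M, I)$ forces $\deg_M(\gamma) \leq 1$: otherwise the two-edge-coloring supplies an incident $\overline{\sigma}$-edge in $M$, matching $U_\gamma^\sigma$ or $V_\gamma^\sigma$. A similar check shows that if $\deg_M(\gamma) = 1$, the unique incident edge is a $\sigma$-edge lying in $M \setminus I$. Thus $\gamma$ is an endpoint of a path component $P \subseteq M \setminus I$ whose first edge at $\gamma$ is a $\sigma$-edge. Since $\gamma^\sigma$ has $\sigma$-subgraph degree $\leq 1$, the component $\calC(\gamma^\sigma)$ is a path with $\gamma^\sigma$ as one endpoint, obtained by the unique maximal walk from $\gamma^\sigma$. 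This walk traverses successive edges of $P$, automatically accepting each $\overline{\sigma}$-edge and requiring double-tightness for each $\sigma$-edge; thus it terminates either upon reaching the far end of $P$ (Case A) or immediately before the first single-tight $\sigma$-edge of $P$ (Case B). \emph{The main obstacle} is ruling out odd length in Case A: if $|P|$ were odd, both end edges of $P$ would be $\sigma$-edges and every $\sigma$-edge of $P$ would have been traversed, hence double-tight, exhibiting $P$ as a rearrangeable component and contradicting the hypothesis. In Case B the walk stops immediately before a $\sigma$-edge, which has odd index among the edges of $P$, so the walk length is even.

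Claim (1) is then immediate: if a component contained both a $\beta$-type and an $\alpha$-type vertex from $\calU$, claim~(2) forces both to be endpoints of the component, yet a bipartite path of even length has both endpoints in the same bipartition class. For (3), a bipartite path of even length $2j$ with a $\beta$-endpoint has $j+1$ vertices of type $\beta$ and $j$ of type $\alpha$, contributing $+1$ to the left-hand side. A brief case check shows the far endpoint of $\calC(\beta^\sigma)$ (when the component has positive length) is never itself in $\calU$: in Case A it is an endpoint of $P$ incident to a $\overline{\sigma}$-edge of $M$, hence matched; in Case B it is an interior degree-two vertex of $P$ incident to the $\overline{\sigma}$-edge just traversed, hence matched. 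Therefore distinct elements of $\calU \cap \{\beta^+, \beta^-\}_\beta$ index distinct components of $\calS$, and the number of components of $\calS$ equals $|\calU \cap \{\beta^+, \beta^-\}_\beta|$. Since there are $2\nu$ total $\beta$-vertices and exactly $|M| + |I| = k$ of them are matched, this quantity is $2\nu - k$.
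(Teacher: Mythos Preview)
Your proof is correct and follows essentially the same approach as the paper's: both trace the component $\calC(\gamma^\sigma)$ as a $\sigma$-walk along the path component of $M\setminus I$ containing $\gamma$, use the alternation of $\sigma$- and $\overline{\sigma}$-edges together with (Tight) to show the walk continues automatically through $\overline{\sigma}$-edges and stops only at single-tight $\sigma$-edges, and invoke the no-rearrangeable hypothesis to rule out the odd-length case. The only organizational difference is that the paper proves (1) first by a direct contradiction and then (2), whereas you prove (2) first and derive (1) as a parity consequence; also, in (3) you verify explicitly that the far endpoint of each $\calC(\beta^\sigma)$ is matched (hence the components are pairwise disjoint), a step the paper leaves implicit.
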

\begin{proof}
For $\gamma^\sigma \in \calU(M, I)$,
we denote by $V(\calC(\gamma^\sigma))$ the set of nodes belonging to $\calC(\gamma^\sigma)$.

(1).
Suppose, to the contrary, that $\calS(M, I, \calV, p) \cap \calT(M, I, \calV, p) \neq \emptyset$.
Then $\bigcup_{\alpha^\sigma \in \calU(M, I)} V(\calC(\alpha^\sigma)) \cap \bigcup_{\beta^\sigma \in \calU(M, I)} V(\calC(\beta^\sigma)) \neq \emptyset$.
That is,
there is a path connected component $C$ of $M \setminus I$
with odd length
of the form $(\beta_0 \alpha_1, \alpha_1 \beta_1, \dots, \beta_k \alpha_{k+1})$
such that $\beta_0 \alpha_1$ and $\beta_k \alpha_{k+1}$ are $\sigma$-edges and
a $\sigma$-path $(\beta_0^\sigma \alpha_1^\sigma, \alpha_1^\sigma \beta_1^\sigma, \dots, \beta_k^\sigma \alpha_{k+1}^\sigma)$
exists in $\calG(\calV, p)|_M$.
This implies that all $\sigma$-edges $\beta_0 \alpha_1, \beta_1 \alpha_2, \dots, \beta_k \alpha_{k+1}$
are double-tight,
i.e., $C$ is rearrangeable,
which
contradicts the assumption that there is no rearrangeable component.

(2).
We only show that, for each $\beta^\sigma \in \calU(M, I)$,
$\calC(\beta^\sigma)$ forms an even length $\sigma$-path in $\calG(\calV, p)|_M$ which starts with $\beta^\sigma$;
the case for $\alpha^\sigma \in \calU(M, I)$ is similar.
Note that $\beta$ is not incident to an edge in $I$.
By~\eqref{eq:+-}, $\calC(\beta^\sigma)$ consists of nodes labeled by $\sigma$.
Suppose, to the contrary, that $\calC(\beta^\sigma)$ forms an odd length $\sigma$-path,
i.e., $\calC(\beta^\sigma) = (\beta^\sigma = \beta_0^\sigma \alpha_1^\sigma, \alpha_1^\sigma \beta_1^\sigma, \dots, \beta_k^\sigma \alpha_{k+1}^\sigma)$.
Note that $\beta_0 \alpha_1$ and $\beta_k \alpha_{k+1}$ are $\sigma$-edges.
If $\deg_M(\alpha_{k+1}) = 2$,
then $\alpha_{k+1}$ is incident to a $\overline{\sigma}$-edge $\alpha_{k+1} \beta_{k+1}$ in $M$,
and
hence, the edge $\alpha_{k+1}^\sigma \beta_{k+1}^\sigma$ belongs to $\calE(\calV, p)$.
This implies that $\beta_{k+1}^\sigma$ also belongs to $\calC_\beta$;
a contradiction.
If $\deg_M(\alpha_{k+1}) = 1$,
then the connected component $C$ of $M \setminus I$ containing $\beta_0$ and $\alpha_{k+1}$
forms an odd length path component 
and the end edges $\beta_0 \alpha_1$ and $\beta_k \alpha_{k+1}$ are $\sigma$-edges.
Thus all $\sigma$-edges in $C$ are double-tight.
This contradicts the assumption.

(3).
By~(2),
we have $\card{V(\calC(\beta^\sigma)) \cap \set{ \beta^+, \beta^- }[ \beta ]} - \card{V(\calC(\beta^\sigma)) \cap \set{ \alpha^+, \alpha^- }[\mid \alpha ]} = 1$
for each $\beta^\sigma \in \calU(M, I)$.
Thus we obtain $\card{\calS(M, I, \calV, p) \cap \set{ \beta^+, \beta^- }[ \beta ]} - \card{\calS(M, I, \calV, p) \cap \set{ \alpha^+, \alpha^- }[ \alpha ]} = \card{\set{ \beta^\sigma }[ \beta \in \calU(M, I) ]}
= 2\nu - k$.
\end{proof}
\begin{lemma}\label{lem:edge}
	Suppose that there is an edge $\alpha^\sigma \beta^{\sigma'}$ in $\calE(\calV, p)$.
	Then
	the vector spaces $(V_{\beta}^{\sigma'})^{\perp_{\beta \alpha}}$ and $(U_{\alpha}^{\sigma})^{\perp_{\alpha \beta}}$ belong to $\calM_{\alpha}$ and $\calM_{\beta}$,
	and are different from $U_{\alpha}^{\sigma}$ and $V_{\beta}^{\sigma'}$,
	respectively.
	Moreover,
	$(V_{\beta}^{\sigma'})^{\perp_{\beta \alpha}} = U_{\alpha}^{\overline{\sigma}}$ if $p(U_{\alpha}^{\overline{\sigma}}) < p(U_{\alpha}^\sigma)$,
	and $(U_{\alpha}^\sigma)^{\perp_{\alpha \beta}} = V_{\beta}^{\overline{\sigma'}}$ if $p(V_{\beta}^{\overline{\sigma'}}) < p(V_{\beta}^{\sigma'})$.
\end{lemma}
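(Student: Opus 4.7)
The plan is to reduce the entire statement to the behavior of the single linear functional $f_v(u) := A_{\alpha\beta}(u, v) = u^\top A_{\alpha\beta} v$ on $U_\alpha$, where $v$ is any fixed nonzero vector in the 1-dimensional space $V_\beta^{\sigma'}$. The hypothesis $\alpha^\sigma \beta^{\sigma'} \in \calE(\calV, p)$ unpacks into two pieces: the nonvanishing $A_{\alpha\beta}(U_\alpha^\sigma, V_\beta^{\sigma'}) \neq \set{0}$ and the tightness identity $p(U_\alpha^\sigma) + p(V_\beta^{\sigma'}) + c = d_{\alpha\beta}$. The former makes $f_v$ a nonzero linear functional on the 2-dimensional space $U_\alpha$ and witnesses $U_\alpha^\sigma \not\subseteq \ker f_v$. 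Since $V_\beta^{\sigma'}$ is 1-dimensional, $(V_\beta^{\sigma'})^{\perp_{\beta\alpha}}$ equals $\ker f_v$, which is 1-dimensional, hence lies in $\calM_\alpha$ and differs from $U_\alpha^\sigma$. The parallel statement for $(U_\alpha^\sigma)^{\perp_{\alpha\beta}} \in \calM_\beta \setminus \set{V_\beta^{\sigma'}}$ follows from the symmetric argument with the transposed functional on $V_\beta$.

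For the \emph{Moreover} part, set $X := (V_\beta^{\sigma'})^{\perp_{\beta\alpha}}$ and suppose, toward a contradiction, that $X \neq U_\alpha^{\overline{\sigma}}$ while $p(U_\alpha^{\overline{\sigma}}) < p(U_\alpha^\sigma)$. Since $X$ is the unique 1-dimensional kernel of $f_v$ and $U_\alpha^{\overline{\sigma}}$ is a 1-dimensional subspace distinct from $X$, the inclusion $U_\alpha^{\overline{\sigma}} \subseteq \ker f_v$ would force equality with $X$; hence $f_v$ is nonzero on $U_\alpha^{\overline{\sigma}}$, giving $A_{\alpha\beta}(U_\alpha^{\overline{\sigma}}, V_\beta^{\sigma'}) \neq \set{0}$. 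The $c$-potential inequality at this pair, combined with tightness, yields
\begin{align*}
p(U_\alpha^{\overline{\sigma}}) + p(V_\beta^{\sigma'}) + c \geq d_{\alpha\beta} = p(U_\alpha^\sigma) + p(V_\beta^{\sigma'}) + c,
\end{align*}
so $p(U_\alpha^{\overline{\sigma}}) \geq p(U_\alpha^\sigma)$, contradicting the hypothesis. Consequently $X = U_\alpha^{\overline{\sigma}}$. The dual conclusion $(U_\alpha^\sigma)^{\perp_{\alpha\beta}} = V_\beta^{\overline{\sigma'}}$ under $p(V_\beta^{\overline{\sigma'}}) < p(V_\beta^{\sigma'})$ follows identically after exchanging the roles of rows and columns.

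No real obstacle is expected here: the argument is elementary linear algebra on $\setF^2$ together with a single application of the potential inequality. The one subtlety worth flagging is that neither (Reg) nor (Tight) is invoked; only that $p$ is a $c$-potential and that the tightness equality $p(U_\alpha^\sigma) + p(V_\beta^{\sigma'}) + c = d_{\alpha\beta}$ is part of membership in $\calE(\calV, p)$. This ensures the lemma applies at every stage of the primal-dual algorithm, including immediately after a dual update before compatibility of $p$ with a new matching-pair has been reestablished.
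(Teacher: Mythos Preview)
Your argument is correct and mirrors the paper's proof: both unpack the edge condition into nonvanishing plus tightness, deduce that $(V_\beta^{\sigma'})^{\perp_{\beta\alpha}}$ is one-dimensional and distinct from $U_\alpha^\sigma$ via elementary linear algebra on $\setF^2$, and then use the $c$-potential inequality together with tightness to force $(V_\beta^{\sigma'})^{\perp_{\beta\alpha}} = U_\alpha^{\overline{\sigma}}$ when $p(U_\alpha^{\overline{\sigma}}) < p(U_\alpha^\sigma)$. The only cosmetic difference is that you phrase the last step as a contrapositive (assume the spaces differ and derive $p(U_\alpha^{\overline{\sigma}}) \geq p(U_\alpha^\sigma)$), whereas the paper argues directly (from the strict potential inequality deduce $A_{\alpha\beta}(U_\alpha^{\overline{\sigma}}, V_\beta^{\sigma'}) = \{0\}$ and hence equality of the one-dimensional spaces).
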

\begin{proof}
	We only show the statements on $(V_{\beta}^{\sigma'})^{\perp_{\beta \alpha}}$.
	By $\alpha^\sigma \beta^{\sigma'} \in \calE(\calV, p)$,
	we have $A_{\alpha \beta}(U_{\alpha}^\sigma, V_{\beta}^{\sigma'}) \neq \set{0}$ and $p(U_{\alpha}^\sigma) + p(V_{\beta}^{\sigma'}) + c = d_{\alpha \beta}$.
	By the former,
	we have $V_{\beta}^{\sigma'} \not\subseteq \kerR(A_{\alpha \beta})$.
	Hence $(V_{\beta}^{\sigma'})^{\perp_{\beta \alpha}}$ is a 1-dimensional vector space; i.e., it belongs to $\calM_{\alpha}$,
	and $(V_{\beta}^{\sigma'})^{\perp_{\beta \alpha}} \neq U_{\alpha}^\sigma$.
	Moreover,
	if $p(U_{\alpha}^{\overline{\sigma}}) < p(U_{\alpha}^\sigma)$,
	the identity $p(U_{\alpha}^\sigma) + p(V_{\beta}^{\sigma'}) + c = d_{\alpha \beta}$ implies $p(U_{\alpha}^{\overline{\sigma}}) + p(V_{\beta}^{\sigma'}) + c < d_{\alpha \beta}$.
	Since $p$ is a $c$-potential,
	we have $A_{\alpha \beta}(U_{\alpha}^{\overline{\sigma}}, V_{\beta}^{\sigma'}) = \set{0}$.
	Thus we obtain $(V_{\beta}^{\sigma'})^{\perp_{\beta \alpha}} = U_{\alpha}^{\overline{\sigma}}$.
\end{proof}
Let $(M, I)$ be a matching-pair, $\calV = \labeling$ a valid labeling for $M$,
and
$p$ an $(M, I, \calV)$-compatible $c$-potential.
A labeling $\hat{\calV} = (\{\hat{U}_\alpha^+, \hat{U}_\alpha^+\}, \{\hat{V}_\beta^+, \hat{V}_\beta^-\} )_{\alpha, \beta}$
is said to be \emph{equivalent to $\calV$ with respect to $p$}, denoted by $\hat{\calV} \simeq_p \calV$,
if $\hat{\calV}$ is a valid labeling for $M$
and $(p(U_\alpha^+), p(U_\alpha^-)) = (p(\hat{U}_\alpha^+), p(\hat{U}_\alpha^-))$ and $(p(V_\beta^+), p(V_\beta^-)) = (p(\hat{V}_\beta^+), p(\hat{V}_\beta^-))$ for each $\alpha, \beta$.
\begin{lemma}\label{lem:>}
    Let $\hat{\calV} = (\{\hat{U}_\alpha^+, \hat{U}_\alpha^+\}, \{\hat{V}_\beta^+, \hat{V}_\beta^-\} )_{\alpha, \beta}$ be a labeling equivalent to $\calV$ with respect to $p$.
	Then the following hold:
	\begin{itemize}
	    \item[{\rm (1)}]
	    $p$ is an $(M, I, \hat{\calV})$-compatible $c$-potential.
	    In addition, if $p$ is optimal for $(M, I, \hat{\calV})$,
	    then $p$ is also optimal for $(M, I, \hat{\calV})$.
		\item[{\rm (2)}]
		$\calG(\hat{\calV}, p)|_M = \calG(\calV, p)|_M$.
		\item[{\rm (3)}]
		Suppose that $U_\alpha^\sigma = \hat{U}_{\alpha}^\sigma$ or $p(U_{\alpha}^{\sigma}) > p(U_{\alpha}^{\overline{\sigma}})$,
		and that $V_\beta^{\sigma'} = \hat{V}_{\beta}^{\sigma'}$ or $p(V_{\beta}^{\sigma'}) > p(V_{\beta}^{\overline{\sigma'}})$.
		Then the edge $\alpha^\sigma \beta^{\sigma'}$ exists in $\calE(\calV, p)$
		if and only if it exists in $\calE(\hat{\calV}, p)$.
	\end{itemize}
\end{lemma}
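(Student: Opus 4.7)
The equivalence $\hat{\calV} \simeq_p \calV$ supplies two ingredients: the ordered $p$-equalities $p(\hat U_\alpha^\sigma) = p(U_\alpha^\sigma)$ and $p(\hat V_\beta^\sigma) = p(V_\beta^\sigma)$, and the validity of $\hat\calV$ for $M$. Combining the equalities with (Reg) for $\calV$ yields the key structural observation that $p$ assumes at most two distinct values on $\calM_\alpha$ (namely $p(U_\alpha^+)$ and $p(U_\alpha^-)$), and when these differ the space attaining the smaller value is pinned uniquely inside $\{U_\alpha^+,U_\alpha^-\}$. In particular, the ``minimum-potential space'' at each node is labeling-invariant; I shall reuse this throughout. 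The $c$-potential inequality $p(X)+p(Y)+c\ge d_{\alpha\beta}$ and the nonnegativity of $p$ are labeling-free, so they transfer to $\hat\calV$ at once.

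For Part (1), I verify (Reg) for $\hat\calV$ by case analysis on whether $X\in\calM_\alpha\setminus\{\hat U_\alpha^+,\hat U_\alpha^-\}$ lies outside $\{U_\alpha^+,U_\alpha^-\}$, where (Reg) on $\calV$ applies directly, or inside, where the ordered $p$-equality applied to the space in $\{\hat U_\alpha^\pm\}\setminus\{U_\alpha^\pm\}$ forces $p(U_\alpha^+)=p(U_\alpha^-)$. For (Tight), each $\alpha\beta\in M$ either retains its sign under $\hat\calV$, in which case the identity transfers via the ordered $p$-equality, or flips; in the flip case, (5) for rank-1 edges and the kernel-relations $V_\beta^{-}=(U_\alpha^{+})^{\perp_{\alpha\beta}}$, $V_\beta^{+}=(U_\alpha^{-})^{\perp_{\alpha\beta}}$ enforced by (4) for rank-2 edges together with the ordered $p$-equality force $p(U_\alpha^+)=p(U_\alpha^-)$ and $p(V_\beta^+)=p(V_\beta^-)$, so the two candidate tightness identities coincide. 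Edges in $I$ are handled identically. For optimality, \cref{lem:compatible} reduces the claim to (Zero), and since the collection of unmatched nodes $\alpha^\sigma, \beta^\sigma$ is determined by $(M,I)$ alone and the corresponding $p$-values agree, (Zero) carries over.

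For Part (2), (4) on both labelings eliminates $\alpha^+\beta^-$ and $\alpha^-\beta^+$ from $\calE(\cdot,p)|_M$ in both cases, so only $\alpha^\sigma\beta^\sigma$ needs checking; the $p$-sum identity transfers via the ordered equality, while nonzeroness is controlled by (5) for rank-1 edges and by the $\perp$-characterization $A_{\alpha\beta}(U_\alpha^\sigma,V_\beta^\sigma)\ne\{0\}\iff V_\beta^\sigma\ne(U_\alpha^\sigma)^{\perp_{\alpha\beta}}$ for rank-2 edges, which is preserved by the sign-flip analysis above. For Part (3), the $p$-sum identity again transfers. For nonzeroness, assume $\alpha^\sigma\beta^{\sigma'}\in\calE(\calV,p)$ and $U_\alpha^\sigma\ne\hat U_\alpha^\sigma$; the hypothesis forces $p(U_\alpha^\sigma)>p(U_\alpha^{\overline\sigma})$, so $U_\alpha^{\overline\sigma}$ is the unique minimum-potential space at $\alpha$ and $\hat U_\alpha^\sigma\ne U_\alpha^{\overline\sigma}$. \cref{lem:edge} supplies $(V_\beta^{\sigma'})^{\perp_{\beta\alpha}}=U_\alpha^{\overline\sigma}$, whence $A_{\alpha\beta}(\hat U_\alpha^\sigma,V_\beta^{\sigma'})\ne\{0\}$; if additionally $\hat V_\beta^{\sigma'}\ne V_\beta^{\sigma'}$, repeating the argument on the $\beta$-side yields $A_{\alpha\beta}(\hat U_\alpha^\sigma,\hat V_\beta^{\sigma'})\ne\{0\}$. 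The converse follows by symmetry since $\simeq_p$ is an equivalence relation.

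The hard part will be the sign-flip case in the (Tight) verification of Part (1): different valid labelings may color the same edge oppositely, so their prescribed tightness identities involve different pairs of spaces. The resolution is to exploit that any such flip is constrained by (4) and (5) together with the ordered $p$-equality to the extent that the $p$-values at both endpoints must coincide, which makes the two identities equivalent. Once this rigidity is in hand, the remainder reduces to routine substitution using the ordered $p$-equality and the labeling-invariance of the minimum-potential space.
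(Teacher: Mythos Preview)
Your argument is essentially correct and close in spirit to the paper's, but you have introduced an unnecessary complication. The ``sign-flip'' case you identify as the hard part of (Tight) does not arise: the $\pm$-edge coloring of $M$ is part of the fixed data $(M,I)$, and the hypothesis that $\hat{\calV}$ is a valid labeling for $M$ refers to this same coloring. Hence a $+$-edge for $\calV$ remains a $+$-edge for $\hat{\calV}$, and (Tight) transfers immediately via the ordered $p$-equality $p(\hat U_\alpha^\sigma)=p(U_\alpha^\sigma)$, $p(\hat V_\beta^\sigma)=p(V_\beta^\sigma)$; this is why the paper declares (1) and (2) ``immediate from the definition.'' Your flip-case argument for rank-2 edges is also not quite complete as stated (the $\perp$-relations alone do not force $p(U_\alpha^+)=p(U_\alpha^-)$ at a single edge), but since the case is vacuous this does no harm.

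For Part (3), your approach differs mildly from the paper's. The paper splits into four cases according to which alternative holds in each hypothesis and, in the doubly-strict case (iv), observes directly that three of the four $p$-sums fall strictly below $d_{\alpha\beta}$, forcing $\alpha\beta$ to be rank-1 with $\kerL(A_{\alpha\beta})=U_\alpha^{\overline\sigma}=\hat U_\alpha^{\overline\sigma}$ and $\kerR(A_{\alpha\beta})=V_\beta^{\overline{\sigma'}}=\hat V_\beta^{\overline{\sigma'}}$. You instead iterate \cref{lem:edge}: first on the $\alpha$-side to get $(V_\beta^{\sigma'})^{\perp_{\beta\alpha}}=U_\alpha^{\overline\sigma}$, hence $A_{\alpha\beta}(\hat U_\alpha^\sigma,V_\beta^{\sigma'})\neq\{0\}$, then on the $\beta$-side with $\hat U_\alpha^\sigma$ in place of $U_\alpha^\sigma$. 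This second application is legitimate because \cref{lem:edge} uses only that $p$ is a $c$-potential and the edge is tight and nonzero at the given pair of spaces, all of which you have established; the conclusion $(\hat U_\alpha^\sigma)^{\perp_{\alpha\beta}}=V_\beta^{\overline{\sigma'}}$ combined with $\hat V_\beta^{\sigma'}\neq V_\beta^{\overline{\sigma'}}$ (from the (Reg)-uniqueness of the minimum-potential space) then finishes. Both routes are short; yours is a bit more modular, the paper's a bit more direct.
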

\begin{proof}
    (1) and (2) immediately follow from the definition of $\calV \simeq_p \hat{\calV}$.

	(3).
	There are four cases:
	(i) $U_\alpha^\sigma = \hat{U}_{\alpha}^\sigma$ and $V_\beta^{\sigma'} = \hat{V}_{\beta}^{\sigma'}$,
	(ii) $U_\alpha^\sigma = \hat{U}_{\alpha}^\sigma$ and $p(V_{\beta}^{\sigma'}) > p(V_{\beta}^{\overline{\sigma'}})$,
	(iii) $p(U_{\alpha}^{\sigma}) > p(U_{\alpha}^{\overline{\sigma}})$ and $V_\beta^{\sigma'} = \hat{V}_{\beta}^{\sigma'}$,
	and (iv) $p(U_{\alpha}^{\sigma}) > p(U_{\alpha}^{\overline{\sigma}})$ and $p(V_{\beta}^{\sigma'}) > p(V_{\beta}^{\overline{\sigma'}})$.
	We note that $p(U_{\alpha}^{\sigma}) > p(U_{\alpha}^{\overline{\sigma}}) = p(\hat{U}_{\alpha}^{\overline{\sigma}})$ (resp. $p(V_{\beta}^{\sigma'}) > p(V_{\beta}^{\overline{\sigma'}}) = p(\hat{V}_{\beta}^{\overline{\sigma'}})$)
	implies $U_{\alpha}^{\overline{\sigma}} = \hat{U}_{\alpha}^{\overline{\sigma}}$
	(resp. $V_{\beta}^{\overline{\sigma'}} = \hat{V}_{\beta}^{\overline{\sigma'}}$)
	by (Reg).
	By symmetry, it suffices to show that $\alpha^\sigma \beta^{\sigma'} \in \calE(\calV, p)$ implies $\alpha^\sigma \beta^{\sigma'} \in \calE(\hat{\calV}, p)$ for each case.
	
	(i).
	It is clear that $\alpha^\sigma \beta^{\sigma'} \in \calE(\calV, p)$ implies $\alpha^\sigma \beta^{\sigma'} \in \calE(\hat{\calV}, p)$.
	
	(ii) and (iii).
	We only consider (ii);
	(iii) follows from the same argument.
	Suppose $\alpha^\sigma \beta^{\sigma'} \in \calE(\calV, p)$.
	Then we have $p(U_\alpha^\sigma) + p(V_{\beta}^{\sigma'}) + c = d_{\alpha \beta}$ and $A_{\alpha \beta}(U_\alpha^\sigma, V_{\beta}^{\sigma'}) \neq \set{0}$.
	By $U_\alpha^\sigma = \hat{U}_{\alpha}^\sigma$ and $p(V_{\beta}^{\sigma'}) > p(V_{\beta}^{\overline{\sigma'}}) = p(\hat{V}_{\beta}^{\overline{\sigma'}})$,
	we have $p(\hat{U}_{\alpha}^\sigma) + p(\hat{V}_{\beta}^{\overline{\sigma'}}) + c < d_{\alpha \beta}$,
	which implies $A_{\alpha \beta}(\hat{U}_\alpha^{\sigma}, \hat{V}_\beta^{\overline{\sigma'}}) = \set{0}$
	since $p$ is a $c$-potential.
	By $\hat{V}_{\beta}^{\sigma'} \neq \hat{V}_{\beta}^{\overline{\sigma'}}$,
	we obtain $A_{\alpha \beta}(\hat{U}_\alpha^{\sigma}, \hat{V}_\beta^{\sigma'}) \neq \set{0}$.
	Therefore $\alpha^\sigma \beta^{\sigma'}$ belongs to $\calE(\hat{\calV}, p)$.
	
	(iv).
	Suppose $\alpha^\sigma \beta^{\sigma'} \in \calE(\calV, p)$.
	By the assumption and $p(U_\alpha^\sigma) + p(V_{\beta}^{\sigma'}) + c = d_{\alpha \beta}$,
	all of $p(U_{\alpha}^{\overline{\sigma}}) + p(V_{\beta}^{\sigma'})$, $p(U_{\alpha}^{\sigma}) + p(V_{\beta}^{\overline{\sigma'}})$,
	and $p(U_{\alpha}^{\overline{\sigma}}) + p(V_{\beta}^{\overline{\sigma'}})$
	are smaller than $d_{\alpha \beta}$.
	Hence
	we have $A_{\alpha \beta}(U_\alpha^{\overline{\sigma}}, V_{\beta}^{\sigma'}) = A_{\alpha \beta}(U_\alpha^{\sigma}, V_{\beta}^{\overline{\sigma'}}) = A_{\alpha \beta}(U_\alpha^{\overline{\sigma}}, V_{\beta}^{\overline{\sigma'}}) = \set{0}$.
	This implies that $\alpha \beta$ is rank-1,
	$U_\alpha^{\overline{\sigma}} = \kerL(A_{\alpha \beta})$,
	and $V_\beta^{\overline{\sigma'}} = \kerR(A_{\alpha \beta})$.
	It follows from $\hat{U}_\alpha^\sigma \neq \hat{U}_\alpha^{\overline{\sigma}} = U_\alpha^{\overline{\sigma}}$ and $\hat{V}_\beta^{\sigma'} \neq \hat{V}_\beta^{\overline{\sigma'}} = V_\beta^{\overline{\sigma'}}$ that $A_{\alpha \beta}(\hat{U}_\alpha^\sigma, \hat{V}_\beta^{\sigma'}) \neq \set{0}$.
	Thus $\alpha^\sigma \beta^{\sigma'}$ belongs to $\calE(\hat{\calV}, p)$.
\end{proof}

We are ready to prove \cref{thm:augmenting path}.
\begin{proof}[Proof of \cref{thm:augmenting path}]
A path $(\beta_0^{\sigma_0} \alpha_1^{\sigma_1}, \alpha_1^{\sigma_1} \beta_1^{\sigma_1}, \dots, \alpha_k^{\sigma_k} \beta_k^{\sigma_k})$ in
$\calG(\calV, p)$
is called a {\it truncated outer path} for $(M, I, \calV, p)$
if it satisfies (O1) and (O2);
note that the last node is not $\alpha_{k+1}^{\sigma_{k+1}}$ but $\beta_k^{\sigma_k}$.
Also a path $\calR$ in $\calG(\calV, p)$
is called a {\it truncated augmenting path} for $(M, I, \calV, p)$
if it is the concatenation $\calP_0 \circ \calQ_1 \circ \calP_1 \circ \cdots \circ \calQ_m \circ \calP$
of outer paths $\calP_0, \calP_1, \dots, \calP_m$, inner paths $\calQ_1, \dots, \calQ_m$,
and a truncated outer path $\calP$,
where $\calP$ can be empty,
such that $\beta(\calP_0) \in \calS(M, I, \calV, p)$.
We can easily see that, for a truncated augmenting path $\calR$ with the end node $\beta^\sigma$ and an edge $\beta^\sigma \alpha^{\sigma'}$ such that $\beta \alpha \in E \setminus M$ and $\alpha^{\sigma'} \in \calT(M, I, \calV, p)$,
the path $\calR \circ (\beta^\sigma \alpha^{\sigma'})$ forms an augmenting path for $(M, I, \calV, p)$.

We simultaneously show \cref{thm:augmenting path}~(1),~(2), and~(3).
In particular,
we prove that
at the beginning of the primal/dual update phase,
the following five conditions hold:
\begin{enumerate}
	\item
	$(M, I) = (M_0, I_0)$,
	$\calV$ is a valid labeling for $M$,
	and
	$p$ is an optimal $(M, I, \calV)$-compatible potential.
	\item
	$\calF$ is a forest in $\calG(\calV, p)$
	such that each connected component of $\calF$
	has exactly one node in $\calS(M, I, \calV, p)$.
	\item
	For each $\beta^\sigma \in \calF$,
	the unique path $\calR$ in $\calF$ from a vertex in $\calS$ to $\beta^\sigma$
	is a truncated augmenting path for $(M, I, \calV, p)$.
	\item
	For each $\alpha \beta \in M$ with $\alpha^\sigma \beta^\sigma \in \calE(\calV, p)$,
	$\alpha^{\sigma} \in \calF$
	if and only if $\beta^{\sigma} \in \calF$.
	\item
	$\card{\calF \cap \set{ \beta^+, \beta^- }[ \beta ]} - \card{\calF \cap \set{ \alpha^+, \alpha^- }[ \alpha ]} = 2\nu - k$.
\end{enumerate}
These are true for the initial phase $\calF = \calS(M, I, \calV, p)$ (particularly, condition~5 follows from \cref{lem:source set}~(3)).
We see that the primal and dual updates keep these conditions.

(Primal update).
Suppose that we choose an edge $\beta^{\sigma} \alpha^{\sigma'} \in \calE(\calV, p)$ such that $\beta^\sigma \in \calF$ and $\alpha^{\sigma'} \not\in \calF$ to $\calF$.
Then $\beta \alpha$ belongs to $E \setminus M$ by condition~4 and $\beta^\sigma \in \calF \not\ni \alpha^{\sigma'}$.
Let $\calR$ be the truncated augmenting path for $(M, I, \calV, p)$
that ends at $\beta^{\sigma}$ (which uniquely exists by condition~3).

(P1).
By $\alpha^{\sigma'} \in \calT(M, I, \calV, p)$,
the output $\calR \circ (\beta^\sigma \alpha^{\sigma'})$ forms an augmenting path for $(M, I, \calV, p)$.
Condition~1 implies that \cref{thm:augmenting path}~(1) for (P1) holds.

(P2).
Suppose that $\alpha$ is incident to an edge $\alpha \beta'$ in $I$.
By \cref{lem:edge},
we have
$(V_\beta^{\sigma})^{\perp_{\alpha \beta}} \in \calM_\alpha$ and $(V_\beta^{\sigma})^{\perp_{\alpha \beta}} \neq U_\alpha^{\sigma'}$.
Furthermore $p((V_\beta^{\sigma})^{\perp_{\alpha \beta}}) = p(U_\alpha^{\overline{\sigma'}})$ holds.
Indeed,
if $p(U_\alpha^{\sigma'}) > p(U_\alpha^{\overline{\sigma'}})$,
then \cref{lem:edge} asserts $(V_\beta^{\sigma})^{\perp_{\alpha \beta}} = U_\alpha^{\overline{\sigma'}}$.
If $p(U_\alpha^{\sigma'}) \leq p(U_\alpha^{\overline{\sigma'}})$,
then the inequality $(V_\beta^{\sigma})^{\perp_{\alpha \beta}} \neq U_\alpha^{\sigma'}$ implies $p((V_\beta^{\sigma})^{\perp_{\alpha \beta}}) = p(U_\alpha^{\overline{\sigma'}})$ by (Reg).
Therefore, by update $U_\alpha^{\overline{\sigma'}} \leftarrow (V_\beta^{\sigma})^{\perp_{\alpha \beta}}$
and $V_{\beta'}^{\sigma'} \leftarrow (U_\alpha^{\overline{\sigma'}})^{\perp_{\alpha \beta'}}$,
the resulting $\calV$ is a valid labeling for $M$ and is equivalent to the previous one with respect to $p$.
By \cref{lem:>}~(1), $p$ is an optimal $(M, I, \calV)$-compatible $c$-potential.
Thus condition~1 holds.

The update of $\calV$ can change $\calG(\calV, p)$,
particularly,
the set of edges incident to $\alpha^{\overline{\sigma'}}$ or ${\beta'}^{\sigma'}$.
We here show that $\calF \cup \{ \beta^\sigma \alpha^{\sigma'}, \alpha^{\sigma'} {\beta'}^{\sigma'} \} \subseteq \calE(\calV, p)$,
i.e., no edge in $\calF \cup \{ \beta^\sigma \alpha^{\sigma'}, \alpha^{\sigma'} {\beta'}^{\sigma'} \}$ is deleted from $\calE(\calV, p)$ by this update.
By $\alpha \beta' \in I$ and \cref{lem:>}~(2),
we have
$\alpha^+ {\beta'}^+, \alpha^- {\beta'}^- \in \calE(\calV, p)$,
implying $\alpha^{\sigma'} {\beta'}^{\sigma'} \in \calE(\calV, p)$.
By condition~4,
$\alpha^{\sigma'}$ and ${\beta'}^{\sigma'}$ do not belong to $\calF$,
particularly,
no edge incident to ${\beta'}^{\sigma'}$ is in $\calF$.
Hence it suffices to see that if there is ${\beta''}^{\sigma''} \alpha^{\overline{\sigma'}} \in \calF$ for some $\beta'' \neq \beta'$,
then ${\beta''}^{\sigma''} \alpha^{\overline{\sigma'}} \in \calE(\calV, p)$.
Suppose ${\beta''}^{\sigma''} \alpha^{\overline{\sigma'}} \in \calF$.
Then we have $\alpha^{\overline{\sigma'}} {\beta'}^{\overline{\sigma'}} \in \calF$ by (P2).
Since the path in $\calF$ from a vertex in $\calS$ to ${\beta'}^{\overline{\sigma'}}$,
whose last edge is $\alpha^{\overline{\sigma'}} {\beta'}^{\overline{\sigma'}}$,
is a truncated augmenting path by condition~3,
we have
$U_\alpha^{\sigma'} = (V_{\beta''}^{\sigma''})^{\perp_{\alpha \beta''}}$.
By $U_\alpha^{\sigma'} \neq U_\alpha^{\overline{\sigma'}}$ and the equivalence of $\calV$,
we have $A_{\alpha \beta''}(U_\alpha^{\overline{\sigma'}}, V_{\beta''}^{\sigma''}) \neq \set{0}$
and $p(U_\alpha^{\overline{\sigma'}}) + p(V_{\beta''}^{\sigma''}) + c = d_{\alpha \beta''}$.
Thus we obtain ${\beta''}^{\sigma''} \alpha^{\overline{\sigma'}} \in \calE(\calV, p)$.

We finally prove that $\calF \cup \{ \beta^\sigma \alpha^{\sigma'}, \alpha^{\sigma'} {\beta'}^{\sigma'} \}$ satisfies conditions~2--5 after the update.
By condition~4 for $\calF$ and $\alpha^{\sigma'} \not\in \calF$,
we have
${\beta'}^{\sigma'} \not\in \calF$.
Hence $\calF \cup \{ \beta^\sigma \alpha^{\sigma'}, \alpha^{\sigma'} {\beta'}^{\sigma'} \}$
satisfies condition~4.
By $\calF \subseteq \calE(\calV, p)$,
condition~2 still holds for $\calF$.
In addition, $\alpha^{\sigma'} \not\in \calF$ and
${\beta'}^{\sigma'} \not\in \calF$
immediately imply
that
$\calF \cup \{ \beta^\sigma \alpha^{\sigma'}, \alpha^{\sigma'} {\beta'}^{\sigma'} \}$
satisfies conditions~2 and~5.
For condition~3,
it suffices to see that the unique path $\calR \circ (\beta^{\sigma} \alpha^{\sigma'}, \alpha^{\sigma'} {\beta'}^{\sigma'})$ in $\calF \cup \{ \beta^\sigma \alpha^{\sigma'}, \alpha^{\sigma'} {\beta'}^{\sigma'} \}$ from a vertex in $\calS(M, I, \calV, p)$ to ${\beta'}^{\sigma'}$ is a truncated augmenting path.
This follows from $\alpha \beta' \in I$
and
$A_{\alpha \beta}(U_{\alpha}^{\overline{\sigma'}}, V_\beta^\sigma) = \set{0}$ by the update.

(P3).
Suppose that $\alpha^{\sigma'} \not\in \calT(M, I, \calV, p)$ and $\alpha$ belongs to a connected component of $M \setminus I$.
Let $\calQ$ be the longest inner path in $\calG(\calV, p)|_M$ starting with $\alpha^{\sigma'}$ such that
$\calQ$ does not meet $\calF$.

Since $\calV$ and $p$ do not change in this update,
condition~1 clearly holds.
Condition~2 follows from the fact that every node in $\calQ$ exits $\calF$ by the definition.
Since $\calQ$ is of the form $(\alpha^{\sigma'} = \alpha_0^{\sigma'} \beta_1^{\sigma'}, \beta_1^{\sigma'} \alpha_1^{\sigma'}, \dots, \alpha_k^{\sigma'} \beta_{k+1}^{\sigma'})$,
in which $\alpha_0 \beta_1, \alpha_1 \beta_2, \dots, \alpha_k \beta_{k+1}$ are $\overline{\sigma'}$-edges,
$\calF \cup \{ \beta^\sigma \alpha^{\sigma'} \} \cup \calQ$ satisfies condition~5.
For each $\beta_i^{\sigma'}$ belonging to $\calQ$,
the unique path from a vertex in $\calS$ to $\beta_i^{\sigma'}$
is a truncated augmenting path (which does not have a truncated outer path).
Thus condition~3 holds.
Finally, we show that $\calF \cup \{ \beta^\sigma \alpha^{\sigma'} \} \cup \calQ$ satisfies the condition~4.
Suppose, to the contrary, that there is a $\sigma'$-edge $\beta_{k+1} \alpha_{k+1} \in M$ such that $\beta_{k+1}^{\sigma'} \alpha_{k+1}^{\sigma'} \in \calE(\calV, p)$
and $\alpha_{k+1}^{\sigma'} \notin \calF$.
Then a $\overline{\sigma'}$-edge $\alpha_{k+1} \beta_{k+2}$ also exists in $M$.
Indeed, otherwise $U_{\alpha_{k+1}}^{\sigma'}$ is unmatched by $(M, I)$ by $\deg_M(\alpha_{k+1}) = 1$,
which implies $\alpha_0^{\sigma'} \in \calT(M, I, \calV, p)$.
This contradicts the assumption of $\alpha_0^{\sigma'} \notin \calT(M, I, \calV, p)$ on (P3).
Thus $\alpha_{k+1}^{\sigma'} \beta_{k+2}^{\sigma'}$ also exists in $\calE(\calV, p)$,
since $\alpha_{k+1} \beta_{k+2}$ is a $\overline{\sigma'}$-edge.
By condition~4 for $\calF$ and $\alpha_{k+1}^{\sigma'} \notin \calF$, we have $\beta_{k+2}^{\sigma'} \notin \calF$.
Hence $\calQ \circ (\beta_{k+1}^{\sigma'} \alpha_{k+1}^{\sigma'}, \alpha_{k+1}^{\sigma'} \beta_{k+2}^{\sigma'})$ also forms an inner path for $(M, I, \calV, p)$ not meeting $\calF$,
which contradicts the maximality of $\calQ$.
Therefore $\calF \cup \{ \beta^\sigma \alpha^{\sigma'} \} \cup \calQ$ satisfies condition~4.

(Dual update).
Since there is no edge between $\beta^{\sigma}$ and $\alpha^{\sigma'}$ in $\calE(\calV, p)$ such that $\beta^\sigma \in \calF$ and $\alpha^{\sigma'} \not\in \calF$,
the minimum value $\epsilon$ of $\epsilon_{\beta^\sigma}$ over $\beta^\sigma \in \calF$ is positive by definition.
For $\epsilon'$ with $0 \leq \epsilon' \leq \epsilon$ and $\epsilon' < +\infty$,
let us define $c_{\epsilon'}$ by
$c_{\epsilon'} \defeq c - \epsilon'$
and
$p_{\epsilon'}$ by
\begin{align*}
p_{\epsilon'}(V_\beta^\sigma) \defeq
\begin{cases}
p(V_\beta^\sigma) & \text{if $\beta^\sigma \in \calF$},\\
p(V_\beta^\sigma) + \epsilon' & \text{if $\beta^\sigma \not\in \calF$},
\end{cases} \qquad
p_{\epsilon'}(U_\alpha^\sigma) \defeq
\begin{cases}
p(U_\alpha^\sigma) + \epsilon' & \text{if $\alpha^\sigma \in \calF$},\\
p(U_\alpha^\sigma) & \text{if $\alpha^\sigma \not\in \calF$},
\end{cases}
\end{align*}
and
\begin{align*}
p_{\epsilon'}(X) &\defeq \max \{ p_{\epsilon'}(U_\alpha^+), p_{\epsilon'}(U_\alpha^-) \} \qquad (X \in \calM_\alpha \setminus \{ U_\alpha^+, U_\alpha^- \}),\\
p_{\epsilon'}(Y) &\defeq \max \{ p_{\epsilon'}(V_\beta^+), p_{\epsilon'}(V_\beta^-) \} \qquad (Y \in \calM_\beta \setminus \{ V_\beta^+, V_\beta^- \})
\end{align*}
for each $\alpha$ and $\beta$.
Then the following claim holds:
\begin{claim*}
	The function $\funcdoms{p_{\epsilon'}}{\bigcup_{\gamma} \calM_\gamma}{\setR}$ is an optimal $(M, I, \calV)$-compatible $c_{\epsilon'}$-potential
	and $\calF \subseteq \calE(\calV, p_{\epsilon'})$.
	In particular,
	if $\epsilon < + \infty$,
	then there is at least one edge $\beta^\sigma \alpha^{\sigma'} \in \calE(\calV, p_{\epsilon})$
	such that $\beta^\sigma \in \calF$ and $\alpha^{\sigma'} \not\in \calF$.
\end{claim*}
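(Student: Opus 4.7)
The plan is to verify, in order, the four requirements of the claim: (a) $p_{\epsilon'}$ is a $c_{\epsilon'}$-potential, (b) it satisfies (Tight) with respect to $(M, I, \calV)$, (c) it satisfies (Zero) and is therefore optimal by Lemma~\ref{lem:compatible}, and (d) $\calF \subseteq \calE(\calV, p_{\epsilon'})$; then the ``in particular'' part follows immediately from the definition of $\epsilon$. The uniform guiding observation is that for every edge $\alpha^\sigma \beta^{\sigma'} \in \calF$ both endpoints lie in $\calF$: for $M$-edges this is condition~4 of the invariants preserved by the algorithm, and for the non-$M$ edges of the form $\beta^\sigma \alpha^{\sigma'}$ created in (P2) it is immediate from how they are added. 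Consequently, on every such edge the $+\epsilon'$ added to its $U$-endpoint cancels the $-\epsilon'$ absorbed into $c$, while the $V$-endpoint is unchanged, so identities of the form $p(U_\alpha^\sigma) + p(V_\beta^{\sigma'}) + c = d_{\alpha\beta}$ transfer verbatim from $p$ to $p_{\epsilon'}$, yielding item (d) outright and all the tight identities required for item (b).

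For (a), since $p_{\epsilon'} \geq p \geq 0$ and $p_{\epsilon'}$ satisfies (Reg) by construction, Lemma~\ref{lem:(Reg)} reduces the potential inequality to the four labelled choices $X \in \{U_\alpha^+, U_\alpha^-\}$, $Y \in \{V_\beta^+, V_\beta^-\}$. I would then split on whether $\alpha^\sigma$ and $\beta^{\sigma'}$ lie in $\calF$: in three of the four subcases the perturbations telescope exactly as in the guiding observation and the inequality descends directly from $p$ being a $c$-potential, and the only nontrivial case, $\beta^{\sigma'} \in \calF$ and $\alpha^\sigma \notin \calF$, is precisely what the definition of $\epsilon_{\beta^{\sigma'}}$ together with $\epsilon' \leq \epsilon \leq \epsilon_{\beta^{\sigma'}}$ was designed to guarantee.

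For (c), Lemma~\ref{lem:compatible} reduces optimality of $p_{\epsilon'}$ to checking (Zero) on each vertex unmatched by $(M, I)$. Unmatched $V$-vertices lie in $\calS(M, I, \calV, p) \subseteq \calF$, so the $V$-side rule leaves their value untouched at $0$; for unmatched $U$-vertices the point is that $\calF \cap \calT(M, I, \calV, p) = \emptyset$ at the moment the dual update is invoked, which combines Lemma~\ref{lem:source set}(1) (no source vertex is in the target set) with the fact that (P1) would have terminated the procedure had any subsequent primal update placed a $\calT$-vertex into $\calF$. Hence unmatched $U$-vertices are outside $\calF$ and retain value $0$. For the ``in particular'' statement, the definition of $\epsilon < +\infty$ supplies $\alpha^{\sigma'} \notin \calF$, $\beta^\sigma \in \calF$ with $A_{\alpha\beta}(U_\alpha^{\sigma'}, V_\beta^\sigma) \neq \{0\}$ and $p(U_\alpha^{\sigma'}) + p(V_\beta^\sigma) + c - d_{\alpha\beta} = \epsilon$; at $\epsilon' = \epsilon$ neither side is perturbed while $c$ drops by $\epsilon$, so the identity becomes tightness under $p_\epsilon$, placing $\beta^\sigma \alpha^{\sigma'}$ into $\calE(\calV, p_\epsilon)$.

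The substantive content, and therefore the main obstacle, is the invariant $\calF \cap \calT(M, I, \calV, p) = \emptyset$ that underpins the survival of (Zero) under the $U$-side increment; once this invariant is in hand, the remainder of the claim is a purely mechanical four-case bookkeeping with the telescoping cancellation described at the outset.
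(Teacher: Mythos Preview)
Your proof is correct and follows the same route as the paper: verify the $c_{\epsilon'}$-potential inequality via Lemma~\ref{lem:(Reg)} and a four-way case split on membership in $\calF$, obtain (Tight) from condition~4, obtain (Zero) from $\calS \subseteq \calF$ together with $\calF \cap \calT = \emptyset$, and read off $\calF \subseteq \calE(\calV,p_{\epsilon'})$ and the ``in particular'' by telescoping. One phrasing issue: your ``guiding observation'' is stated only for edges in $\calF$, which is enough for (d) but not for (b), since (Tight) concerns every edge of $\calE(\calV,p)|_M$, including those entirely outside $\calF$; the actual content of condition~4 (which you do invoke) is that both endpoints of any such edge lie on the \emph{same side} of $\calF$, so the cancellation goes through whether that side is ``in'' or ``out''.
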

\begin{proof}[Proof of Claim]
	We first show that $p_{\epsilon'}$ is a $c_{\epsilon'}$-potential.
	Take arbitrary $U_{\alpha}^\sigma$ and $V_{\beta}^{\sigma'}$
	with
	$p_{\epsilon'}(U_{\alpha}^\sigma) + p_{\epsilon'}(V_{\beta}^{\sigma'}) + c_{\epsilon'} < d_{\alpha \beta}$.
	We see $A_{\alpha \beta}(U_\alpha^\sigma, V_\beta^{\sigma'}) = \set{0}$.
	If $\alpha^\sigma \in \calF$ or $\beta^{\sigma'} \notin \calF$,
	then $d_{\alpha \beta} > p_{\epsilon'}(U_\alpha^\sigma) + p_{\epsilon'}(V_\beta^{\sigma'}) + c_{\epsilon'} \geq p(U_\alpha^\sigma) + p(V_\beta^{\sigma'}) + c$.
	Since $p$ is a $c$-potential,
	we have $A_{\alpha \beta}(U_\alpha^\sigma, V_\beta^{\sigma'}) = \set{0}$.
	If $\alpha^\sigma \not\in \calF$ and $\beta^{\sigma'} \in \calF$,
	then $p_{\epsilon'}(U_\alpha^\sigma) + p_{\epsilon'}(V_\beta^{\sigma'}) + c_{\epsilon'} = p(U_\alpha^\sigma) + p(V_\beta^{\sigma'}) + c - \epsilon' < d_{\alpha \beta}$.
	By $\epsilon' \leq \epsilon \leq \epsilon_{\beta^\sigma}$ and the definition of $\epsilon_{\beta^\sigma}$,
	it must hold that $A_{\alpha \beta}(U_\alpha^\sigma, V_\beta^{\sigma'}) = \set{0}$.
	Since $p_{\epsilon'}$ satisfies (Reg) by definition,
	it is a $c_{\epsilon'}$-potential by \cref{lem:(Reg)}.

    By condition~4, for each $\alpha^\sigma \beta^\sigma \in \calE(\calV, p)|_M$,
    we have either $\alpha^\sigma, \beta^\sigma \in \calF$ or $\alpha^\sigma, \beta^\sigma \notin \calF$,
    i.e., $p(U_\alpha^\sigma) + p(V_\beta^\sigma) + c = p_{\epsilon'}(U_\alpha^\sigma) + p_{\epsilon'}(V_\beta^\sigma) + c_{\epsilon'}$.
    Thus we obtain $\calE(\calV, p)|_M \subseteq \calE(\calV, p_{\epsilon'})|_M$,
    which implies that $p_{\epsilon'}$ satisfies (Tight).
	For each $U_\alpha^\sigma$ and $V_\beta^{\sigma'}$
	unmatched by $(M, I)$,
	we have $\alpha^\sigma \in \calT(M, I, \calV, p)$ and $\beta^{\sigma'} \in \calS(M, I, \calV, p)$.
	It follows from $\calT(M, I, \calV, p) \cap \calF = \emptyset$ and $\calS(M, I, \calV, p) \subseteq \calF$ that $p_{\epsilon'}(U_\alpha^\sigma) = p(U_\alpha^\sigma) = 0$ and $p_{\epsilon'}(V_\beta^{\sigma'}) = p(V_\beta^{\sigma'}) = 0$,
	which implies $p_{\epsilon'}$ satisfies (Zero).
	Hence $p_{\epsilon'}$ is an optimal $(M, I, \calV)$-compatible $c_{\epsilon'}$-potential by \cref{lem:compatible}.
	
	For each $\alpha^\sigma \beta^{\sigma'} \in \calF$,
	we have $p_{\epsilon'}(U_\alpha^{\sigma}) + p_{\epsilon'}(V_\beta^{\sigma}) + c_{\epsilon'} = p(U_\alpha^{\sigma}) + p(V_\beta^{\sigma}) + c = d_{\alpha \beta}$,
	Therefore we obtain $\calF \subseteq \calE(\calV, p_{\epsilon'})$.
	
	If $\epsilon < +\infty$,
	then there are $\beta^\sigma \in \calF$ and $\alpha^{\sigma'} \not\in \calF$ such that
	$A_{\alpha \beta}(U_\alpha^{\sigma'}, V_\beta^\sigma) \neq \set{0}$
	and $p(U_\alpha^{\sigma'}) + p(V_\beta^\sigma) + c = d_{\alpha \beta} + \epsilon$.
	For such $\beta^\sigma$ and $\alpha^{\sigma'}$,
	we have $p_{\epsilon}(U_\alpha^{\sigma'}) + p_{\epsilon}(V_\beta^\sigma) + c_{\epsilon'} = p(U_\alpha^{\sigma'}) + p(V_\beta^\sigma) + c - \epsilon = d_{\alpha \beta}$
	by the definition of $p_{\epsilon}$.
	Hence $\beta^\sigma \alpha^{\sigma'}$ belongs to $\calE(\calV, p_{\epsilon})$.
\end{proof}

We consider the case of $\epsilon = +\infty$, corresponding to (D1).
By Claim, the function $p_{\epsilon'}$ is a $c_{\epsilon'}$-potential
for any $\epsilon' \geq 0$.
By condition~5,
we have $2\nu - |\calF \cap \{ \beta^+, \beta^- \mid \beta \}| + |\calF \cap \{ \alpha^+, \alpha^- \mid \alpha \}| = k$.
Hence
$p_{\epsilon'}(\calV) + (k+1)c_{\epsilon'} = p(\calV) + k\epsilon' + (k+1)(c - \epsilon') = p(\calV) + (k+1)c - \epsilon'$.
Therefore
\begin{align*}
\inf_{\epsilon' \geq 0} \left\{ p_{\epsilon'}(\calV) + (k+1)c_{\epsilon'} \right\} = -\infty.
\end{align*}
By the weak duality in \cref{thm:minmax},
we obtain $\delta_{k+1}(A(t)) = -\infty$,
which implies \cref{thm:augmenting path}~(2).

We then consider the case of $\epsilon < +\infty$, corresponding to (D2).
By Claim,
$p_{\epsilon}$ is an optimal $(M, I, \calV)$-compatible $c_{\epsilon}$-potential.

Suppose that (D2-1) occurs.
Then we apply the rearrangement to $(M, I)$ with respect to some rearrangeable component in the algorithm;
the resulting edge set pair is denoted by $(M^*, I^*)$.
By \cref{lem:rearrangement},
$(M^*, I^*)$ is a matching-pair of size $k+1$,
$\calV$ is a valid labeling for $M^*$,
and
$p_\epsilon$ is an optimal $(M^*, I^*, \calV)$-compatible $c$-potential.
By updating $(M, I) \leftarrow (M^*, I^*)$ and $p \leftarrow p_{\epsilon}$,
we obtain \cref{thm:augmenting path}~(3).

In the following, we assume that there is no rearrangeable component in $M \setminus I$.
By Claim, $p_{\epsilon}$ is an optimal $(M, I, \calV)$-compatible $c_{\epsilon}$-potential,
and hence, condition~1 holds.
By the definition of $p_{\epsilon}$,
if $\beta^\sigma \alpha^{\sigma'} \in \calE(\calV, p)$
and either $\beta^\sigma, \alpha^{\sigma'} \in \calF$
or $\beta^\sigma, \alpha^{\sigma'} \notin \calF$,
then $\beta^\sigma \alpha^{\sigma'} \in \calE(\calV, p_\epsilon)$.
By $\calS(M, I, \calV, p) \subseteq \calF$ and $\calT(M, I, \calV, p) \cap \calF = \emptyset$,
we obtain $\calS(M, I, \calV, p) \subseteq \calS(M, I, \calV, p_\epsilon)$ and $\calT(M, I, \calV, p) \subseteq \calT(M, I, \calV, p_\epsilon)$.
By \cref{lem:source set}~(2),
$\calC(\beta^\sigma)$ forms an even length path for $\beta^\sigma \in \calU(M, I)$.
If $\calS(M, I, \calV, p_\epsilon) \setminus \calS(M, I, \calV, p) \neq \emptyset$,
then there is a $\sigma$-edge $\beta \alpha \in M$ such that $\beta^\sigma \in \calS(M, I, \calV, p)$
and $\beta^\sigma \alpha^\sigma \in \calE(\calV, p_{\epsilon}) \setminus \calE(\calV, p)$.
This implies that $\alpha^\sigma \notin \calF$.
Thus, by condition~4, $\calF$ exits $\calS(M, I, \calV, p_\epsilon) \setminus \calS(M, I, \calV, p)$.
We consider additional two cases:
(i)
$\calT(M, I, \calV, p_\epsilon) \supsetneq \calT(M, I, \calV, p)$
and (ii) $\calT(M, I, \calV, p_\epsilon) = \calT(M, I, \calV, p)$.

(i).
This case corresponds to (D2-2).
By $\calT(M, I, \calV, p_\epsilon) \supsetneq \calT(M, I, \calV, p)$,
there is a $\sigma$-edge $\beta \alpha \in M$
such that $\alpha^\sigma \in \calT(M, I, \calV, p)$ and
$\alpha^\sigma \beta^\sigma \in \calE(\calV, p_{\epsilon}) \setminus \calE(\calV, p)$.
This implies that $\beta^\sigma \in \calF \cap \calT(M, I, \calV, p_\epsilon)$.
Thus there is a path in $\calF$ from a vertex in $\calS(M, I, \calV, p) (\subseteq \calS(M, I, \calV, p_\epsilon))$ to a vertex in $\calT(M, I, \calV, p_\epsilon)$.

Let $\calR$ be such a minimal path in $\calF$ with respect to inclusion,
i.e.,  the first node of $\calR$ belongs to $\calS(M, I, \calV, p)$,
the last node belongs to $\calT(M, I, \calV, p_\epsilon)$,
and all intermediate nodes exit $\calS(M, I, \calV, p) \cup \calT(M, I, \calV, p_\epsilon)$.
Furthermore, since $\calF$ exits $\calS(M, I, \calV, p_\epsilon) \setminus \calS(M, I, \calV, p)$,
all intermediate nodes also exit $\calS(M, I, \calV, p_\epsilon)$.
We show that $\calR$ is an augmenting path for $(M, I, \calV, p_\epsilon)$.
By \cref{lem:source set}~(2) and the minimality of $\calR$ with respect to the inclusion,
the last node of $\calR$ belongs to $\calT(M, I, \calV, p_{\epsilon}) \cap \{ \alpha^+, \alpha^- \mid \alpha \}$,
say, $\alpha^\sigma$,
and the last edge ${\beta'}^{\sigma'} \alpha^\sigma$ of $\calR$ exits $\calE(\calV, p_{\epsilon})|_M$.
Since $\calF$ is included in $\calE(\calV, p_\epsilon)$ by Claim,
$\calR$ forms an augmenting path for $(M, I, \calV, p_\epsilon)$;
\cref{thm:augmenting path}~(1) for (D2-2) holds.

(ii).
This case corresponds to (D2-3).
Since $\calF$ exits $\calS(M, I, \calV, p_\epsilon) \setminus \calS(M, I, \calV, p)$,
$\calF \cup \calS(M, I, \calV, p_\epsilon)$ satisfies conditions~2 and~3.
Let $\calC(\beta^\sigma)_\epsilon$ be the connected component of $\calG(\calV, p_\epsilon)|_M$
containing $\beta^\sigma$.
By \cref{lem:source set}~(2),
$\calC(\beta^\sigma)_\epsilon \setminus \calC(\beta^\sigma)$
forms an even length $\sigma$-path in $\calG(\calV, p_\epsilon)|_M$.
Hence conditions~4 and~5 hold.

(4).
One primal or dual update can be performed in $O(|\calE(\calV, p)|) = O(\mu \nu)$ time.
By condition~5 and $\card{\set{\beta^+, \beta^-}[\beta]} \leq 2\nu$,
the number of nodes in $\set{\alpha^+, \alpha^-}[\alpha]$ covered with $\calF$
is bounded by $k \leq \min\set{\mu, \nu}$.
In the primal update,
if the algorithm does not stop,
then the number of nodes in $\set{\alpha^+, \alpha^-}[\alpha]$ covered with $\calF$ increases by at least one.
In the dual update,
if the algorithm does not stop,
then
at least one edge $\beta^\sigma \alpha^{\sigma'}$
such that $\beta^\sigma \in \calF$ and $\alpha^{\sigma'} \not\in \calF$ appears in $\calE(\calV, p)$ by Claim;
the next phase is primal.
Thus
the number of iterations of the primal/dual updates in the algorithm is $O(\min\set{\mu, \nu})$.
Hence the running-time of the algorithm is $O(\mu \nu \min\set{\mu, \nu})$.
\end{proof}

\section{Preliminaries for the augmentation procedure}\label{sec:preliminaries augmentation}
In this section,
we introduce concepts of {\it pseudo augmenting path} (\cref{subsec:pseudo augmenting path}),
{\it front-propagation} (\cref{subsec:front-propagation}),
and {\it back-propagation} (\cref{subsec:back-propagation})
for describing an augmentation procedure and proving its validity.
We also introduce two no short-cut conditions---named ($\No$) and ($\Ni$)---on an augmenting path
in \cref{subsec:no short-cut conditions}.
\cref{subsec:theta} is devoted to introducing two quantities used to estimate the time complexity of the augmentation procedure.
During the augmentation,
although an $(M, I, \calV)$-compatible $c$-potential $p$ may not satisfy (Zero) (or may not be optimal) for $(M, I, \calV)$,
it satisfies (Zero)$'$ for $\calR$ that is a weaker condition than (Zero).
This is formally introduced in \cref{subsec:IH}.
In \cref{subsec:outline},
we provide an outline of our augmentation.

We here employ several notations.
For an outer path $\calP = (\beta_0^{\sigma_0} \alpha_1^{\sigma_1}, \alpha_1^{\sigma_1} \beta_1^{\sigma_1}, \dots, \beta_k^{\sigma_k} \alpha_{k+1}^{\sigma_{k+1}})$,
its italic font $P$ denotes
the underlying walk $(\beta_0 \alpha_1, \alpha_1 \beta_1, \dots, \beta_k \alpha_{k+1})$ in $G$.
Similarly, the underlying path of an inner path $\calQ$
is denoted by its italic font $Q$.
An outer path $\calP$ is said to be {\it simple}
if the underlying walk $P$ in $G$ is actually a path,
i.e.,
it does not use the same edge twice.
For an outer or inner path $\mathcal{X} = (\gamma_0^{\sigma_0} \gamma_1^{\sigma_1}, \gamma_1^{\sigma_1} \gamma_2^{\sigma_2}, \dots, \gamma_k^{\sigma_k} \gamma_{k+1}^{\sigma_{k+1}})$ and $i, j$ with $0 \leq i < j \leq k+1$,
we define $\mathcal{X}[\gamma_i^{\sigma_i}, \gamma_j^{\sigma_j}]$ by
the subpath of $\mathcal{X}$ from $\gamma_i^{\sigma_i}$ to $\gamma_j^{\sigma_j}$.
In particular,
if $\gamma_i^{\sigma_i}$ is the initial node $\gamma_0^{\sigma_0}$ of $\mathcal{X}$,
then we denote $\mathcal{X}[\gamma_0^{\sigma_0}, \gamma_j^{\sigma_j}]$ by
$\mathcal{X}(\gamma_j^{\sigma_j}]$.
If $\gamma_j^{\sigma_j}$ is the last node $\gamma_{k+1}^{\sigma_{k+1}}$ of $\mathcal{X}$,
then we denote $\mathcal{X}[\gamma_i^{\sigma_i}, \gamma_{k+1}^{\sigma_{k+1}}]$ by
$\mathcal{X}[\gamma_i^{\sigma_i})$.
For a component $C$ of $G$,
we denote the vertex set of $C$ by $V(C)$.

In the following,
let $(M, I)$ denote a matching-pair of size $k$, $\calV$ a valid labeling for $M$,
and
$p$ an $(M, I, \calV)$-compatible $c$-potential.

\subsection{A pseudo augmenting path}\label{subsec:pseudo augmenting path}
A {\it pseudo outer path} $\calP$ for $(M, I, \calV, p)$ is a path in $\calG(\calV, p)$ of the form
\begin{align*}
(\beta_0^{\sigma_0} \alpha_1^{\sigma_1}, \alpha_1^{\sigma_1} \beta_1^{\sigma_1}, \dots, \beta_k^{\sigma_k} \alpha_{k+1}^{\sigma_{k+1}})
\end{align*}
satisfying (O1) and the following (O2)$'$:
\begin{description}
	\item[{\rm (O2)$'$}]
	$\beta_i^{\sigma_j} \alpha_{i+1}^{\overline{\sigma_{i+1}}} \not\in \calE(\calV, p)$
	for each $i = 0,1,\dots,k -1$.
\end{description}
That is,
the condition (O2) that $A_{\alpha_{i+1} \beta_i} (U_{\alpha_{i+1}}^{\overline{\sigma_{i+1}}}, V_{\beta_i}^{\sigma_i}) = \{ 0 \}$
of an outer path
is weakened
to $\beta_i^{\sigma_j} \alpha_{i+1}^{\overline{\sigma_{i+1}}} \not\in \calE(\calV, p)$
in the condition (O2)$'$ of an pseudo outer path.

A {\it pseudo augmenting path} $\calR$ for $(M, I, \calV, p)$ is a path in $\calG(\calV, p)$
satisfying (A2) and the following (A1)$'$:
\begin{description}
	\item[{\rm (A1)$'$}]
	$\calR$ is the concatenation $\calP_0 \circ \calQ_1 \circ \calP_1 \circ \cdots \circ \calQ_m \circ \calP_m$
	of pseudo outer paths $\calP_0, \calP_1, \dots, \calP_m$ and inner paths $\calQ_1, \dots, \calQ_m$ for $(M, I, \calV, p)$
	in which $\alpha(\calP_i) = \alpha(\calQ_{i+1})$ and $\beta(\calQ_{i+1}) = \beta(\calP_{i+1})$ for each $i$.
\end{description}

\subsection{Front-propagation}\label{subsec:front-propagation}
Let $\calP = (\beta_0^{\sigma_0} \alpha_1^{\sigma_1}, \alpha_1^{\sigma_1} \beta_1^{\sigma_1}, \dots, \beta_k^{\sigma_k} \alpha_{k+1}^{\sigma_{k+1}})$ be a pseudo outer path for $(M, I, \calV, p)$.
The {\it front-propagation} of $\calP$ is a sequence $(Y_0, X_1, Y_1, \dots, Y_k, X_{k+1})$
of 1-dimensional vector spaces
such that $Y_0 \defeq V_{\beta_0}^{\sigma_0}$ and for each $i$,
\begin{align*}
X_i \defeq
(Y_{i-1})^{\perp_{\beta_{i-1} \alpha_i}}, \qquad
Y_i \defeq (X_i)^{\perp_{\alpha_i \beta_i}}.
\end{align*}
Note here that
if $\calP$ is an outer path,
then
the front-propagation of $\calP$ is
\begin{align*}
(V_{\beta_i}^{\sigma_0}, U_{\alpha_1}^{\overline{\sigma_1}}, V_{\beta_1}^{\sigma_1}, \dots, V_{\beta_k}^{\sigma_k}, (V_{\beta_k}^{\sigma_k})^{\perp_{\beta_k \alpha_{k+1}}})
\end{align*}
by (O2).

Suppose that we replace $V_{\beta_0}^{\sigma_0}, U_{\alpha_1}^{\overline{\sigma_1}}, V_{\beta_1}^{\sigma_1}, \dots, V_{\beta_i}^{\sigma_i}$ in $\calV$ with $Y_0, X_1, Y_1, \dots, Y_i$,
respectively;
we refer to the resulting as $\calV(\calP(\beta_i^{\sigma_i}])$.
The following holds:
\begin{lemma}\label{lem:pseudo outer path}
	For each $i = 0,1, \dots, k$,
	we have
	$\calV(\calP(\beta_i^{\sigma_i}]) \simeq_p \calV$
	and
	$\calG(\calV(\calP(\beta_i^{\sigma_i}]), p) = \calG(\calV, p)$.
	Moreover, $\calP$ forms an outer path for $(M, I, \calV(\calP(\beta_k^{\sigma_k}]), p)$.
\end{lemma}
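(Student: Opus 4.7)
The proof proceeds by induction on $i$. The base case $i=0$ is immediate, since $Y_0 = V_{\beta_0}^{\sigma_0}$ forces $\calV(\calP(\beta_0^{\sigma_0}]) = \calV$. For the inductive step, write $\calV^{(i-1)} := \calV(\calP(\beta_{i-1}^{\sigma_{i-1}}])$ and $\calV^{(i)} := \calV(\calP(\beta_i^{\sigma_i}])$. These two labelings differ only in the $\overline{\sigma_i}$-space of $\alpha_i$ (replaced by $X_i$) and the $\sigma_i$-space of $\beta_i$ (replaced by $Y_i$), so I need only analyze the passage from $\calV^{(i-1)}$ to $\calV^{(i)}$.

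The first task is to extract the geometric properties of $X_i$ and $Y_i$. Since the inductive hypothesis gives $\calG(\calV^{(i-1)}, p) = \calG(\calV, p)$ and $\beta_{i-1}^{\sigma_{i-1}} \alpha_i^{\sigma_i} \in \calE(\calV, p)$, applying \cref{lem:edge} in $\calV^{(i-1)}$ to this edge shows $X_i \in \calM_{\alpha_i}$ with $X_i \neq U_{\alpha_i}^{\sigma_i}$, and the identity $p(X_i) = p(U_{\alpha_i}^{\overline{\sigma_i}})$ follows from the strict clause of \cref{lem:edge} when $p(U_{\alpha_i}^{\overline{\sigma_i}}) < p(U_{\alpha_i}^{\sigma_i})$, and from (Reg) otherwise. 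Because $\alpha_i \beta_i \in I$ is rank-2 and double-tight, the relation $A_{\alpha_i \beta_i}(U_{\alpha_i}^{\sigma_i}, V_{\beta_i}^{\overline{\sigma_i}}) = \set{0}$ from \eqref{eq:+-} pins $U_{\alpha_i}^{\sigma_i}$ as the unique orthogonal of $V_{\beta_i}^{\overline{\sigma_i}}$; thus $X_i \neq U_{\alpha_i}^{\sigma_i}$ forces $Y_i \in \calM_{\beta_i}$ with $Y_i \neq V_{\beta_i}^{\overline{\sigma_i}}$. Double-tightness of $\alpha_i \beta_i$ yields $p(U_{\alpha_i}^+) + p(V_{\beta_i}^+) = p(U_{\alpha_i}^-) + p(V_{\beta_i}^-)$, and combining this with $p(X_i) = p(U_{\alpha_i}^{\overline{\sigma_i}})$ and (Reg) gives $p(Y_i) = p(V_{\beta_i}^{\sigma_i})$.

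With these properties in hand, I verify that $\calV^{(i)}$ is a valid labeling for $M$. Since $\alpha_i \beta_i \in I$ is isolated, it is the unique $M$-edge touching $\alpha_i$ or $\beta_i$, so no other $M$-edge is affected by the update; the opposite-sign condition $A_{\alpha_i \beta_i}(X_i, Y_i) = \set{0}$ holds by the very definition $Y_i = (X_i)^{\perp_{\alpha_i \beta_i}}$, and the other condition of \eqref{eq:+-} is inherited from $\calV^{(i-1)}$. Together with the potential equalities this establishes $\calV^{(i)} \simeq_p \calV^{(i-1)} \simeq_p \calV$. The identity $\calG(\calV^{(i)}, p) = \calG(\calV^{(i-1)}, p)$ then follows from \cref{lem:>}~(3): edges untouched by the replacement satisfy the hypothesis trivially, and for an edge incident to the newly set $\overline{\sigma_i}$-space of $\alpha_i$ (and symmetrically for $\beta_i$), either $X_i = U_{\alpha_i}^{\overline{\sigma_i}}$ or the strict potential dominance $p(U_{\alpha_i}^{\overline{\sigma_i}}) > p(U_{\alpha_i}^{\sigma_i})$ supplies the required hypothesis; the remaining equal-potential subcase is resolved by feeding the orthogonality $X_i = (Y_{i-1})^{\perp_{\beta_{i-1}\alpha_i}}$ and the no-short-cut condition (O2)$'$ against one another to rule out any spurious edge creation or deletion.

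For the moreover part, once the induction reaches $i = k$, every consecutive triple $(\beta_{j-1}^{\sigma_{j-1}}, \alpha_j^{\sigma_j}, \beta_j^{\sigma_j})$ along $\calP$ satisfies $A_{\alpha_j \beta_{j-1}}(X_j, Y_{j-1}) = \set{0}$ by the very definition of $X_j$, which is precisely condition (O2) for $\calP$ read in the labeling $\calV^{(k)}$; combined with (O1), inherited from $\calP$ being a pseudo outer path, this upgrades $\calP$ to an honest outer path for $(M, I, \calV^{(k)}, p)$. The main obstacle I anticipate is the equal-potential subcase in the third paragraph, where \cref{lem:>}~(3) is not directly applicable and a short case analysis using the orthogonal definitions of $X_i, Y_i$ and (O2)$'$ is needed to show that the edge set of $\calG$ is truly invariant under the relabeling.
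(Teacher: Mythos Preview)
Your outline is correct and uses the right ingredients, but the case structure is looser than it needs to be, and the ``remaining equal-potential subcase'' you flag as the main obstacle is in fact vacuous. The paper organizes the induction step as a dichotomy on the sign of $p(U_{\alpha_i}^{\sigma_i}) - p(U_{\alpha_i}^{\overline{\sigma_i}})$. When $p(U_{\alpha_i}^{\sigma_i}) \geq p(U_{\alpha_i}^{\overline{\sigma_i}})$, combining the tightness $p(U_{\alpha_i}^{\sigma_i}) + p(Y_{i-1}) + c = d_{\alpha_i\beta_{i-1}}$ with (O2)$'$ (i.e.\ $\beta_{i-1}^{\sigma_{i-1}}\alpha_i^{\overline{\sigma_i}} \notin \calE(\calV^{(i-1)},p)$) forces $A_{\alpha_i\beta_{i-1}}(U_{\alpha_i}^{\overline{\sigma_i}}, Y_{i-1}) = \{0\}$, hence $X_i = (Y_{i-1})^{\perp_{\beta_{i-1}\alpha_i}} = U_{\alpha_i}^{\overline{\sigma_i}}$ exactly; then $Y_i = V_{\beta_i}^{\sigma_i}$ and $\calV^{(i)} = \calV^{(i-1)}$, so the graph identity is trivial. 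In the complementary case $p(U_{\alpha_i}^{\sigma_i}) < p(U_{\alpha_i}^{\overline{\sigma_i}})$, double-tightness of $\alpha_i\beta_i$ gives the strict inequality $p(V_{\beta_i}^{\sigma_i}) > p(V_{\beta_i}^{\overline{\sigma_i}})$ as well, so \cref{lem:>}~(3) applies directly at both updated nodes. There is no third case.

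Your plan to ``rule out spurious edge creation or deletion'' by a direct edge-by-edge analysis in the equal-potential regime would not succeed as stated: if one genuinely had $X_i \neq U_{\alpha_i}^{\overline{\sigma_i}}$ with $p(U_{\alpha_i}^+) = p(U_{\alpha_i}^-)$, then for an arbitrary $\beta^\sigma$ the membership of $\alpha_i^{\overline{\sigma_i}}\beta^\sigma$ in $\calE$ depends on whether $A_{\alpha_i\beta}(\,\cdot\,,V_\beta^\sigma)$ vanishes on $U_{\alpha_i}^{\overline{\sigma_i}}$ versus on $X_i$, and nothing in the hypotheses links these two conditions. The resolution is not a case analysis of the graph but the realization, via (O2)$'$, that this configuration never arises. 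Once you reorganize the two cases as above, the argument goes through cleanly and your final paragraph on upgrading (O2)$'$ to (O2) via $A_{\alpha_j\beta_{j-1}}(X_j, Y_{j-1}) = \{0\}$ is exactly right.
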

\begin{proof}
	Let $(Y_0, X_1, Y_1, \dots, Y_k, X_{k+1})$ be the front-propagation of $\calP$.
	For $i = 0,1,\dots,k$, let $\calV_i \defeq \calV(\calP(\beta_i^{\sigma_i}])$.
	We show
	$\calV_i \simeq_p \calV$
	and
	$\calG(\calV_i, p) = \calG(\calV, p)$
	by induction on $i$.
	In addition,
	we also show that, if $p(U_{\alpha_{i}}^{\overline{\sigma_{i}}}) \leq p(U_{\alpha_{i}}^{\sigma_{i}})$
	then $X_i = U_{\alpha_{i}}^{\overline{\sigma_{i}}}$,
	and if $p(V_{\beta_{i}}^{\sigma_{i}}) \leq p(V_{\beta_{i}}^{\overline{\sigma_{i}}})$
	then $Y_i = V_{\beta_{i}}^{\sigma_{i}}$.

	The case $i = 0$ is clear by $\calV_0 = \calV$ and $Y_0 = V_{\beta_0}^{\sigma_0}$.
	Suppose $0 \leq i-1 < k$.
	By the induction hypothesis of $\calG(\calV_{i-1}, p) = \calG(\calV, p)$,
	we have
	$\beta_{i-1}^{\sigma_{i-1}} \alpha_{i}^{\sigma_{i}} \in \calE(\calV_{i-1}, p)$
	and $\beta_{i-1}^{\sigma_{i-1}} \alpha_{i}^{\overline{\sigma_{i}}} \notin \calE(\calV_{i-1}, p)$.
	There are two cases:
	(i) $p(U_{\alpha_{i}}^{\sigma_{i}}) \geq p(U_{\alpha_{i}}^{\overline{\sigma_{i}}})$
	and (ii) $p(U_{\alpha_{i}}^{\sigma_{i}}) < p(U_{\alpha_{i}}^{\overline{\sigma_{i}}})$.
	Note that $Y_{i-1}$ is the $\sigma_{i-1}$-space of $\beta_{i-1}$ with respect to $\calV_{i-1}$,
	and that
	$U_{\alpha_{i}}^{\overline{\sigma_{i}}}$ and $V_{\beta_{i}}^{\sigma_{i}}$ are the $\overline{\sigma_{i}}$-space of $\alpha_{i}$ and the $\sigma_{i}$-space of $\beta_{i}$ with respect to $\calV_{i-1}$,
	respectively.

	(i) $p(U_{\alpha_{i}}^{\sigma_{i}}) \geq p(U_{\alpha_{i}}^{\overline{\sigma_{i}}})$.
	By $\alpha_{i} \beta_{i} \in I$ and (Tight),
	we have $p(V_{\beta_{i}}^{\sigma_{i}}) \leq p(V_{\beta_{i}}^{\overline{\sigma_{i}}})$.
	It suffices to show that $X_{i} = U_{\alpha_{i}}^{\overline{\sigma_{i}}}$.
	Indeed,
	the identity $X_{i} = U_{\alpha_{i}}^{\overline{\sigma_{i}}}$ implies $Y_{i} = V_{\beta_{i}}^{\sigma_{i}}$,
	i.e., $\calV_{i} = \calV_{i-1}$.
	Thus
	$\calV_{i} \simeq_p \calV$ and
	$\calG(\calV_{i}, p) = \calG(\calV, p)$
	by the induction hypothesis.

	We denote by $X$ the $\sigma_{i}$-space of $\alpha_{i}$ with respect to $\calV_{i-1}$;
	note that $X$ may be different from $U_{\alpha_{i}}^{\sigma_{i}}$
	if $\alpha_{i}^{\overline{\sigma_{i}}} = \alpha_l^{\sigma_l}$ for some $l \leq i-1$.
	Since $p$ satisfies (Reg),
	we have $p(X) = p(U_{\alpha_{i}}^{\sigma_{i}}) \geq p(U_{\alpha_{i}}^{\overline{\sigma_{i}}})$.
	By $\beta_{i-1}^{\sigma_{i-1}} \alpha_{i}^{\sigma_{i}} \in \calE(\calV_{i-1}, p)$ and \cref{lem:edge},
	we obtain $X_{i} (= (Y_{i-1})^{\perp_{\beta_{i-1} \alpha_{i}}}) \in \calM_{\alpha_{i}}$.
	Furthermore,
	it follows from $\beta_{i-1}^{\sigma_{i-1}} \alpha_{i}^{\sigma_{i}} \in \calE(\calV_{i-1}, p)$
	that $d_{\alpha_{i} \beta_{i-1}} = p(X) + p(Y_{i-1}) + c \geq p(U_{\alpha_{i}}^{\overline{\sigma_{i}}}) + p(Y_{i-1}) + c$.
	By $\beta_{i-1}^{\sigma_{i-1}} \alpha_{i}^{\overline{\sigma_{i}}} \notin \calE(\calV_{i-1}, p)$,
	we have $A_{\alpha_{i} \beta_{i-1}}(U_{\alpha_{i}}^{\overline{\sigma_{i}}}, Y_{i-1}) = \set{0}$.
	Thus we obtain $X_{i} = (Y_{i-1})^{\perp_{\beta_{i-1} \alpha_{i}}} = U_{\alpha_{i}}^{\overline{\sigma_{i}}}$, as required.

	(ii) $p(U_{\alpha_{i}}^{\sigma_{i}}) < p(U_{\alpha_{i}}^{\overline{\sigma_{i}}})$.
	By $\alpha_{i} \beta_{i} \in I$ and (Tight),
	we have $p(V_{\beta_{i}}^{\sigma_{i}}) > p(V_{\beta_{i}}^{\overline{\sigma_{i}}})$.
	By the induction hypothesis,
	the $\sigma_{i}$-space of $\alpha_{i}$
	and the $\overline{\sigma_{i}}$-space of $\beta_{i}$
	with respect to $\calV_{i-1}$ are $U_{\alpha_{i}}^{\sigma_{i}}$
	and $V_{\beta_{i}}^{\overline{\sigma_{i}}}$,
	respectively.
	By $\beta_{i-1}^{\sigma_{i-1}} \alpha_{i}^{\sigma_{i}} \in \calE(\calV_{i-1}, p)$ and \cref{lem:edge},
	we obtain $X_{i} = (Y_{i-1})^{\perp_{\beta_{i-1} \alpha_{i}}} \in \calM_{\alpha_i}$ and
	$X_{i} \neq U_{\alpha_{i}}^{\sigma_{i}}$.
	Since $\alpha_{i} \beta_{i}$ is rank-2,
	we also obtain $Y_{i} = (X_{i})^{\perp_{\alpha_{i} \beta_{i}}} \in \calM_{\beta_{i}}$
	and $Y_{i} \neq V_{\beta_{i}}^{\sigma_{i}}$.
	Hence $\calV_{i}$ is a valid labeling for $M$.
	Since $p$ satisfies (Reg),
	we have $p(X_{i}) = p(U_{\alpha_{i}}^{\overline{\sigma_{i}}}) > p(U_{\alpha_{i}}^{\sigma_{i}})$
	and $p(Y_{i}) = p(V_{\beta_{i}}^{\sigma_{i}}) > p(V_{\beta_{i}}^{\overline{\sigma_{i}}})$.
	Thus $\calV_i \simeq_p \calV_{i-1}$.
	Furthermore \cref{lem:>}~(3) asserts $\calG(\calV_{i}, p) = \calG(\calV_{i-1}, p)$.
	
	In the case of $i = k$,
	$\calP$ is a path in $\calG(\calV_k, p) = \calG(\calV, p)$ that clearly satisfies (O1).
	Furthermore, $\calP$ satisfies (O2)
	by $A_{\alpha_{i+1} \beta_i}(X_{i+1}, Y_i) = \set{0}$ for $i = 0,1,\dots, k-1$.
	Therefore $\calP$ is an outer path for $(M, I, \calV, p)$.
	
	This completes the proof.
\end{proof}

Let $\calR = \calP_0 \circ \calQ_1 \circ \calP_1 \circ \cdots \circ \calQ_m \circ \calP_m$ be a pseudo augmenting path for $(M, I, \calV, p)$.
We define $\calV(\calR)$ by the labeling obtained from $\calV$ by executing the front-propagation of $\calP_i$
for each $i$. That is,
\begin{align*}
\calV(\calR) \defeq \calV(\calP_0(\beta'(\calP_0)])(\calP_1(\beta'(\calP_1)])) \cdots (\calP_m(\beta'(\calP_m)]),
\end{align*}
where $\beta'(\calP_i)$ denotes the node adjacent to $\alpha(\calP_i)$ in $\calP_i$,
i.e., $\beta'(\calP_i)\alpha(\calP_i)$ is the last edge of $\calP_i$.
The following proposition states that we can construct an augmenting path from a pseudo outer path via the front-propagation.
\begin{proposition}\label{prop:pseudo augmenting path}
	Let $\calR$ be a pseudo augmenting path for $(M, I, \calV, p)$.
    Then $\calV(\calR) \simeq_p \calV$
	and $\calR$
	forms an augmenting path for $(M, I, \calV(\calR), p)$.
\end{proposition}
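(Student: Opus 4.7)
The plan is to prove the proposition by iteratively applying the single-path result \cref{lem:pseudo outer path} to each pseudo outer path $\calP_0, \calP_1, \dots, \calP_m$ in the decomposition $\calR = \calP_0 \circ \calQ_1 \circ \cdots \circ \calQ_m \circ \calP_m$. Set $\calV_{-1} \defeq \calV$ and, for $i = 0, 1, \dots, m$, let $\calV_i$ be the labeling obtained from $\calV_{i-1}$ by executing the front-propagation of $\calP_i$; then $\calV_m = \calV(\calR)$ by the definition preceding the proposition. I prove by induction on $i$ the invariants (a) $\calV_i \simeq_p \calV$, (b) $\calG(\calV_i, p) = \calG(\calV, p)$, (c) $\calP_0, \dots, \calP_i$ are outer paths for $(M, I, \calV_i, p)$, and (d) the components $\calP_{i+1}, \dots, \calP_m$ and $\calQ_{i+1}, \dots, \calQ_m$ remain pseudo outer paths and inner paths for $(M, I, \calV_i, p)$, respectively.

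For the inductive step, the invariant (b) at stage $i-1$, combined with the fact that $\calV_{i-1}$ is a valid labeling for $M$ and therefore induces the same $\sigma$-/$\overline{\sigma}$-coloring on $M$ as $\calV$ does, implies that (O1), (O2)$'$ for $\calP_j$ ($j \geq i$) and (I1), (I2) for $\calQ_j$ still hold with respect to $(M, I, \calV_{i-1}, p)$. Invoking \cref{lem:pseudo outer path} with $\calP_i$ regarded as a pseudo outer path for $(M, I, \calV_{i-1}, p)$ then gives $\calV_i \simeq_p \calV_{i-1}$, $\calG(\calV_i, p) = \calG(\calV_{i-1}, p)$, and promotes $\calP_i$ to a genuine outer path. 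Transitivity of $\simeq_p$ and of the graph identity preserves (a) and (b); the earlier outer paths $\calP_0, \dots, \calP_{i-1}$ retain (O2) because their already-computed fronts were written into $\calV_j$ at the visited nodes and those labels persist in $\calV_i$, yielding (c).

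The main obstacle is to verify that altering labels at the nodes $\beta_0, \alpha_1, \dots, \beta_k$ of $\calP_i$ does not disrupt the later components. Since $\calR$ is a path in $\calG(\calV, p)$ and hence has distinct vertices, $\calP_i$ shares with the rest of $\calR$ only the endpoint $\alpha(\calP_i) = \alpha(\calQ_{i+1})$, whose labeling is left untouched by the front-propagation (which modifies labels only up to $\beta_k$). The one remaining source of interference is the possibility that a later component revisits an underlying node of $\calP_i$ with the opposite sign, but this is neutralized by the $\calG$-invariance in \cref{lem:pseudo outer path}: every edge used by a later component stays in $\calE(\calV_i, p) = \calE(\calV, p)$, so (O1), (O2)$'$, (I1), (I2) continue to hold, establishing (d).

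After $m+1$ iterations one obtains $\calV(\calR) = \calV_m \simeq_p \calV$ and that every $\calP_i$ is an outer path for $(M, I, \calV(\calR), p)$, so $\calR$ satisfies (A1) there. Condition (A2) is inherited from the hypothesis on $\calR$ because $\calS(M, I, \calV(\calR), p)$ and $\calT(M, I, \calV(\calR), p)$ are defined through the connected components of $\calG|_M$ and the matched/unmatched status of spaces under $(M, I)$; the former is preserved by (b) and the latter by the preservation of the $M$-coloring under $\simeq_p$. Hence $\calR$ is an augmenting path for $(M, I, \calV(\calR), p)$, as required.
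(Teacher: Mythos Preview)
Your proof is correct and follows essentially the same inductive scheme as the paper: both set $\calV_{-1} = \calV$, apply \cref{lem:pseudo outer path} to each $\calP_i$ in turn, and use the resulting graph invariance $\calG(\calV_i,p)=\calG(\calV,p)$ to check that inner paths, later pseudo outer paths, and the source/target sets are preserved. One minor imprecision: $\calP_i$ actually shares \emph{two} endpoints with the rest of $\calR$ (also $\beta(\calP_i)=\beta(\calQ_i)$), but since the front-propagation sets $Y_0 = V_{\beta_0}^{\sigma_0}$ there is no change at that vertex either, so the conclusion stands.
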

\begin{proof}
	Suppose that $\calR = \calP_0 \circ \calQ_1 \circ \calP_1 \circ \cdots \circ \calQ_m \circ \calP_m$
	and, for each $i$, let
	\begin{align*}
	\calV(\calR)_i \defeq \calV(\calP_0(\beta'(\calP_0)])(\calP_1(\beta'(\calP_1)])) \cdots (\calP_i(\beta'(\calP_i)]).
	\end{align*}
	We show by induction on $i$ that
	$\calV(\calR)_i \simeq_p \calV$
	and $\calR$ is a pseudo augmenting path for $(M, I, \calV(\calR)_i, p)$
	such that $\calP_0, \calP_1, \dots, \calP_i$
	are outer paths for $(M, I, \calV(\calR)_i, p)$.

	Let $\calV(\calR)_{-1} \defeq \calV$.
	We consider $i \geq 0$.
	Since $\calP_{i}$ is a pseudo outer path for $(M, I, \calV(\calR)_{i-1}, p)$,
	\cref{lem:pseudo outer path} and the induction hypothesis assert that
	$\calV(\calR)_{i} \simeq_p \calV(\calR)_{i-1} \simeq_p \calV$,
	$\calG(\calV(\calR)_{i}, p) = \calG(\calV(\calR)_{i-1}, p) = \calG(\calV, p)$,
	and $\calP_{i}$ is an outer path for $(M, I, \calV(\calR)_{i}, p)$.
	By $\calG(\calV(\calR)_{i}, p) = \calG(\calV, p)$,
	all $\calP_1, \calP_2, \dots, \calP_{k}$ are still pseudo outer paths for $(M, I, \calV(\calR)_{i}, p)$.
	In particular,
	since $\calP_{i}$ does not meet $\calP_l$ with $l \leq i-1$,
	$\calP_1, \calP_2, \dots, \calP_{i-1}$ are still outer paths for $(M, I, \calV(\calR)_{i}, p)$.
	It follows from $\calG(\calV, p) = \calG(\calV(\calR)_i, p)$
	that
	$\calQ_j$ forms an inner path for $(M, I, \calV(\calR)_i, p)$ for each $j$,
	$\calS(M, I, \calV, p) = \calS(M, I, \calV(\calR)_i, p)$,
	and $\calT(M, I, \calV, p) = \calT(M, I, \calV(\calR)_i, p)$.
	Thus $\calR$ is a pseudo augmenting path for $(M, I, \calV(\calR)_i, p)$
	such that $\calP_0, \calP_1, \dots, \calP_i$
	are outer paths for $(M, I, \calV(\calR)_i, p)$.
\end{proof}

\subsection{Back-propagation}\label{subsec:back-propagation}
Let $\calP = (\beta_0^{\sigma_0} \alpha_1^{\sigma_1}, \alpha_1^{\sigma_1} \beta_1^{\sigma_1}, \dots, \beta_k^{\sigma_k} \alpha_{k+1}^{\sigma_{k+1}})$ be an outer path for $(M, I, \calV, p)$.
The {\it back-propagation} of $\calP$ is a sequence $(Y_0, X_1, Y_1, \dots, Y_k, X_{k+1})$
such that $X_{k+1} \defeq U_{\alpha_{k+1}}^{\sigma_{k+1}}$ and for each $i$,
\begin{align*}
Y_{i-1} \defeq
(X_i)^{\perp_{\alpha_i \beta_{i-1}}}, \qquad
X_i \defeq (Y_i)^{\perp_{\beta_i \alpha_i}}.
\end{align*}
\begin{lemma}\label{lem:outer path}
	Let $(Y_0, X_1, Y_1, \dots, X_{k+1})$ be the back-propagation of $\calP$.
	For each $i = 0, 1,\dots, k$,
	$X_{i+1}$
	belongs to $\calM_{\alpha_{i+1}}$ and
	is different from $U_{\alpha_{i+1}}^{\overline{\sigma_{i+1}}}$,
	and
	$Y_i$ belongs to
	$\calM_{\beta_i}$
	and
	is different from $V_{\beta_i}^{\sigma_i}$.
	Moreover,
	if $p(U_{\alpha_i}^{\sigma_i}) < p(U_{\alpha_i}^{\overline{\sigma_i}})$ (resp. $p(V_{\beta_i}^{\overline{\sigma_i}}) < p(V_{\beta_i}^{\sigma_i})$),
	then $X_i$ is equal to $U_{\alpha_i}^{\sigma_i}$ (resp. $Y_i$ is equal to $V_{\beta_i}^{\overline{\sigma_i}}$).
\end{lemma}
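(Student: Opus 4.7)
The plan is to prove all the stated properties by a single downward induction on $i$, working back from $X_{k+1} = U_{\alpha_{k+1}}^{\sigma_{k+1}}$ and alternately deducing the claim for $Y_i$ from the claim for $X_{i+1}$, and the claim for $X_i$ (when $i \geq 1$) from the claim for $Y_i$. The base case is immediate: $X_{k+1}$ is by definition a labeling space of $\alpha_{k+1}$ distinct from $U_{\alpha_{k+1}}^{\overline{\sigma_{k+1}}}$, and the ``moreover'' for $X_{k+1}$ holds trivially.

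For the sub-step $X_{i+1} \mapsto Y_i$, I would split on whether $X_{i+1} = U_{\alpha_{i+1}}^{\sigma_{i+1}}$. If it does, the edge $\beta_i^{\sigma_i} \alpha_{i+1}^{\sigma_{i+1}} \in \calE(\calV, p)$ directly gives $A_{\alpha_{i+1} \beta_i}(X_{i+1}, V_{\beta_i}^{\sigma_i}) \neq \set{0}$, whence $Y_i$ is one-dimensional and distinct from $V_{\beta_i}^{\sigma_i}$. Otherwise the inductive ``moreover'' clause rules out $p(U_{\alpha_{i+1}}^{\sigma_{i+1}}) < p(U_{\alpha_{i+1}}^{\overline{\sigma_{i+1}}})$, so by (Reg) we get $p(X_{i+1}) = p(U_{\alpha_{i+1}}^{\sigma_{i+1}})$; combined with (O2), which identifies $U_{\alpha_{i+1}}^{\overline{\sigma_{i+1}}}$ as the left kernel of $A_{\alpha_{i+1} \beta_i}(\cdot, V_{\beta_i}^{\sigma_i})$, the hypothesis $X_{i+1} \neq U_{\alpha_{i+1}}^{\overline{\sigma_{i+1}}}$ again yields the desired conclusion on $Y_i$. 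For the ``moreover'' for $Y_i$, under $p(V_{\beta_i}^{\overline{\sigma_i}}) < p(V_{\beta_i}^{\sigma_i})$ I would combine the tightness $d_{\alpha_{i+1} \beta_i} = p(U_{\alpha_{i+1}}^{\sigma_{i+1}}) + p(V_{\beta_i}^{\sigma_i}) + c$ with $p(X_{i+1}) = p(U_{\alpha_{i+1}}^{\sigma_{i+1}})$ and the $c$-potential property to force $A_{\alpha_{i+1} \beta_i}(X_{i+1}, V_{\beta_i}^{\overline{\sigma_i}}) = \set{0}$, i.e.\ $Y_i = V_{\beta_i}^{\overline{\sigma_i}}$.

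For the sub-step $Y_i \mapsto X_i$ (when $i \geq 1$), I would exploit that $\alpha_i \beta_i \in I$ is a rank-$2$ double-tight edge, so that $A_{\alpha_i \beta_i}$ is nondegenerate and \eqref{eq:+-} gives $(V_{\beta_i}^{\overline{\sigma_i}})^{\perp_{\beta_i \alpha_i}} = U_{\alpha_i}^{\sigma_i}$ and $(V_{\beta_i}^{\sigma_i})^{\perp_{\beta_i \alpha_i}} = U_{\alpha_i}^{\overline{\sigma_i}}$. Hence $X_i = Y_i^{\perp_{\beta_i \alpha_i}} \in \calM_{\alpha_i}$, and $X_i = U_{\alpha_i}^{\overline{\sigma_i}}$ would be equivalent to the already excluded equality $Y_i = V_{\beta_i}^{\sigma_i}$. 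The ``moreover'' for $X_i$ follows because double-tightness plus (Tight) translates $p(U_{\alpha_i}^{\sigma_i}) < p(U_{\alpha_i}^{\overline{\sigma_i}})$ into $p(V_{\beta_i}^{\overline{\sigma_i}}) < p(V_{\beta_i}^{\sigma_i})$, so the ``moreover'' just proved for $Y_i$ gives $Y_i = V_{\beta_i}^{\overline{\sigma_i}}$, and therefore $X_i = U_{\alpha_i}^{\sigma_i}$.

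The main subtlety I expect is the interlocking of the two ``moreover'' clauses: at each level, identifying $Y_i$ with $V_{\beta_i}^{\overline{\sigma_i}}$ relies on the previous ``moreover'' for $X_{i+1}$ through the (Reg)-based evaluation $p(X_{i+1}) = p(U_{\alpha_{i+1}}^{\sigma_{i+1}})$, and identifying $X_i$ with $U_{\alpha_i}^{\sigma_i}$ in turn relies on the ``moreover'' for $Y_i$ together with the tightness identity at the double-tight edge in $I$. This pattern mirrors the front-propagation argument in \cref{lem:pseudo outer path} with the direction of induction and the direction of the strict potential inequalities reversed, so I expect the same proof template to go through cleanly.
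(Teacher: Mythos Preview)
Your proof is correct and follows essentially the same downward-induction structure as the paper's proof, alternating between the $X_{i+1} \to Y_i$ step (via the edge $\beta_i^{\sigma_i}\alpha_{i+1}^{\sigma_{i+1}}$ and (O2)) and the $Y_i \to X_i$ step (via the rank-$2$ double-tight edge $\alpha_i\beta_i \in I$). Your case organization is slightly different---you split on whether $X_{i+1}=U_{\alpha_{i+1}}^{\sigma_{i+1}}$ and unify the ``moreover'' argument through $p(X_{i+1})=p(U_{\alpha_{i+1}}^{\sigma_{i+1}})$, whereas the paper splits on the rank of $\alpha_{i+1}\beta_i$ and then on the sign of $p(U_{\alpha_{i+1}}^{\sigma_{i+1}})-p(U_{\alpha_{i+1}}^{\overline{\sigma_{i+1}}})$---but the underlying reasoning is the same.
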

\begin{proof}
	It suffices to prove that
	\begin{itemize}
		\item
		if $X_{i+1} \in \calM_{\alpha_{i+1}}$ and $X_{i+1} \neq U_{\alpha_{i+1}}^{\overline{\sigma_{i+1}}}$,
		then $Y_i \in \calM_{\beta_i}$ and $Y_i \neq V_{\beta_i}^{\sigma_i}$, and
		\item
		if
		$p(V_{\beta_i}^{\sigma_i}) > p(V_{\beta_i}^{\overline{\sigma_i}})$,
		then
		$Y_i = V_{\beta_i}^{\overline{\sigma_i}}$.
	\end{itemize}
	Indeed,
	by $\beta_i \alpha_i \in I$ and (Tight),
	the edge $\beta_i \alpha_i$ is rank-2, $U_{\alpha_{i}}^{\overline{\sigma_{i}}} = (V_{\beta_i}^{\sigma_i})^{\perp_{\beta_i \alpha_i}}$,
	$U_{\alpha_{i}}^{\sigma_{i}} = (V_{\beta_i}^{\overline{\sigma_i}})^{\perp_{\beta_i \alpha_i}}$,
	and $p(U_{\alpha_i}^{\sigma_i}) + p(V_{\beta_i}^{\sigma_i}) + c = p(U_{\alpha_i}^{\overline{\sigma_i}}) + p(V_{\beta_i}^{\overline{\sigma_i}}) + c$.
	Hence
	$Y_i \in \calM_{\beta_i}$, $Y_i \neq V_{\beta_i}^{\sigma_i}$, $p(V_{\beta_i}^{\sigma_i}) > p(V_{\beta_i}^{\overline{\sigma_i}})$,
	and
	$Y_i = V_{\beta_i}^{\overline{\sigma_i}}$ immediately imply $X_{i} \in \calM_{\alpha_{i}}$, $X_{i} \neq U_{\alpha_{i}}^{\overline{\sigma_{i}}}$,
	$p(U_{\alpha_i}^{\sigma_i}) < p(U_{\alpha_i}^{\overline{\sigma_i}})$,
	and $X_i = U_{\alpha_i}^{\sigma_i}$,
	respectively.

	We show both the bullets by induction on $i = k, k-1, \dots, 0$.
	By definition,
	we have $X_{k+1} = U_{\alpha_{k+1}}^{\sigma_{k+1}}$,
	which clearly belongs to $\calM_{\alpha_{k+1}}$ and is different from $U_{\alpha_{k+1}}^{\overline{\sigma_{k+1}}}$.
	Since the edge $\beta_k^{\sigma_{k}} \alpha_{k+1}^{\sigma_{k+1}}$ exists in $\calE(\calV, p)$,
	\cref{lem:edge} asserts that
	$Y_k =
	(U_{\alpha_{k+1}}^{\overline{\sigma_{k+1}}})^{\perp_{\alpha_{k+1} \beta_k}}$
	belongs to $\calM_{\beta_k}$
	and is different from $V_{\beta_k}^{\sigma_{k}}$.
	In addition,
	if $p(V_{\beta_k}^{\sigma_k}) > p(V_{\beta_k}^{\overline{\sigma_k}})$,
	then we have $Y_k = V_{\beta_k}^{\overline{\sigma_k}}$ by $X_{k+1} = U_{\alpha_{k+1}}^{\sigma_{k+1}}$, $\beta_k^{\sigma_{k}} \alpha_{k+1}^{\sigma_{k+1}} \in \calE(\calV, p)$, and \cref{lem:edge}.
	
	Assume $X_{i+1} \in \calM_{\alpha_{i+1}}$ and $X_{i+1} \neq U_{\alpha_{i+1}}^{\overline{\sigma_{i+1}}}$ for some $i < k$.
	If $\alpha_{i+1} \beta_i$ is rank-2,
	then $V_{\beta_i}^{\sigma_i} = (U_{\alpha_{i+1}}^{\overline{\sigma_{i+1}}})^{\perp_{\alpha_{i+1} \beta_i}}$ by (O2)
	and $Y_i =
	(X_{i+1})^{\perp_{\alpha_{i+1} \beta_i}}$.
	Hence we have $Y_i \in \calM_{\beta_i}$ and $Y_i \neq V_{\beta_i}^{\sigma_i}$.
	If $\alpha_{i+1} \beta_i$ is rank-1,
	then $X_{i+1} \neq U_{\alpha_{i+1}}^{\overline{\sigma_{i+1}}} = \ker_{\rm L}(A_{\alpha_{i+1} \beta_i})$
	by the assumption and (O2),
	which implies $Y_i = \ker_{\rm R}(A_{\alpha_{i+1} \beta_i})$.
	Moreover,
	we have $V_{\beta_i}^{\sigma_i} \neq \ker_{\rm R}(A_{\alpha_{i+1} \beta_i})$ by $\beta_i^{\sigma_i} \alpha_{i+1}^{\sigma_{i+1}} \in \calE(\calV, p)$.
	Thus we obtain $Y_i \neq V_{\beta_i}^{\sigma_i}$.

	Assume $p(V_{\beta_i}^{\sigma_i}) > p(V_{\beta_i}^{\overline{\sigma_i}})$ for some $i$.
	If $p(U_{\alpha_{i+1}}^{\sigma_{i+1}}) < p(U_{\alpha_{i+1}}^{\overline{\sigma_{i+1}}})$,
	then $X_{i+1} = U_{\alpha_{i+1}}^{\sigma_{i+1}}$ by the induction hypothesis.
	In this case, the identity $Y_i = V_{\beta_i}^{\overline{\sigma_i}}$
	follows from $\beta_k^{\sigma_{k}} \alpha_{k+1}^{\sigma_{k+1}} \in \calE(\calV, p)$ and \cref{lem:edge}.
	Suppose that $p(U_{\alpha_{i+1}}^{\sigma_{i+1}}) \geq p(U_{\alpha_{i+1}}^{\overline{\sigma_{i+1}}})$.
	Then we have $d_{\alpha_{i+1} \beta_i} = p(U_{\alpha_{i+1}}^{\sigma_{i+1}}) + p(V_{\beta_i}^{\sigma_i}) + c
	> p(U_{\alpha_{i+1}}^{\sigma_{i+1}}) + p(V_{\beta_i}^{\overline{\sigma_i}}) + c \geq p(U_{\alpha_{i+1}}^{\overline{\sigma_{i+1}}}) + p(V_{\beta_i}^{\overline{\sigma_i}}) + c$;
	the equality follows from $\beta_i^{\sigma_i} \alpha_{i+1}^{\sigma_{i+1}} \in \calE(\calV, p)$.
	Since $p$ is a $c$-potential,
	it must hold that $A_{\alpha_{i+1} \beta_i}(U_{\alpha_{i+1}}^{\sigma_{i+1}}, V_{\beta_i}^{\overline{\sigma_i}}) = 
	A_{\alpha_{i+1} \beta_i}(U_{\alpha_{i+1}}^{\overline{\sigma_{i+1}}}, V_{\beta_i}^{\overline{\sigma_i}}) = \set{0}$.
	This implies that $\alpha_{i+1} \beta_i$ is rank-1 and $V_{\beta_i}^{\overline{\sigma_i}} = \kerR(A_{\alpha_{i+1} \beta_i})$.
	By $Y_i = (X_{i+1})^{\perp_{\alpha_{i+1} \beta_i}}$,
	$Y_i$ is equal to $V_{\beta_i}^{\overline{\sigma_i}}$.
	
	This completes the proof.
\end{proof}

Suppose that we replace $U_{\alpha_i}^{\sigma_i}, V_{\beta_i}^{\overline{\sigma_i}}, \dots, V_{\beta_k}^{\overline{\sigma_k}}, U_{\alpha_{k+1}}^{\sigma_{k+1}}$
with $X_i, Y_i, \dots, Y_k, X_{k+1}$,
respectively;
we refer to the resulting as $\calV(\calP[\alpha_i^{\sigma_i})^{-1})$.
\begin{lemma}\label{lem:simple outer path}
	If $\calP$ is a simple outer path for $(M, I, \calV, p)$,
    then $\calV(\calP[\alpha_1^{\sigma_1})^{-1}) \simeq_p \calV$
	and $\calP$ is an outer path for $(M, I, \calV(\calP[\alpha_1^{\sigma_1})^{-1}), p)$.
\end{lemma}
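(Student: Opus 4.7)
The plan is to prove both assertions by reverse induction on $i = k+1, k, \dots, 1$, in parallel to the forward induction used for \cref{lem:pseudo outer path}. Let $(Y_0, X_1, Y_1, \dots, Y_k, X_{k+1})$ be the back-propagation of $\calP$, and for each $i \in \{1, \dots, k+1\}$ let $\calV_i \defeq \calV(\calP[\alpha_i^{\sigma_i})^{-1})$ denote the labeling obtained from $\calV$ by replacing $U_{\alpha_l}^{\sigma_l}$ with $X_l$ for $l = i, \dots, k+1$ and $V_{\beta_l}^{\overline{\sigma_l}}$ with $Y_l$ for $l = i, \dots, k$. The inductive claim will be that $\calV_i \simeq_p \calV$, $\calG(\calV_i, p) = \calG(\calV, p)$, and $\calP$ is an outer path for $(M, I, \calV_i, p)$. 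The base case $i = k+1$ is immediate since $X_{k+1} = U_{\alpha_{k+1}}^{\sigma_{k+1}}$ gives $\calV_{k+1} = \calV$.

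In the inductive step, passing from $\calV_{i+1}$ to $\calV_i$ replaces the $\sigma_i$-space of $\alpha_i$ with $X_i$ and the $\overline{\sigma_i}$-space of $\beta_i$ with $Y_i$. Simplicity of $\calP$ ensures that neither of these slots has been touched by the previous $k+1-i$ replacements, so the update is well-defined. \cref{lem:outer path} supplies $X_i \in \calM_{\alpha_i} \setminus \{U_{\alpha_i}^{\overline{\sigma_i}}\}$ and $Y_i \in \calM_{\beta_i} \setminus \{V_{\beta_i}^{\sigma_i}\}$, so each of $\alpha_i, \beta_i$ still carries two distinct one-dimensional subspaces. Since $\alpha_i \beta_i \in I$ is isolated in $M$, the only $M$-edge affected is $\alpha_i \beta_i$ itself; being rank-2 it requires only \eqref{eq:+-}, and the new cross term $A_{\alpha_i \beta_i}(X_i, Y_i) = \set{0}$ holds by $X_i = (Y_i)^{\perp_{\beta_i \alpha_i}}$, while the other cross term $A_{\alpha_i \beta_i}(U_{\alpha_i}^{\overline{\sigma_i}}, V_{\beta_i}^{\sigma_i}) = \set{0}$ already held in $\calV$. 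Equality of $p$-values, $p(X_i) = p(U_{\alpha_i}^{\sigma_i})$ and $p(Y_i) = p(V_{\beta_i}^{\overline{\sigma_i}})$, comes from the ``moreover'' clause of \cref{lem:outer path}: either $X_i = U_{\alpha_i}^{\sigma_i}$ directly when $p(U_{\alpha_i}^{\sigma_i}) < p(U_{\alpha_i}^{\overline{\sigma_i}})$, or $X_i \notin \{U_{\alpha_i}^+, U_{\alpha_i}^-\}$ and (Reg) forces $p(X_i) = \max\{p(U_{\alpha_i}^+), p(U_{\alpha_i}^-)\} = p(U_{\alpha_i}^{\sigma_i})$; the argument for $Y_i$ is symmetric. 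Then \cref{lem:>}~(2) yields $\calG(\calV_i, p) = \calG(\calV, p)$.

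To extend the outer-path property from $\calV_{i+1}$ to $\calV_i$, note that every (O2) condition of $\calP$ involves only the $\overline{\sigma_{j+1}}$-space of $\alpha_{j+1}$ and the $\sigma_j$-space of $\beta_j$, both of which are unchanged in $\calV_i$, so (O2) transfers verbatim from $\calV$. Only the edges $\beta_{i-1}^{\sigma_{i-1}} \alpha_i^{\sigma_i}$ and $\alpha_i^{\sigma_i} \beta_i^{\sigma_i}$ of $\calP$ reference the modified slot and must be rechecked to lie in $\calE(\calV_i, p)$. For the latter, $A_{\alpha_i \beta_i}(X_i, V_{\beta_i}^{\sigma_i}) \neq \set{0}$ because $\alpha_i \beta_i$ is rank-2 and $V_{\beta_i}^{\sigma_i} \neq Y_i$, while tightness transfers via $p(X_i) = p(U_{\alpha_i}^{\sigma_i})$ and (Tight) for $I$. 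For the former, in the rank-2 case $V_{\beta_{i-1}}^{\sigma_{i-1}} \neq Y_{i-1} = X_i^{\perp_{\alpha_i \beta_{i-1}}}$ yields nonvanishing; in the rank-1 case the original (O2) forces $\kerL(A_{\alpha_i \beta_{i-1}}) = U_{\alpha_i}^{\overline{\sigma_i}}$ and $\kerR(A_{\alpha_i \beta_{i-1}}) \neq V_{\beta_{i-1}}^{\sigma_{i-1}}$, so $X_i \neq U_{\alpha_i}^{\overline{\sigma_i}}$ yields the same conclusion. The main obstacle, in my view, is the bookkeeping: confirming that no $M$-edge incident to the modified nodes other than $\alpha_i \beta_i$ itself requires rechecking, and that no (O2) condition along $\calP$ refers to a relabeled subspace. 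Both reductions rely critically on the simplicity of $\calP$ --- which rules out configurations where a single one-dimensional slot would receive two competing updates --- combined with the isolation of $\alpha_i \beta_i \in I$ in $M$.
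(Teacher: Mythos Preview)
Your proof takes essentially the same approach as the paper's—using \cref{lem:outer path} to control the back-propagated spaces and then verifying the valid-labeling, $p$-equivalence, and outer-path conditions—though you organize it as a reverse induction while the paper analyzes the final labeling $\calV' = \calV_1$ in one pass.

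There is, however, one genuine slip: your inductive claim includes $\calG(\calV_i, p) = \calG(\calV, p)$, justified via \cref{lem:>}~(2), but that lemma only asserts equality of the restrictions $\calG(\cdot, p)|_M$. Unlike the forward case in \cref{lem:pseudo outer path}, where the modified subspace always sits at a vertex with \emph{strictly} larger $p$-value (so \cref{lem:>}~(3) applies globally), here the back-propagation can change the $\sigma_i$-space of $\alpha_i$ even when $p(U_{\alpha_i}^{\sigma_i}) = p(U_{\alpha_i}^{\overline{\sigma_i}})$, and then edges of $\calE(\calV, p)$ incident to $\alpha_i^{\sigma_i}$ outside $M$ need not survive. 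The paper's proof accordingly makes no such claim. Fortunately, you never actually use this part of the hypothesis: your argument that $\calP$ remains an outer path relies only on the inductive outer-path property at $\calV_{i+1}$ together with the explicit recheck of the two edges touching $\alpha_i^{\sigma_i}$. Simply drop $\calG(\calV_i, p) = \calG(\calV, p)$ from the inductive claim and the proof goes through.
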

\begin{proof}
	Let $(Y_0, X_1, Y_1, \dots, X_{k+1})$ be the back-propagation of $\calP$
	and $\calV' \defeq \calV(\calP[\alpha_1^{\sigma_1})^{-1})$.
	We first show $\calV' \simeq_p \calV$.
	Since $\calP$ is simple,
	for each $i = 1,2,\dots,k$
	the $\sigma_i$-space and $\overline{\sigma_i}$-space of $\alpha_i$ with respect to $\calV'$ are $X_i$ and $U_{\alpha_i}^{\overline{\sigma_i}}$,
	respectively.
	By
	\cref{lem:outer path}, they are different.
	Similarly, the $\sigma_i$-space and $\overline{\sigma_i}$-space of $\beta_i$ with respect to $\calV'$ are $V_{\beta_i}^{\sigma_i}$ and $Y_i$,
	respectively,
	which are different.
	By $A_{\alpha_i \beta_i}(U_{\alpha_i}^{\overline{\sigma_i}}, V_{\beta_i}^{\sigma_i}) = A_{\alpha_i \beta_i}(X_i, Y_i) = \set{0}$,
	$\calV'$ is a valid labeling for $M$.
	\cref{lem:outer path} also asserts that, if $X_i \neq U_{\alpha_i}^{\sigma_i}$ then $p(U_{\alpha_i}^{\sigma_i}) \geq p(U_{\alpha_i}^{\overline{\sigma_i}})$,
	and if $Y_i \neq V_{\beta_i}^{\overline{\sigma_i}}$
	then $p(V_{\beta_i}^{\overline{\sigma_i}}) \geq p(V_{\beta_i}^{\sigma})$.
	These imply $p(X_i) = p(U_{\alpha_i}^{\sigma_i})$ and $p(Y_i) = p(V_{\beta_i}^{\overline{\sigma_i}})$ for each $i$ by (Reg).
	Hence we have $\calV' \simeq_p \calV$.

    We then show that $\calP$ is an outer path for $(M, I, \calV', p)$.
    Clearly, $\calP$ satisfies (O1) and (O2);
    in particular, (O2) follows from the fact that the $\sigma_i$-space of $\beta_i$ and the $\overline{\sigma_{i+1}}$-space of $\alpha_{i+1}$
    are $V_{\beta_i}^{\sigma_i}$ and $U_{\alpha_{i+1}}^{\overline{\sigma_{i+1}}}$,
    respectively.
    Thus it suffices to see that $\calP$ is actually a path in $\calG(\calV', p)$,
    or $\beta_0^{\sigma_0} \alpha_1^{\sigma_1}, \beta_1^{\sigma_1} \alpha_2^{\sigma_2}, \dots, \beta_k^{\sigma_k} \alpha_{k+1}^{\sigma_{k+1}} \in \calE(\calV', p)$.
    By $X_{i+1} \neq U_{\alpha_{i+1}}^{\overline{\sigma_{i+1}}}$,
	$A_{\alpha_{i+1} \beta_i}(U_{\alpha_{i+1}}^{\overline{\sigma_{i+1}}}, V_{\beta_i}^{\sigma_i}) = \{ 0 \}$,
	and $A_{\alpha_{i+1} \beta_i}(U_{\alpha_{i+1}}^{\sigma_{i+1}}, V_{\beta_i}^{\sigma_i}) \neq \{ 0 \}$,
	we have $A_{\alpha_{i+1} \beta_i}(X_{i+1}, V_{\beta_i}^{\sigma_i}) \neq \set{0}$,
	implying $\beta_0^{\sigma_0} \alpha_1^{\sigma_1}, \beta_1^{\sigma_1} \alpha_2^{\sigma_2}, \dots, \beta_k^{\sigma_k} \alpha_{k+1}^{\sigma_{k+1}} \in \calE(\calV', p)$,
	as required.
\end{proof}

\subsection{No short-cut conditions: {\rm ($\No$)} and {\rm ($\Ni$)}}\label{subsec:no short-cut conditions}
Let $\calR = \calP_0 \circ \calQ_1 \circ \calP_1 \circ \cdots \circ \calQ_m \circ \calP_m$ be an augmenting path for $(M, I, \calV, p)$,
where the last outer path $\calP_m$ is of the form $(\beta_0^{\sigma_0} \alpha_1^{\sigma_1}, \alpha_1^{\sigma_1} \beta_1^{\sigma_1}, \dots, \beta_k^{\sigma_k} \alpha_{k+1}^{\sigma_{k+1}})$.
A node $\beta_i^{\overline{\sigma_i}}$ is said to be \emph{BP-invariant with respect to $\calP_m$}
if $p(V_{\beta_i}^+) \neq p(V_{\beta_i}^-)$, or 
$p(V_{\beta_i}^+) = p(V_{\beta_i}^-)$ and $V_{\beta_i}^{\overline{\sigma_i}}$
coincides with the space at $\beta_i^{\sigma_i}$ of the back-propagation of $\calP_m$.
Through \cref{lem:outer path},
the above condition can be rephrased as:
$p(V_{\beta_i}^{\sigma_i}) < p(V_{\beta_i}^{\overline{\sigma_i}})$, or 
$p(V_{\beta_i}^{\sigma_i}) \geq p(V_{\beta_i}^{\overline{\sigma_i}})$ and $V_{\beta_i}^{\overline{\sigma_i}}$
coincides with the space of $\beta_i^{\sigma_i}$ of the back-propagation of $\calP_m$.
``BP'' is an abbreviation of ``back-propagation.''
An outer path for $(M, I, \calV, p)$ with the last edge $\beta^\sigma \alpha^{\sigma'}$
is said to be {\it proper}
if there is no edge between $\beta^\sigma$ and $\alpha^{\overline{\sigma'}}$ in $\calE(\calV, p)$.
Let $\calQ^{\overline{\sigma_0}}$ be the maximal inner $\overline{\sigma_0}$-path for $(M, I, \calV, p)$
such that $\beta(\calQ^{\overline{\sigma_0}}) = \beta_0^{\overline{\sigma_0}}$.

The following conditions are referred to as ($\No$) and as ($\Ni$):
\begin{description}
	\item[{\rm ($\No$)}]
	The last outer path $\calP_m$ is simple, and
	every $\beta_i^{\overline{\sigma_i}}$ belonging to $\calR$ is BP-invariant with respect to $\calP_m$.
	\item[{\rm ($\Ni$)}]
	If $\beta_0^{\overline{\sigma_0}}$ is not BP-invariant with respect to $\calP_m$ and the last vertex of $P_l$ with $l \leq m-2$ belongs to $Q^{\overline{\sigma_0}}$,
	then $\calP_l$
	is proper and
	$\alpha(\calP_l)$ is a $\sigma_0$-vertex.
\end{description}

\subsection{Decremental quantities $\theta$ and $\phi$}\label{subsec:theta}
To estimate the time complexity of the augmentation procedure,
we introduce two quantities $\theta$ and $\phi$ at least one of which decreases during the algorithm.
Let $\calR = \calP_0 \circ \calQ_1 \circ \calP_1 \circ \cdots \circ \calQ_m \circ \calP_m$ be an augmenting path for $(M, I, \calV, p)$.
Since $\calR$ is a path in the bipartite graph $\calG(\calV, p)$
and $M$ consists of a path or a cycle component in the bipartite graph $G$,
we have $|\calR| = O(\min \set{\mu, \nu})$ and $|M| = O(\min \set{\mu, \nu})$.

We define $\theta(M, I, \calR)$ by
\begin{align*}
\theta(M, I, \calR) \defeq \sum_{i = 0}^m |\calP_i| + N_{\rm S}(M,I, \calR),
\end{align*}
where $N_{\rm S}(M, I, \calR)$ denotes the number of edges in the union of all connected components of $M \setminus I$ intersecting with $\bigcup_i V(Q_i) \cup V(P_m)$.
Clearly, $\theta(I, \calR) \leq |\calR| + |M| = O(\min \set{\mu, \nu})$

We then define $\phi$.
Suppose that the initial node $\beta(\calP_0)$ of $\calR$ is $\beta_0^{\sigma_0}$.
Since $\beta_0^{\sigma_0}$ belongs to $\calS(M, I, \calV, p)$ by (A2),
there uniquely exists $\beta_*^{\sigma_0} \in \calU(M, I)$ such that $\beta_0^{\sigma_0} \in \calC(\beta_*^{\sigma_0})$.
Define $N_{\rm 0}(M, I, \calR)$ as the number of edges in the path from $\beta_0^{\sigma_0}$ to $\beta_*^{\sigma_0}$
in $\calC(\beta_*^{\sigma_0})$.
We define $\phi(M, I, \calR)$ by
\begin{align*}
\phi(M, I, \calR) \defeq |\calR| + N_{\rm 0}(M, I, \calR).
\end{align*}
Clearly $\phi(M, I, \calR) \leq |\calR| + |\calS(M, I, \calV, p)| = O(\min \set{\mu, \nu})$.

\subsection{Induction hypothesis}\label{subsec:IH}
During augmentation,
we modify a matching-pair $(M, I)$ and a valid labeling $\calV$ for $M$.
Then, for the resulting $(M, I, \calV)$, $p$ is an $(M, I, \calV)$-compatible $c$-potential
but may be no longer optimal, or may not satisfy (Zero).
We require $p$ to satisfy a weaker condition (Zero)$'$ than (Zero), which we introduce below.

Let $\calR = \calP_0 \circ \calQ_1 \circ \calP_1 \circ \cdots \circ \calQ_m \circ \calP_m$
be an augmenting path for $(M, I, \calV, p)$.
By (A2), we have $\alpha(\calP_m) \in \calT(M, I, \calV, p)$.
Hence there uniquely exists a node $\alpha_*^{\sigma_*} \in \calU(M, I)$ such that $\alpha(\calP_m) \in \calC(\alpha_*^{\sigma_*})$.
The condition (Zero)$'$ for $\calR$ is the following:
\begin{description}
    \item[{\rm (Zero)$'$}]
	For each $U_\alpha^{\sigma}, V_\beta^{\sigma'}$ unmatched by $(M, I)$ except $U_{\alpha_*}^{\sigma_*}$,
	\begin{align*}
	    p(U_\alpha^\sigma) = p(V_\beta^{\sigma'}) = 0.
	\end{align*}
\end{description}

In the initial phase,
since $p$ is an optimal $(M, I, \calV)$-compatible $c$-potential,
$p$ satisfies (Zero) by \cref{lem:compatible}, and hence (Zero)$'$.

\subsection{Outline}\label{subsec:outline}
Our augmentation procedure is outlined as follows.
We consider the following three cases:
\begin{itemize}
	\item
	$\calR = \calP_0$ and $\calP_0$ is simple; it is called the base case.
	\item
	$\calR$ violates ($\No$) or ($\Ni$).
	\item
	$\calR$ satisfies both ($\No$) and ($\Ni$) but is not in the base case.
\end{itemize}
In the base case, we can augment a matching-pair, i.e.,
we obtain a matching-pair $(M^*,I^*)$ of size $k+1$ and a valid labeling $\calV^*$ for $M^*$ such that $p$ is an optimal $(M^*, I^*, \calV^*)$-compatible $c$-potential in $O(\min \set{\mu, \nu})$ time;
we terminate the augmentation procedure.
This is dealt with in \cref{sec:base}.
In the second (\cref{sec:violate}) and third cases (\cref{sec:non violate}),
we modify $(M, I, \calV, \calR)$ in $O(\min \set{\mu, \nu})$ time so that $\theta$ strictly decreases, or $\theta$ does not change and $\phi$ strictly decreases.
At the beginning of each update,
we modify $(M, I, \calV, \calR)$ in $O(\min \set{\mu, \nu})$ time so that $\calR$ satisfies three additional conditions
and the last outer path of $\calR$ becomes as short as possible in some sense,
which is described in \cref{sec:initial}.
Neither $\theta$ nor $\phi$ increases by this modification.

By $\theta(M, I, \calR) = O(\min \set{\mu, \nu})$ and $\phi(M, I, \calR) = O(\min \set{\mu, \nu})$,
the number of updates is bounded by $O(\min \set{\mu, \nu}^2)$.
Furthermore, each update takes $O(\min \set{\mu, \nu})$ time.
Hence the running-time of the procedure is bounded by $O(\min \set{\mu, \nu}^3)$,
which implies \cref{thm:augment}.

\section{Initial stage}\label{sec:initial}
This section is devoted to describing the modifications of $\calR$ so that $\calR$ satisfies the additional three conditions
and the last outer path of $\calR$ becomes shorter,
which are executed at the beginning of each update in \cref{sec:base,,sec:violate,,sec:non violate}.
Let $\calR = \calP_0 \circ \calQ_1 \circ \calP_1 \circ \cdots \circ \calQ_m \circ \calP_m$ be an augmenting path for $(M, I, \calV, p)$,
where $\calP_m = (\beta_0^{\sigma_0} \alpha_1^{\sigma_1}, \alpha_1^{\sigma_1} \beta_1^{\sigma_1}, \dots, \beta_k^{\sigma_k} \alpha_{k+1}^{\sigma_{k+1}})$.
Let $C$ denote the connected component of $M \setminus I$ containing $\alpha_{k+1}$,
and $\alpha_*^{\sigma_{k+1}}$ denote the node in $\calU(M, I)$ such that $\calC(\alpha_*^{\sigma_{k+1}})$ contains $\alpha_{k+1}^{\sigma_{k+1}}$.

In the initial stage, we execute the following:
\begin{itemize}
    \item
    If $C$ is rearrangeable,
    then we apply the rearrangement to $(M, I)$ with respect to $C$.
    Output the resulting $(M, I, \calV, p)$
    and stop the augmentation procedure.
    This is described in \cref{subsec:rearrangeable case}.
    \item
    Suppose that $C$ is not rearrangeable.
    Then we modify $(M, I)$ and $\calR$ so that the conditions (A3)--(A5) introduced in \cref{subsec:modifications} are satisfied.
    In addition,
    we appropriately modify $M$ and the last outer path in $\calR$
    so that the last outer path in $\calR$ becomes shorter,
    which is described in \cref{subsec:preprocessing}.
    This can simplify the arguments in \cref{sec:base,,sec:violate,,sec:non violate}.
\end{itemize}

\subsection{Rearrangeable case}\label{subsec:rearrangeable case}
Suppose that $C$ is rearrangeable with respect to $p$.
Then we apply the rearrangement to $(M, I)$ with respect to $C$;
the resulting is denoted by $(M^*, I^*)$.
By the same argument as in \cref{subsec:rearrangement},
$(M^*, I^*)$ is a matching-pair of size $k+1$,
$\calV$ is a valid labeling for $M$,
and $p$ is an $(M^*, I^*, \calV)$-compatible $c$-potential.
Moreover,
we have $\calU(M^*, I^*) = \calU(M, I) \setminus \{ \alpha_*^{\sigma_{k+1}}, \beta_*^{\sigma_{k+1}} \}$,
where $\alpha_*$ and $\beta_*$ are the end nodes of $C$.
Since $p$ satisfies (Zero)$'$ for $(M, I, \calV, \calR)$,
i.e., $p(U_\alpha^\sigma) = p(V_\beta^{\sigma'}) = 0$ for each $U_\alpha^{\sigma}, V_\beta^{\sigma'}$ unmatched by $(M, I)$ except $U_{\alpha_*}^{\sigma_{k+1}}$,
$p$ satisfies (Zero) for $(M^*, I^*, \calV)$.
By \cref{lem:compatible}, $p$ is optimal.
We output $(M^*, I^*, \calV, p)$, and stop this procedure.

\subsection{Additional requirements}\label{subsec:modifications}
In the following sections,
we assume that $C$ is not rearrangeable.
We consider the three additional conditions (A3)--(A5) for $\calR$:
\begin{description}
    \item[{\rm (A3)}]
	No intermediate vertices in $\calR$ belong to $\calS(M, I, \calV, p)$.
	\item[{\rm (A4)}]
	For each $l = 0,1,\dots,m-1$,
	the outer path $\calP_l$ with $\alpha(\calP_l) = \alpha^\sigma$ satisfies
	$\alpha^\sigma \notin \calC(\alpha_*^{\sigma_{k+1}})$.
	In addition, if $\calP_l$ is not proper,
	then $\alpha^{\overline{\sigma}} \notin \calC(\alpha_*^{\sigma_{k+1}})$.
	\item[{\rm (A5)}]
	$\deg_M(\alpha_{k+1}) \leq 1$, and if $\deg_M(\alpha_{k+1}) = 1$ then $\alpha_{k+1}$ is incident to a $\sigma_{k+1}$-edge in $M$.
\end{description}
If $\calR$ satisfies (A1) and (A2),
then we can modify $I$, $\calV$, and $\calR$ so that (A3), (A4), and (A5) also hold as follows.

We first modify an augmenting path $\calR$ so that
it satisfies (A3) and (A4).
If $\calR$ violates (A3),
then $\calR$ is updated as the minimal suffix of $\calR$ satisfying (A2).
One can see that the resulting $\calR$ is an augmenting path for $(M, I, \calV, p)$ satisfying also (A3).
Suppose that $\calR$ violates (A4), i.e., there is an outer path $\calP_l$ with $\alpha(\calP_l) = \alpha^\sigma$ for some $l < m$ such that $\alpha^\sigma \in \calC(\alpha_*^{\sigma_{k+1}})$,
or $\calP_l$ is not proper and $\alpha^{\overline{\sigma}} \in \calC(\alpha_*^{\sigma_{k+1}})$.
Let $\calP_l$ be such an outer path with the minimum index $l$,
and $\beta^{\sigma'} \alpha^{\sigma}$ the last edge of $\calP_l$.
If $\alpha^\sigma \in \calC(\alpha_*^{\sigma_{k+1}})$,
then we update $\calR$ as
\begin{align*}
\calR \leftarrow \calP_0 \circ \calQ_1 \circ \calP_1 \circ \cdots \circ \calQ_i \circ \calP_l.
\end{align*}
If $\calP_l$ is not proper and $\alpha^{\overline{\sigma}} \in \calC(\alpha_*^{\sigma_{k+1}})$,
then 
we update $\calR$ as
\begin{align*}
\calR \leftarrow \calP_0 \circ \calQ_1 \circ \calP_1 \circ \cdots \circ \calQ_i \circ \calP_l',
\end{align*}
where $\calP_l'$ is the outer path obtained from $\calP_i$ by replacing the last edge $\beta^{\sigma'} \alpha^{\sigma}$ with $\beta^{\sigma'} \alpha^{\overline{\sigma}}$.
Clearly, the resulting $\calR$ is an augmenting path for $(M, I, \calV, p)$ satisfying (A3) and (A4),
and that $\theta$ strictly decreases.
Moreover, $p$ satisfies (Zero)$'$ for the resulting $\calR$,
since the last node of $\calR$ belongs to $\calC(\alpha_*^{\sigma_{k+1}})$.
Checking (A3) and (A4) and the update can be done in $O(|\calR|) = O(\min \set{\mu, \nu})$ time.
We let $\calR = \calP_0 \circ \calQ_1 \circ \calP_1 \circ \cdots \circ \calQ_m \circ \calP_m$ again by re-index.

We now assume that $\calR$ satisfies (A3) and (A4).
We then modify $(M, I)$ and $\calV$ so that $(M, I)$ is a matching-pair of size $k$,
$\calV$ is a valid labeling for $M$,
$p$ is an $(M, I, \calV)$-compatible $c$-potential,
and
$\calR$ is an augmenting path for $(M, I, \calV, p)$
satisfying (A3)--(A5).
By $\alpha_{k+1}^{\sigma_{k+1}} \in \calC(\alpha_*^{\sigma_{k+1}})$,
there is a $\sigma_{k+1}$-path $\calP$ in $\calG(\calV, p)|_M$ from $\alpha_{k+1}^{\sigma_{k+1}}$ to $\alpha_\ast^{\sigma_{k+1}}$.
This implies that every $\sigma_{k+1}$-edge $\alpha \beta$ in the underlying path $P$ of $\calP$
is double-tight.
By deleting all $\overline{\sigma_{k+1}}$-edges in $P$ from $M$
and adding all $\sigma_{k+1}$-edges in $P$ to $I$,
the resulting $(M, I)$ is a matching-pair of size $k$,
$\calV$ is a valid labeling for $M$,
$p$ is an $(M, I, \calV)$-compatible $c$-potential,
and $\deg_M(\alpha_{k+1}) \leq 1$.
Moreover, if $\deg_M(\alpha_{k+1}) = 1$,
then $\alpha_{k+1}$ is incident to a $\sigma_{k+1}$-edge,
i.e., (A5) holds.

We see that $\calR$ forms a pseudo augmenting path for $(M, I, \calV, p)$.
Since
this modification does not change $\calS(M, I, \calV, p)$
and does not increase $\calT(M, I, \calV, p)$,
and $\alpha_{k+1}^{\sigma_{k+1}}$ still belongs to $\calT(M, I, \calV, p)$,
$\calR$ satisfies (A2)--(A4).
If $\calR$ does not meet any newly appearing $\sigma_{k+1}$-edges in $I$,
then $\calR$ clearly satisfies (A1), as required.

Otherwise,
there is an outer path $\calP_i$
that meets a newly appearing $\sigma_{k+1}$-edge $\alpha \beta$ in $I$.
Since $\alpha^{\sigma_{k+1}}$ belongs to $\calT(M, I, \calV, p)$ before the modification,
the last edge of $\calP_i$ must be of the form ${\beta'}^\sigma \alpha^{\overline{\sigma_{k+1}}}$
and $\calP_i$ is proper, i.e., there is no edge between ${\beta'}^\sigma$ and $\alpha^{\overline{\sigma_{k+1}}}$ in $\calE(\calV, p)$
by (A4).
Hence $\calP_i$ satisfies (O2)$'$,
which implies that $\calR$ is a pseudo augmenting path for $(M, I, \calV, p)$.
We update $\calV$ as
\begin{align*}
\calV \leftarrow \calV(\calR).
\end{align*}
Then it follows from \cref{prop:pseudo augmenting path}
that $\calR$ is an augmenting path for $(M, I, \calV, p)$.
By \cref{lem:pseudo outer path},
$\calG(\calV, p)$ does not change,
which implies that $\calR$ still satisfies (A3)--(A5).

By this update, $\alpha_*^{\sigma_{k+1}}$ is deleted from $\calU(M, I)$
and $\alpha_{k+1}^{\sigma_{k+1}}$ is added to $\calU(M, I)$.
Since $\alpha_{k+1}^{\sigma_{k+1}}$ is the last node of $\calR$,
$p$ satisfies (Zero)$'$ for the resulting $\calR$.

Checking (A5) and the update can be done in $O(\min \set{\mu, \nu})$ time.
In particular, the update requires the front-propagation on $\calR$ which takes $O(|\calR|) = O(\min \set{\mu, \nu})$ time.
Clearly this update does not change $\phi$.
Furthermore $\theta$ does not increase.
Indeed,
$N_{\rm S}$ decreases by $|P|$.
If the underlying path of an inner path in the previous $\calR$ meets $P$,
then the corresponding inner path forms a part of an outer path in the resulting $\calR$.
Hence the number of edges in the union of outer paths in the resulting $\calR$ can increase,
but its increment is bounded by $|P|$.

In \cref{sec:base,,sec:violate,,sec:non violate},
we require that the augmenting path $\calR$ satisfies (A3)--(A5).

\subsection{Simplification of the last outer path}\label{subsec:preprocessing}
Suppose that $\calP_m[\beta_{i^*}^{\sigma_{i^*}})$ is simple for some $i^*$
and that every $\beta_i^{\overline{\sigma_i}}$ with $i > i^*$ belonging to $\calR$ is BP-invariant with respect to $\calP_m$.
Since $\calP_m[\beta_{i^*}^{\sigma_{i^*}})$ is simple,
$(\beta_{i^*} \alpha_{i^*+1}, \alpha_{i^*+1} \beta_{i^*+1}, \dots, \beta_k \alpha_{k+1})$ forms a path in $G$.
Therefore we can redefine $+$- and $-$-edges of $I$
and $+$- and $-$-spaces of $\calV$
so that
all $\alpha_{i^*+1} \beta_{i^*+1}, \alpha_{i^*+2} \beta_{i^*+2}, \dots, \alpha_k \beta_k$
are $+$-edges
and
\begin{align*}
\calP_m = (\beta_0^{\sigma_0} \alpha_1^{\sigma_1}, \dots, \beta_{i^*}^{\sigma_{i^*}} \alpha_{i^*+1}^+, \alpha_{i^*+1}^+ \beta_{i^*+1}^+, \dots, \beta_k^+ \alpha_{k+1}^+).
\end{align*}
The condition (A4) verifies that
$\deg_M(\alpha_{k+1}) \leq 1$ and
the edge incident to $\alpha_{k+1}$ in $M$ is a $+$-edge if $\deg_M(\alpha_{k+1}) = 1$.

We define $\hat{M}$, $\hat{I}$, $\hat{\calV}$, and $\hat{\calR}$ by
\begin{align*}
\hat{M} &\defeq M \cup P_m[\alpha_{i^*+1}),\\
\hat{I} &\defeq I \setminus \set{\alpha_{i^*+1} \beta_{i^*+1}, \alpha_{i^*+2} \beta_{i^*+2}, \dots, \alpha_k \beta_k},\\
\hat{\calV} &\defeq \calV(\calP_m[\alpha_{i^*+1}^+)^{-1}),\\
\hat{\calR} &\defeq \calP_0 \circ \calQ_1 \circ \calP_1 \circ \cdots \circ \calQ_m \circ \calP_m(\alpha_{i^*+1}^+];
\end{align*}
see Figure~\ref{fig:preprocessing}.
\begin{figure}
	\centering
	\begin{tikzpicture}[
	node/.style={
		fill=black, circle, minimum height=5pt, inner sep=0pt,
	},
	M/.style={
		line width = 3pt
	},
	AM/.style={
		line width = 3pt,
		red
	},
	DI/.style={
		line width = 3pt,
		blue
	},
	P/.style={
		very thick,
	},
	PD/.style={
		very thick, dashed,
	},
	calE/.style={
		thick
	}
	]
	
	\def\mu{a1, a2, a3, a4, a5, a6, a7}
	\def\nu{b0, b1, b2, b3, b4, b5, b6, b7}
	\def\size{1.7cm}
	\def\hight{4cm}
	\def\side{5pt}
	\def\up{12pt}

	\nodecounter{\nu}
	\coordinate (pos);
	\foreach \currentnode in \nu {
		\node[node, below=0 of pos, anchor=center] (\currentnode) {};
		\coordinate (pos) at ($(pos)+(-\size, 0)$);
	}
	\nodecounter{\mu}
	\coordinate (pos) at ($(b0) + (-\size, -\size)$);
	\foreach \currentnode in \mu {
		\node[node, below=0 of pos, anchor=center] (\currentnode) {};
		\coordinate (pos) at ($(pos)+(-\size, 0)$);
	}
	
	\foreach \i in {1, 2, 3, 7} {
		\draw[M] (b\i) -- (a\i);
	}
	
	\foreach \i in {4, 5, 6} {
		\draw[DI] (b\i) -- (a\i);
	}
	
	\foreach \i in \nu {
		\coordinate [left = \side] (l\i) at (\i);
		\coordinate [right = \side] (r\i) at (\i);
	}
	
	\foreach \i in \mu {
		\coordinate [left = \side] (l\i) at (\i);
		\coordinate [right = \side] (r\i) at (\i);
	}
	
	\draw (lb0) node[above left = 0 and -8pt] {$\sigma_0$};
	\draw (rb0) node[above right = 0 and -8pt] {$\overline{\sigma_0}$};
	
	\draw (lb3) node[above left = 0 and -9pt] {$\sigma_{i^*}$};
	\draw (rb3) node[above right = 0 and -9pt] {$\overline{\sigma_{i^*}}$};
	
	\draw[P, -{Latex[length=3mm]}] (lb0) -- node[above left = 0 and -6pt]{$\calP_m$} ($(lb0)!0.6!(la1)$);
	\draw[P] ($(lb0)!0.4!(la1)$) -- (la1);
	\foreach \i/\j in {1/1, 1/2, 2/2, 2/3, 3/3} {
		\draw[P] (lb\i) -- (la\j);
	}
	\draw[P, -{Latex[length=3mm]}] (lb3) -- ($(lb3)!0.6!(la4)$);
	\draw[P] ($(lb3)!0.4!(la4)$) -- (la4);
	\foreach \i/\j in {4/4, 4/5, 5/5, 5/6, 6/6, 6/7} {
		\draw[PD] (lb\i) -- (la\j);
	}
	
	\foreach \i in {1, 2, 3, 4, 5, 6, 7} {
		\draw [calE] (rb\i) -- (ra\i);
	}
	
	\foreach \i/\j in {4/5, 5/6, 6/7} {
		\draw[AM] (b\i) -- (a\j);
	}

	\coordinate [above = \up] (b0n) at (b0);
	\draw (b0n) node[above] {$\beta_0$};
	\coordinate [above = \up] (b3n) at (b3);
	\draw (b3n) node[above] {$\beta_{i^*}$};
	\coordinate [below = \up] (a4n) at (a4);
	\draw (a4n) node[below] {$\alpha_{i^*+1}$};
	\coordinate [below = \up] (a7n) at (a7);
	\draw (a7n) node[below] {$\alpha_{k+1}$};

	\coordinate (Pls) at ($(ra5)+(0, -1cm)$);
	\coordinate (Plt) at ($(rb6)+(0, 1cm)$);
	\draw [P, -{Latex[length=3mm]}] (Pls) -- (ra5) -- (rb5) -- (ra6) -- (rb6) -- (Plt);
    \coordinate [label=below:{$\calP_l$}] () at (Pls);

	\foreach \i in {4, 5, 6, 7} {
		\draw (lb\i) node[above] {$+$};
		\draw (rb\i) node[above=.3333em-1pt, fill=white, inner sep=1pt] {$-$};
		\draw (la\i) node[below] {$+$};
		\draw (ra\i) node[below=.3333em-1pt, fill=white, inner sep=1pt] {$-$};
	}
	
	\coordinate[label=left:$\cdots$] () at ($(b7)!0.5!(a7) + (-0.2cm, 0)$);
	
	\end{tikzpicture}
	\caption{
		Modification in \cref{subsec:preprocessing}.
		The thick and thin lines represent edges in $M$ and in $\calE(\calV, p)$,
		respectively.
		The red and blue lines represent the edges added to $M$, i.e., the edges in $\hat{M} \setminus M$,
		and the edges deleted from $I$, i.e., the edges in $I \setminus \hat{I}$,
		by the modification,
		respectively.
		The thin solid/dashed paths represent subpaths of $\calR$.
		In particular,
		the dashed thin path represents a part of $\calR \setminus \hat{\calR}$.
	}
	\label{fig:preprocessing}
\end{figure}
Then the following holds.
\begin{proposition}\label{prop:Nouter}
	$(\hat{M}, \hat{I})$ is a matching-pair of size $k$ such that $\calU(\hat{M}, \hat{I}) = \calU(M, I) \setminus \{ \alpha_{k+1}^+\} \cup \{ \alpha_{i^*+1}^+ \}$, $\hat{\calV}$ is a valid labeling for $\hat{M}$, $p$ is an $(\hat{M}, \hat{I}, \hat{\calV})$-compatible $c$-potential,
	and $\hat{\calR}$ is a pseudo augmenting path for $(\hat{M}, \hat{I}, \hat{\calV}, p)$
	satisfying {\rm (A3)}--{\rm (A5)}.
\end{proposition}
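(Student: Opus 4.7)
The plan is to verify, in order: (1) the size identity $|\hat M|+|\hat I|=k$ and $\calU(\hat M,\hat I)=\calU(M,I)\setminus\{\alpha_{k+1}^+\}\cup\{\alpha_{i^*+1}^+\}$; (2) the matching-pair conditions (Deg), (Cycle), (VL) for $(\hat M,\hat I)$; (3) the $(\hat M,\hat I,\hat\calV)$-compatibility of $p$; and (4) that $\hat{\calR}$ is a pseudo augmenting path satisfying (A3)--(A5).

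Step (1) is a direct count: $|\hat M|=|M|+(k-i^*)$ and $|\hat I|=|I|-(k-i^*)$. The $\calU$-identity holds because $\alpha_{i^*+1}\beta_{i^*+1}$ leaves $\hat I$ with $\alpha_{i^*+1}$ otherwise isolated in $\hat M$, so $U_{\alpha_{i^*+1}}^+$ becomes unmatched, while $U_{\alpha_{k+1}}^+$ becomes matched by the new $-$-edge $\beta_k\alpha_{k+1}$ (using (A5)); all other matched statuses are preserved. For (2), (Deg) is immediate along $\alpha_{i^*+1}\,\beta_{i^*+1}\cdots\beta_k\,\alpha_{k+1}$, and (Cycle) holds because the old $M$-components of the interior vertices were singleton $I$-edges, while $\alpha_{k+1}$'s possible $M$-edge shares no vertex with any $\beta_j$ in the tail by (A5) and the simplicity of $\calP_m[\beta_{i^*}^{\sigma_{i^*}})$. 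For (VL), I would apply \cref{lem:simple outer path} to the tail $\calP_m[\alpha_{i^*+1}^+,\alpha_{k+1}^+]$, which is a simple outer path; this yields $\hat\calV\simeq_p\calV$ and validity of the new $-$-edges $\beta_i\alpha_{i+1}$ via the back-propagation identities $X_{i+1}=Y_i^{\perp_{\beta_i\alpha_{i+1}}}$ and $Y_i=X_{i+1}^{\perp_{\alpha_{i+1}\beta_i}}$, while the former $I$-edges $\alpha_i\beta_i$ remain valid by the rank-$2$ orthogonality guaranteed by \cref{lem:outer path}.

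For step (3), (Reg) transfers from $\calV$ via $\hat\calV\simeq_p\calV$. The (Tight) identity on the new $+$-edges $\alpha_i\beta_i\in\hat M\setminus\hat I$ reads $d_{\alpha_i\beta_i}=p(U_{\alpha_i}^-)+p(Y_i)+c$ and follows from the $I$-tightness for the same edge together with $p(Y_i)=p(V_{\beta_i}^-)$; the (Tight) identity on the new $-$-edges $\beta_i\alpha_{i+1}$ reads $d_{\alpha_{i+1}\beta_i}=p(X_{i+1})+p(V_{\beta_i}^+)+c$ and follows from the tight edge $\beta_i^+\alpha_{i+1}^+\in\calE(\calV,p)$ (an edge of $\calP_m$) together with $p(X_{i+1})=p(U_{\alpha_{i+1}}^+)$. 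Optimality is not claimed, so (Zero) is unneeded. For step (4), $\calP_m(\alpha_{i^*+1}^+]$ inherits (O1) and (O2)$'$ from $\calP_m$, and its last edge lies in $\calE(\hat\calV,p)$ again by \cref{lem:simple outer path}; each earlier $\calP_l$ $(l<m)$ and $\calQ_j$ survives under $\hat\calV$ because at any relabeled vertex $\beta_i^-$ visited by $\calR$, the BP-invariance hypothesis combined with \cref{lem:outer path} yields either $p(V_{\beta_i}^+)\neq p(V_{\beta_i}^-)$ (allowing \cref{lem:>}(3) to preserve incident edges) or $Y_i=V_{\beta_i}^-$ (no change). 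Since $\alpha_{i^*+1}^+\in\calU(\hat M,\hat I)$ the new endpoint lies in $\calT(\hat M,\hat I,\hat\calV,p)$, giving (A2); (A3) and (A4) transfer from $\calR$ using that $\alpha_{i^*+1}^+$ is isolated in $\calG(\hat\calV,p)|_{\hat M}$ at the new unmatched vertex, and (A5) is $\deg_{\hat M}(\alpha_{i^*+1})=1$ via the $+$-edge $\alpha_{i^*+1}\beta_{i^*+1}$.

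The main obstacle lies in step (4): ensuring every edge of $\hat{\calR}$ survives the relabeling $V_{\beta_i}^-\leftarrow Y_i$ for $i^*<i\leq k$. An edge of some earlier $\calP_l$ or $\calQ_j$ incident to $\beta_i^-$ could conceivably leave $\calE(\hat\calV,p)$, and BP-invariance is introduced precisely to exclude this through the two-case split described above. A secondary subtlety is the (Cycle) check, which jointly uses (A5) and the simplicity of $\calP_m[\beta_{i^*}^{\sigma_{i^*}})$ to rule out accidental cycles in the merged component of $\hat M$ containing $\alpha_{k+1}$.
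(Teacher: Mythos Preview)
Your treatment of steps (1)--(3) is essentially the same as the paper's, and your use of \cref{lem:simple outer path} and the back-propagation identities for validity and (Tight) is correct.

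The genuine gap is in step (4). You argue that ``each earlier $\calP_l$ survives under $\hat\calV$,'' focusing only on whether the edges of $\hat\calR$ remain in $\calE(\hat\calV,p)$. But an outer path $\calP_l$ may traverse an edge $\alpha_i^-\beta_i^-$ for some $i^*<i\le k$. Since $\alpha_i\beta_i\in I\setminus\hat I$, condition (O1) fails for $\calP_l$ relative to $(\hat M,\hat I)$: the edge $\alpha_i\beta_i$ is now in $\hat M\setminus\hat I$, not in $\hat I$. The paper handles this by showing that $\calP_l\circ\calQ_{l+1}$ decomposes as an \emph{alternation} of pseudo outer paths (the pieces of $\calP_l$ between consecutive visits to the removed $I$-edges) and inner paths (the pieces $\calP^-[\alpha_i^-,\beta_j^-]$ along the new $\hat M$-component). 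This re-decomposition is what makes $\hat\calR$ a pseudo augmenting path; edge survival alone is insufficient. Moreover, for (O2)$'$ on these new pseudo outer pieces one must check that $\beta^\sigma\alpha^{\overline{\sigma'}}\notin\calE(\hat\calV,p)$ whenever $\alpha$ is still incident to an edge in $I$ (including $\alpha\in\{\alpha_{i^*+1},\dots,\alpha_k\}$, where the $+$-space has changed). This is the second half of the paper's Claim and requires the BP-invariance split on the $\alpha$-side as well.

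Your (A4) argument is also incorrect: $\alpha_{i^*+1}^+$ is not isolated in $\calG(\hat\calV,p)|_{\hat M}$. In fact $\calC(\alpha_{i^*+1}^+)$ equals the old $\calC(\alpha_{k+1}^+)$ together with all of $\calP_m[\alpha_{i^*+1}^+)$, so the target component grows substantially. After the re-decomposition above, new pseudo outer paths may end at some $\alpha_i^-$ with $\alpha_i^+\in\calC(\alpha_{i^*+1}^+)$, and (A4) then requires that these paths be proper. The paper derives this from the same Claim (specifically, the absence of $\beta'^\sigma\alpha_i^+$ in $\calE(\hat\calV,p)$).
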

\begin{proof}
	If $\calP_m$ is of the form $(\beta_0^{\sigma_0} \alpha_1^{\sigma_1}, \dots, \beta_{i^*}^{\sigma_{i^*}} \alpha_{i^*+1}^+)$,
	then $(\hat{M}, \hat{I}) = (M, I)$, $\hat{\calV} = \calV$,
	and $\hat{\calR} = \calR$;
	the statement clearly holds.
	In the following, we assume $\calP_m = (\beta_0^{\sigma_0} \alpha_1^{\sigma_1}, \dots, \beta_{i^*}^{\sigma_{i^*}} \alpha_{i^*+1}^+, \alpha_{i^*+1}^+ \beta_{i^*+1}^+, \dots, \beta_k^+ \alpha_{k+1}^+)$ for some $k > i^*$.
	
	We first show that $(\hat{M}, \hat{I})$ is a matching-pair of size $k$ and $\hat{\calV}$ is a valid labeling for $\hat{M}$.
	Clearly, $\hat{M}$ satisfies (Deg) and (Cycle).
	Since $\calP_m[\beta_{i^*}^{\sigma_{i^*}})$ is a simple outer path,
	$\hat{\calV}$ is a valid labeling for $M$ by \cref{lem:simple outer path}.
	Let $(X_{i^*+1}, Y_{i^*+1}, X_{i^*+2}, \dots, X_{k+1})$ be the back-propagation of $\calP_m[\alpha_{i^*+1}^+)$.
	Then, for $i \geq i^*$,
	$X_i$ is the $+$-space of $\alpha_i$
	and $Y_i$ is the $-$-space of $\beta_i$
	with respect to $\hat{\calV}$.
	By the definition of the back-propagation,
	$A_{\alpha_{i+1} \beta_i}(X_{i+1}, Y_i) = \set{0}$ and $A_{\alpha_i \beta_i}(X_i, Y_i) = \set{0}$
	for each $i \geq i^*$.
	Thus $\hat{\calV}$ is also a valid labeling for $\hat{M}$,
	which implies that $\hat{M}$ is a pseudo-matching.
	Since $\hat{I}$ consists of isolated rank-2 edges in $\hat{M}$,
	$(\hat{M}, \hat{I})$ is a matching-pair.
	Since
	the number of edges in $\hat{M}$ increases by $k-i^*$
	and that of edges in $\hat{I}$ decreases by $k-i^*$,
	its size is equal to $k$.
	Since $\alpha_{i^*+1} \beta_{i^*+1}$ is a $+$-edge in $\hat{M}$,
	$U_{\alpha_{i^*+1}}^+$ is unmatched by $(\hat{M}, \hat{I})$.
	Thus we obtain $\calU(\hat{M}, \hat{I}) = \calU(M, I) \setminus \{\alpha_{k+1}^+\} \cup \{ \alpha_{i^*+1}^+ \}$.

    We then show that $p$ is an $(\hat{M}, \hat{I}, \hat{\calV})$-compatible potential.
	By \cref{lem:simple outer path},
	$\calV$ is equivalent to $\hat{\calV}$ with respect to $p$,
	and $\calP_m$ is an outer path for $(M, I, \hat{\calV}, p)$.
	By the former and \cref{lem:>}~(1) and~(2), $p$ satisfies (Reg) for $\hat{\calV}$
	and the edges $\alpha_{i^*+1}^- \beta_{i^*+1}^-, \dots, \alpha_k^- \beta_k^-$ exist in $\calE(\hat{\calV}, p)$,
	where $\alpha_{i^*+1} \beta_{i^*+1}, \alpha_{i^*+2} \beta_{i^*+2}, \dots, \alpha_k \beta_k$ are $+$-edges in $\hat{M}$.
	It follows from the latter that
	the edges $\beta_{i^*+1}^+ \alpha_{i^*+2}^+, \beta_{i^*+2}^+ \alpha_{i^*+3}^+, \dots, \beta_k^+ \alpha_{k+1}^+$ exist in $\calE(\hat{\calV}, p)$,
	where $\beta_{i^*+1} \alpha_{i^*+2}, \beta_{i^*+2} \alpha_{i^*+3}, \dots, \beta_k \alpha_{k+1}$ are $-$-edges in $\hat{M}$.
	Thus $p$ satisfies (Tight) for $(\hat{M}, \hat{I}, \hat{\calV})$,
	which implies that $p$ is $(\hat{M}, \hat{I}, \hat{\calV})$-compatible.
	
	We finally show that $\hat{\calR}$ is a pseudo augmenting path for $(\hat{M}, \hat{I} ,\hat{\calV}, p)$ satisfying (A3)--(A5).
	It
	clearly holds that $\calS(M, I, \calV, p) = \calS(\hat{M}, \hat{I}, \hat{\calV}, p)$.
	Thus $\calR$ satisfies (A3) and $\beta(\calP_0) \in \calS(\hat{M}, \hat{I}, \hat{\calV}, p)$.
	By $\alpha_{i^*+1}^+ \in \calU(\hat{M}, \hat{I})$, we have $\alpha_{i^*+1}^+ \in \calT(\hat{M}, \hat{I}, \hat{\calV}, p)$.
	Therefore $\hat{\calR}$ satisfies (A2).
	Since $\deg_{\hat{M}}(\alpha_{i^*+1}) = 1$ and $\alpha_{i^*+1} \beta_{i^*+1}$ is a $+$-edge,
	we have (A5).

	To prove (A1)$'$ and (A4),
	we show the following claim:
	\begin{claim*}
		$\hat{\calR}$ is a path in $\calG(\hat{\calV}, p)$.
		Moreover,
		for any $\beta^\sigma \alpha^{\sigma'} \in \hat{\calR}$
		such that $\alpha$ is incident to an edge in $I$,
		there is no edge between
		$\beta^\sigma$ and $\alpha^{\overline{\sigma'}}$ in $\calE(\hat{\calV}, p)$.
	\end{claim*}
	\begin{proof}[Proof of Claim]
		The difference between $\hat{\calV}$ and $\calV$ can only be in the $+$-space of $\alpha_i$ and the $-$-space of $\beta_i$
		for $i^*+1 \leq i \leq k$.
		Since $\calP_m$ meet $\alpha_{i^*+1}^+ \beta_{i^*+1}^+, \dots, \alpha_k^+ \beta_k^+$ and $\calR$ is a path in $\calG(\calV, p)$,
		$\hat{\calR}$ does not have any of $\alpha_{i^*+1}^+ \beta_{i^*+1}^+, \dots, \alpha_k^+ \beta_k^+$.
		Therefore, for each $\beta^\sigma \alpha^{\sigma'} \in \hat{\calR}$, we obtain the following observations.
		\begin{itemize}
			\item
			The $\sigma'$-space of $\alpha$ with respect to $\hat{\calV}$ is $U_{\alpha}^{\sigma'}$.
			\item
			If $\beta$ is different from $\beta_{i^*+1}, \dots, \beta_k$,
			then the $\sigma$-space of $\beta$ with respect to $\hat{\calV}$ is $V_{\beta}^{\sigma}$.
			\item
			If $\beta$ is one of $\beta_{i^*+1}, \dots, \beta_k$, say, $\beta_i$,
			then $\beta^{\sigma} = \beta_i^-$, and
			either
			$p(V_{\beta_i}^-) > p(V_{\beta_i}^+)$ or
			the $-$-space of $\beta_i$ with respect to $\hat{\calV}$ is $V_{\beta_i}^-$.
		\end{itemize}
		In particular,
		the third follows from the BP-invariance of $\beta_{i}^-$.
		Thus every $\beta^\sigma \alpha^{\sigma'} \in \hat{\calR}$ belongs to $\calE(\hat{\calV}, p)$
		by \cref{lem:>}~(3) and $\beta^\sigma \alpha^{\sigma'} \in \calE(\calV, p)$.
		This implies that $\hat{\calR}$ is a path in $\calG(\hat{\calV}, p)$.
		If $\alpha$ is incident to an edge in $\hat{I}$,
		then $\alpha$ is different from $\alpha_{i^*+1}, \dots, \alpha_k$.
		Hence the $\overline{\sigma'}$-space of $\alpha$ with respect to $\hat{\calV}$ is equal to $U_{\alpha}^{\overline{\sigma'}}$.
		If $\alpha$ is incident to an edge in $I \setminus \hat{I}$,
		then $\alpha$ is one of $\alpha_{i^*+1}, \dots, \alpha_k$, say, $\alpha_i$.
		In this case, $\alpha^{\sigma'} = \alpha_i^-$ and either $p(U_{\alpha_i}^+) > p(V_{\beta_i}^-)$ or the $+$-space of $\alpha_i$ with respect to $\hat{\calV}$ is $U_{\alpha_i}^+$;
		the latter follows from the BP-invariance of $\beta_i^-$.
		By \cref{lem:>}~(3) and $\beta^\sigma \alpha^{\sigma'} \notin \calE(\calV, p)$,
		we obtain $\beta^{\sigma} \alpha^{\overline{\sigma'}} \notin \calE(\hat{\calV}, p)$.
	\end{proof}
	
	Claim immediately verifies that every $\calQ_l$ in $\calR$ forms an inner path for $(\hat{I}, \hat{\calV}, p)$
	and every $\calP_l$ intersecting with none of $\alpha_{i^*+1}^- \beta_{i^*+1}^-, \dots, \alpha_k^- \beta_k^-$
	forms an outer path for $(\hat{I}, \hat{\calV}, p)$.
	
	Suppose that $\calP_l$ meets some of $\alpha_{i^*+1}^- \beta_{i^*+1}^-, \dots, \alpha_k^- \beta_k^-$.
	Then we see that $\calP_l \circ \calQ_{l+1}$ is representable as the concatenation of several pseudo outer paths and inner paths for $(\hat{I}, \hat{\calV}, p)$,
	which implies that $\hat{\calR}$ satisfies (A1)$'$.
	By Claim, all edges in $\calP_l$ belong to $\calE(\hat{\calV}, p)$.
	Let $\calP^- \defeq (\alpha_{i^*+1}^- \beta_{i^*+1}^-, \beta_{i^*+1}^- \alpha_{i^*+2}^-, \dots, \beta_k^- \alpha_{k+1}^-)$.
	The intersection of $\calP_l$ and $\calP^-$
	is the disjoint union of the subpath of $\calP^-$
	having the form $\calP_l[ \alpha_i^-, \beta_j^- ]$ with $i^*+1 \leq i \leq j \leq k$ or the form $\calP_l[\alpha_i^-)$ with $i^*+1 \leq i \leq k$.
	That is,
	$\calP_l$ can be represented as
	\begin{align*}
	\calP_l(\alpha_{i_1}^-] \circ \calP^-[ \alpha_{i_1}^-, \beta_{i_2}^- ] \circ \calP_l[ \beta_{i_2}^-, \alpha_{i_3}^- ]
	\circ \cdots \circ \calP^-[ \alpha_{i_p}^-, \beta_{i_{p+1}}^- ] \circ \calP_l[ \beta_{i_{p+1}}^- )
	\end{align*}
	or
	\begin{align*}
	\calP_l(\alpha_{i_1}^-] \circ \calP^-[ \alpha_{i_1}^-, \beta_{i_2}^- ] \circ \calP_l[ \beta_{i_2}^-, \alpha_{i_3}^- ]
	\circ \cdots \circ \calP_l[ \beta_{i_p}^-, \alpha_{i_{p+1}}^- ] \circ \calP^-[ \alpha_{i_{p+1}}^- ),
	\end{align*}
	where $i^*+1 \leq i_q \leq i_{q+1} \leq k$ for each $q$.
	
	Since $\alpha_i \beta_i$ is a $+$-edge in $\hat{M}$ for $i^*+1 \leq i \leq k$,
	the subpath $\calP_l[ \alpha_{i_q}^-, \beta_{i_q}^- ]$ forms an inner path for $(\hat{M}, \hat{I}, \hat{\calV}, p)$.
	Moreover, $\calP_l[\alpha_{i_{p+1}}^-) \circ \calQ_{l+1}$ is also an inner path for $(\hat{M}, \hat{I}, \hat{\calV}, p)$ in the latter case.
	By Claim,
	if $\calP_l$ has an edge $\beta^\sigma \alpha^{\sigma'}$ such that $\alpha$ is incident to an edge in $\hat{I}$,
	there is no edge between $\beta^\sigma$ and $\alpha^{\overline{\sigma'}}$ in $\calE(\hat{\calV}, p)$.
	This implies that
	the remaining parts of $\calP_l$,
	which are
	$\calP_l(\alpha_{i_1}^-]$, $\calP_l[ \beta_{i_q}^-, \alpha_{i_{q+1}}^- ]$, and $\calP_l[ \beta_{i_{p+1}}^- )$,
	are pseudo outer paths for $(\hat{M}, \hat{I}, \hat{\calV}, p)$.
	Thus $\calP_l \circ \calQ_{l+1}$ can be viewed as the concatenation of several pseudo outer paths and inner paths for $(\hat{M}, \hat{I}, \hat{\calV}, p)$.
	
	We finally show that $\calP$ satisfies (A4).
	Since $\calC(\alpha_{i^*+1}^+)$ is the union of $\calC(\alpha_{k+1}^+)$ in $\calG(\calV, p)$ and $\calP_m[\alpha_{i^*+1}^+)$,
	it suffices to see that,
	if $\calP_l$ has $\beta'^\sigma \alpha_i^-, \alpha_i^- \beta_i^-$ for some $i^*+1 \leq i \leq k$,
	then there is no edge between $\beta'^\sigma$ and $\alpha_i^+$ in $\calG(\hat{\calV}, p)$;
	this follows from Claim.
	Hence $\calP$ satisfies (A4).
	
	This completes the proof.
\end{proof}

By this update, $\theta$ does not increase and $\phi$ strictly decreases.
\begin{lemma}\label{lem:preprocessing theta varphi}
	$\theta(\hat{M}, \hat{I}, \hat{\calR}) \leq \theta(M, I, \calR)$ and $\phi(\hat{M}, \hat{I}, \hat{\calR}) < \phi(M, I, \calV)$.
\end{lemma}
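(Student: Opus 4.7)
The plan is to track $\theta$ and $\phi$ separately, exploiting the locality of the modification. The key observation I will use is that $\hat{\calR}$ is obtained from $\calR$ by discarding the suffix $\calP_m[\alpha_{i^*+1}^+)$ of length exactly $2(k-i^*)$, while $\hat{M}\setminus\hat{I}$ differs from $M\setminus I$ only by the addition of the underlying path $P_m[\alpha_{i^*+1})$, and $\hat{\calV}$ differs from $\calV$ only on $\{\alpha_{i^*+1},\dots,\alpha_k,\beta_{i^*+1},\dots,\beta_k\}$. When $k = i^*$ the update is trivial and both inequalities hold as equalities; I will assume $k > i^*$ for the remainder. An auxiliary fact I will invoke repeatedly is that each vertex in $\{\alpha_{i^*+1},\dots,\alpha_k,\beta_{i^*+1},\dots,\beta_k\}$ is isolated in $M\setminus I$, because each $\alpha_j\beta_j \in I$ is an isolated rank-$2$ edge of $M$; meanwhile $\alpha_{k+1}$ lies in a component $C_{k+1}$ of $M\setminus I$ of size at most one, by (A5).

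For $\phi$, I first note that the truncation gives $|\hat{\calR}| = |\calR| - 2(k-i^*)$, and then argue that $N_0$ is preserved. The value $N_0$ is measured inside the source-side component $\calC(\beta_*^{\sigma_0})$; by \cref{lem:source set}(1) this component is disjoint from the target-side $\calC(\alpha_*^{+})$, which is precisely where the modification is localised. Hence the path from $\beta_0^{\sigma_0}$ to $\beta_*^{\sigma_0}$ is untouched, and $\phi$ strictly decreases by $2(k-i^*) \geq 2$.

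For $\theta$, the sum of outer-path lengths drops by $2(k-i^*)$ because only $\calP_m$ is truncated. I match this with an equal increase in $N_S$: the enlarged component $C_{k+1}\cup P_m[\alpha_{i^*+1})$ of $\hat{M}\setminus\hat{I}$ replaces $C_{k+1}$ in the count, so the contribution from this region grows from $|C_{k+1}|$ (formerly counted via $\alpha_{k+1}\in V(P_m)$) to $|C_{k+1}|+2(k-i^*)$ (now counted via $\alpha_{i^*+1}\in V(\hat{P}_m)$). The hard part will be to confirm that no other component contributing to $N_S$ changes. For this I plan to use (A4) to show that each inner path $\calQ_l$ lies in a component of $M\setminus I$ disjoint from $\calC(\alpha_*^{+})$, hence from the modified region; combined with the isolation of the $I$-edges $\alpha_j\beta_j$ in $M$, this forces all remaining components of $\hat{M}\setminus\hat{I}$ to agree with those of $M\setminus I$ on the vertices relevant to $N_S$. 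Putting the two accounts together yields $\theta(\hat{M},\hat{I},\hat{\calR}) = \theta(M,I,\calR)$, which is the desired non-strict inequality.
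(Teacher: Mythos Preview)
Your approach is essentially the paper's: track the drop in the outer-path sum against the gain in $N_S$, and note that $N_0$ is untouched because the modification lives on the target side. Your justification of the $N_0$ invariance via \cref{lem:source set}(1) is more explicit than the paper's ``clearly'', and is fine.

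There is one imprecision worth flagging. You assert that ``the sum of outer-path lengths drops by $2(k-i^*)$ because only $\calP_m$ is truncated'', and conclude $\theta(\hat{M},\hat{I},\hat{\calR}) = \theta(M,I,\calR)$. As a sequence of edges $\hat{\calR}$ is indeed just $\calR$ with its last $2(k-i^*)$ edges removed, but $\theta$ is computed from the outer/inner \emph{decomposition} of $\hat{\calR}$ relative to $(\hat{M},\hat{I})$, and that decomposition can differ from the restriction of the old one. Concretely, an earlier outer path $\calP_l$ may contain an edge $\alpha_j^{-}\beta_j^{-}$ with $i^*+1\le j\le k$; since $\alpha_j\beta_j\in I$ but $\alpha_j\beta_j\in\hat{M}\setminus\hat{I}$, such a segment of $\calP_l$ becomes an inner path in the new decomposition (this is exactly the situation handled in the proof of \cref{prop:Nouter}). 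When that happens the outer-path sum drops by \emph{more} than $2(k-i^*)$, while your $N_S$ computation is unaffected (the new inner segments lie in the enlarged component already counted via $\alpha_{i^*+1}$). So your equality need not hold; the paper accordingly writes that the outer-path sum ``decreases at least by $|\calP_m[\alpha_{i^*+1}^+)|$''. This does not harm the lemma---it only strengthens the desired inequality---but the sentence ``only $\calP_m$ is truncated'' should be weakened.

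A second small point: your handling of the degenerate case $k=i^*$ (``both inequalities hold as equalities'') contradicts the strict inequality for $\phi$ claimed in the lemma. The statement is implicitly for a nontrivial simplification ($k>i^*$), so simply drop that sentence.
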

\begin{proof}
	The difference of $\theta$ follows from the fact that
    $N_{\rm S}$ increases by $|\calP_m[\alpha_{i^*+1}^+)|$
	and the number of edges in the union of outer paths decreases at least by $|\calP_m[\alpha_{i^*+1}^+])|$.
 	Thus we have $\theta(\hat{M}, \hat{I}, \hat{\calR}) \leq \theta(M, I, \calR)$.
	Clearly $N_0$ does not change and $|\hat{\calR}| = |\calR| - |\calP_m[\alpha_{i^*+1}^+)|$,
	which implies $\phi(\hat{M}, \hat{I}, \hat{\calR}) = \phi(M, I, \calV) - |\calP_m[ \alpha_{i^*+1} )| < \phi(M, I, \calV)$.
\end{proof}

We update
\begin{align*}
M \leftarrow \hat{M}, \qquad
I \leftarrow \hat{I}, \qquad
\calV \leftarrow \hat{\calV}(\calR).
\end{align*}
This update can be done in $O(|\calR|) = O(\min \set{\mu, \nu})$ time.
By \cref{prop:pseudo augmenting path,prop:Nouter},
the resulting $(M, I)$ is a matching-pair of size $k$, $\calV$ a valid labeling for $M$,
$p$ an $(M, I, \calV)$-compatible $c$-potential,
and
$\calR$ an augmenting path for $(M, I, \calV, p)$ satisfying (A3)--(A5).
Moreover,
$p$ satisfies (Zero)$'$ for $\calR$ by $\alpha_{i^*+1}^+ \in \calU(M, I) \not\ni \alpha_{k+1}^+$ (in \cref{prop:Nouter}).

In \cref{sec:base,,sec:violate,,sec:non violate},
we appropriately execute this modification of the last outer path as the preprocessing to simplify the update in each phase.

\section{Base case: $\calR = \calP_0$ and $\calP_0$ is simple}\label{sec:base}
Suppose that $\calR$ consists only of a single outer path $\calP_0$.
Then $\calR$ clearly satisfies ($\No$).
By applying the simplification in \cref{subsec:preprocessing} to $\calP_0$,
we can assume that $\calP_0$ consists of a single edge in $\calE(\calV, p)$.
We first redefine $+$- and $-$-edges of $M$
and $+$- and $-$-spaces of $\calV$
so that
\begin{align*}
\calP_0 = (\beta_0^+ \alpha_1^+).
\end{align*}
By (A5), $\deg_M(\alpha_1) \leq 1$
and
an edge incident to $\alpha_{k+1}$ in $M$ is a $+$-edge if it exists.
Moreover,
we modify $(M, I)$ so that $\deg_M(\beta_0) \leq 1$
and an edge incident to $\beta_0$ in $M$ is a $+$-edge if it exists;
it can be done by the same procedure as in \cref{subsec:modifications} for (A5).
By this modification and (Zero)$'$,
we have
\begin{align}\label{eq:except}
    p(U_\alpha^\sigma) = p(V_\beta^{\sigma'}) = 0
\end{align}
for any unmatched spaces $U_\alpha^\sigma, V_\beta^{\sigma'}$ by $(M, I)$ except $U_{\alpha_1}^+$ and $V_{\beta_0}^+$.

Let $C_{\beta_0}$ be the connected components of $M \setminus I$
containing $\beta_0$,
and $P_{\beta_0}$ be the maximal rank-2 path in $C_{\beta_0}$ 
which start with $\beta_0$.
We let $P_{\beta_0} \defeq \{ \beta_0 \}$ 
if $\beta_0$ is incident to no edge or a rank-1 edge.
For a vector space $Y \in \calM_{\beta_{0}}$ with $Y \neq V_{\beta_0}^+$,
let $\calV(C_{\beta_0}; Y)$ be the labeling obtained from $\calV$
by replacing $V_{\beta_0}^-$
with $Y$
and by setting the $-$-space of $\beta$
and the $+$-space of $\alpha$ for $\alpha, \beta$ belonging to $C_{\beta_0}$
so that \eqref{eq:+-} and \eqref{eq:++ --} hold.
Clearly we have $\calV(C_{\beta_0}; V_{\beta_0}^-) = \calV$.
One can observe that $\calV$ and $\calV(C_{\beta_0}; Y)$ can be different only in the $-$-space of $\beta$
and the $+$-space of $\alpha$ for $\alpha, \beta \in V(P_{\beta_0})$,
and that $\calV(C_{\beta_0}; Y)$ is a valid labeling for $M$.
For $\alpha_{1}$,
we similarly define $C_{\alpha_{1}}$, $P_{\alpha_{1}}$,
and $\calV(C_{\alpha_{1}}; X)$ for $X \in \calM_{\alpha_{1}}$ with $X \neq U_{\alpha_1}^+$.
\begin{lemma}\label{lem:single-counted component}
	\begin{itemize}
		\item[{\rm (1)}]
		If $p(V_{\beta_0}^-) \geq p(V_{\beta_0}^+)$ (resp. $p(V_{\beta_0}^-) > p(V_{\beta_0}^+)$),
		then we have $p(V_{\beta}^-) \geq p(V_{\beta}^+)$ and $p(U_{\alpha}^+) \geq p(U_{\alpha}^-)$
		(resp. $p(V_{\beta}^-) > p(V_{\beta}^+)$ and $p(U_{\alpha}^+) > p(U_{\alpha}^-)$)
		for each $\alpha, \beta \in V(P_{\beta_0})$.
		Also if $p(U_{\alpha_1}^-) \geq p(U_{\alpha_1}^+)$ (resp. $p(U_{\alpha_1}^-) > p(U_{\alpha_1}^+)$),
		then we have $p(U_{\alpha}^-) \geq p(U_{\alpha}^+)$ and $p(V_{\beta}^+) \geq p(V_{\beta}^-)$ (resp. $p(U_{\alpha}^-) > p(U_{\alpha}^+)$ and $p(V_{\beta}^+) > p(V_{\beta}^-)$)
		for each $\alpha, \beta \in V(P_{\alpha_1})$.
		\item[{\rm (2)}]
	If $p(V_{\beta_0}^-) \geq p(V_{\beta_0}^+)$,
	then $\calV(C_{\beta_0}; Y) \simeq_p \calV$
	for each $Y \in \calM_{\beta_{0}}$ with $Y \neq V_{\beta_0}^+$.
	Also if $p(U_{\alpha_{1}}^-) \geq p(U_{\alpha_{1}}^+)$,
	then $\calV(C_{\alpha_{1}}; X) \simeq_p \calV$
	for each $X \in \calM_{\alpha_{1}}$ with $X \neq U_{\alpha_{1}}^-$.
	\end{itemize}
\end{lemma}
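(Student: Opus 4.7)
The plan is to establish (1) by propagating the hypothesized inequality along the rank-2 paths $P_{\beta_0}$ and $P_{\alpha_1}$, using (Tight) together with the $c$-potential inequality on rank-2 edges. Given (1), the equivalence $\calV(C_{\beta_0}; Y) \simeq_p \calV$ in (2) then reduces to a direct application of (Reg) to the modified spaces along $P_{\beta_0}$, and analogously for $C_{\alpha_1}$.

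For (1), the key computation is the following. Let $\alpha\beta$ be a rank-2 $+$-edge in $M$. By (Tight), $d_{\alpha\beta} = p(U_\alpha^-) + p(V_\beta^-) + c$, while rank-2-ness gives $A_{\alpha\beta}(U_\alpha^+, V_\beta^+) \neq \set{0}$, so the $c$-potential inequality yields $p(U_\alpha^+) + p(V_\beta^+) + c \geq d_{\alpha\beta}$; subtracting gives
\begin{align*}
p(U_\alpha^+) - p(U_\alpha^-) \geq p(V_\beta^-) - p(V_\beta^+).
\end{align*}
Symmetrically, any rank-2 $-$-edge $\alpha\beta$ of $M$ satisfies
\begin{align*}
p(V_\beta^-) - p(V_\beta^+) \geq p(U_\alpha^+) - p(U_\alpha^-),
\end{align*}
and each inequality is strict whenever the right-hand side is. Since the edge of $P_{\beta_0}$ incident to $\beta_0$ (if any) is a $+$-edge by the modifications preceding the lemma, and edges of $M \setminus I$ alternate in sign, iterating these two inequalities along $P_{\beta_0}$ starting from $p(V_{\beta_0}^-) \geq p(V_{\beta_0}^+)$ produces the asserted inequalities at every $\alpha, \beta \in V(P_{\beta_0})$, with strictness carried through. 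The statement for $\alpha_1$ follows in the same way by propagating $p(U_{\alpha_1}^-) \geq p(U_{\alpha_1}^+)$ along $P_{\alpha_1}$.

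For (2), recall that $\calV(C_{\beta_0}; Y)$ is a valid labeling for $M$ differing from $\calV$ only in the $+$-space of each $\alpha$ and the $-$-space of each $\beta$ for $\alpha, \beta \in V(P_{\beta_0})$, so it suffices to verify that every modified space has the same $p$-value as the original. At $\beta_0$, either $Y = V_{\beta_0}^-$ (nothing to check) or $Y \notin \set{V_{\beta_0}^+, V_{\beta_0}^-}$, in which case (Reg) gives $p(Y) = \max\set{p(V_{\beta_0}^+), p(V_{\beta_0}^-)} = p(V_{\beta_0}^-)$ by the hypothesis. Iterating along $P_{\beta_0}$, each modified $+$-space at an $\alpha$ is a 1-dimensional orthogonal space that is either equal to $U_\alpha^+$ (no change) or lies outside $\set{U_\alpha^+, U_\alpha^-}$; in the latter case, (Reg) together with (1) yields $p$-value $\max\set{p(U_\alpha^+), p(U_\alpha^-)} = p(U_\alpha^+)$. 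The modified $-$-spaces at subsequent $\beta$'s are handled identically, and the argument for $\calV(C_{\alpha_1}; X)$ is fully analogous.

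The main technical point, I expect, is verifying that each propagated orthogonal space is indeed distinct from the unchanged space of opposite sign, so that (Reg) can be applied. This is controlled by the rank-2 bijectivity of the orthogonality pairing: from $(V_{\beta_0}^+)^{\perp_{\beta_0\alpha}} = U_\alpha^-$ one gets that $Y \neq V_{\beta_0}^+$ implies $Y^{\perp_{\beta_0\alpha}} \neq U_\alpha^-$, and the same mechanism transports the distinction along the rest of $P_{\beta_0}$.
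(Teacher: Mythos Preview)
Your proof is correct and follows essentially the same approach as the paper: part~(1) is the same propagation via (Tight) plus the $c$-potential inequality on rank-2 edges, and part~(2) is the same application of (Reg) together with~(1). The only cosmetic difference is that the paper invokes the pre-stated fact that $\calV(C_{\beta_0}; Y)$ is a valid labeling for $M$ to get the distinctness of each modified space from the unchanged space of opposite sign, whereas you re-derive this via the rank-2 bijectivity of the orthogonality pairing; both are fine.
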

\begin{proof}
	We only show the case for $\beta_0$; the case for $\alpha_1$ is similar.
	
	(1).
	If $P_{\beta_0} = \{\beta_0\}$,
	we are done.
	Suppose that $P_{\beta_0}$ has of the form $(\beta_{0} \alpha_{0}, \alpha_{0} \beta_{-1}, \dots)$.
	Since $\beta_{0} \alpha_{0}$ is rank-2,
	we have $A_{\alpha_{0} \beta_{0}}(U_{\alpha_{0}}^+, V_{\beta_{0}}^+) \neq \set{0}$.
	Since $p$ is a $c$-potential satisfying (Tight) and $\beta_{0} \alpha_{0}$ is a $+$-edge,
	we have $p(U_{\alpha_{0}}^+) + p(V_{\beta_{0}}^+) + c \geq d_{\alpha_{0} \beta_{0}} = p(U_{\alpha_{0}}^-) + p(V_{\beta_{0}}^-) + c$.
	By combining it with $p(V_{\beta_0}^-) \geq p(V_{\beta_0}^+)$,
	we have $p(U_{\alpha_{0}}^+) \geq p(U_{\alpha_{0}}^-)$.
	By a similar argument,
	we obtain $p(V_\beta^-) \geq p(V_\beta^+)$ and $p(U_\alpha^+) \geq p(U_\alpha^-)$ for each $\alpha, \beta \in V(P_{\beta_{0}})$.
	By replacing $\geq$ with $>$ in the above argument,
	we obtain $p(V_\beta^-) > p(V_\beta^+)$ and $p(U_\alpha^+) > p(U_\alpha^-)$ for each $\alpha, \beta \in V(P_{\beta_{0}})$.
	
	(2).
	Recall that $\calV$ and $\calV(C_{\beta_0}; Y)$ can be different only in the $-$-space of $\beta$
	and the $+$-space of $\alpha$ for $\alpha, \beta \in V(P_{\beta_0})$.
	Suppose $p(V_{\beta_0}^-) \geq p(V_{\beta_0}^+)$.
	Then, by (1),
	$p(V_{\beta}^-) \geq p(V_{\beta}^+)$ and $p(U_{\alpha}^+) \geq p(U_{\alpha}^-)$ hold
	for each $\alpha, \beta \in V(P_{\beta_0})$.
	Since $\calV(C_{\beta_0}; Y)$ is a valid labeling for $M$,
	for each $\beta \in V(P_{\beta_0})$
	the $-$-space of $\beta$ with respect to $\calV(C_{\beta_0}; Y)$, denoted by $Y'$,
	is different from the $+$-space of $\beta$ with respect to $\calV(C_{\beta_0}; Y)$,
	which is $V_{\beta}^+$.
	Thus, by (Reg),
	we have $p(Y') = p(V_\beta^-) \geq p(V_{\beta}^+)$.
	Similarly,
	for $\alpha \in V(P_{\beta_0})$,
	we have $p(X') = p(U_\alpha^+) \geq p(U_\alpha^-)$,
	where $X'$ is the $+$-space of $\alpha$ with respect to $\calV(C_{\beta_0}; Y)$.
	Thus $\calV(C_{\beta_0}; Y) \simeq_p \calV$.
\end{proof}

Let us define $M^*$ and $\calV^*$ by
\begin{align*}
M^* \defeq M \cup \{ \beta_0 \alpha_1\}, \qquad \calV^* \defeq \calV \left(C_{\alpha_1}; (V_{\beta_0}^+)^{\perp_{\beta_0 \alpha_1}}\right)\left(C_{\beta_0}; (U_{\alpha_1}^+)^{\perp_{\alpha_1 \beta_0}}\right).
\end{align*}
Then the following holds:
\begin{proposition}\label{prop:base case}
	$(M^*, I)$ is a matching-pair of size $k+1$, $\calV^*$ is a valid labeling for $M^*$,
	and
	$p$ is an optimal $(M^*, I, \calV^*)$-compatible $c$-potential.
\end{proposition}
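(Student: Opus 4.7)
The plan is to verify the three assertions in turn: that $(M^*, I)$ is a matching-pair of size $k+1$; that $\calV^*$ is a valid labeling for $M^*$; and that $p$ is an optimal $(M^*, I, \calV^*)$-compatible $c$-potential.

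For the first assertion, the preprocessing in \cref{subsec:modifications} together with the analogous tweak at $\beta_0$ ensures $\deg_M(\beta_0), \deg_M(\alpha_1) \leq 1$, so adding $\beta_0\alpha_1$ produces degrees at most $2$ and (Deg) holds. To rule out a violation of (Cycle), I would use \cref{lem:source set}(1),(2): since $\beta_0^+ \in \calS(M,I,\calV,p)$ and $\alpha_1^+ \in \calT(M,I,\calV,p)$ and these two sets are disjoint, the path components of $M \setminus I$ through $\beta_0$ and through $\alpha_1$ are distinct, so $\beta_0\alpha_1$ cannot close a cycle. The size increases by one. For the second assertion, applying \cref{lem:edge} to $\beta_0^+\alpha_1^+ \in \calE(\calV,p)$ yields $(V_{\beta_0}^+)^{\perp_{\beta_0\alpha_1}} \in \calM_{\alpha_1}\setminus\{U_{\alpha_1}^+\}$ and $(U_{\alpha_1}^+)^{\perp_{\alpha_1\beta_0}} \in \calM_{\beta_0}\setminus\{V_{\beta_0}^+\}$, so the $+$- and $-$-spaces at $\beta_0$ and at $\alpha_1$ in $\calV^*$ are distinct; the identities \eqref{eq:+-} and \eqref{eq:++ --} at the new edge are then immediate from the orthogonality definitions, case-splitting on the rank of $A_{\beta_0\alpha_1}$. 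Validity at the remaining nodes of $V(P_{\beta_0}) \cup V(P_{\alpha_1})$ is built into the definitions of $\calV(C_{\beta_0};\cdot)$ and $\calV(C_{\alpha_1};\cdot)$, and the two relabelings commute because $C_{\beta_0}$ and $C_{\alpha_1}$ are disjoint components.

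For the third assertion, I would split into cases on the potential ordering at $\beta_0$, with the analysis at $\alpha_1$ being symmetric. If $p(V_{\beta_0}^-) \geq p(V_{\beta_0}^+)$, then \cref{lem:single-counted component}(2) gives $\calV(C_{\beta_0};Y) \simeq_p \calV$ for the choice $Y = (U_{\alpha_1}^+)^{\perp_{\alpha_1\beta_0}}$, so by \cref{lem:>}(1) compatibility is preserved at every edge of $M$ incident to $V(P_{\beta_0})$ and (Reg) survives. If instead $p(V_{\beta_0}^+) > p(V_{\beta_0}^-)$, then the identity $d_{\beta_0\alpha_1} = p(V_{\beta_0}^+) + p(U_{\alpha_1}^+) + c$ coming from $\beta_0^+\alpha_1^+ \in \calE(\calV,p)$, combined with the $c$-potential inequality, forces $A_{\alpha_1\beta_0}(U_{\alpha_1}^+, V_{\beta_0}^-) = \{0\}$; a dimension count then pins $(U_{\alpha_1}^+)^{\perp_{\alpha_1\beta_0}} = V_{\beta_0}^-$, so $\calV^*$ coincides with $\calV$ on $V(P_{\beta_0})$ and compatibility is trivially inherited. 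Tightness at the new edge $\beta_0\alpha_1$ is exactly the equation $d_{\beta_0\alpha_1} = p(V_{\beta_0}^+) + p(U_{\alpha_1}^+) + c$, read as the (Tight) condition for $\beta_0\alpha_1$ treated as a $-$-edge whenever one of its endpoints already carries a $+$-edge in $M$ (and with either sign when both endpoints are isolated). Optimality then follows from \cref{lem:compatible}: the spaces that become matched by $(M^*, I)$ relative to $(M, I)$ are precisely the two exceptional spaces $V_{\beta_0}^+$ and $U_{\alpha_1}^+$ permitted by (Zero)$'$, so the resulting potential satisfies (Zero).

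The main obstacle is this compatibility step when the $+$-space at $\beta_0$ or at $\alpha_1$ carries strictly greater potential than the $-$-space: the generic equivalence $\calV^* \simeq_p \calV$ given by \cref{lem:single-counted component}(2) is not available there, and one instead has to exploit the rigidity of the orthogonality structure around the tight edge $\beta_0^+\alpha_1^+$ to show that the new $-$-space is forced to coincide with the old one, so that no effective relabeling takes place and (Reg), (Tight) automatically persist.
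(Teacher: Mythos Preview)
Your argument has a genuine gap: you assume throughout that $C_{\beta_0}$ and $C_{\alpha_1}$ are distinct components of $M \setminus I$, but this is not guaranteed, and the paper's proof explicitly treats the case where they coincide. Your appeal to \cref{lem:source set}(1),(2) does not establish disjointness: even granting $\calS(M,I,\calV,p) \cap \calT(M,I,\calV,p) = \emptyset$, this only says $\beta_0^+$ and $\alpha_1^+$ lie in different connected components of the auxiliary graph $\calG(\calV,p)|_M$, not that $\beta_0$ and $\alpha_1$ lie in different components of $M \setminus I$ in $G$. A single-tight edge on the $M$-path between them already separates their $\calC$-components while leaving them in the same $M$-component. (Moreover, the hypothesis of \cref{lem:source set} --- no rearrangeable component anywhere --- is not asserted at this point of the procedure; only the component through $\alpha_1$ has been checked in \cref{subsec:rearrangeable case}.)

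When $C_{\beta_0} = C_{\alpha_1}$, adding $\beta_0\alpha_1$ produces a cycle $C^*$, and two things in your outline break down. First, (Cycle) is not automatic: the paper rules out an all-rank-$2$ cycle by a counting argument comparing $\sum(p(U_\alpha^-)+p(V_\beta^-))$ against $\sum(p(U_\alpha^+)+p(V_\beta^+))$ over $C_{\beta_0}$, where the non-rearrangeability of $C_{\beta_0}$ furnishes a strictly single-tight $+$-edge that makes the inequality strict in one direction while the $-$-edges and the new tight edge $\beta_0^+\alpha_1^+$ give the reverse inequality. Second, in this case $P_{\beta_0}$ and $P_{\alpha_1}$ coincide, so the two relabelings $\calV(C_{\alpha_1};\,\cdot\,)$ and $\calV(C_{\beta_0};\,\cdot\,)$ act on the same nodes and do not commute in the way you claim; the paper resolves this by observing that $\beta_0\alpha_1$ must then be rank-$1$ and by a further case split on the sign of $p(U_{\alpha_1}^-)-p(U_{\alpha_1}^+)$ to show one of the two orthogonal spaces is forced to equal an existing label, collapsing $\calV^*$ to a single relabeling. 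Your treatment of the path case (distinct components) is essentially along the right lines and matches the paper there, but the proof is incomplete without the cycle case.
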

\begin{proof}
	By $\deg_I(\beta_0) \leq 1$ and $\deg_I(\alpha_1) \leq 1$,
	the resulting $M^*$ satisfies (Deg).
	Let $C^*$ be the union of $C_{\beta_0}$, $C_{\alpha_1}$, and $\beta_0 \alpha_1$,
	which is a connected component of $M^* \setminus I$.
	Suppose, to the contrary,
	that $M^*$ does not satisfy (Cycle),
	or equivalently,
	$C^*$ is a cycle consisting of rank-2 edges.
	It can happen only when
	$C_{\beta_0}$ and $C_{\alpha_{1}}$ coincide,
	$C_{\beta_0}$ forms a rank-2 path in $M$,
	and $\beta_0 \alpha_1$ is rank-2.
	In this case,
	we have $A_{\alpha \beta}(U_{\alpha}^+, V_{\beta}^+) \neq \set{0} \neq A_{\alpha \beta}(U_{\alpha}^-, V_{\beta}^-)$ for each $\alpha \beta \in C_{\beta_0}$.

	For each $+$-edge $\alpha \beta \in C_{\beta_0}$,
	there is $\alpha^- \beta^- \in \calE(\calV, p)$.
	Hence we have $p(U_{\alpha}^-) + p(V_{\beta}^-) \leq p(U_{\alpha}^+) + p(V_{\beta}^+)$.
	In addition,
	since $C_{\beta_0}$ is not rearrangeable,
	$C_{\beta_0}$ has at least one single-tight $+$-edge.
	For such an edge,
	the above inequality is strict.
	Thus
	we have
	\begin{align}\label{eq:strict}
	\sum_{\alpha, \beta \in V(C_{\beta_0})} \left( p(U_{\alpha}^-) + p(V_{\beta}^-) \right) < \sum_{\alpha, \beta \in V(C_{\beta_0})} \left( p(U_{\alpha}^+) + p(V_{\beta}^+) \right).
	\end{align}
	Similarly, for each $-$-edge $\alpha \beta \in C_{\beta_0}$,
	there is $\alpha^+ \beta^+ \in \calE(\calV, p)$.
	Moreover, the edge $\beta_0^+ \alpha_1^+$ exists in $\calE(\calV, p)$.
	Hence
	\begin{align*}
	\sum_{\alpha, \beta \in V(C_{\beta_0})} \left( p(U_{\alpha}^-) + p(V_{\beta}^-) \right) \geq \sum_{\alpha, \beta \in V(C_{\beta_0})} \left( p(U_{\alpha}^+) + p(V_{\beta}^+) \right)
	\end{align*}
	also holds,
	which contradicts~\eqref{eq:strict}.
	
	We then show that $\calV^*$ is a valid labeling for $M^*$
	and $p$ is an $(M^*, I, \calV^*)$-compatible $c$-potential.
	In particular, the former implies that $(M^*, I)$ is a matching-pair.

	Suppose that $C^*$ is a path component of $M^* \setminus I$
	or $C^*$ is a cycle component such that $C_{\beta_0} (= C_{\alpha_{k+1}})$
	has a rank-1 edge.
	Then $P_{\beta_0}$ and $P_{\alpha_1}$ are disjoint,
	particularly,
	$P_{\beta_0}$ does not contain $\alpha_1$
	and $P_{\alpha_1}$ does not contain $\beta_0$.
	Hence the $+$-spaces of $\beta_0$ and $\alpha_1$ with respect to $\calV^*$
	are $V_{\beta_0}^+$ and $U_{\alpha_1}^+$,
	respectively.
	By \cref{lem:edge} and $\beta_0^+ \alpha_1^+ \in \calE(\calV, p)$,
	we have $(V_{\beta_0}^+)^{\perp_{\beta_0 \alpha_1}} \neq U_{\alpha_1}^+$
	and $(U_{\alpha_1}^+)^{\perp_{\alpha_1 \beta_0}} \neq V_{\beta_0}^+$.
	Hence $\calV^*$
	is a valid labeling for $M$.
	In addition, $p(U_{\alpha_1}^-) \geq p(U_{\alpha_1}^+)$
	if $(V_{\beta_0}^+)^{\perp_{\beta_0 \alpha_1}} \neq U_{\alpha_1}^-$,
	and $p(V_{\beta_0}^-) \geq p(V_{\beta_0}^+)$
	if $(U_{\alpha_1}^+)^{\perp_{\alpha_1 \beta_0}} \neq V_{\beta_0}^-$.
	Thus we have $\hat{\calV} \simeq_p \calV$ by \cref{lem:single-counted component}~(2).
	Therefore, by \cref{lem:>}~(1),
	$p$ is an $(M, I, \calV^*)$-compatible potential.
	In particular, $p$ satisfies (Reg) for $\calV^*$
	and (Tight) for all edges in $M$.

    The remaining is to show that the conditions~\eqref{eq:+-} and~\eqref{eq:++ --} hold for the edge $\beta_0 \alpha_1$ and that (Tight) holds for $\beta_0 \alpha_1$,
    which implies that $\calV^*$ is a valid labeling for $M^*$
    and $p$ is an $(M^*, I, \calV^*)$-compatible potential.
    The former follows from the fact that
    $(V_{\beta_0}^+)^{\perp_{\beta_0 \alpha_1}}$ and $(U_{\alpha_1}^+)^{\perp_{\alpha_1 \beta_0}}$ are the $-$-spaces of $\alpha_1$ and of $\beta_0$ with respect to $\calV^*$,
    respectively,
    and
    $A_{\alpha_1 \beta_0}((V_{\beta_0}^+)^{\perp_{\beta_0 \alpha_1}}, V_{\beta_0}^+) = A_{\alpha_1 \beta_0}(U_{\alpha_1}^+, (U_{\alpha_1}^+)^{\perp_{\alpha_1 \beta_0}}) = \set{0}$.
    Since the edge $\beta_0 \alpha_1$ is a $-$-edge in $M^*$
	and $\beta_0^+ \alpha_1^+$ exists in $\calE(\calV^*, p)$ by \cref{lem:>}~(3),
	the latter holds.

	Suppose that $C^*$ is a cycle component such that $C_{\beta_0}$
	consists of rank-2 edges.
	In this case, $P_{\beta_0}$ and $P_{\alpha_1}$ coincide;
	they are the same as $C_{\beta_0}$.
	Since $M^*$ satisfies (Cycle),
	$\beta_0 \alpha_1$ must be rank-1.
	
	If $p(U_{\alpha_1}^-) < p(U_{\alpha_1}^+)$,
	then $(V_{\beta_0}^+)^{\perp_{\beta_0 \alpha_1}} = U_{\alpha_1}^-$ by \cref{lem:edge}.
	Hence $\calV^* = \calV(C_{\beta_0}; (U_{\alpha_1}^+)^{\perp_{\alpha_1 \beta_0}})$,
	implying that the $+$-space of $\beta$ with respect to $\calV^*$ is $V_\beta^+$.
	\cref{lem:edge,lem:single-counted component}~(2) assert that
	$\calV^*$ is a valid labeling for $M$
	and $p$ is $(M, I, \calV^*)$-compatible.
	Furthermore,
	since $\beta_0 \alpha_1$ must be rank-1,
	we have $(V_{\beta_0}^+)^{\perp_{\beta_0 \alpha_1}} (= U_{\alpha_1}^-) = \kerL(A_{\alpha_1 \beta_0})$
	and $(U_{\alpha_1}^+)^{\perp_{\alpha_1 \beta_0}} = \kerR(A_{\alpha_1 \beta_0})$.
	Thus $A_{\alpha_1 \beta_0}((V_{\beta_0}^+)^{\perp_{\beta_0 \alpha_1}}, V_{\beta_0}^+) = A_{\alpha_1 \beta_0}(U_{\alpha_1}^+, (U_{\alpha_1}^+)^{\perp_{\alpha_1 \beta_0}}) = \set{0}$ holds,
	implying that $\calV^*$ is also a valid labeling for $M^*$.
	By \cref{lem:>}~(3),
	the edge $\beta_0^+ \alpha_1^+$ exists in $\calE(\calV^*, p)$.
	Therefore $p$ is an $(M^*, I, \calV^*)$-compatible potential.
	
	If $p(U_{\alpha_1}^-) \geq p(U_{\alpha_1}^+)$,
	then $p(V_{\beta_0}^-) \leq p(V_{\beta_0}^+)$ by \cref{lem:single-counted component}~(1).
	Moreover, since $C_{\beta_0}$ has a single-tight edge,
	the above inequality is strict, i.e., $p(V_{\beta_0}^-) < p(V_{\beta_0}^+)$.
	Thus $(U_{\alpha_1}^+)^{\perp_{\alpha_1 \beta_0}} = V_{\beta_0}^-$ by \cref{lem:edge}.
	By a similar argument to the case of $p(U_{\alpha_1}^-) < p(U_{\alpha_1}^+)$,
	$\calV^*$ is also a valid labeling for $M^*$
	and $p$ is an $(M^*, I, \calV^*)$-compatible potential.

    Clearly $(M^*, I)$ is a matching-pair of size $k+1$ such that $\calU(M^*, I) = \calU(M, I) \setminus \{ \beta_0^+, \alpha_1^+ \}$.
    By the condition~\eqref{eq:except},
    $p$ satisfies (Zero) for $(M^*, I, \calV^*)$.
    Thus, by \cref{lem:compatible},
    $p$ is optimal.
    
	This completes the proof.
\end{proof}

This update can be done in $O(|M|) = O(\min \set{\mu, \nu})$ time.
We output the resulting $(M^*, I, \calV^*, p)$ and stop the augmentation procedure.

\begin{remark}\label{rmk:rearrangeable}
    In the proof of \cref{prop:base case}, the condition that $C_{\beta_0}$ is not rearrangeable is used only when the union $C^*$ of $C_{\beta_0}$, $C_{\alpha_1}$, and $\beta_0 \alpha_1$ forms a cycle component in $M^*$.
    Hence, in the case where $C^*$ is a path component in $M^*$,
    then the statement of \cref{prop:base case} holds even if
    $C_{\beta_0}$ is rearrangeable.
    \qqed
\end{remark}

\begin{remark}\label{rmk:cycle}
    Let $C^*$ be the union of $C_{\beta_0}$, $C_{\alpha_1}$, and $\beta_0 \alpha_1$ as in \cref{rmk:rearrangeable}.
    The proof of \cref{prop:base case} implies that
    for each $\alpha \in V(C^*)$,
    the $+$-space or $-$-space of $\alpha$ with respect to $\calV^*$ is $U_\alpha^+$ or $U_\alpha^-$,
    respectively,
    even when $C^*$ forms a cycle.
    The same holds for $\beta \in V(C^*)$.
    \qqed
\end{remark}

\section{$\calR$ violates ($\No$) or ($\Ni$)}\label{sec:violate}
In this section, we consider the case where $\calR$ violates ($\No$) or ($\Ni$).
Let $\calR = \calP_0 \circ \calQ_1 \circ \calP_1 \circ \cdots \circ \calQ_m \circ \calP_m$ be an augmenting path for $(M, I, \calV, p)$,
in which $\calP_m = (\beta_0^{\sigma_0} \alpha_1^{\sigma_1}, \alpha_1^{\sigma_1} \beta_1^{\sigma_1}, \dots, \beta_k^{\sigma_k} \alpha_{k+1}^{\sigma_{k+1}})$.

\subsection{$\calR$ violates ($\No$)}\label{subsec:Nouter}
Suppose that $\calR$ violates ($\No$),
i.e.,
$\calP_m$ is not simple
or $\calR$ meets some $\beta_i^{\overline{\sigma_i}}$
that is not BP-invariant with respect to $\calP_m$.

Let $i^*$ be the minimum index such that
$P_m[\alpha_{i^*+1})$ forms a path in $G$.
That is, if $\calP_m$ is simple
then $i^* = 0$,
and
otherwise
$P_m$ is of the form
\begin{align*}
P_m = (\beta_0 \alpha_1, \alpha_1 \beta_1, \dots, \alpha_{i^*} \beta_{i^*}, \beta_{i^*} \alpha_{i^* +1}, \dots, \alpha_{j^*} \beta_{j^*}, \beta_{j^*} \alpha_{j^*+1}, \dots, \beta_k \alpha_{k+1}),
\end{align*}
where $\alpha_{i^*}\beta_{i^*} = \alpha_{j^*} \beta_{j^*}$
and $\beta_{i^*} \alpha_{i^*+1}, \alpha_{i^*+1} \beta_{i^*+1}, \dots, \beta_k \alpha_{k+1}$
are distinct.
Note that, in the latter case,
we have
$\alpha_{i^*}^{\sigma_{i^*}} \beta_{i^*}^{\sigma_{i^*}} = \alpha_{j^*}^{\overline{\sigma_{j^*}}} \beta_{j^*}^{\overline{\sigma_{j^*}}}$.

There are two cases:
(Case~1) $\calR$ meets some $\beta_i^{\overline{\sigma_i}}$ with $i > i^*$
that is not BP-invariant with respect to $\calP_m$
and (Case~2) all $\beta_i^{\overline{\sigma_i}}$ with $i > i^*$ which $\calR$ meets are BP-invariant with respect to $\calP_m$.

\paragraph{(Case~1) $\calR$ meets some $\beta_i^{\overline{\sigma_i}}$ with $i > i^*$ that is not BP-invariant with respect to $\calP_m$.}
Choose such $\beta_i^{\overline{\sigma_i}}$ with the maximum index $i$.
By $i > i^*$ and the choice of $\beta_i^{\overline{\sigma_i}}$,
$\calP_m[\beta_i^{\sigma_i})$ is a simple outer path and
every $\beta_j^{\overline{\sigma_j}}$ with $j > i$ belonging to $\calR$ is BP-invariant with respect to $\calP_m$.
Hence, by executing the simplification in \cref{subsec:preprocessing} to $\calP_m[\beta_i^{\sigma_i})$,
we can assume that
\begin{align*}
\calP_m = (\beta_0^{\sigma_0} \alpha_1^{\sigma_1}, \alpha_1^{\sigma_1} \beta_1^{\sigma_1}, \dots, \alpha_i^{\sigma_i} \beta_i^{\sigma_i}, \beta_i^{\sigma_i} \alpha_{i+1}^+).
\end{align*}

Since $\beta_i^{\overline{\sigma_i}}$ is not BP-invariant
and the space at $\beta_i^{\sigma_i}$ of the back-propagation of $\calP_m$
is $(U_{\alpha_{i+1}}^+)^{\perp_{\alpha_{i+1} \beta_i}}$,
we have
$p(V_{\beta_i}^+) = p(V_{\beta_i}^-)$ and $V_{\beta_i}^{\overline{\sigma_i}}$
is different from $(U_{\alpha_{i+1}}^+)^{\perp_{\alpha_{i+1} \beta_i}}$.
This implies $p(U_{\alpha_{i+1}}^+) + p(V_{\beta_i}^{\overline{\sigma_i}}) + c = d_{\alpha_{i+1} \beta_i}$
and $A_{\alpha_{i+1} \beta_i}(U_{\alpha_{i+1}}^+, V_{\beta_i}^{\overline{\sigma_i}}) \neq \set{0}$.
Thus the edge $\beta_i^{\overline{\sigma_i}} \alpha_{i+1}^+$ exists in $\calE(\calV, p)$.

Let $\calP_l$ be the outer path in $\calR$ that meets $\beta_i^{\overline{\sigma_i}}$.
We update
\begin{align*}
\calR \leftarrow \calP_0 \circ \calQ_1 \circ \calP_1 \circ \cdots \circ \calP_l(\beta_i^{\overline{\sigma_i}}] \circ (\beta_i^{\overline{\sigma_i}} \alpha_{i+1}^+).
\end{align*}
It is clear that the resulting $\calR$ is an augmenting path for $(M, I, \calV, p)$.
Since the number of edges in the union of outer paths decreases,
so does $\theta$.
This update can be done in $O(|\calR|) = O(\min \set{\mu, \nu})$ time.
Return to the initial stage (\cref{sec:initial}).

\paragraph{(Case~2) All $\beta_i^{\overline{\sigma_i}}$ with $i > i^*$ which $\calR$ meets are BP-invariant with respect to $\calP_m$.}
Since $\calR$ fails ($\No$),
$\calP_m$ is not simple,
i.e., $i^* > 0$ and there is $j^* > i^*$ such that $\beta_{i^*}^{\sigma_{i^*}} = \beta_{j^*}^{\overline{\sigma_{j^*}}}$.
The assumption says that
$\calP_m[ \beta_{j^*}^{\sigma_{j^*}} )$ is simple,
$\beta_{j^*}^{\overline{\sigma_{j^*}}}$ is BP-invariant with respect to $\calP_m$,
and 
every $\beta_j^{\overline{\sigma_j}}$ with $j > {j^*}$ belonging to $\calR$ is also BP-invariant.
Hence
we may assume that
\begin{align*}
\calP_m = (\beta_0^{\sigma_0} \alpha_1^{\sigma_1}, \alpha_1^{\sigma_1} \beta_1^{\sigma_1}, \dots, \alpha_{i^*}^{\sigma_{i^*}} \beta_{i^*}^{\sigma_{i^*}}, \beta_{i^*}^{\sigma_{i^*}} \alpha_{i^*+1}^{\sigma_{i^*+1}},
\dots, \alpha_{j^*}^{\sigma_{j^*}} \beta_{j^*}^{\sigma_{j^*}}, \beta_{j^*}^{\sigma_{j^*}} \alpha_{j^*+1}^+)
\end{align*}
by executing the simplification in \cref{subsec:preprocessing} to $\calP_m[ \beta_{j^*}^{\sigma_{j^*}} )$.

We first delete $\alpha_{i^*} \beta_{i^*} (=\alpha_{j^*} \beta_{j^*})$ from $M$ and $I$;
the resulting edge sets are denoted by $M'$ and $I'$.
We then redefine $+$- and $-$-edges of $M'$
and $+$- and $-$-spaces of $\calV$
so that
all $\alpha_{i^*+1} \beta_{i^*+1}, \alpha_{i^*+2} \beta_{i^*+2}, \dots, \alpha_{j^*-1} \beta_{j^*-1}$ are $-$-edges
and
\begin{align*}
\calP_m = (\dots, \alpha_{i^*}^+ \beta_{i^*}^-, \beta_{i^*}^- \alpha_{i^*+1}^-, \dots, \beta_{j^*-1}^- \alpha_{j^*}^-, \alpha_{j^*}^- \beta_{j^*}^+, \beta_{j^*}^+ \alpha_{j^*+1}^+).
\end{align*}
Note here that
the resulting $\calV$ is no longer a valid labeling for $M$,
since $A_{\alpha_{i^*} \beta_{i^*}}(U_{\alpha_{i^*}}^-, V_{\beta_{i^*}}^+) \neq \set{0} \neq A_{\alpha_{i^*} \beta_{i^*}}(U_{\alpha_{i^*}}^+, V_{\beta_{i^*}}^-)$ (corresponding to the edges $\alpha_{i^*}^+ \beta_{i^*}^-$ and $\alpha_{j^*}^- \beta_{j^*}^+$ in $\calP_m$).
On the other hand,
$\calV$ is a valid labeling for $M'$,
since it does not have $\alpha_{i^*}\beta_{i^*}$.
Here $(M', I')$ is a matching-pair of size $k-2$.

Let $C_{\alpha_{j^*+1}}$
be the connected component of $M' \setminus I'$
containing $\alpha_{j^*+1}$,
and $Y$ be the $+$-space of $\beta_{j^*}$ with respect to $\calV(\calP_m[\beta_{i^*}^-, \alpha_{j^*}^-]^{-1})$.
We define $\hat{M}$, $\hat{I}$, and $\hat{\calV}$ by
\begin{align*}
\hat{M} &\defeq M' \cup P_m[\beta_{i^*}, \alpha_{j^*}] \cup \{ \beta_{j^*} \alpha_{j^*+1} \},\\
\hat{I} &\defeq I' \setminus \set{\alpha_{i^*+1} \beta_{i^*+1}, \alpha_{i^*+2} \beta_{i^*+2}, \dots, \alpha_{j^*-1} \beta_{j^*-1}},\\
\hat{\calV} &\defeq \calV(\calP_m[\beta_{i^*}^-, \alpha_{j^*}^-]^{-1})(P_m[\beta_{i^*}, \alpha_{j^*}]; (U_{\alpha_{j^*+1}}^+)^{\perp_{\alpha_{j^*+1} \beta_{j^*}}})(C_{\alpha_{j^*+1}}; Y^{\perp_{\beta_{j^*} \alpha_{j^*+1}}}),
\end{align*}
where $P_m[\beta_{i^*}, \alpha_{j^*}]$ denotes the subpath of $P_m$ from $\beta_{i^*}$ to $\alpha_{j^*}$;
see Figure~\ref{fig:Nouter}.
\begin{figure}
	\centering
	\begin{tikzpicture}[
	node/.style={
		fill=black, circle, minimum height=5pt, inner sep=0pt,
	},
	M/.style={
		line width = 3pt
	},
	AM/.style={
		line width = 3pt,
		red
	},
	DM/.style={
		line width = 3pt,
		dashed
	},
	DI/.style={
		line width = 3pt,
		blue
	},
	P/.style={
		very thick,
	},
	PD/.style={
		very thick, dashed,
	},
	calE/.style={
		thick
	}
	]
	
	\def\mu{a0, a1, a2, a3, a4, a5, a6, a7, a8}
	\def\nu{b0, b1, b2, b3, b4, b5, b6, b7, b8}
	\def\size{1.7cm}
	\def\hight{4cm}
	\def\side{5pt}
	\def\up{12pt}

	\nodecounter{\nu}
	\coordinate (pos);
	\foreach \currentnode in \nu {
		\node[node, below=0 of pos, anchor=center] (\currentnode) {};
		\coordinate (pos) at ($(pos)+(-\size, 0)$);
	}
	\nodecounter{\mu}
	\coordinate (pos) at ($(b0) + (0, -\size)$);
	\foreach \currentnode in \mu {
		\node[node, below=0 of pos, anchor=center] (\currentnode) {};
		\coordinate (pos) at ($(pos)+(-\size, 0)$);
	}
	
	\foreach \i in {6, 7, 8} {
		\draw[M] (b\i) -- (a\i);
	}
	\foreach \i in {1, 2, 3, 4} {
		\draw[DI] (b\i) -- (a\i);
	}
	\foreach \i/\j in {6/7, 7/8} {
		\draw[M] (b\i) -- (a\j);
	}
	\foreach \i in {0, 5} {
		\draw[DM] (b\i) -- (a\i);
	}
	\foreach \i/\j in {0/1, 1/2, 2/3, 3/4, 4/5, 5/6} {
		\draw[AM] (b\i) -- (a\j);
	}
	
	\foreach \i in \nu {
		\coordinate [left = \side] (l\i) at (\i);
		\coordinate [right = \side] (r\i) at (\i);
	}
	
	\foreach \i in \mu {
		\coordinate [left = \side] (l\i) at (\i);
		\coordinate [right = \side] (r\i) at (\i);
	}

	\draw (lb0) node[above=5pt] {$-$};
	\draw (rb0) node[above=5pt] {$+$};
	\draw (la0) node[below=5pt] {$+$};
	\draw (ra0) node[below=5pt] {$-$};
	\coordinate [above=15pt] (b0n) at (b0);
	\draw (b0n) node[above] {$\beta_{i^*}$};
	
	\draw (lb5) node[above=5pt] {$+$};
	\draw (rb5) node[above=5pt] {$-$};
	\draw (la5) node[below=5pt] {$-$};
	\draw (ra5) node[below=5pt] {$+$};
	\coordinate [above=15pt] (b5n) at (b5);
	\draw (b5n) node[above] {$\beta_{j^*}$};
	
	\coordinate (Pls) at ($(ra1)+(0, -1cm)$);
	\coordinate (Plt) at ($(rb2)+(0, 1cm)$);
	\draw [P, -{Latex[length=3mm]}] (Pls) -- (ra1) -- (rb1) -- (ra2) -- (rb2) -- (Plt);
    \coordinate [label=below:{$\calP_l$}] () at (Pls);
	
	\coordinate (Pms) at ($(ra4)+(0, -1.5cm)$);
	\draw [P, -{Latex[length=3mm]}] (Pms) -- ($(Pms)!0.6!(ra4)$);
	\draw [P] ($(Pms)!0.4!(ra4)$) -- (ra4);
	\draw [PD, -{Latex[length=3mm]}] (ra4) -- (rb4) -- (ra5) -- (rb5);
	\coordinate [label=below:{$\calP_m$}] () at (Pms);
	
	\coordinate (Rs) at ($(ra7)+(0, -1cm)$);
	\coordinate (Rt) at ($(rb7)+(0, 1cm)$);
	\draw[P, -{Latex[length=3mm]}] (Rs) -- (Rt);
    \coordinate [label=below:{$\hat{\calR}$}] () at (Rs);
	
	\coordinate (R2s) at ($(la8)+(0, -1cm)$);
	\coordinate (R2t) at ($(lb7)+(0, 1cm)$);
	\draw[P, -{Latex[length=3mm]}] (R2s) -- (la8) -- (lb7) -- (R2t);
    \coordinate [label=below:{$\hat{\calR}$}] () at (R2s);
	
	\foreach \i in {1, 2, 3, 4} {
		\draw (lb\i) node[above=.3333em-1pt, fill=white, inner sep=1pt] {$-$};
		\draw (rb\i) node[above=.3333em-1pt, fill=white, inner sep=1pt] {$+$};
		\draw (la\i) node[below=.3333em-1pt, fill=white, inner sep=1pt] {$-$};
		\draw (ra\i) node[below=.3333em-1pt, fill=white, inner sep=1pt] {$+$};
	}
	\foreach \i in {6, 7, 8} {
		\draw (lb\i) node[above=.3333em-1pt, fill=white, inner sep=1pt] {$+$};
		\draw (rb\i) node[above=.3333em-1pt, fill=white, inner sep=1pt] {$-$};
		\draw (la\i) node[below=.3333em-1pt, fill=white, inner sep=1pt] {$+$};
		\draw (ra\i) node[below=.3333em-1pt, fill=white, inner sep=1pt] {$-$};
	}
	
	\coordinate [above right=5pt and 3pt] (arb0) at (rb0);
	\coordinate [above left=5pt and 3pt] (alb0) at (lb0);
	\coordinate [below right=5pt and 3pt] (bra0) at (ra0);
	\coordinate [below left=5pt and 3pt] (bla0) at (la0);
	\draw[dotted, very thick] (arb0) -- (alb0) -- (bla0) -- (bra0) -- (arb0);
	
	\coordinate [above right=5pt and 3pt] (arb5) at (rb5);
	\coordinate [above left=5pt and 3pt] (alb5) at (lb5);
	\coordinate [below right=5pt and 3pt] (bra5) at (ra5);
	\coordinate [below left=5pt and 3pt] (bla5) at (la5);
	\draw[dotted, very thick] (arb5) -- (alb5) -- (bla5) -- (bra5) -- (arb5);
	
	\coordinate [below = 15pt] (a0n) at (a0);
	\draw (a0n) node[below] {$\alpha_{i^*}$};
	\coordinate [below = 15pt] (a5n) at (a5);
	\draw (a5n) node[below] {$\alpha_{j^*}$};
	\coordinate [below = \up] (a6n) at (a6);
	\draw (a6n) node[below] {$\alpha_{j^*+1}$};
	\coordinate [below left = \up and 5pt] (a4n) at (a4);
	\draw (a4n) node[below] {$\alpha_{k^*}$};

	\draw[PD, -{Latex[length=3mm]}] (lb0) -- node[above left = 0 and -4pt]{$\calP_m$} ($(lb0)!0.6!(la1)$);
	\draw[PD] ($(lb0)!0.4!(la1)$) -- (la1);
	\foreach \i/\j in {1/1, 1/2, 2/2, 2/3, 3/3, 3/4, 4/4, 4/5, 5/5, 5/6} {
		\draw[PD] (lb\i) -- (la\j);
	}
	
	\foreach \i in {3} {
		\draw [calE] (rb\i) -- (ra\i);
	}

	\coordinate[label=left:$\cdots$] () at ($(b8)!0.5!(a8) + (-0.2cm, 0)$);
	
	\end{tikzpicture}
	\caption{
		Modification in Case~2 in \cref{subsec:Nouter};
		the definitions of all solid thick lines and paths are the same as in Figure~\ref{fig:preprocessing}.
		The dashed thick lines represent the edges deleted from $M$ by the modification, i.e., the edges in $M \setminus \hat{M}$.
	}
	\label{fig:Nouter}
\end{figure}
Then the following holds.
\begin{proposition}\label{prop:not simple}
	$(\hat{M}, \hat{I})$ is a matching-pair of size $k$
	such that $\calU(\hat{M}, \hat{I}) = \calU(M, I) \setminus \{ \alpha_{j^*+1}^+ \} \cup \{ \alpha_{j^*}^+ \}$,
	$\hat{\calV}$ is a valid labeling for $\hat{M}$,
	and
	$p$ is an $(\hat{M}, \hat{I}, \hat{\calV})$-compatible potential.
\end{proposition}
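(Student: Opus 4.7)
The plan is to establish the four conclusions of the proposition in turn: $(\hat{M}, \hat{I})$ is a matching-pair of size $k$, $\calU(\hat{M}, \hat{I}) = \calU(M, I) \setminus \set{\alpha_{j^*+1}^+} \cup \set{\alpha_{j^*}^+}$, $\hat{\calV}$ is a valid labeling for $\hat{M}$, and $p$ is $(\hat{M}, \hat{I}, \hat{\calV})$-compatible. Geometrically, the ``cycle'' created by the identification $\alpha_{i^*}\beta_{i^*} = \alpha_{j^*}\beta_{j^*}$ inside $\calP_m$ is broken open by removing this edge from both $M$ and $I$; the alternate $\beta\alpha$-edges of $\calP_m[\beta_{i^*}, \alpha_{j^*}]$ are promoted from $E \setminus M$ into $\hat{M}$, forming a fresh path component of $\hat{M} \setminus \hat{I}$ with endpoints $\beta_{i^*}$ and $\alpha_{j^*}$, while the extension edge $\beta_{j^*}\alpha_{j^*+1}$ attaches $\beta_{j^*}$ to the component $C_{\alpha_{j^*+1}}$.

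For the matching-pair property I begin with the size: $|\hat{M}| = |M| + (j^* - i^*)$ and $|\hat{I}| = |I| - (j^* - i^*)$, so $|\hat{M}| + |\hat{I}| = k$. Condition (Deg) is checked node by node; the only nontrivial check is at $\alpha_{j^*+1}$, where (A5) applied to $\alpha_{j^*+1}$ in its role as the last vertex of the new $\calR$ (after \cref{subsec:preprocessing}) gives $\deg_M(\alpha_{j^*+1}) \le 1$, and hence $\deg_{\hat{M}}(\alpha_{j^*+1}) \le 2$. The newly added edges form a path rather than a cycle, so (Cycle) is inherited, and every edge of $\hat{I} \subseteq I$ remains an isolated rank-$2$ edge in $\hat{M}$. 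For the $\calU$-identity a node-by-node analysis shows that $\alpha_{j^*}^+$ becomes newly unmatched (the free endpoint of the new path component is incident only to $\beta_{j^*-1}\alpha_{j^*}$, which is a $+$-edge of $\hat{M}$ by the alternating-colour convention forced on the relabeling), that $\alpha_{j^*+1}^+$ becomes newly matched ($\beta_{j^*}\alpha_{j^*+1}$ takes the colour opposite to the pre-existing $+$-edge at $\alpha_{j^*+1}$ provided by (A5)), and that all other spaces are unaffected.

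The validity of $\hat{\calV}$ is treated by following its three-stage construction. The first stage, the back-propagation $\calV(\calP_m[\beta_{i^*}^{-}, \alpha_{j^*}^{-}]^{-1})$, is legitimate because the middle sub-outer-path is simple (since $P_m[\alpha_{i^*+1})$ is a path in $G$); an argument parallel to \cref{lem:outer path} and \cref{lem:simple outer path} shows the resulting labeling is $\simeq_p$-equivalent to $\calV$ on $M'$ and makes each newly added $\beta\alpha$-edge of $P_m[\beta_{i^*}, \alpha_{j^*}]$ satisfy the orthogonality condition~\eqref{eq:+-}. The two subsequent $\calV(\cdot\,;\cdot)$-propagations along $P_m[\beta_{i^*}, \alpha_{j^*}]$ and along $C_{\alpha_{j^*+1}}$ preserve $\simeq_p$-equivalence by \cref{lem:single-counted component}, and extend validity to the extension edge $\beta_{j^*}\alpha_{j^*+1}$. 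Finally, $(\hat{M}, \hat{I}, \hat{\calV})$-compatibility of $p$ follows from \cref{lem:>}~(1): (Reg) is immediate from $\simeq_p$-equivalence, and (Tight) is inherited for edges of $\hat{M}$ untouched by the modification; for the newly promoted $+$-edges $\beta_i \alpha_{i+1}$ with $i^* \le i \le j^*-1$, tightness at $(-,-)$ follows from $\beta_i^{-}\alpha_{i+1}^{-} \in \calE(\calV, p)$ (subsisting through the relabeling) combined with \cref{lem:>}~(3), and analogously for $\beta_{j^*}\alpha_{j^*+1}$ via $\beta_{j^*}^{+}\alpha_{j^*+1}^{+} \in \calE(\calV, p)$.

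The main obstacle will be the careful bookkeeping at the boundary of the modified region. One must use \cref{lem:edge} both at the boundary edge $\beta_{i^*}^{-}\alpha_{i^*+1}^{-}$ and at $\beta_{j^*}^{+}\alpha_{j^*+1}^{+}$ to confirm that each orthogonal complement---such as $(U_{\alpha_{j^*+1}}^+)^{\perp_{\alpha_{j^*+1}\beta_{j^*}}}$ used to define the $-$-space of $\beta_{j^*}$ in $\hat{\calV}$---is a proper one-dimensional subspace distinct from the existing $+$-space, so that $\hat{\calV}$ is genuinely a labeling. A related subtlety is that the sign assignment of the newly promoted edges is fixed by alternating colour in $\hat{M}$, and one must verify that the three successive re-propagations produce a labeling consistent with the sign pattern demanded by (Tight).
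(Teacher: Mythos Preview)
Your direct bookkeeping approach is broadly reasonable, but there is a genuine gap at exactly the point you flag as ``the main obstacle.'' You propose to use \cref{lem:edge} at $\beta_{j^*}^{+}\alpha_{j^*+1}^{+}$ to verify that $(U_{\alpha_{j^*+1}}^+)^{\perp_{\alpha_{j^*+1}\beta_{j^*}}}$ is distinct from the $+$-space of $\beta_{j^*}$. But \cref{lem:edge} only tells you that $(U_{\alpha_{j^*+1}}^+)^{\perp}\neq V_{\beta_{j^*}}^{+}$ in the \emph{original} labeling. After the first-stage back-propagation, the $+$-space of $\beta_{j^*}$ has been replaced by some $Y$, and you need $(U_{\alpha_{j^*+1}}^+)^{\perp}\neq Y$. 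When $p(V_{\beta_{j^*}}^+)=p(V_{\beta_{j^*}}^-)$, nothing in \cref{lem:edge} or \cref{lem:single-counted component} forces this; (Reg) does not pin down $Y$, and the two subspaces could coincide, making $\hat{\calV}$ fail to be a labeling. The missing ingredient is the BP-invariance of $\beta_{j^*}^{-}$, which is precisely the hypothesis of Case~2: it guarantees (in the case $p(V_{\beta_{j^*}}^+)\geq p(V_{\beta_{j^*}}^-)$) that $V_{\beta_{j^*}}^{-}=(U_{\alpha_{j^*+1}}^+)^{\perp}$, and since the $-$-space of $\beta_{j^*}$ is unchanged by the first stage while $Y$ is its (new, distinct) $+$-space, one obtains $Y\neq (U_{\alpha_{j^*+1}}^+)^{\perp}$ as required. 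Your invocation of \cref{lem:single-counted component} for the two propagation stages is also incomplete, since its hypothesis (an inequality on $p$) is not verified; in the paper this is absorbed into the base case, where the case analysis using \cref{lem:edge} handles the situation when the hypothesis fails.

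The paper's own proof avoids most of this direct bookkeeping by observing that $\calP_m[\beta_{i^*}^-,\alpha_{j^*}^-]$ is an augmenting path for $(M',I',\calV,p)$ in the base case, so \cref{prop:base case} yields the intermediate matching-pair $(M'',\hat{I})$ with labeling $\calV(\calP_m[\beta_{i^*}^-,\alpha_{j^*}^-]^{-1})$ and $p$ compatible. Then BP-invariance of $\beta_{j^*}^-$ is used to show the edge $\beta_{j^*}^+\alpha_{j^*+1}^+$ exists in this intermediate auxiliary graph, so $(\beta_{j^*}^+\alpha_{j^*+1}^+)$ is again a base-case augmenting path for $(M'',\hat{I})$; a second appeal to \cref{prop:base case} (via \cref{rmk:rearrangeable}, since the merged component is a path) then yields $(\hat{M},\hat{I},\hat{\calV})$ directly. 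This two-step reduction to the base case is cleaner than unpacking each stage by hand, and it isolates the one genuinely new verification---the survival of $\beta_{j^*}^+\alpha_{j^*+1}^+$---which is where BP-invariance enters.
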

\begin{proof}
	Let $M'' \defeq M' \cup P_m[\beta_{i^*}, \alpha_{j^*}]$.
	We first show that $(M'', \hat{I})$ is a matching-pair of size $k-1$,
	$\calV(\calP_m[\beta_{i^*}^-, \alpha_{j^*}^-]^{-1})$ is a valid labeling for $M''$,
	and
	$p$ is an $(M'', \hat{I}, \calV(\calP_m[\beta_{i^*}^-, \alpha_{j^*}^-]^{-1}))$-compatible $c$-potential.
	
	Since both $\beta_{i^*}$ and $\alpha_{j^*}$ are incident to no edge in $M'$,
	$\beta_{i^*}^-$ and $\alpha_{j^*}^-$ belong to $\calS(M', I', \calV, p)$ and $\calT(M', I', \calV, p)$,
	respectively.
	Hence $\calP_m[ \beta_{i^*}^-, \alpha_{j^*}^- ]$ forms an augmenting path for $(M', I', \calV, p)$,
	which is in the base case (\cref{sec:base}).
	One can easily see that $(M'', \hat{I})$ and $\calV(\calP_m[\beta_{i^*}^-, \alpha_{j^*}^-]^{-1})$ are the resulting pair and labeling,
	respectively, by the augmentation of $(M', I')$ via $\calP_m[ \beta_{i^*}^-, \alpha_{j^*}^- ]$.
	Thus $(M'', \hat{I})$ is a matching-pair of size $k-1$, $\calV(\calP_m[\beta_{i^*}^-, \alpha_{j^*}^-]^{-1})$ is a valid labeling for $M''$,
	and $p$ is an $(M'', \hat{I}, \calV(\calP_m[\beta_{i^*}^-, \alpha_{j^*}^-]^{-1}))$-compatible potential
	by \cref{prop:base case}.
	
	We then show that the edge $\beta_{j^*}^+ \alpha_{j^*+1}^+$ exists in $\calG(\calV(\calP_m[\beta_{i^*}^-, \alpha_{j^*}^-]^{-1}), p)$.
	Recall here that $Y$ is the $+$-space of $\beta_{j^*}$ with respect to $\calV(\calP_m[\beta_{i^*}^-, \alpha_{j^*}^-]^{-1})$.
	By $p(Y) = p(V_{\beta_{j^*}}^+)$ and $\beta_{j^*}^+ \alpha_{j^*+1}^+ \in \calE(\calV, p)$,
	we have $p(U_{\alpha_{j^*+1}}^+) + p(Y) + c = d_{\alpha_{j^*+1} \beta_{j^*}}$.
	Thus it suffices to show that $A_{\alpha_{j^*+1} \beta_{j^*}}(U_{\alpha_{j^*+1}}^+, Y) \neq \set{0}$.
	
	If $p(V_{\beta_{j^*}}^+) < p(V_{\beta_{j^*}}^-)$,
	then \cref{lem:outer path} asserts $Y = V_{\beta_{j^*}}^+$.
	Thus we obtain $A_{\alpha_{j^*+1} \beta_{j^*}}(U_{\alpha_{j^*+1}}^+, Y) = A_{\alpha_{j^*+1} \beta_{j^*}}(U_{\alpha_{j^*+1}}^+, V_{\beta_{j^*}}^+) \neq \set{0}$.
	If $p(V_{\beta_{j^*}}^+) \geq p(V_{\beta_{j^*}}^-)$,
	then it follows from the BP-invariance of $\beta_{j^*}^-$ that
	$V_{\beta_{j^*}}^-$ is the space at $\beta_{j^*}^-$ of the back-propagation of $\calP_m$.
	Hence $A_{\alpha_{j^*+1} \beta_{j^*}}(U_{\alpha_{j^*+1}}^+, V_{\beta_{j^*}}^-) = \set{0}$.
	Since $Y$ is different from $V_{\beta_{j^*}}^-$,
	we obtain $A_{\alpha_{j^*+1} \beta_{j^*}}(U_{\alpha_{j^*+1}}^+, Y) \neq \set{0}$.
	
	We finally show the statement of \cref{prop:not simple}.
	Clearly, we have $\calU(\hat{M}, \hat{I}) = \calU(M, I) \setminus \{ \alpha_{j^*+1}^+ \} \cup \{ \alpha_{j^*}^+ \}$.
	We can see that $(\beta_{j^*}^+ \alpha_{j^*+1}^+)$ forms an augmenting path for $(M'', \hat{I}, \calV(\calP_m[\beta_{i^*}^-, \alpha_{j^*}^-]^{-1}), p)$,
	which is in the base case.
	Furthermore, the connected component of $\hat{M} \setminus \hat{I}$ containing $\beta_{j^*} \alpha_{j^*+1}$ forms a path.
	By \cref{prop:base case} and \cref{rmk:rearrangeable},
	we can argument $(M'', \hat{I})$ via $(\beta_{j^*}^+ \alpha_{j^*+1}^+)$,
	and
	$(\hat{M}, \hat{I})$ and $\hat{\calV}$ can be seen as the resulting pair and labeling,
	respectively.
	Thus $(\hat{M}, \hat{I})$ is a matching-pair of size $k$,
	$\hat{\calV}$ is a valid labeling for $\hat{M}$,
	and
	$p$ is an $(\hat{M}, \hat{I}, \hat{\calV})$-compatible $c$-potential.
\end{proof}

Let $k^*$ be the minimum index with $i^*+1 \leq k^* \leq j^*$
such that $(\alpha_{k^*}^+ \beta_{k^*}^+, \beta_{k^*}^+ \alpha_{k^*+1}^+, \dots, \beta_{j^*-1}^+ \alpha_{j^*}^+)$ forms a subpath of $\calP_m$.
We define
\begin{align*}
\hat{\calR} \defeq \calP_0 \circ \calQ_1 \circ \calP_1 \circ \cdots \circ \calQ_m \circ \calP_m(\alpha_{k^*}^+].
\end{align*}
Figure~\ref{fig:Nouter} also describes $\hat{\calR}$.
The following holds;
the proof is given at the end of this section.
\begin{proposition}\label{prop:not simple pseudo}
	$\hat{\calR}$ is a pseudo augmenting path for $(\hat{M}, \hat{I}, \hat{\calV}, p)$.
\end{proposition}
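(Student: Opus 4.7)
The plan is to verify the three conditions defining a pseudo augmenting path for $(\hat{M}, \hat{I}, \hat{\calV}, p)$: that $\hat{\calR}$ is a path in $\calG(\hat{\calV}, p)$, that its endpoints satisfy (A2), and that it decomposes as in (A1)$'$. The key viewpoint I would adopt is the two-stage augmentation picture from the proof of \cref{prop:not simple}: the construction first augments $(M', I')$ along the simple outer path $\calP_m[\beta_{i^*}^-, \alpha_{j^*}^-]$ via the base case \cref{prop:base case}, and then augments along $(\beta_{j^*}^+ \alpha_{j^*+1}^+)$ (again a base case, with the possibly cyclic component handled by \cref{rmk:rearrangeable,rmk:cycle}). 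This pins down the $\pm$-spaces in $\hat{\calV}$ at every node touched by the modifications.

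A catalogue of $\hat{\calV}$-versus-$\calV$ sign changes is the first tool. By \cref{lem:outer path} together with \cref{rmk:cycle}, the back-propagation resets only the $-$-spaces of $\alpha_{i^*+1}, \dots, \alpha_{j^*-1}$ and the $+$-spaces of $\beta_{i^*+1}, \dots, \beta_{j^*-1}$, while the two subsequent $\calV(\,;\,)$-operations further adjust certain $\pm$-spaces on $V(P_m[\beta_{i^*}, \alpha_{j^*}])$ and on the rank-$2$ subpath of $C_{\alpha_{j^*+1}}$ starting from $\alpha_{j^*+1}$. Crucially, the $+$-spaces of $\alpha_{i^*+1}, \dots, \alpha_{j^*}$ and the $-$-spaces of $\beta_{i^*+1}, \dots, \beta_{j^*-1}$ (as labelled by $\calV$) remain unchanged; this is exactly what lets the truncated tail $\calP_m(\alpha_{k^*}^+]$ survive, because each $\calG$-vertex $\alpha_{k^*}^+, \beta_{k^*}^+, \dots, \alpha_{j^*}^+$ (with $i^* < k^* \leq j^*$) referred to by the defining subpath of $k^*$ has its relevant sign-space untouched.

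Combining this catalogue with \cref{lem:>}(3), the avoidance of $\calC(\alpha_*^{\sigma_{k+1}})$ by the earlier $\calP_l$'s enforced by (A4) in \cref{sec:initial}, and the Case~2 BP-invariance hypothesis for every $\beta_i^{\overline{\sigma_i}}$ ($i > i^*$) visited by $\calR$, I would conclude that every edge of $\hat{\calR}$ lies in $\calE(\hat{\calV}, p)$. For (A2), the initial vertex is untouched and remains in the source set; the terminal vertex $\alpha_{k^*}^+$ is placed in $\calT(\hat{M}, \hat{I}, \hat{\calV}, p)$ by observing that the defining subpath $(\alpha_{k^*}^+ \beta_{k^*}^+, \dots, \beta_{j^*-1}^+ \alpha_{j^*}^+)$ realizes a $+$-path in $\calG(\hat{\calV}, p)|_{\hat{M}}$ alternating between the $\hat{M} \setminus \hat{I}$-edges $\alpha_i \beta_i$ ($k^* \leq i < j^*$, originally in $I$) and the newly added edges $\beta_i \alpha_{i+1}$ from $P_m[\beta_{i^*}, \alpha_{j^*}]$, connecting $\alpha_{k^*}^+$ to $\alpha_{j^*}^+ \in \calU(\hat{M}, \hat{I})$.

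For (A1)$'$, the pieces $\calQ_l$ remain inner paths, but each outer piece that traverses some $\alpha_i^+ \beta_i^+$ with $i^* < i < j^*$ (originally an $I$-edge step, now an $\hat{M} \setminus \hat{I}$-edge) must be re-decomposed: such traversals split the outer piece into pseudo outer sub-pieces joined by short inner paths, and the same regrouping applies inside $\calP_m(\alpha_{k^*}^+]$. I expect the main obstacle to be this bookkeeping---verifying (O2)$'$ at each newly created transition using the untouched $+$-spaces from the catalogue, and checking (I1), (I2) for the small inserted inner pieces---since this is where the structural interplay between the original outer-path decomposition of $\calR$ and the new $\hat{M}$-structure becomes intricate.
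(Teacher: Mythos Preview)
Your overall strategy matches the paper's: verify (A2) via the $+$-path from $\alpha_{k^*}^+$ to $\alpha_{j^*}^+\in\calU(\hat M,\hat I)$, establish a claim that every edge of $\hat{\calR}$ lies in $\calE(\hat{\calV},p)$ (together with the (O2)$'$ condition at $\hat I$-incident $\alpha$'s), and then re-decompose each $\calP_l$ traversing some $\alpha_i^+\beta_i^+$ with $i^*<i<j^*$ into pseudo outer and inner pieces.

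The gap is in your catalogue. Your assertion that ``the $+$-spaces of $\alpha_{i^*+1},\dots,\alpha_{j^*}$ and the $-$-spaces of $\beta_{i^*+1},\dots,\beta_{j^*-1}$ remain unchanged'' is not correct in general, and in fact contradicts your own preceding sentence. The operation $\calV\bigl(P_m[\beta_{i^*},\alpha_{j^*}];\,(U_{\alpha_{j^*+1}}^+)^{\perp_{\alpha_{j^*+1}\beta_{j^*}}}\bigr)$ sets the $-$-space of $\beta_{i^*}\,(=\beta_{j^*})$ and then propagates through the rank-$2$ prefix of $P_m[\beta_{i^*},\alpha_{j^*}]$, overwriting precisely the $+$-spaces of the $\alpha_i$'s and the $-$-spaces of the $\beta_i$'s there. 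What actually feeds into \cref{lem:>}(3) is only a \emph{conditional} statement, proved in the paper as \cref{lem:C_alpha}(2) via a case split: if $p(U_{\alpha_{j^*}}^+)\ge p(U_{\alpha_{j^*}}^-)$, the BP-invariance of $\beta_{j^*}^-$ forces $(U_{\alpha_{j^*+1}}^+)^{\perp_{\alpha_{j^*+1}\beta_{j^*}}}=V_{\beta_{j^*}}^-$, so this propagation is trivial and your claim holds; if $p(U_{\alpha_{j^*}}^+)< p(U_{\alpha_{j^*}}^-)$, then whenever $\hat{\calR}$ meets some $\alpha_i^+$ with $p(U_{\alpha_i}^+)\le p(U_{\alpha_i}^-)$, \cref{lem:single-counted component}(1) forces a rank-$1$ edge on $P_m[\beta_{i^*},\alpha_i]$ that cuts off the propagation before it reaches $\alpha_i$. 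An analogous argument, using BP-invariance of the visited $\beta_i^+$, handles the $\beta$-side. Without this case analysis the survival of edges incident to such $\alpha_i^+$ and $\beta_i^+$ is unjustified.

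A minor point: \cref{rmk:cycle} is not needed here. The component of $\hat M\setminus\hat I$ containing $\beta_{j^*}\alpha_{j^*+1}$ is always a path (its ends are $\alpha_{j^*}$ and the far end of $C_{\alpha_{j^*+1}}$), so only \cref{rmk:rearrangeable} is invoked in the second base-case augmentation.
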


In this case, $\theta$ strictly decreases.
\begin{lemma}\label{lem:theta case 2}
    $\theta(\hat{M}, \hat{I}, \hat{\calR}) < \theta(M, I, \calR)$.
\end{lemma}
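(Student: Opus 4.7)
The approach is to separately bound the change in each summand of $\theta(M,I,\calR) = \sum_{i=0}^{m} |\calP_i| + N_S(M,I,\calR)$ under the modification.

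First, for the outer-path-length sum: only the last outer path changes, to $\hat{\calP}_m = \calP_m(\alpha_{k^*}^+]$. Counting, $|\calP_m| = 2k+1$ and $|\hat{\calP}_m| = 2k^* - 1$, so
\[|\calP_m| - |\hat{\calP}_m| = 2(k - k^* + 1),\]
which, by $i^* + 1 \leq k^* \leq j^* \leq k$, is at least $2$. Thus $\sum_{i=0}^m |\calP_i|$ strictly decreases by at least $2$.

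Second, for $N_S$: by construction $\hat{M} \setminus \hat{I} = (M \setminus I) \cup E_{\mathrm{new}}$, where $E_{\mathrm{new}}$ consists of the $2(j^*-i^*)$ edges on the sub-walk $P_m[\beta_{i^*}, \alpha_{j^*+1}]$ other than the doubly-traversed edge $\alpha_{i^*}\beta_{i^*}$; every endpoint of every edge in $E_{\mathrm{new}}$ lies in $V(P_m)$. I first aim for the crude bound
\[N_S(\hat{M},\hat{I},\hat{\calR}) \leq N_S(M,I,\calR) + 2(j^*-i^*).\]
Any edge counted on the left-hand side that lies in $M\setminus I$ has its $\hat{M}\setminus\hat{I}$-component touching $V(\hat{\calP}_m)\cup\bigcup_i V(\calQ_i)\subseteq V(P_m)\cup\bigcup_i V(\calQ_i)$, and since merging via $E_{\mathrm{new}}$-edges introduces only vertices already in $V(P_m)$, the edge's $M\setminus I$-component already touched $V(P_m)\cup\bigcup_i V(\calQ_i)$ and so was counted on the right-hand side.

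Combining the two estimates yields
\[\theta(\hat{M},\hat{I},\hat{\calR}) - \theta(M,I,\calR) \leq 2(j^*-i^*) - 2(k-k^*+1).\]
The main obstacle is upgrading this to a strict decrease. The plan is a charging argument: using condition (A5) near $\alpha_{k+1}$ and the minimality in the definition of $k^*$, I would show that components of $M\setminus I$ whose only touching points with $V(P_m)\cup\bigcup_i V(\calQ_i)$ lie in $V(P_m)\setminus V(\hat{\calP}_m) = \{\beta_{k^*},\alpha_{k^*+1},\ldots,\alpha_{k+1}\}$ are either trivial (by (A5), $\alpha_{k+1}$ lies on at most one $+$-edge, forcing a small component) or become disconnected from $V(\hat{\calP}_m)\cup\bigcup_i V(\calQ_i)$ in $\hat{M}\setminus\hat{I}$. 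Tracking these dropped-out edges compensates for the $E_{\mathrm{new}}$-excess and gives $\theta(\hat{M},\hat{I},\hat{\calR}) < \theta(M,I,\calR)$.
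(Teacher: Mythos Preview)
Your bound on the $N_S$ increase (at most $|E_{\mathrm{new}}| = 2(j^*-i^*)$) is essentially the paper's bound and is fine. The gap is in your count of $|\hat{\calP}_m| = |\calP_m(\alpha_{k^*}^+]|$. You write $|\hat{\calP}_m| = 2k^*-1$, implicitly placing the vertex $\alpha_{k^*}^+$ at position $2k^*-1$ along $\calP_m$. But the vertex at position $2k^*-1$ is $\alpha_{k^*}^{-}$, not $\alpha_{k^*}^{+}$: after the sign redefinition the stretch $\beta_{i^*}^-\alpha_{i^*+1}^-,\dots,\beta_{j^*-1}^-\alpha_{j^*}^-$ of $\calP_m$ carries $-$ labels. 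The vertex $\alpha_{k^*}^+$ lies in the \emph{earlier} pass of the self-intersecting walk: by the very definition of $k^*$, the $+$-subpath $(\alpha_{k^*}^+\beta_{k^*}^+,\dots,\beta_{j^*-1}^+\alpha_{j^*}^+)$ is a subpath of $\calP_m$ ending at $\alpha_{j^*}^+ = \alpha_{i^*}^+$, which sits at position $2i^*-1$. Hence $\alpha_{k^*}^+$ sits at position $2i^*-1-2(j^*-k^*) \le 2i^*-1$, so
\[
|\calP_m(\alpha_{k^*}^+]| \;\le\; 2i^*-1,
\qquad
|\calP_m| - |\calP_m(\alpha_{k^*}^+]| \;\ge\; (2j^*+1)-(2i^*-1) \;=\; 2(j^*-i^*)+2.
\]
This already strictly exceeds the $N_S$ increase $2(j^*-i^*)$, so $\theta$ drops by at least $2$ with no further work. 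This is exactly the paper's one-line argument; your proposed charging argument via (A5) and the minimality of $k^*$ is unnecessary, and as written it is only a sketch rather than a proof. (Minor aside: it is not literally true that ``only the last outer path changes''---some $\calP_l$ with $l<m$ may re-decompose into pseudo outer and inner pieces in $\hat{\calR}$---but that only shrinks the outer-path sum further, so it does not harm the inequality.)
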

\begin{proof}
	The number of edges in the union of outer paths decreases by $|\calP_m[\alpha_{k^*}^+)| \geq |\calP_m[\beta_{i^*}^-, \alpha_{j^*}^-]| + 2 $,
	and $N_{\rm S}$ increases at most by $|\calP_m[\beta_{i^*}^-, \alpha_{j^*}^-]| + 1$.
	Hence $\theta$ decreases at least by $1$.
\end{proof}

We update
\begin{align*}
M \leftarrow \hat{I}, \qquad
I \leftarrow \hat{I}, \qquad
\calV \leftarrow \hat{\calV}(\hat{\calR}),\qquad
\calR \leftarrow \hat{\calR}.
\end{align*}
This update can be done in $O(|\calR|) = O(\min \set{\mu, \nu})$ time.
By \cref{prop:pseudo augmenting path,prop:not simple,prop:not simple pseudo},
the resulting $(M, I)$ is a matching-pair of size $k$, $\calV$ a valid labeling for $M$,
$p$ an $(M, I, \calV)$-compatible $c$-potential,
and
$\calR$ is an augmenting path for $(M, I, \calV, p)$.
Moreover, since $\alpha_{j^*+1}^+$ is deleted from $\calU(M, I)$ and $\alpha_{j^*}^+$ is added to $\calU(M, I)$ in this update (by \cref{prop:not simple})
and $\alpha_{k^*}^+ \in \calC(\alpha_{j^*}^+)$,
$p$ satisfies (Zero)$'$ for $\calR$.
Return to the initial stage (\cref{sec:initial}).

The proof of \cref{prop:not simple pseudo} requires \cref{lem:C_alpha} below;
\cref{lem:C_alpha}~(1) is also used in the proofs of \cref{prop:cycle =,prop:path:pseudo augmenting path}.
Let $P_{\alpha_{j^*+1}}$ be the maximal rank-2 path in $C_{\alpha_{j^*+1}}$ that starts with $\alpha_{j^*+1}$,
where $P_{\alpha_{j^*+1}} := \set{\alpha_{j^*+1}}$ if $\alpha_{j^*+1}$ is incident to no edge or a rank-1 edge.
\begin{lemma}\label{lem:C_alpha}
	\begin{itemize}
		\item[{\rm (1)}]
		For each $\beta \in V(P_{\alpha_{j^*+1}})$,
		it holds that $p(V_{\beta}^+) > p(V_{\beta}^-)$, the $+$-space of $\beta$ with respect to $\hat{\calV}$
		is $V_{\beta}^+$,
		or $p(V_{\beta}^+) = p(V_{\beta}^-)$ and $\beta^+ \in \calC(\alpha_{j^*+1}^+)$.
        Similarly,
		for each $\alpha \in V(P_{\alpha_{j^*+1}})$,
		it holds that $p(U_{\alpha}^-) > p(U_{\alpha}^+)$,
		the $-$-space of $\alpha$ with respect to $\hat{\calV}$ is $U_{\alpha}^-$,
		or
		$p(U_{\alpha}^-) = p(U_{\alpha}^+)$ and $\alpha^+ \in \calC(\alpha_{j^*+1}^+)$.
		\item[{\rm (2)}]
		If $\hat{\calR}$ meets $\alpha_i^+$ for some $i$ with $i^* < i \leq j^*$,
		then $p(U_{\alpha_i}^+) > p(U_{\alpha_i}^-)$ or the $+$-space of $\alpha_i$ with respect to $\hat{\calV}$
		is $U_{\alpha_i}^+$.
		Similarly,
		if $\hat{\calR}$ meets $\beta_i^+$ for some $i$ with $i^* < i < j^*$,
		then $p(V_{\beta_i}^+) > p(V_{\beta_i}^-)$ or the $+$-space of $\beta_i$ with respect to $\hat{\calV}$
		is $V_{\beta_i}^+$.
	\end{itemize}
\end{lemma}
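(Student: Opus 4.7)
My plan is to handle part~(1) by induction along the rank-$2$ path $P_{\alpha_{j^*+1}}$, and then deduce part~(2) from that together with the structural constraints on $\hat{\calR}$. Before starting the induction, I would first pin down where $\hat{\calV}$ differs from $\calV$ on the relevant vertices. The back-propagation factor $\calV(\calP_m[\beta_{i^*}^-,\alpha_{j^*}^-]^{-1})$ and the middle factor $(P_m[\beta_{i^*},\alpha_{j^*}];(U_{\alpha_{j^*+1}}^+)^{\perp_{\alpha_{j^*+1}\beta_{j^*}}})$ touch only vertices in $V(P_m[\beta_{i^*},\alpha_{j^*}])$, which is disjoint from $V(P_{\alpha_{j^*+1}})$ because $C_{\alpha_{j^*+1}}$ is a component of $M'\setminus I'$ while $P_m[\beta_{i^*},\alpha_{j^*}]$ lies outside $M'$. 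Hence on $V(P_{\alpha_{j^*+1}})$ only the third factor $\calV(C_{\alpha_{j^*+1}};Y^{\perp_{\beta_{j^*}\alpha_{j^*+1}}})$ matters, and by the observation following the definition of $\calV(C;\cdot)$ in \cref{sec:base} it replaces $U_{\alpha_{j^*+1}}^-$ with $Y^{\perp_{\beta_{j^*}\alpha_{j^*+1}}}$ and then iterates $\perp$ along $P_{\alpha_{j^*+1}}$.

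For part~(1), I would start the induction at the first $\beta$-type neighbour $\beta^{(1)}$ of $\alpha_{j^*+1}$ in $P_{\alpha_{j^*+1}}$. Using $\beta_{j^*}^+\alpha_{j^*+1}^+\in \calE(\calV,p)$ together with $p(Y)=p(V_{\beta_{j^*}}^+)$, established in the proof of \cref{prop:not simple}, \cref{lem:edge} forces $Y^{\perp_{\beta_{j^*}\alpha_{j^*+1}}}=U_{\alpha_{j^*+1}}^-$ when $p(U_{\alpha_{j^*+1}}^+)>p(U_{\alpha_{j^*+1}}^-)$, so the $+$-space of $\beta^{(1)}$ under $\hat{\calV}$ remains $V_{\beta^{(1)}}^+$. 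If $p(U_{\alpha_{j^*+1}}^+)<p(U_{\alpha_{j^*+1}}^-)$, (Tight) on the rank-$2$ edge $\alpha_{j^*+1}\beta^{(1)}$ yields $p(V_{\beta^{(1)}}^+)>p(V_{\beta^{(1)}}^-)$, which is the first alternative. In the equality case $p(U_{\alpha_{j^*+1}}^+)=p(U_{\alpha_{j^*+1}}^-)$, (Tight) gives $p(V_{\beta^{(1)}}^+)=p(V_{\beta^{(1)}}^-)$ and double-tightness of $\alpha_{j^*+1}\beta^{(1)}$, so $\beta^{(1)+}\in \calC(\alpha_{j^*+1}^+)$ through the $+$-$+$ tight edge in $\calE(\calV,p)|_M$. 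The induction step along a further rank-$2$ $M$-edge $\alpha\beta$ of $P_{\alpha_{j^*+1}}$ propagates this dichotomy: (Tight) transports strict/equal comparisons from $\{U_\alpha^+,U_\alpha^-\}$ to $\{V_\beta^+,V_\beta^-\}$ exactly as in the proof of \cref{lem:single-counted component}~(1), and (Reg) together with the $\perp$ rule keeps $V_\beta^+$ as the $+$-space of $\beta$ in $\hat{\calV}$ whenever $p$ strictly separates the two, while double-tightness in the equality case keeps $\beta^+$ connected to $\alpha_{j^*+1}^+$ in $\calG(\calV,p)|_M$.

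For part~(2), the re-labeling of $\calP_m$ ensures that no vertex $\alpha_i^+$ with $i^*<i\le j^*$ or $\beta_i^+$ with $i^*<i<j^*$ appears on $\calP_m$ itself, so any such vertex visited by $\hat{\calR}$ comes from an outer or inner path of $\calR$ with index $<m$. The back-propagation factor of $\hat{\calV}$ only alters the $-$-spaces at $\alpha_i$ and the $+$-spaces at $\beta_i$ along $\calP_m[\beta_{i^*}^-,\alpha_{j^*}^-]$, so the $+$-space of $\alpha_i$ (resp.\ of $\beta_i$) under $\hat{\calV}$ agrees with $U_{\alpha_i}^+$ (resp.\ $V_{\beta_i}^+$) unless $\alpha_i$ (resp.\ $\beta_i$) lies in $V(P_{\alpha_{j^*+1}})$, in which case the $\alpha$-analog of part~(1) applies. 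The equality-plus-connectivity sub-case is then ruled out by (A4), which forbids earlier outer paths in $\hat{\calR}$ from landing in $\calC(\alpha_{j^*+1}^+)$. The main obstacle I expect is verifying this clean decomposition of $\hat{\calV}$'s action among its three factors --- in particular that the middle factor really leaves $V(P_{\alpha_{j^*+1}})$ alone --- since the composite labeling notation is not supplied with a standalone lemma; once that is nailed down, the remaining bookkeeping is routine.
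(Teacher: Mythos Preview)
Your treatment of part~(1) is essentially the paper's argument: only the third factor $\calV(C_{\alpha_{j^*+1}};Y^{\perp_{\beta_{j^*}\alpha_{j^*+1}}})$ of $\hat{\calV}$ touches $V(P_{\alpha_{j^*+1}})$, and the trichotomy on $p(U_{\alpha_{j^*+1}}^+)$ versus $p(U_{\alpha_{j^*+1}}^-)$ together with the propagation of (Tight) along rank-$2$ edges (which is exactly \cref{lem:single-counted component}~(1)) yields the three alternatives. One minor point: to get $Y^{\perp}=U_{\alpha_{j^*+1}}^-$ from \cref{lem:edge} you should apply it in the intermediate labeling $\calV(\calP_m[\beta_{i^*}^-,\alpha_{j^*}^-]^{-1})$, where $Y$ is the $+$-space of $\beta_{j^*}$ and $\beta_{j^*}^+\alpha_{j^*+1}^+$ is known to lie in the corresponding edge set (this was verified in the proof of \cref{prop:not simple}).

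Your plan for part~(2), however, has a genuine gap. The vertices $\alpha_i,\beta_i$ with $i^*<i\le j^*$ lie in the component $P_m[\beta_{i^*},\alpha_{j^*}]$ of $M''\setminus\hat{I}$, which is \emph{disjoint} from $P_{\alpha_{j^*+1}}$; so part~(1) never applies to them, and the (A4) argument you propose is beside the point. More seriously, you omit the effect of the middle factor $(P_m[\beta_{i^*},\alpha_{j^*}];(U_{\alpha_{j^*+1}}^+)^{\perp_{\alpha_{j^*+1}\beta_{j^*}}})$: this factor propagates from $\beta_{j^*}=\beta_{i^*}$ along the whole path $P_m[\beta_{i^*},\alpha_{j^*}]$ and \emph{does} alter the $+$-space of each $\alpha_i$ (and the $-$-space of each $\beta_i$). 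Likewise, the back-propagation factor alters the $+$-space of each $\beta_i$, so your conclusion that ``the $+$-space of $\beta_i$ under $\hat{\calV}$ agrees with $V_{\beta_i}^+$'' contradicts your own preceding sentence. The paper handles part~(2) by a case split on $p(U_{\alpha_{j^*}}^+)$ versus $p(U_{\alpha_{j^*}}^-)$. In the $\ge$ case, BP-invariance of $\beta_{j^*}^-$ (guaranteed by the standing Case~2 hypothesis) forces $(U_{\alpha_{j^*+1}}^+)^{\perp_{\alpha_{j^*+1}\beta_{j^*}}}=V_{\beta_{j^*}}^-$, so the middle factor is the identity; the back-propagation of $\calP_m[\beta_{i^*}^-,\alpha_{j^*}^-]$ then coincides with the restriction of the back-propagation of $\calP_m$, and BP-invariance of the visited $\beta_i^+$ gives the conclusion. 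In the $<$ case one shows, via \cref{lem:single-counted component}~(1) applied from $\beta_{i^*}=\beta_{j^*}$, that whenever $p(U_{\alpha_i}^+)\le p(U_{\alpha_i}^-)$ the subpath $P_m[\beta_{i^*},\alpha_i]$ must contain a rank-$1$ edge, which stops the middle factor's propagation before it reaches $\alpha_i$; similarly for $\beta_i$. You need to incorporate this case analysis and the BP-invariance hypothesis to close the argument.
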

\begin{proof}
	(1).
	If $p(U_{\alpha_{j^*+1}}^-) < p(U_{\alpha_{j^*+1}}^+)$,
	then it follows from $\beta_{j^*}^+\alpha_{j^*+1}^+ \in \calE(\hat{\calV}, p)$ and \cref{lem:edge} that $Y^{\perp_{\beta_{j^*} \alpha_{j^*+1}}} = U_{\alpha_{j^*+1}}^-$.
	Hence, for each $\alpha, \beta \in V(P_{\alpha_{j^*+1}})$, the $-$-space of $\alpha$ and the $+$-space of $\beta$ with respect to $\hat{\calV}$
	coincide with those with respect to $\calV$,
	i.e., $U_{\alpha}^-$ and $V_{\beta}^+$,
	respectively.
	
	Suppose $p(U_{\alpha_{j^*+1}}^-) \geq p(U_{\alpha_{j^*+1}}^+)$.
	By \cref{lem:single-counted component}~(1),
	we have $p(U_{\alpha}^-) \geq p(U_{\alpha}^+)$ and $p(V_{\beta}^+) \geq p(V_{\beta}^-)$
	for each $\alpha, \beta \in V(P_{\alpha_{j^*+1}})$.
	Moreover, if $p(V_{\beta}^+) = p(V_{\beta}^-)$ (resp. $p(U_{\alpha}^+) = p(U_{\alpha}^-)$)
	then $p(U_{\alpha_{j^*+1}}^-) = p(U_{\alpha_{j^*+1}}^+)$
	and the subpath of $P_{\alpha_{j^*+1}}$ from $\alpha_{j^*+1}$ to $\beta$ (resp. $\alpha$)
	consists of double-tight edges.
	Hence there is an $\alpha_{j^*+1}^+$-$\beta^+$ $+$-path (resp. an $\alpha_{j^*+1}^+$-$\alpha^+$ $+$-path) in $\calG(\calV, p)|_M$,
	which implies that
	$\beta^+$ (resp. $\alpha^+$) belongs to $\calC(\alpha_{j^*+1}^+)$.

	(2).
	There are two cases: (i) $p(U_{\alpha_{j^*}}^+) \geq p(U_{\alpha_{j^*}}^-)$ and (ii) $p(U_{\alpha_{j^*}}^+) < p(U_{\alpha_{j^*}}^-)$.
	
	(i) $p(U_{\alpha_{j^*}}^+) \geq p(U_{\alpha_{j^*}}^-)$.
	By $\beta_{j^*} \alpha_{j^*} \in I$ and (Tight),
	we have $p(V_{\beta_{j^*}}^+) \geq p(V_{\beta_{j^*}}^-)$.
	By the BP-invariance of $\beta_{j^*}^-$,
	$V_{\beta_{j^*}}^-$ is the space at $\beta_{j^*}^+$ of the back-propagation of $\calP_m$,
	i.e.,
	$V_{\beta_{j^*}}^- = (U_{\alpha_{j^*+1}}^+)^{\perp_{\alpha_{j^*+1} \beta_{j^*}}}$.
	Thus we obtain
	\begin{align*}
	\hat{\calV} &= \calV(\calP_m[\beta_{i^*}^-, \alpha_{j^*}^-]^{-1})(P_m[\beta_{i^*}, \alpha_{j^*}]; (U_{\alpha_{j^*+1}}^+)^{\perp_{\alpha_{j^*+1} \beta_{j^*}}})(C_{\alpha_{j^*+1}}; Y^{\perp_{\beta_{j^*} \alpha_{j^*+1}}})\\
	&= \calV(\calP_m[\beta_{i^*}^-, \alpha_{j^*}^-]^{-1})(C_{\alpha_{j^*+1}}; Y^{\perp_{\beta_{j^*} \alpha_{j^*+1}}}).
	\end{align*}
	That is, for each $j = i^*, i^*+1,\dots,j^*-1$,
	the $-$-space of $\beta_{j}$ and the $+$-space of $\alpha_{j+1}$ with respect to $\hat{\calV}$
	are $V_{\beta_{j}}^-$ and $U_{\alpha_{j+1}}^+$,
	respectively.
	In particular,
	the $+$-space of $\alpha_{j^*}$ with respect to $\hat{\calV}$
	is $U_{\alpha_{j^*}}^+$,
	and if $\hat{\calR}$ meets $\alpha_i^+$ for some $i$ with $i^* < i \leq j$,
	then the $+$-spaces of $\alpha_i$ with respect to $\hat{\calV}$ is $U_{\alpha_i}^+$.

	Since $V_{\beta_{j^*}}^-$ is the space at $\beta_{j^*}^+$ of the back-propagation of $\calP_m$ and $U_{\alpha_{j^*}}^- = (V_{\beta_{j^*}}^-)^{\perp_{\beta_{j^*} \alpha_{j^*}}}$,
	$U_{\alpha_{j^*}}^-$ is the space of $\alpha_{j^*}^-$ of the back-propagation of $\calP_m$.
	Therefore,
	the back-propagation of $\calP_m[\beta_{i^*}^-, \alpha_{j^*}^-]$
	coincides with the restriction of the back-propagation of $\calP_m$ to $\calP_m[\beta_{i^*}^-, \alpha_{j^*}^-]$.
	That is,
	for $j = i^*, i^*+1,\dots,j^*-1$,
	the $-$-space of $\alpha_{j+1}$ and the $+$-space of $\beta_j$ with respect to $\hat{\calV}$
	are the spaces at $\alpha_{j+1}^-$ and at $\beta_j^-$ of the back-propagation of $\calP_m$,
	respectively.
	Suppose that $\hat{\calR}$ meets $\beta_i^+$ for some $i$ with $i^* < i < j$.
	Then, by the BP-invariance of $\beta_{i}^+$,
	we obtain that $p(V_{\beta_i}^+) > p(V_{\beta_i}^-)$,
	or that the $+$-space of $\beta_i$ with respect to $\hat{\calV}$
	is $V_{\beta_i}^+$.

	(ii) $p(U_{\alpha_{j^*}}^+) < p(U_{\alpha_{j^*}}^-)$.
	In this case,
	we have $p(V_{\beta_{j^*}}^+) < p(V_{\beta_{j^*}}^-)$.
	It suffices to show that
	\begin{itemize}
		\item
		if $\hat{\calR}$ meets $\alpha_i^+$ for some $i$ with $i^* < i \leq j^*$ and $p(U_{\alpha_i}^+) \leq p(U_{\alpha_i}^-)$,
		then the $+$-space of $\alpha_i$ with respect to $\hat{\calV}$
		is $U_{\alpha_i}^+$, and 
		\item 
		if $\hat{\calR}$ meets $\beta_i^+$ for some $i$ with $i^* < i < j^*$ and $p(V_{\beta_i}^+) \leq p(V_{\beta_i}^-)$,
		then the $+$-space of $\beta_i$ with respect to $\hat{\calV}$
		is the space at $\beta_i^-$ of the back-propagation of $\calP_m$.
	\end{itemize}
	
	We show the former bullet.
	Suppose that $\hat{\calR}$ meets $\alpha_i^+$ for some $i$ with $i^* < i \leq j$ and $p(U_{\alpha_i}^+) \leq p(U_{\alpha_i}^-)$.
	Then the subpath $P_m[ \beta_{i^*}, \alpha_i ]$ of $P_m[ \beta_{i^*}, \alpha_{j^*} ]$ has a rank-1 edge;
	otherwise $p(U_{\alpha_i}^+) > p(U_{\alpha_i}^-)$ holds by $p(V_{\beta_{j^*}}^+) < p(V_{\beta_{j^*}}^-)$ and \cref{lem:single-counted component}~(1), a contradiction.
	Thus the $+$-space of $\alpha_i$ with respect to $\hat{\calV}$
	coincides with that with respect to $\calV$,
	i.e., $U_{\alpha_i}^+$.
	
	We then prove the latter bullet.
	Suppose that $\hat{\calR}$ meets $\beta_i^+$ for some $i$ with $i^* < i < j$ and $p(V_{\beta_i}^+) \leq p(V_{\beta_i}^-)$.
	Then the subpath $P_m[ \beta_{i}, \alpha_{j^*} ]$ of $P_m[ \beta_{i^*}, \alpha_{j^*} ]$ has a rank-1 edge;
	otherwise $p(V_{\beta_i}^+) > p(V_{\beta_i}^-)$ holds by $p(U_{\alpha_{j^*}}^+) < p(U_{\alpha_{j^*}}^-)$ and \cref{lem:single-counted component}~(1),
	a contradiction.
	Thus the $+$-space of $\beta_i$ with respect to $\hat{\calV}$
	coincides with the space at $\beta_i^-$ of the back-propagation of $\calP_m$.
	
	This completes the proof.
\end{proof}

We are ready to prove \cref{prop:not simple pseudo}.
\begin{proof}[Proof of \cref{prop:not simple pseudo}]
	Clearly the initial node $\beta(\calP_0)$ of $\hat{\calR}$ belongs to $\calS(\hat{M}, \hat{I}, \hat{\calV}, p)$.
	We see that the last node $\alpha_{k^*}^+$ of $\hat{\calR}$ belongs to $\calT(\hat{M}, \hat{I}, \hat{\calV}, p)$ as follows,
	which implies that $\hat{\calR}$ satisfies (A2).
	Since $\beta_{j^*-1} \alpha_{j^*}$ is a $+$-edge in $\hat{M}$,
	we have $\alpha_{j^*}^+ \in \calU(\hat{M}, \hat{I})$.
	Since the path $\calP_m[ \alpha_{k^*}^+, \alpha_{j^*}^+ ]$ and $\calP_m[\alpha_{k^*}^-, \alpha_{j^*}^-]$
	form subpaths of the outer path $\calP_m$ in $\calG(\calV, p)$,
	we have $A_{\alpha_{i+1} \beta_i}(U_{\alpha_{i+1}}^-, V_{\beta_i}^+) = A_{\alpha_{i+1} \beta_i}(U_{\alpha_{i+1}}^+, V_{\beta_i}^-) = \set{0}$
	for each $i = k^*, \dots, j^*-1$ by (O2).
	Thus $\alpha_{k^*} \beta_{k^*}, \beta_{k^*} \alpha_{k^*+1}, \dots, \beta_{j^*-1} \alpha_{j^*}$ are rank-2.
	This implies that $\alpha_{k^*} \beta_{k^*}, \beta_{k^*} \alpha_{k^*+1}, \dots, \beta_{j^*-1} \alpha_{j^*}$ are double-tight with respect to $(\hat{\calV}, p)$,
	and hence,
	the $+$-path from $\alpha_{j^*}^+$ to $\alpha_{k^*}^+$ exists in $\calE(\hat{\calV}, p)|_{\hat{M}}$.
	Thus we have $\alpha_{k^*}^+ \in \calC(\alpha_{j^*}^+) \subseteq \calT(\hat{M}, \hat{I}, \hat{\calV}, p)$.

	The proof strategy for (A1)$'$ is similar to the proof of \cref{prop:Nouter}.
	Since we do not require that $\hat{R}$ satisfies (A3)--(A5),
	several arguments in the proof of \cref{prop:Nouter} can be omitted.
	We first show the following claim.
	\begin{claim*}
		$\hat{\calR}$ is a path in $\calG(\hat{\calV}, p)$.
		Moreover,
		for any $\beta^\sigma \alpha^{\sigma'} \in \hat{\calR}$
		such that $\alpha$ is incident to an edge in $\hat{I}$,
		there is no edge between
		$\beta^\sigma$ and $\alpha^{\overline{\sigma'}}$ in $\calE(\hat{\calV}, p)$.
	\end{claim*}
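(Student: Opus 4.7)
My plan is to prove the Claim by the same general mechanism used inside the proof of \cref{prop:Nouter}: apply \cref{lem:>}~(3) edge-by-edge, using \cref{lem:C_alpha} to verify its hypothesis at every endpoint whose labeling has changed. The labelings $\calV$ (after the sign relabeling of $\calP_m$) and $\hat{\calV}$ can differ only on the vertices in $V(P_m[\beta_{i^*}, \alpha_{j^*}]) \cup V(P_{\alpha_{j^*+1}})$, so for any edge $\beta^\sigma \alpha^{\sigma'} \in \hat{\calR}$ with both endpoints outside these two regions, the spaces used in $\calV$ and $\hat{\calV}$ literally coincide; membership in $\calE(\calV, p)$ (which holds because $\hat{\calR} \setminus \calP_m(\alpha_{k^*}^+]$ is contained in $\calR$, a path in $\calG(\calV, p)$) transfers verbatim, and so does the (O2)$'$-type absence of the opposite edge.

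For the first assertion, at endpoints lying in $V(P_{\alpha_{j^*+1}})$ I would invoke \cref{lem:C_alpha}~(1): for the sign $\sigma$ actually used by $\hat{\calR}$ at such a node, either the $\calV$- and $\hat{\calV}$-spaces agree, or the $\calV$-space has strictly larger $p$-value than its opposite (so (Reg) and \cref{lem:>}~(3) apply). The tie case $p(V_\beta^+) = p(V_\beta^-)$ with $\beta^+ \in \calC(\alpha_{j^*+1}^+)$ arises only at $+$-vertices and is compatible with membership by the double-tightness of the newly added edges in $\hat{M}$. For endpoints in $V(P_m[\beta_{i^*}, \alpha_{j^*}])$, whenever $\hat{\calR}$ meets a $+$-labeled such vertex I apply \cref{lem:C_alpha}~(2) directly; at a $-$-labeled such vertex the $-$-space with respect to $\hat{\calV}$ equals that with respect to $\calV$ by the definition of $\hat{\calV}$ through back-propagation on $\calP_m[\beta_{i^*}^-, \alpha_{j^*}^-]$. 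In each case the hypothesis of \cref{lem:>}~(3) is satisfied, giving $\beta^\sigma \alpha^{\sigma'} \in \calE(\hat{\calV}, p)$.

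For the second assertion, fix $\beta^\sigma \alpha^{\sigma'} \in \hat{\calR}$ such that $\alpha$ is incident to an edge in $\hat{I}$. Because $\hat{I}$ is obtained from $I$ by removing exactly $\alpha_{i^*}\beta_{i^*}, \alpha_{i^*+1}\beta_{i^*+1}, \dots, \alpha_{j^*-1}\beta_{j^*-1}$, the vertex $\alpha$ is not among $\alpha_{i^*}, \alpha_{i^*+1}, \dots, \alpha_{j^*-1}$; consequently the $\overline{\sigma'}$-space of $\alpha$ under $\hat{\calV}$ either equals the $\calV$-version outright (when $\alpha$ lies outside the modified regions) or satisfies the Lemma~\ref{lem:>}~(3) hypothesis via the same \cref{lem:C_alpha} analysis. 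Contrapositively applying \cref{lem:>}~(3), an edge $\beta^\sigma \alpha^{\overline{\sigma'}} \in \calE(\hat{\calV}, p)$ would force $\beta^\sigma \alpha^{\overline{\sigma'}} \in \calE(\calV, p)$, contradicting the condition (O2)$'$ on the pseudo outer path of $\calR$ containing $\beta^\sigma \alpha^{\sigma'}$. The main obstacle is the careful bookkeeping in the tie cases of \cref{lem:C_alpha}~(1), and at the junction vertex $\alpha_{j^*}$ which belongs both to $V(P_m[\beta_{i^*}, \alpha_{j^*}])$ and (through the newly added edge $\beta_{j^*}\alpha_{j^*+1}$) links to $V(C_{\alpha_{j^*+1}})$; resolving these requires tracking precisely which sign $\hat{\calR}$ uses at each affected node and exploiting the fact that the added edges $\alpha_{i^*+1}\beta_{i^*+1}, \dots, \beta_{j^*-1}\alpha_{j^*}$ of $\hat{M}$ are forced double-tight, which propagates the equality of potentials consistently across $\calC(\alpha_{j^*+1}^+)$.
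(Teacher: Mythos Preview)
Your overall framework is right---edge-by-edge application of \cref{lem:>}~(3), using \cref{lem:C_alpha} at the endpoints where $\hat{\calV}$ differs from $\calV$---and this is exactly what the paper does. However, there is a genuine gap in your treatment of the tie case of \cref{lem:C_alpha}~(1), and a related misconception about the $P_m[\beta_{i^*},\alpha_{j^*}]$ region.

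First, the minor point: your claim that ``at a $-$-labeled such vertex the $-$-space with respect to $\hat{\calV}$ equals that with respect to $\calV$'' is false---the back-propagation $\calV(\calP_m[\beta_{i^*}^-,\alpha_{j^*}^-]^{-1})$ \emph{replaces} the $-$-spaces $U_{\alpha_i}^-$ along the path. The reason this case never arises is different: $\calR$ is a path in $\calG(\calV,p)$ and $\calP_m$ already occupies every $-$-labeled vertex of $V(P_m[\beta_{i^*},\alpha_{j^*}])$, so $\hat{\calR}$ (which is $\calR$ truncated before that segment) cannot revisit them. The paper simply notes that if $\beta$ or $\alpha$ lies in this region then necessarily $\sigma=+$ (resp.\ $\sigma'=+$), and then \cref{lem:C_alpha}~(2) applies.

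The substantive gap is the tie case for $\alpha\in V(P_{\alpha_{j^*+1}})$ with $\sigma'=-$: here \cref{lem:C_alpha}~(1) allows $p(U_\alpha^+)=p(U_\alpha^-)$ with $\alpha^+\in\calC(\alpha_{j^*+1}^+)$ and the $-$-space of $\alpha$ genuinely changed. In this case the hypothesis of \cref{lem:>}~(3) fails at $\alpha^{\sigma'}$, so your reduction breaks down; ``double-tightness of the newly added edges'' does not help, since those edges live in a different part of $\hat{M}$. The paper instead invokes (A4): because the edge $\beta^\sigma\alpha^-$ must be the last edge of some $\calP_l$ (as $\alpha$ lies in a component of $M\setminus I$), properness together with (A4) forces $\beta^\sigma\alpha^+\notin\calE(\calV,p)$, hence $A_{\alpha\beta}(U_\alpha^+,V_\beta^\sigma)=\{0\}$. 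From this one argues \emph{directly} with the bilinear form (splitting on whether $p(V_\beta^\sigma)\gtrless p(V_\beta^{\overline{\sigma}})$) that $A_{\alpha\beta}(X,Y)\neq\{0\}$ for the $\hat{\calV}$-spaces $X,Y$. The analogous $\beta$-side tie case is excluded by the same (A4) reasoning. Your proposal omits (A4) entirely, and without it the tie case cannot be closed.
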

	\begin{proof}[Proof of Claim]
	Take any $\beta^\sigma \alpha^{\sigma'} \in \hat{\calR}$.
	Then we can see that $p(V_\beta^\sigma) > p(V_\beta^{\overline{\sigma}})$
	or the $\sigma$-space of $\beta$ with respect to $\hat{\calV}$ is $V_\beta^\sigma$.
	Indeed,
	if $\beta \notin V(P_{\alpha_j^* + 1}) \cup V(P_m[ \beta_{i^*}, \alpha_{j^*} ])$, or $\beta \in V(P_{\alpha_j^* + 1})$ and $\sigma = -$,
	then the $\sigma$-space of $\beta$ with respect to $\hat{\calV}$ is $V_\beta^\sigma$.
	If $\beta \in V(P_{\alpha_j^* + 1})$ and $\sigma = +$,
	then by \cref{lem:C_alpha}~(1) and (A4),
	$p(V_\beta^\sigma) > p(V_\beta^{\overline{\sigma}})$
	or the $\sigma$-space of $\beta$ with respect to $\hat{\calV}$ is $V_\beta^\sigma$.
	If $\beta \in V(P_m[ \beta_{i^*}, \alpha_{j^*} ])$,
	then $\sigma = +$, and hence by \cref{lem:C_alpha}~(2),
	$p(V_\beta^\sigma) > p(V_\beta^{\overline{\sigma}})$
	or the $\sigma$-space of $\beta$ with respect to $\hat{\calV}$ is $V_\beta^\sigma$.
	Note that $\hat{\calR}$ does not have any of $\beta_{i^*}^+$ and $\beta_{i^*}^-$.

	Suppose that $\alpha$ exits $P_{\alpha_{j^*} +1}$ and $P_m[ \beta_{i^*}, \alpha_{j^*} ]$;
	note that
	this includes the case where $\alpha$ is incident to an edge in $\hat{I}$.
	Then the $+$-space and $-$-space of $\alpha$ with respect to $\hat{\calV}$ are $U_\alpha^+$ and $U_\alpha^-$,
	respectively.
	By \cref{lem:>}~(3),
	the edge $\beta^\sigma \alpha^{\sigma'}$ exists in $\calE(\hat{\calV}, p)$.
	In particular,
	if $\alpha$ is incident to an edge in $\hat{I}$,
	then it is also incident to an edge in $I$
	and $\beta^\sigma \alpha^{\overline{\sigma'}} \notin \calE(\calV, p)$.
	In this case,
	we also obtain $\beta^\sigma \alpha^{\overline{\sigma'}} \notin \calE(\hat{\calV}, p)$ by \cref{lem:>}~(3).
	
	If $\alpha$ belongs to $P_m[ \beta_{i^*}, \alpha_{j^*} ]$,
	then $\sigma' = +$.
	Hence by \cref{lem:C_alpha}~(2),
	$p(U_{\alpha_i}^+) > p(U_{\alpha_i}^+)$ or the $+$-space of $\alpha_i$ with respect to $\hat{\calV}$
	is $U_{\alpha_i}^+$.
	\cref{lem:>}~(3)
	asserts that the edge $\beta^\sigma \alpha^{\sigma'}$ exists in $\calE(\hat{\calV}, p)$.
	
	Suppose that $\alpha$ belongs to $P_{\alpha_{j^*} +1}$.
	Note that the $+$-space of $\alpha$ with respect to $\hat{\calV}$ is $U_{\alpha}^+$.
	Hence,
	if $\sigma' = +$,
	then we have $\beta^\sigma \alpha^{\sigma'} \in \calE(\hat{\calV}, p)$ by \cref{lem:>}~(3).
	If $\sigma' = -$,
	then $p(U_{\alpha}^-) > p(U_{\alpha}^+)$,
	the $-$-space of $\alpha$ with respect to $\hat{\calV}$ is $U_{\alpha}^-$,
	or
	$p(U_{\alpha}^-) = p(U_{\alpha}^+)$ and $\alpha^+ \in \calC(\alpha_{j^*+1}^+)$.
	In the first or second case,
	\cref{lem:>}~(3)
	verifies $\beta^\sigma \alpha^- \in \calE(\hat{\calV}, p)$.
	
	In the last case,
	there is no edge between $\beta^\sigma$ and $\alpha^+$ in $\calE(\calV, p)$
	by the condition (A4).
	By $p(U_{\alpha}^-) = p(U_{\alpha}^+)$ and $\beta^\sigma \alpha^- \in \calE(\calV, p)$,
	we obtain $p(U_\alpha^+) + p(V_\beta^\sigma) + c = d_{\alpha \beta}$,
	which implies $A_{\alpha \beta}(U_\alpha^+, V_\beta^\sigma) = \set{0}$.
	Moreover, by $\beta^\sigma \alpha^- \in \calE(\calV, p)$,
	we have
	$U_\alpha^- \not\subseteq \kerL(A_{\alpha \beta})$ and $V_\beta^\sigma \not\subseteq \kerR(A_{\alpha \beta})$.
	Let $X$ and $Y$ denote the $-$-space of $\alpha$ and the $\sigma$-space of $\beta$ with respect to $\hat{\calV}$,
	respectively.
	Note that $X$ and $Y$ are different from $U_\alpha^+$ and $V_\beta^{\overline{\sigma}}$,
	respectively.
	By \cref{lem:>}~(3),
	we have
	$\beta^\sigma \alpha^+ \notin \calE(\hat{\calV}, p)$,
	implying
	$A_{\alpha \beta}(U_\alpha^+, Y) = \set{0}$.
	If $p(V_\beta^\sigma) \leq p(V_\beta^{\overline{\sigma}})$,
	then the $\sigma$-space of $\beta$ with respect to $\hat{\calV}$ is $V_\beta^\sigma$,
	i.e., $Y = V_{\beta}^\sigma$.
	Hence we obtain $A_{\alpha \beta}(X, V_{\beta}^\sigma) \neq \set{0}$ by $X \neq U_\alpha^+$.
	If $p(V_\beta^\sigma) > p(V_\beta^{\overline{\sigma}})$,
	then one can see that $\alpha \beta$ is rank-1,
	$U_\alpha^+ = \kerL(A_{\alpha \beta})$,
	and $V_{\beta}^{\overline{\sigma}} = \kerR(A_{\alpha \beta})$.
	By $Y \neq V_{\beta}^{\overline{\sigma}}$ and $X \neq U_\alpha^+$,
	we obtain $A_{\alpha \beta}(X, Y) \neq \set{0}$.
	Thus the edge $\beta^\sigma \alpha^{\sigma'}$ exists in $\calE(\hat{\calV}, p)$.
	\end{proof}
	
	By Claim,
	we have $\calQ_l \subseteq \calE(\hat{\calV}, p)$ and $\calP_l \subseteq \calE(\hat{\calV}, p)$ for each $l$.
	The former immediately implies that $\calQ_l$ forms an inner path for $(\hat{M}, \hat{I}, \hat{\calV}, p)$.
	By $\beta^\sigma \alpha^{\overline{\sigma'}} \notin \calE(\hat{\calV}, p)$
	if $\beta^\sigma \alpha^{\sigma'} \in \calP_l$ and $\alpha$ is incident to an edge in $\hat{I}$,
	$\calP_l$ that does not meet $\alpha_{i^*+1}^+ \beta_{i^*+1}^+, \dots, \alpha_{j^*-1}^+ \beta_{j^*-1}^+$
	forms a pseudo outer path for $(\hat{M}, \hat{I}, \hat{\calV}, p)$.
	Also, if $\calP_l$ meets some of $\alpha_{i^*+1}^+ \beta_{i^*+1}^+, \dots, \alpha_{j^*-1}^+ \beta_{j^*-1}^+$,
	where we recall that $\alpha_i \beta_i$ is a $-$-edge in $\hat{M}$ for $i^*+1 \leq i \leq j^*-1$,
	then $\calP_l$ is the concatenation of several pseudo outer paths and inner paths for $(\hat{M}, \hat{I}, \hat{\calV}, p)$
	as seen in the proof of \cref{prop:Nouter}.
	Thus $\hat{\calR}$ satisfies (A1)$'$.
	This completes the proof.
\end{proof}

\subsection{$\calR$ satisfies ($\No$) but violates ($\Ni$)}\label{subsec:Ninner}
Suppose that $\calR$ satisfies ($\No$) but violates ($\Ni$).
We can assume that $\calQ_m$ is an inner $+$-path,
i.e., the last node of $\calQ_m$ is $\beta_0^+$.
Since $\calR$ satisfies ($\No$),
we can additionally assume that $\calP_m = (\beta_0^+ \alpha_1^{\sigma_1})$ by executing the simplification in \cref{subsec:preprocessing} to $\calP_m$.
Recall that $\calQ^-$ is the maximal inner $-$-path
such that $\beta(\calQ^-) = \beta_0^-$.
Since $\calR$ violates ($\Ni$),
$\beta_0^-$ is not BP-invariant with respect to $\calP_m$
and
there is $\calP_l$ with $l \leq m-2$ 
such that the last node of $P_l$ belongs to $Q^-$
and $\alpha(\calP_l)$ is a $-$-vertex or $\calP_l$ is not proper.

Consider such $\calP_l$ with the minimum index $l$.
We denote by $\beta^\sigma \alpha^{\sigma'}$ the last edge of $\calP_l$.
By the assumption,
the edge $\beta^\sigma \alpha^-$ exists in $\calE(\calV, p)$
and $\alpha^-$ belongs to $\calQ^-$.
Let $\calP_l'$ be the path obtained from $\calP_l$ by replacing $\beta^\sigma \alpha^{\sigma'}$ with $\beta^\sigma \alpha^-$
in $\calG(\calV, p)$.

Since $\beta_0^-$ is not BP-invariant with respect to $\calP_m$,
we have $p(V_{\beta_0}^+) = p(V_{\beta_0}^-)$
and $V_{\beta_0}^-$ is different from the space at $\beta_0^+$ of the back-propagation of $\calP_m$;
the latter implies
$A_{\alpha_{1} \beta_0}(U_{\alpha_1}^{\sigma_1}, V_{\beta_0}^-) \neq \set{0}$.
Hence there exists $\beta_0^- \alpha_{1}^+$ in $\calE(\calV, p)$.
We update $\calR$ as
\begin{align}\label{eq:Ninner R}
\hat{\calR} \defeq \calP_0 \circ \calQ_1 \circ \cdots \circ \calQ_l \circ \calP_l' \circ \calQ^-[ \alpha^-, \beta_0^- ] \circ (\beta_0^- \alpha_1^{\sigma_1}).
\end{align}
This update can be done in $O(\min \set{\mu, \nu})$ time.
By the choice of $\calP_l$,
no outer path $\calP_{l'}$ with $l' < l$ meets $\calQ^-[ \alpha^-, \beta_0^- ]$.
Hence $\calR$ forms a path in $\calG(\calV, p)$.
It is clear that $\calP_l'$ and $(\beta_0^- \alpha_{1}^{\sigma_1})$ are outer paths for $(M, I, \calV, p)$,
and that $\calQ^-[ \alpha^-, \beta_0^-]$ is an inner path for $(M, I, \calV, p)$.
Thus
the resulting $\calR$ is an augmenting path for $(M, I, \calV, p)$.
Clearly $p$ satisfies (Zero)$'$ for $\calR$ and $\theta$ strictly decreases.
Return to the initial stage (\cref{sec:initial}).

\section{$\calR$ satisfies both ($\No$) and ($\Ni$) but is not in the base case}\label{sec:non violate}
Suppose that $\calR$ satisfies both ($\No$) and ($\Ni$) but is not in the base case.
We can assume that $\calQ_m$ forms an inner $-$-path,
where the last node $\beta(\calQ_m)$ of $\calQ_m$ is $\beta_0^-$.
Let $C$ be the connected component of $M \setminus I$ containing $Q_m$.
We consider two cases:
\begin{itemize}
	\item
	$C$ is a cycle component.
	\item
	$C$ is a path component.
\end{itemize}
The first and second are discussed in \cref{subsec:cycle -,subsec:path},
respectively.

\subsection{$C$ is a cycle component}\label{subsec:cycle -}
We denote by $\alpha_{-1}^- \beta_0^-$ the last edge of $\calQ_m$.
Let
\begin{align*}
M' \defeq M \setminus \set{ \alpha_{-1} \beta_0 }.
\end{align*}
We can easily see that
$(M', I)$ is a matching-pair of size $k-1$,
$\calV$ is a valid labeling for $M'$,
and
for the resulting $(M', I, \calV)$,
$p$ is an $(M', I, \calV)$-compatible $c$-potential.

Since $C$ is a cycle component,
the last node of $P_m$ exits $C$.
Hence we can assume that $\calP_m = (\beta_0^- \alpha_1^-)$
by executing the simplification in \cref{subsec:preprocessing} for $\calP_m$.
Let $C_{\beta_0}$ and $C_{\alpha_1}$ be the connected component of $M' \setminus I$
containing $\beta_0$ and $\alpha_1$,
respectively;
note that $C_{\beta_0}$ is equal to $C \setminus \set{ \alpha_{-1} \beta_0 }$.

We define $\hat{M}$, $\hat{\calV}$, and $\hat{\calR}$ by
\begin{align*}
\hat{M} &\defeq M' \cup \set{ \beta_0 \alpha_1 },\\ 
\hat{\calV} &\defeq
\calV(C_{\alpha_1}; (V_{\beta_0}^-)^{\perp_{\beta_0 \alpha_1}})(C_{\beta_0}; (U_{\alpha_1}^-)^{\perp_{\alpha_1 \beta_0}}),\\
\hat{\calR} &\defeq
\calP_0 \circ \calQ_1 \circ \calP_1 \circ \cdots \circ \calP_{m-1}.
\end{align*}

\begin{proposition}\label{prop:cycle =}
	$(\hat{M}, I)$ is a matching-pair of size $k$
	such that $\calU(\hat{M}, I) = \calU(M, I) \setminus \set{\alpha_1^-} \cup \set{\alpha_{-1}^-}$,
$\hat{\calV}$ is a valid labeling for $\hat{M}$,
$p$ is an $(\hat{M}, I, \hat{\calV})$-compatible $c$-potential,
and $\hat{\calR}$ is a pseudo augmenting path for $(\hat{M}, I, \hat{\calV}, p)$.
\end{proposition}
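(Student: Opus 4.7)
My strategy is to recognize $\calP_m = (\beta_0^- \alpha_1^-)$ as a single-edge augmenting path performing the ``base case'' augmentation of \cref{sec:base} on the reduced matching-pair $(M', I)$ of size $k-1$, and then to invoke the sign-flipped analogue of \cref{prop:base case} to derive the claims about $(\hat{M}, \hat{\calV}, p)$. As a preliminary step, I would verify that $(M', I)$ is a matching-pair of size $k-1$, that $\calV$ is valid for $M'$, and that $p$ is $(M', I, \calV)$-compatible; all three are immediate from $M' \subsetneq M$ and the preservation of (Deg), (Cycle), (VL), (Reg), and (Tight) under edge deletion. The base-case preconditions then hold: $\beta_0^- \in \calU(M', I)$ because the removed edge $\alpha_{-1}\beta_0$ was $\beta_0$'s only $+$-edge in $M$ (so $V_{\beta_0}^-$ is now unmatched), $\alpha_1^- \in \calU(M, I) \subseteq \calU(M', I)$ follows from the target-set condition on $\calR$, and the required degree bounds at $\beta_0$ and $\alpha_1$ (with $-$-incident edges if any) hold by construction and by (A5).

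A crucial structural observation is that (A5) combined with the fact that every node of the cycle $C$ has degree $2$ in $M$ forces $\alpha_1 \notin V(C)$, so $C_{\alpha_1}$ and $C_{\beta_0}$ are disjoint components of $M' \setminus I$ and their union with $\{\beta_0 \alpha_1\}$ in $\hat{M} \setminus I$ is a path, not a cycle. By \cref{rmk:rearrangeable}, the proof of \cref{prop:base case} then applies (with the roles of $+$ and $-$ interchanged) without the non-rearrangeability requirement on $C_{\beta_0}$, yielding that $\hat{M}$ is a matching-pair of size $k$, $\hat{\calV}$ is a valid labeling for $\hat{M}$, and $p$ is $(\hat{M}, I, \hat{\calV})$-compatible; optimality is not claimed, since the newly unmatched $\alpha_{-1}^-$ may carry positive $p$-value. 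The formula $\calU(\hat{M}, I) = \calU(M, I) \setminus \{\alpha_1^-\} \cup \{\alpha_{-1}^-\}$ then follows by direct bookkeeping: removing the $+$-edge $\alpha_{-1}\beta_0$ unmatches both $V_{\beta_0}^-$ and $U_{\alpha_{-1}}^-$, while adding $\beta_0\alpha_1$, which is necessarily a $+$-edge in $\hat{M}$ by the 2-coloring, rematches $V_{\beta_0}^-$ and newly matches $U_{\alpha_1}^-$.

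To show that $\hat{\calR}$ is a pseudo augmenting path for $(\hat{M}, I, \hat{\calV}, p)$, I would derive the equivalence $\hat{\calV} \simeq_p \calV$ on $M'$ exactly as in the proof of \cref{prop:base case}: from $\beta_0^- \alpha_1^- \in \calE(\calV, p)$, \cref{lem:edge} supplies the required non-equality of orthogonal subspaces, and \cref{lem:single-counted component}~(2) promotes this to equivalence. Then \cref{lem:>}~(2) gives $\calG(\hat{\calV}, p)|_{M'} = \calG(\calV, p)|_{M'}$, so the inner paths $\calQ_1, \ldots, \calQ_{m-1}$ and the (pseudo) outer paths $\calP_0, \ldots, \calP_{m-1}$, none of which involve the deleted edge $\alpha_{-1}\beta_0$, remain valid under $\hat{\calV}$. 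The initial endpoint $\beta(\calP_0)$ stays in $\calS(\hat{M}, I, \hat{\calV}, p)$ since no $\beta$-node changes status in $\calU$. The terminal endpoint $\alpha(\calQ_m)$ lies in $\calT(\hat{M}, I, \hat{\calV}, p)$ because the prefix of $\calQ_m$ obtained by deleting its final edge $\alpha_{-1}^- \beta_0^-$ connects $\alpha(\calQ_m)$ to the newly unmatched node $\alpha_{-1}^- \in \calU(\hat{M}, I)$ inside $\calG(\hat{\calV}, p)|_{\hat{M}}$; all its edges lie in $M'$ and survive the equivalence. The main obstacle will be the careful edge-by-edge verification underlying this preservation, requiring case analysis on which $\pm$-spaces in $V(P_{\beta_0})$ and $V(P_{\alpha_1})$ actually shift under $\hat{\calV}$ and invoking \cref{lem:>}~(3) with the $p$-value dominance supplied by \cref{lem:single-counted component}~(1).
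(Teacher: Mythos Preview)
Your reduction of the first three claims to the base case of \cref{sec:base} via \cref{rmk:rearrangeable} is correct and matches the paper's argument exactly: the key observation that $\alpha_1 \notin V(C)$ (so the new component is a path) is precisely what makes \cref{rmk:rearrangeable} applicable. The bookkeeping for $\calU(\hat{M}, I)$ is also fine.

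The gap is in your verification that $\hat{\calR}$ remains a pseudo augmenting path. You propose to handle the edge-by-edge preservation using only \cref{lem:>}~(3) together with the $p$-value dominance from \cref{lem:single-counted component}~(1). But \cref{lem:single-counted component}~(1) yields only a weak inequality in general, whereas \cref{lem:>}~(3) requires either that the relevant space be unchanged \emph{or} that the $p$-value inequality be strict. When an outer-path edge $\beta^\sigma \alpha^-$ in $\hat{\calR}$ has $\alpha \in V(P_{\beta_0})$ with $p(U_\alpha^+) = p(U_\alpha^-)$ and the $-$-space of $\alpha$ actually shifts under $\hat{\calV}$, your toolkit gives no conclusion. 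The paper resolves this case through \cref{lem:C_beta}, whose proof hinges on the no-short-cut condition ($\Ni$): equality of $p$-values forces $\alpha^+ \in \calQ^+$, and then ($\Ni$) guarantees $\beta^\sigma \alpha^+ \notin \calE(\calV, p)$, which is exactly what is needed to push the argument through by hand (see the ``Otherwise'' paragraph in the paper's proof of \cref{prop:cycle =}). An analogous issue arises for $\alpha \in V(P_{\alpha_1})$, where the paper invokes \cref{lem:C_alpha}~(1) and then condition (A4). You mention neither ($\Ni$) nor (A4), and without them the equality cases genuinely cannot be closed.

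A secondary issue: the equality $\calG(\hat{\calV}, p)|_{M'} = \calG(\calV, p)|_{M'}$ from \cref{lem:>}~(2) covers only edges lying in $M'$, so it handles the inner paths $\calQ_l$ but says nothing about the outer paths $\calP_l$, whose edges lie in $E \setminus M$. Your sentence claiming the outer paths ``remain valid'' from this equality alone is not justified; the edge-by-edge analysis is needed for them, not merely as a supporting check.
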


The proof of \cref{prop:cycle =} requires the following lemma,
which is also used in the proof of \cref{prop:path:pseudo augmenting path} in \cref{subsec:path}.
Let $P_{\beta_0}$ be the maximal rank-2 path in $C_{\beta_0}$ starting from $\beta_0$,
where $P_{\beta_0} \defeq \set{\beta_0}$ if $\beta_0$ is incident to a rank-1 edge.
\begin{lemma}\label{lem:C_beta}
	If $\hat{\calR}$ meets $\alpha^+ \beta^+$ for some $\alpha \beta \in P_{\beta_0}$,
	then $p(V_{\beta}^+) > p(V_{\beta}^-)$
	or
	the $+$-space of $\beta$ with respect to $\hat{\calV}$ is $V_{\beta}^+$.
	If $\hat{\calR}$ meets $\alpha^- \beta^-$ for some $\alpha \beta \in P_{\beta_0}$,
	then $p(U_{\alpha}^-) > p(U_{\alpha}^+)$, the $-$-space of $\alpha$ with respect to $\hat{\calV}$
	is $U_{\alpha}^-$,
	or $p(U_\alpha^+) = p(U_\alpha^-)$ and $\alpha^+$ belongs to $\calQ^+$.
\end{lemma}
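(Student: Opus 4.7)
The proof of \cref{lem:C_beta} parallels the proof of \cref{lem:C_alpha}~(1), with $\beta_0$ and $P_{\beta_0}$ playing the role of $\alpha_{j^*+1}$ and $P_{\alpha_{j^*+1}}$. The two central tools are \cref{lem:edge}, applied to the tight edge $\beta_0^-\alpha_1^- \in \calE(\calV, p)$ to control the replacements $(V_{\beta_0}^-)^{\perp_{\beta_0\alpha_1}}$ and $(U_{\alpha_1}^-)^{\perp_{\alpha_1\beta_0}}$ defining $\hat{\calV}$, and \cref{lem:single-counted component}~(1), which propagates potential inequalities along $P_{\beta_0}$ starting from $\beta_0$. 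The latter applies because in the cycle case the $+$-colored neighbor $\beta_0\alpha_0$ of $\beta_0$ inherited from $C$ survives the deletion of the $-$-edge $\alpha_{-1}\beta_0$, and so serves as the first edge of $P_{\beta_0}$.

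The key structural observation is that $\calV(C_{\beta_0}; \cdot)$ modifies only the $-$-space of $\beta$ and the $+$-space of $\alpha$ for $\alpha,\beta \in V(P_{\beta_0})$, while $\calV(C_{\alpha_1}; \cdot)$ has the mirror effect on $V(P_{\alpha_1})$. Therefore, for $\beta \in V(P_{\beta_0})$ the $+$-space of $\beta$ under $\hat{\calV}$ equals $V_\beta^+$ unless $\beta$ also lies in $V(P_{\alpha_1})$ (which forces $C_{\alpha_1}=C_{\beta_0}$), and analogously for the $-$-space of $\alpha$. In the disjoint regime $C_{\alpha_1}\neq C_{\beta_0}$ this gives the second disjunct of each claim directly.

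For the overlap regime, I will split on the comparison of $p(V_{\beta_0}^-)$ with $p(V_{\beta_0}^+)$. When $p(V_{\beta_0}^-)\ge p(V_{\beta_0}^+)$, \cref{lem:single-counted component}~(1) propagates $p(V_\beta^-)\ge p(V_\beta^+)$ and $p(U_\alpha^+)\ge p(U_\alpha^-)$ throughout $V(P_{\beta_0})$; combined with \cref{lem:edge} identifying the orthogonal complements feeding the two updates and with \cref{lem:>}~(3), one of the disjuncts of each claim emerges. The boundary case $p(U_\alpha^+)=p(U_\alpha^-)$ in the second claim is resolved exactly as in \cref{lem:C_alpha}~(1): the sub-path of $P_{\beta_0}$ from $\beta_0$ to $\alpha$ must then consist entirely of double-tight rank-$2$ edges, producing a $+$-path in $\calG(\calV,p)|_M$ from $\alpha^+$ to $\beta_0^+$ that witnesses $\alpha^+\in\calQ^+$. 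In the complementary sub-case $p(V_{\beta_0}^-)<p(V_{\beta_0}^+)$, \cref{lem:edge} forces $(U_{\alpha_1}^-)^{\perp_{\alpha_1\beta_0}}=V_{\beta_0}^+$, which degenerates the $\calV(C_{\beta_0};\cdot)$ update; the $+$-space of $\beta$ and the $-$-space of $\alpha$ for $\gamma\in V(P_{\beta_0})$ then remain $V_\beta^+$ and $U_\alpha^-$ under $\hat{\calV}$, recovering the second disjunct.

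I expect the main obstacle to lie in the overlap regime, where the two successive updates $\calV(C_{\alpha_1};\cdot)$ and $\calV(C_{\beta_0};\cdot)$ act on the same nodes and must combine coherently. Specifically, reconciling the propagation of \cref{lem:single-counted component}~(1) along $P_{\beta_0}$ (first edge $+$-colored) with the symmetric propagation along $P_{\alpha_1}$ (first edge $-$-colored, by condition (A5)) requires slightly more bookkeeping than the analogous step in \cref{lem:C_alpha}~(1)~(ii), since one must match the $\pm$-conventions at both endpoints and verify that the two sequences of replacements agree on the shared portion of $V(P_{\alpha_1})\cap V(P_{\beta_0})$.
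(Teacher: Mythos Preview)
Your proposal rests on a reversed sign convention that invalidates its central structural claim. In the cycle case the last inner path $\calQ_m$ is a $-$-path, so by (I2) its final edge $\alpha_{-1}\beta_0$ is a $+$-edge, not a $-$-edge as you assert. After deleting it, the edge of $C_{\beta_0}$ incident to $\beta_0$ is therefore a $-$-edge, and the update $\calV(C_{\beta_0}; (U_{\alpha_1}^-)^{\perp_{\alpha_1\beta_0}})$ replaces $V_{\beta_0}^+$ and propagates along $P_{\beta_0}$ changing precisely the $+$-space of each $\beta$ and the $-$-space of each $\alpha$ --- the very spaces appearing in the lemma's two claims. (The paper's own proof confirms this in its opening paragraph.) Hence your assertion that ``in the disjoint regime $C_{\alpha_1}\neq C_{\beta_0}$ the second disjunct holds directly'' is false; that regime is in fact the only one arising in the cycle case (since $\alpha_1\notin V(C)$), and it is where all the work lies. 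The same reversal affects your use of \cref{lem:edge} --- it yields $(U_{\alpha_1}^-)^{\perp_{\alpha_1\beta_0}} = V_{\beta_0}^+$ when $p(V_{\beta_0}^+) < p(V_{\beta_0}^-)$, not the opposite --- and your invocation of \cref{lem:single-counted component}~(1), whose hypothesis in the present sign convention is $p(V_{\beta_0}^+) \ge p(V_{\beta_0}^-)$.

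Even with the signs corrected, the argument for the first claim has a real gap: you never invoke condition~($\Ni$), which is the paper's key device here. The paper splits on whether $(U_{\alpha_1}^-)^{\perp_{\alpha_1\beta_0}}$ equals $V_{\beta_0}^+$. When it does, the $\calV(C_{\beta_0};\cdot)$ update is trivial and the second disjunct holds. When it does not, propagation only gives $p(V_\beta^+) \ge p(V_\beta^-)$; to obtain the \emph{strict} inequality the paper argues by contradiction: equality would force $p(V_{\beta_0}^+)=p(V_{\beta_0}^-)$, so together with $(U_{\alpha_1}^-)^{\perp_{\alpha_1\beta_0}} \neq V_{\beta_0}^+$ the node $\beta_0^+$ is not BP-invariant with respect to $\calP_m$, while the double-tight sub-path from $\beta_0$ to $\alpha$ places $\alpha^+$ in $\calQ^+$ --- and then the outer path of $\hat{\calR}$ feeding into this inner $+$-path violates~($\Ni$). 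Neither BP-invariance nor~($\Ni$) appears in your proposal, and without them there is no mechanism to exclude the equality case; your route through \cref{lem:>}~(3) alone does not supply one.
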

\begin{proof}
	If $(U_{\alpha_1}^-)^{\perp_{\alpha_1 \beta_0}} = V_{\beta_0}^+$,
	then we have $\hat{\calV} = \calV(C_{\alpha_1}; (V_{\beta_0}^-)^{\perp_{\beta_0 \alpha_1}})$.
	That is,
	for each $\alpha, \beta$ belonging to $P_{\beta_0}$, the $+$-space of $\beta$ and the $-$-space of $\alpha$ with respect to $\hat{\calV}$
	coincide with those with respect to $\calV$,
	i.e., $V_{\beta}^+$ and $U_{\alpha}^-$,
	respectively.

	In the following,
	we assume that $(U_{\alpha_1}^-)^{\perp_{\alpha_1 \beta_0}} \neq V_{\beta_0}^+$.
	Then it follows from $\beta_{0}^-\alpha_{1}^- \in \calE(\calV, p)$ and \cref{lem:edge} that
	$p(V_{\beta_0}^+) \geq p(V_{\beta_0}^-)$.
	By \cref{lem:single-counted component}~(1),
	we obtain $p(V_{\beta}^+) \geq p(V_{\beta}^-)$ and $p(U_{\alpha}^-) \geq p(U_{\alpha}^+)$
	for each $\alpha, \beta$ belonging to $P_{\beta_0}$.
	
	If $\hat{\calR}$ meets $\alpha^+ \beta^+$ for some $\alpha \beta \in V(P_{\beta_0})$,
	then we have $p(V_{\beta}^+) > p(V_{\beta}^-)$.
	Indeed,
	suppose to the contrary
	that $p(V_{\beta}^+) = p(V_{\beta}^-)$.
	Then $p(V_{\beta_0}^+) = p(V_{\beta_0}^-)$
	and the subpath of $P_{\beta_0}$ from $\beta_0$ to $\beta$
	consists of double-tight edges.
	The former with $(U_{\alpha_1}^-)^{\perp_{\alpha_1 \beta_0}} \neq V_{\beta_0}^+$ implies that $\beta_0^+$ is not BP-invariant with respect to $\calP_m$.
	The latter implies that there is a $+$-path from $\beta_0^+$ to $\alpha^+$ in $\calG(\calV, p)|_M$,
	i.e.,
	$\alpha^+$ belongs to $\calQ^+$.
	This contradicts that $\calR$ satisfies ($\Ni$).

	By a similar argument,
	if $p(U_{\alpha}^+) = p(U_{\alpha}^-)$,
	then $\alpha^+$ belongs to $\calQ^+$.
	Thus, if $\hat{\calR}$ meets $\alpha^- \beta^-$ for some $\alpha \beta \in P_{\beta_0}$,
	then $p(U_{\alpha}^-) > p(U_{\alpha}^+)$
	or $p(U_\alpha^+) = p(U_\alpha^-)$ and $\alpha^+$ belongs to $\calQ^+$.
\end{proof}

We are ready to show \cref{prop:cycle =}.
\begin{proof}[Proof of \cref{prop:cycle =}]
    Since $\beta_0^-$ and $\alpha_1^-$ belong to $\calS(M', I, \calV, p)$ and $\calT(M', I, \calV, p)$,
    respectively,
    $(\beta_0^- \alpha_1^-)$ forms an augmenting path for $(M', I, \calV, p)$.
    Furthermore, the connected component of $\hat{M}$ containing $\beta_0 \alpha_1$ forms a path.
    Thus, by \cref{prop:base case} and \cref{rmk:rearrangeable},
    $(\hat{M}, I)$ is a matching-pair of size $k$
	such that $\calU(\hat{M}, I) = \calU(M, I) \setminus \set{\alpha_1^-} \cup \set{\alpha_{-1}^-}$,
$\hat{\calV}$ is a valid labeling for $\hat{M}$,
and
$p$ is an $(\hat{M}, I, \hat{\calV})$-compatible $c$-potential
(even if $C_{\beta_0}$ is rearrangeable in $M'$).

    We then show that $\hat{\calR}$ is a pseudo augmenting path for $(\hat{M}, I, \hat{\calV}, p)$.
	Since $C$ is a cycle component in $M$,
	the initial node of $P_0$ does not belong to $C$.
	Therefore the initial node $\beta(\calP_0)$ of $\calP_0$
	belongs to $\calS(\hat{M}, I, \hat{\calV}, p)$.
	We can see that 
	the last node of $\hat{\calR}$,
	which is $\alpha(\calP_{m-1})$,
	belongs to $\calT(\hat{M}, I, \hat{\calV}, p)$
	as follows.
	Clearly $\deg_{\hat{M}}(\alpha_{-1}) = 1$
    and $\alpha_{-1}$ is incident to a $-$-edge.
    Moreover, by \cref{lem:>}~(2),
	the $-$-path from $\alpha(\calP_{m-1})$ to $\alpha_{-1}^-$ (the reverse of $\calQ_m \setminus \set{ \alpha_{-1}^- \beta_0^+ }$) exists in $\calG(\hat{\calV}, p)|_{\hat{M}}$.
	Thus we obtain $\alpha(\calP_{m-1}) \in \calC(\alpha_{-1}^-) \subseteq \calT(\hat{M}, I, \hat{\calV}, p)$.
	Hence $\hat{\calR}$ satisfies (A2).
	
	Take any $\beta^\sigma \alpha^{\sigma'} \in \hat{\calR}$.
	By the same argument as in the proof of \cref{prop:not simple pseudo} (or \cref{prop:Nouter}),
	it suffices to show that $\beta^\sigma \alpha^{\sigma'} \in \calE(\hat{\calV}, p)$
	and
	that
	if $\alpha$ is incident to an edge in $I$,
	there is no edge between
	$\beta^\sigma$ and $\alpha^{\overline{\sigma'}}$ in $\calE(\hat{\calV}, p)$.
	By \cref{lem:C_alpha}~(1) and \cref{lem:C_beta},
	$p(V_\beta^\sigma) > p(V_\beta^{\overline{\sigma}})$
	or the $\sigma$-space of $\beta$ with respect to $\hat{\calV}$ is $V_\beta^\sigma$.
	
	Suppose $\alpha \notin V(P_{\alpha_1}) \cup V(P_{\beta_0})$.
	Then the $+$-space and $-$-space of $\alpha$ with respect to $\hat{\calV}$ are $U_\alpha^+$ and $U_\alpha^-$,
	respectively.
	By \cref{lem:>}~(3) and $\beta^\sigma \alpha^{\sigma'} \in \calE(\calV, p)$,
	the edge $\beta^\sigma \alpha^{\sigma'}$ exists in $\calE(\hat{\calV}, p)$.
	In particular,
	if $\alpha$ is incident to an edge in $I$,
	then $\beta^\sigma \alpha^{\overline{\sigma'}} \notin \calE(\calV, p)$.
	In this case,
	we also obtain $\beta^\sigma \alpha^{\overline{\sigma'}} \notin \calE(\hat{\calV}, p)$ by \cref{lem:>}~(3).
	
	Suppose that $\alpha$ belongs to $P_{\beta_0}$.
	If $\sigma' = +$,
	then the $+$-space of $\alpha$ with respect to $\hat{\calV}$ is $U_{\alpha}^+$.
	Hence \cref{lem:>}~(3)
	asserts that the edge $\beta^\sigma \alpha^+$ exists in $\calE(\hat{\calV}, p)$.
	If $\sigma' = -$,
	then by \cref{lem:C_beta}, it holds that $p(U_{\alpha}^-) > p(U_{\alpha}^+)$, the $-$-space of $\alpha$ with respect to $\hat{\calV}$
	is $U_{\alpha}^-$,
	or $p(U_\alpha^+) = p(U_\alpha^-)$ and $\alpha^+$ belongs to $\calQ^+$.
	In the first or second case,
	\cref{lem:>}~(3)
	verifies $\beta^\sigma \alpha^- \in \calE(\hat{\calV}, p)$.

	Otherwise the $-$-space of $\alpha$ with respect to $\hat{\calV}$ is different from $U_{\alpha}^-$,
	$p(U_\alpha^+) = p(U_\alpha^-)$,
	and $\alpha^+$ belongs to $\calQ^+$.
	Then one can see that $\beta_0^+$ is not BP-invariant with respect to $\calP_m$.
	Hence,
	by ($\Ni$),
	there is no edge between $\beta^\sigma$ and $\alpha^+$ in $\calE(\calV, p)$.
	By $p(U_{\alpha}^+) = p(U_{\alpha}^-)$ and $\beta^\sigma \alpha^- \in \calE(\calV, p)$,
	we obtain $p(U_\alpha^+) + p(V_\beta^\sigma) + c = d_{\alpha \beta}$,
	which implies $A_{\alpha \beta}(U_\alpha^+, V_\beta^\sigma) = \set{0}$.
	Moreover, by $\beta^\sigma \alpha^- \in \calE(\calV, p)$,
	we have
	$U_\alpha^- \not\subseteq \kerL(A_{\alpha \beta})$ and $V_\beta^\sigma \not\subseteq \kerR(A_{\alpha \beta})$.
	Let $X$ and $Y$ denote the $-$-space of $\alpha$ and the $\sigma$-space of $\beta$ with respect to $\hat{\calV}$,
	respectively.
	Note here that the $+$-space of $\alpha$ with respect to $\hat{\calV}$ is $U_{\alpha}^+$.
	By \cref{lem:>}~(3),
	we have
	$\beta^\sigma \alpha^+ \notin \calE(\hat{\calV}, p)$,
	implying
	$A_{\alpha \beta}(U_\alpha^+, Y) = \set{0}$.
	If $p(V_\beta^\sigma) \leq p(V_\beta^{\overline{\sigma}})$,
	then the $\sigma$-space of $\beta$ with respect to $\hat{\calV}$ is $V_\beta^\sigma$,
	i.e., $Y = V_{\beta}^\sigma$.
	Hence we obtain $A_{\alpha \beta}(X, V_{\beta}^\sigma) \neq \set{0}$ by $X \neq U_\alpha^+$.
	If $p(V_\beta^\sigma) > p(V_\beta^{\overline{\sigma}})$,
	then one can see that $\alpha \beta$ is rank-1,
	$U_\alpha^+ = \kerL(A_{\alpha \beta})$,
	and $V_{\beta}^{\overline{\sigma}} = \kerR(A_{\alpha \beta})$.
	By $X \neq U_\alpha^+$ and $Y \neq V_{\beta}^{\overline{\sigma}}$,
	we obtain $A_{\alpha \beta}(X, Y) \neq \set{0}$.
	Thus the edge $\beta^\sigma \alpha^-$ exists in $\calE(\hat{\calV}, p)$.

	If $\alpha$ belongs to $P_{\alpha_1}$,
	then by the same argument as in the proof of \cref{prop:not simple pseudo} and \cref{lem:C_alpha}~(1),
	we obtain $\beta^\sigma \alpha^{\sigma'} \in \calE(\hat{\calV}, p)$.
	This completes the proof.
\end{proof}

By this update, $\theta$ does not increase and $\phi$ strictly decreases.
\begin{lemma}\label{lem:cycle theta varphi}
    $\theta(\hat{M}, \hat{I}, \hat{\calR}) \leq \theta(M, I, \calR)$
	and $\phi(\hat{M}, \hat{I}, \hat{\calR}) < \phi(M, I, \calR)$.
\end{lemma}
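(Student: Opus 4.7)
The plan is to unwind the definitions of $\theta$ and $\phi$ and track how each constituent changes when passing from $(M, I, \calR)$ to $(\hat{M}, \hat{I}, \hat{\calR})$. Since $\hat{\calR}$ is obtained from $\calR$ by stripping the tail $\calQ_m \circ \calP_m$, I immediately have $|\hat{\calR}| = |\calR| - |\calQ_m| - 1 \leq |\calR| - 2$ (using $|\calP_m| = 1$ after the preprocessing of Section~\ref{subsec:preprocessing} and $|\calQ_m| \geq 1$). At the level of edge sets, $M$ changes only by losing $\alpha_{-1}\beta_0$ and gaining $\beta_0\alpha_1$; condition (A5) ensures $\alpha_1 \neq \alpha_{-1}$ and $\deg_M(\alpha_1) \leq 1$, so $C_{\beta_0}$ and $C_{\alpha_1}$ are distinct path components of $M' \setminus I$, and in $\hat{M} \setminus I$ they fuse into a single path $K^* = C_{\beta_0} \cup \{\beta_0\alpha_1\} \cup C_{\alpha_1}$ with $|K^*| = (|C|-1) + 1 + |C_{\alpha_1}| = |C| + |C_{\alpha_1}|$ edges, while every other component of $M \setminus I$ is unaltered.

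For the $\theta$ inequality, the outer-path contribution drops by $|\calP_m| = 1$. Turning to $N_S$, note that $V(\calR) \setminus V(\hat{\calR}) \subseteq V(C) \cup \{\alpha_1\}$, so the only components of $M \setminus I$ that $V(\calR)$ can hit but $V(\hat{\calR})$ cannot are $C$ and $C_{\alpha_1}$; together they contribute $|C| + |C_{\alpha_1}|$ to $N_S(M, I, \calR)$. In $\hat{M} \setminus I$, the merged $K^*$ of identical size is hit by $\alpha(\calP_{m-1}) \in V(C) \subseteq V(K^*)$, while every other component of $\hat{M} \setminus I$ agrees vertex-for-vertex with one of $M \setminus I$. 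A short case split on whether $C_{\alpha_1}$ is also hit by $V(\hat{\calR})$ in $M \setminus I$ yields $N_S(\hat{M}, \hat{I}, \hat{\calR}) = N_S(M, I, \calR)$ in either case, and hence $\theta(\hat{M}, \hat{I}, \hat{\calR}) = \theta(M, I, \calR) - 1 \leq \theta(M, I, \calR)$.

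For the $\phi$ inequality, $|\hat{\calR}| < |\calR|$ is already settled, so it suffices to show that $N_0$ is unchanged. Since $\calU(\hat{M}, \hat{I})$ differs from $\calU(M, I)$ only in $\alpha$-vertices (\cref{prop:cycle =}), the distinguished unmatched $\beta$-vertex $\beta_*$ in the definition of $N_0$ is common to both settings. Writing $\tau$ for the sign of the initial node of $\calP_0$, $V_{\beta_*}^\tau$ is unmatched so $\deg_M(\beta_*) \leq 1$, whereas every vertex of the cycle $C$ has degree two; hence $\beta_* \notin V(C)$, and the $\tau$-path in $\calG(\calV, p)|_M$ realizing $\calC(\beta_*^\tau)$ from $\beta_*$ to $\beta(\calP_0)$ never uses the deleted edge $\alpha_{-1}\beta_0$. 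The main obstacle is the subcase where $\beta_* \in V(C_{\alpha_1})$, in which the newly inserted edge $\beta_0\alpha_1$ could a priori extend the $\tau$-path in $\calG(\hat{\calV}, p)|_{\hat{M}}$; I plan to dispatch this by invoking (A3) and (A4) together with \cref{lem:>} to conclude that the subpath from $\beta_*$ to $\beta(\calP_0)$ remains intact under the labeling change $\calV \to \hat{\calV}$. This gives $N_0(\hat{M}, \hat{I}, \hat{\calR}) = N_0(M, I, \calR)$, and combined with $|\hat{\calR}| < |\calR|$ it yields $\phi(\hat{M}, \hat{I}, \hat{\calR}) < \phi(M, I, \calR)$.
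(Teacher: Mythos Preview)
Your approach matches the paper's---track each summand of $\theta$ and $\phi$ under the passage $(M,I,\calR)\to(\hat{M},I,\hat{\calR})$---and your conclusions are correct, but two steps need tightening.

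For $N_{\rm S}$: the quantity is defined (\cref{subsec:theta}) via the vertex set $\bigcup_i V(Q_i)\cup V(P_m)$, not the full $V(\calR)$. Your set-difference computation should be redone accordingly: passing from $\calR$ to $\hat{\calR}$ one loses $V(Q_m)\cup V(P_m)\subseteq V(C)\cup\{\alpha_1\}$ from the relevant set and gains $V(P_{m-1})$, whose interior vertices lie on $I$-edges (hence isolated in $M\setminus I$) and whose endpoints already sit in $V(Q_{m-1})$ and in $V(C)$. So only $C$ and $C_{\alpha_1}$ are at stake; they are replaced by $K^*$ of equal total size, still hit via $\alpha(\calP_{m-1})\in V(P_{m-1})$. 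Your conclusion that $N_{\rm S}$ is unchanged therefore stands, and with it $\theta(\hat{M},I,\hat{\calR})\leq\theta(M,I,\calR)$ (the paper argues this in one line).

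For $N_0$: the ``main obstacle'' you flag is not one. $N_0$ is the length of the $\tau$-path in $\calG(\calV,p)|_M$ between the two \emph{fixed} vertices $\beta(\calP_0)$ and $\beta_*^\tau$; whether the ambient component $\calC(\beta_*^\tau)$ grows through the new edge $\beta_0\alpha_1$ is irrelevant to that distance. Since $\beta_*\notin V(C)$, the path lies entirely in $M'=M\cap\hat{M}$; and since $\hat{\calV}\simeq_p\calV$ as valid labelings for $M'$, \cref{lem:>}(2) gives $\calG(\calV,p)|_{M'}=\calG(\hat{\calV},p)|_{M'}$, so the identical path persists in $\calG(\hat{\calV},p)|_{\hat{M}}$ with the same length. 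No appeal to (A3) or (A4) is needed, and the subcase $\beta_*\in V(C_{\alpha_1})$ requires no special treatment.
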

\begin{proof}
    By $\hat{M} = M \setminus \set{\alpha_{-1} \beta_0} \cup \set{\beta_0 \alpha_1}$,
    $N_{\rm S}$ does not change.
    The edge $\beta_0^- \alpha_1^-$ is removed from the union of the resulting outer paths,
    and only $\beta_0^- \alpha_1^-$ can be newly added to the union of the resulting outer paths,
    implying that the number of edges in the union of outer paths does not increase.
    Thus $\theta(\hat{M}, \hat{I}, \hat{\calR}) \leq \theta(M, I, \calR)$ holds.
    
    We obtain $|\hat{\calR}| = |\calR| - |\calQ_m| - |\calP_m|$ and $N_0(\hat{M}, I, \hat{\calR}) = N_0(M, I, \calR)$.
	Thus $\phi(\hat{M}, I, \hat{\calR}) < \phi(M, I, \calR)$ holds.
\end{proof}

We update
\begin{align*}
M \leftarrow \hat{M}, \qquad
\calV \leftarrow \hat{\calV}(\calR),\qquad
\calR \leftarrow \hat{\calR}.
\end{align*}
This update can be done in $O(\min\set{\mu, \nu})$ time.
By \cref{prop:pseudo augmenting path,prop:cycle =},
the resulting $(M, I)$ is a matching-pair of size $k$, $\calV$ a valid labeling for $M$,
$p$ an $(M, I, \calV)$-compatible $c$-potential,
and
$\calR$ is an augmenting path for $(M, I, \calV, p)$.
Moreover, since $\alpha_1^-$ is deleted from $\calU(M, I)$ and $\alpha_{-1}^-$ is added to $\calU(M, I)$ in this update (by \cref{prop:cycle =})
and $\alpha(\calP_{m-1}) \in \calC(\alpha_{-1}^-)$,
$p$ satisfies (Zero)$'$ for $\calR$.
Return to the initial stage (\cref{sec:initial}).

\subsection{$C$ is a path component}\label{subsec:path}
We denote by $\alpha_{-1}^- \beta_0^-$ the last edge of $\calQ_m$.
Let
\begin{align*}
	M' \defeq M \setminus \set{ \alpha_{-1} \beta_0 }.
\end{align*}
We then redefine $+$- and $-$-edges of $M'$
and $+$- and $-$-spaces of $\calV$
so that all edges in $M' \cap P_m$ are $+$-edges,
$\beta_0$ is incident to a $+$-edge if $\deg_{M'}(\beta_0) = 1$,
and
$\calP_m$ forms a $+$-path.
Note that the last edge of $\calQ_m$ (or the last edge of $\calQ^-$) becomes $\alpha_{-1}^- \beta_0^+$.
The resulting $\calV$ is no longer a valid labeling for $M$,
since $A_{\alpha_{-1} \beta_{0}}(U_{\alpha_{-1}}^-, V_{\beta_{0}}^+) \neq \set{0}$ (corresponding to the edge $\alpha_{-1}^- \beta_{0}^+$ in $\calQ_m$).
On the other hand,
$\calV$ is a valid labeling for $M'$,
since $M'$ does not have $\alpha_{-1}\beta_{0}$.
Here $(M', I)$ is a matching-pair of size $k-1$.
We can easily see that,
for the resulting $(M', I, \calV)$,
$p$ is an $(M', I, \calV)$-compatible $c$-potential.

By applying the simplification in \cref{subsec:preprocessing} to $\calP_m$,
we can assume that $\calP_m = (\beta_0^+ \alpha_1^+)$.
Moreover, $\beta_0^+$ and $\alpha_1^+$ belong to $\calS(M', I, \calV, p)$ and $\calT(M', I, \calV, p)$,
respectively.
Hence $(\beta_0^+ \alpha_1^+)$ forms an augmenting path for $(M', I, \calV, p)$.
Let $C_{\alpha_1}$ be the connected component of $M' \setminus I$
containing $\alpha_1$.
We define $\hat{M}$ and $\hat{\calV}$ by
\begin{align*}
\hat{M} &\defeq M' \cup \set{ \beta_0 \alpha_1 },\\
\hat{\calV} &\defeq
\calV(C_{\alpha_1}; (V_{\beta_0}^+)^{\perp_{\beta_0 \alpha_1}})(C_{\beta_0}; (U_{\alpha_1}^+)^{\perp_{\alpha_1 \beta_0}});
\end{align*}
see Figure~\ref{fig:path}.
\begin{figure}
	\centering
	\begin{tikzpicture}[
	node/.style={
		fill=black, circle, minimum height=5pt, inner sep=0pt,
	},
	M/.style={
		line width = 3pt
	},
	AM/.style={
		line width = 3pt,
		red
	},
	DM/.style={
		line width = 3pt,
		dashed
	},
	DI/.style={
		line width = 3pt,
		blue
	},
	P/.style={
		very thick,
	},
	PD/.style={
		very thick, dashed,
	},
	calE/.style={
		thick
	}
	]
	
	\def\mu{a1, a2, a3}
	\def\nu{b0, b1, b2, b3}
	\def\bmu{ba1, ba2, ba3}
	\def\bnu{bb1, bb2, bb3}
	\def\brmu{ba0, ba-1, ba-2}
	\def\brnu{bb0, bb-1, bb-2, bb-3}
	\def\size{1.7cm}
	\def\hight{4cm}
	\def\side{5pt}
	\def\up{12pt}
	
	\nodecounter{\nu}
	\coordinate (pos);
	\foreach \currentnode in \nu {
		\node[node, below=0 of pos, anchor=center] (\currentnode) {};
		\coordinate (pos) at ($(pos)+(-\size, 0)$);
	}
	\nodecounter{\mu}
	\coordinate (pos) at ($(b1) + (0, -\size)$);
	\foreach \currentnode in \mu {
		\node[node, below=0 of pos, anchor=center] (\currentnode) {};
		\coordinate (pos) at ($(pos)+(-\size, 0)$);
	}

	\nodecounter{\brnu}
	\coordinate (pos) at ($(b0) + (0, -\hight)$);
	\foreach \currentnode in \brnu {
		\node[node, below=0 of pos, anchor=center] (\currentnode) {};
		\coordinate (pos) at ($(pos)+(\size, 0)$);
	}
	\nodecounter{\brmu}
	\coordinate (pos) at ($(bb0) + (0, -\size)$);
	\foreach \currentnode in \brmu {
		\node[node, below=0 of pos, anchor=center] (\currentnode) {};
		\coordinate (pos) at ($(pos)+(\size, 0)$);
	}
	\nodecounter{\bnu}
	\coordinate (pos) at ($(bb0) + (-\size, 0)$);
	\foreach \currentnode in \bnu {
		\node[node, below=0 of pos, anchor=center] (\currentnode) {};
		\coordinate (pos) at ($(pos)+(-\size, 0)$);
	}
	\nodecounter{\bmu}
	\coordinate (pos) at ($(bb0) + (-\size, -\size)$);
	\foreach \currentnode in \bmu {
		\node[node, below=0 of pos, anchor=center] (\currentnode) {};
		\coordinate (pos) at ($(pos)+(-\size, 0)$);
	}

	\foreach \i/\j in {a1/b1, b1/a2, a2/b2, b2/a3, a3/b3, ba1/bb1, bb1/ba2, ba2/bb2, bb2/ba3, ba3/bb3, bb0/ba0, ba0/bb-1, bb-1/ba-1, ba-1/bb-2, bb-2/ba-2, ba-2/bb-3} {
		\draw[M] (\i) -- (\j);
	}
	
	\draw[AM] (b0) -- (a1);
	\draw[DM] (bb0) -- (ba1);

	\foreach \i in \nu {
		\coordinate [left = \side] (l\i) at (\i);
		\coordinate [right = \side] (r\i) at (\i);
	}
	\foreach \i in \mu {
		\coordinate [left = \side] (l\i) at (\i);
		\coordinate [right = \side] (r\i) at (\i);
	}
	\foreach \i in \bnu {
		\coordinate [left = \side] (l\i) at (\i);
		\coordinate [right = \side] (r\i) at (\i);
	}
	\foreach \i in \bmu {
		\coordinate [left = \side] (l\i) at (\i);
		\coordinate [right = \side] (r\i) at (\i);
	}
	\foreach \i in \brnu {
		\coordinate [left = \side] (l\i) at (\i);
		\coordinate [right = \side] (r\i) at (\i);
	}
	\foreach \i in \brmu {
		\coordinate [left = \side] (l\i) at (\i);
		\coordinate [right = \side] (r\i) at (\i);
	}
	\coordinate [above = \up] (b0n) at (b0);
	\coordinate [above = \up] (bb0n) at (bb0);
	\draw[dotted, semithick] (b0) -- (bb0);
	\draw (b0n) node[above] {$\beta_0$};
	\draw (bb0n) node[above, fill=white, inner sep=2pt] {$\beta_0$};

	\draw[PD, -{Latex[length=3mm]}] (lb0) -- node[above left = 0 and -4pt]{$\calP_m$} ($(lb0)!0.6!(la1)$);
	\draw[PD] ($(lb0)!0.4!(la1)$) -- (la1);
	
	\draw[PD, -{Latex[length=3mm]}] (lba2) -- ($(lba2)!0.6!(lbb1)$);
	\draw[PD] ($(lba2)!0.4!(lbb1)$) --node[above left = 0 and -7pt]{$\calQ_m$} (lbb1);
	\draw[PD] (lbb1) -- (lba1) -- (lbb0);
	
	\coordinate (Pm-1s) at ($(lba2)+(0, -1cm)$);
	\draw[P, -{Latex[length=3mm]}] (Pm-1s) -- ($(Pm-1s)!0.6!(lba2)$);
	\draw[P] ($(Pm-1s)!0.4!(lba2)$) -- (lba2);
    \coordinate [label=below:{$\calP_{m-1}$}] () at (Pm-1s);
	
	\coordinate (Rs) at ($(ra2)+(0, -1cm)$);
	\coordinate (Rt) at ($(rb2)+(0, 1cm)$);
	\draw[P, -{Latex[length=3mm]}] (Rs) -- (Rt);
    \coordinate [label=below:{$\hat{\calR}$}] () at (Rs);
	
	\coordinate (R2s) at ($(la3)+(0, -1cm)$);
	\coordinate (R2t) at ($(lb2)+(0, 1cm)$);
	\draw[P, -{Latex[length=3mm]}] (R2s) -- (la3) -- (lb2) -- (R2t);
    \coordinate [label=below:{$\hat{\calR}$}] () at (R2s);
	
	\coordinate (R3s) at ($(lba0)+(0, -1cm)$);
	\coordinate (R3t) at ($(lbb-1)+(0, 1cm)$);
	\draw[P, -{Latex[length=3mm]}] (R3s) -- (lba0) -- (lbb-1) -- (R3t);
    \coordinate [label=below:{$\hat{\calR}$}] () at (R3s);
	
	\coordinate (R4s) at ($(rba-2)+(0, -1cm)$);
	\coordinate (R4t) at ($(rbb-1)+(0, 1cm)$);
	\draw[P, -{Latex[length=3mm]}] (R4s) -- (rba-2) -- (rbb-2) -- (rba-1) -- (rbb-1) -- (R4t);
    \coordinate [label=below:{$\hat{\calR}$}] () at (R4s);
	
	\coordinate [below = \up] (a1n) at (a1);
	\draw (a1n) node[below] {$\alpha_1$};
	\coordinate [below = \up] (ba1n) at (ba1);
	\draw (ba1n) node[below] {$\alpha_{-1}$};
	
	\coordinate[label=left:$\cdots$] () at ($(b3)!0.5!(a3) + (-0.2cm, 0)$);
	\coordinate[label=left:$\cdots$] () at ($(bb3)!0.5!(ba3) + (-0.2cm, 0)$);
	\coordinate[label=right:$\cdots$] () at ($(bb-3)!0.5!(ba-2) + (0.5cm, 0)$);

	\foreach \i in {1, 3} {
		\draw[calE] (rb\i) -- (ra\i);
	}
	\foreach \i in {0, 1, 2, 3} {
		\draw[calE] (rbb\i) -- (rba\i);
	}
	\foreach \i/\j in {1/2} {
		\draw[calE] (lb\i) -- (la\j);
	}
	\foreach \i/\j in {-3/-2, -2/-1, 2/3} {
		\draw[calE] (lbb\i) -- (lba\j);
	}
	\foreach \i in {1} {
		\draw[calE] (lb\i) -- (la\i);
	}
	\foreach \i in {2} {
		\draw[calE] (lbb\i) -- (lba\i);
	}
	\foreach \i/\j in {0/1, 1/2, 2/3} {
		\draw[calE] (rbb\i) -- (rba\j);
	}

	\coordinate [left = 5pt] (Q-s) at (lba3);
	\coordinate [left = 5pt] (Q-st) at (lbb2);
	\draw[[-{Latex[length=2mm]}, dotted, thick] (Q-s) -- ($(Q-s)!0.5!(Q-st)$);
	\coordinate[label=left:$\calQ^-$] () at ($(Q-s) + (3pt, 0)$);
	\coordinate [left = 5pt] (Q-ts) at (lba1);
	\coordinate [left = 5pt] (Q-t) at (lbb0);
	\draw[[-{Latex[length=2mm]}, dotted, thick] ($(Q-ts)!0.6!(Q-t)$) --node[above left = 0pt and -10pt] {$\calQ^-$} (Q-t);
	
	\coordinate [right = 5pt] (Qs) at (rba1);
	\coordinate [right = 5pt] (Qst) at (rbb1);
	\draw[[-{Latex[length=2mm]}, dotted, thick] (Qs) -- ($(Qs)!0.6!(Qst)$);
	\coordinate[label=right:$\calQ$] () at ($(Qs) + (-2pt, 0)$);
	\coordinate [right = 5pt] (Qts) at (rba3);
	\coordinate [right = 5pt] (Qt) at (rbb3);
	\draw[[-{Latex[length=2mm]}, dotted, thick] ($(Qts)!0.6!(Qt)$) --node[above right = 0pt and -2pt] {$\calQ$} (Qt);

	\foreach \i in \nu {
		\draw (l\i) node[above=.3333em-1pt, fill=white, inner sep=1pt] {$+$};
		\draw (r\i) node[above=.3333em-1pt, fill=white, inner sep=1pt] {$-$};
	}
	\foreach \i in \mu {
		\draw (l\i) node[below=.3333em-1pt, fill=white, inner sep=1pt] {$+$};
		\draw (r\i) node[below=.3333em-1pt, fill=white, inner sep=1pt] {$-$};
	}
	\foreach \i in \bnu {
		\draw (l\i) node[above=.3333em-1pt, fill=white, inner sep=1pt] {$-$};
		\draw (r\i) node[above=.3333em-1pt, fill=white, inner sep=1pt] {$+$};
	}
	\foreach \i in \bmu {
		\draw (l\i) node[below=.3333em-1pt, fill=white, inner sep=1pt] {$-$};
		\draw (r\i) node[below=.3333em-1pt, fill=white, inner sep=1pt] {$+$};
	}
	\foreach \i in \brnu {
		\draw (l\i) node[above=.3333em-1pt, fill=white, inner sep=1pt] {$+$};
		\draw (r\i) node[above=.3333em-1pt, fill=white, inner sep=1pt] {$-$};
	}
	\foreach \i in \brmu {
		\draw (l\i) node[below=.3333em-1pt, fill=white, inner sep=1pt] {$+$};
		\draw (r\i) node[below=.3333em-1pt, fill=white, inner sep=1pt] {$-$};
	}
	
	\coordinate [above = \up] (b0n) at (b0);
	\coordinate [above = \up] (bb0n) at (bb0);
	\draw[dotted, semithick] (b0) -- (bb0);
	\draw (b0n) node[above] {$\beta_0$};
	\draw (bb0n) node[above, fill=white, inner sep=2pt] {$\beta_0$};
	
	\end{tikzpicture}
	\caption{
		Modification in \cref{subsec:path};
		the definitions of all lines and paths are the same as in Figures~\ref{fig:preprocessing} and~\ref{fig:Nouter}.
	}
	\label{fig:path}
\end{figure}

If the connected component of $\hat{M}$ containing $\beta_0 \alpha_1$ forms a cycle,
i.e., $\alpha_1$ belongs to $C_{\beta_0}$,
then $C_{\beta_0}$ is not rearrangeable in $M'$.
Indeed, otherwise we have $\alpha(\calP_{m-1}) \in \calC(\alpha_1^+)$,
which contradicts (A4).
Thus,
by \cref{prop:base case} and \cref{rmk:rearrangeable},
we obtain the following:
\begin{proposition}\label{prop:=}
		$(\hat{M}, I)$ is a matching-pair of size $k$
		such that $\calU(\hat{M}, I) = \calU(M, I) \setminus \set{\alpha_1^+} \cup \set{ \alpha_{-1}^- }$,
		$\hat{\calV}$ is a valid labeling for $\hat{M}$,
		and
		$p$ is an $(\hat{M}, I, \hat{\calV})$-compatible $c$-potential.
\end{proposition}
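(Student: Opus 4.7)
I would reduce \cref{prop:=} to the base-case augmentation \cref{prop:base case} applied to $(M', I, \calV, p)$ together with the single-edge augmenting path $(\beta_0^+ \alpha_1^+)$. The pair $(\hat{M}, \hat{\calV})$ is literally the output of that base-case augmentation: the defining formulas here are specializations of the $M^*$ and $\calV^*$ in \cref{sec:base}, with the sign $+$ playing the role of the last sign $\sigma_{k+1}$ there. Hence, once the hypotheses of \cref{prop:base case} are verified, the assertions that $(\hat{M}, I)$ is a matching-pair of size $k$, that $\hat{\calV}$ is a valid labeling for $\hat{M}$, and that $p$ is $(\hat{M}, I, \hat{\calV})$-compatible follow immediately.

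The preceding paragraphs of \cref{subsec:path} already supply the base-case setup: $(M', I)$ is a matching-pair of size $k-1$, $\calV$ is valid for $M'$, $p$ is $(M', I, \calV)$-compatible, and $(\beta_0^+ \alpha_1^+)$ is an augmenting path with $\beta_0^+ \in \calS(M', I, \calV, p)$ and $\alpha_1^+ \in \calT(M', I, \calV, p)$. The only remaining task is to control non-rearrangeability. I would split on the shape of the connected component $C^*$ of $\hat{M} \setminus I$ containing $\beta_0 \alpha_1$: if $C^*$ is a path component, \cref{rmk:rearrangeable} licenses \cref{prop:base case} without any rearrangeability assumption; if $C^*$ is a cycle component then $\alpha_1 \in C_{\beta_0}$ in $M'$ (so $C_{\alpha_1} = C_{\beta_0}$), and I must rule out rearrangeability of $C_{\beta_0}$ in $M'$.

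In the cycle case I would argue by contraposition. Assuming $C_{\beta_0}$ were rearrangeable in $M'$, every edge of the appropriate color in $C_{\beta_0}$ is double-tight with respect to $(\calV, p)$. This tightness, combined with the edge $\beta_0^+ \alpha_1^+ \in \calE(\calV, p)$ and the tight $+$-chain in $\calG(\calV, p)|_M$ dual to $\calQ_m$, would exhibit a $+$-path in $\calG(\calV, p)|_M$ from $\alpha_1^+$ to $\alpha(\calP_{m-1})$. Consequently $\alpha(\calP_{m-1}) \in \calC(\alpha_*^+)$, where $\alpha_*^+$ is the unmatched vertex with $\alpha_1^+ \in \calC(\alpha_*^+)$, contradicting condition (A4) applied to the outer path $\calP_{m-1}$. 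Thus $C_{\beta_0}$ is not rearrangeable and \cref{prop:base case} delivers the three structural claims. The identity $\calU(\hat{M}, I) = \calU(M, I) \setminus \set{\alpha_1^+} \cup \set{\alpha_{-1}^-}$ then follows by a bookkeeping step: removing $\alpha_{-1} \beta_0$ from $M$ and applying the sign relabeling yields $\calU(M', I) = \calU(M, I) \cup \set{\beta_0^+, \alpha_{-1}^-}$, while \cref{prop:base case} removes $\beta_0^+$ and $\alpha_1^+$ upon adding $\beta_0 \alpha_1$.

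The main obstacle is the cycle-case argument: both double-tightness and the component relation $\calC(\cdot)$ are defined relative to the current labeling, and the sign relabeling performed at the start of \cref{subsec:path} permutes which edges carry which sign. Threading the assumed rearrangeability through that relabeling so as to produce the desired tight $+$-path in $\calG(\calV, p)|_M$ is the delicate step; everything else is either a direct appeal to \cref{prop:base case} via \cref{rmk:rearrangeable} or straightforward bookkeeping of unmatched spaces.
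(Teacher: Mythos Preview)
Your proposal is correct and follows essentially the same approach as the paper: reduce to \cref{prop:base case} applied to the single-edge augmenting path $(\beta_0^+\alpha_1^+)$ for $(M',I,\calV,p)$, invoke \cref{rmk:rearrangeable} when the resulting component is a path, and in the cycle case derive $\alpha(\calP_{m-1})\in\calC(\alpha_1^+)$ from rearrangeability to contradict (A4). The paper's proof is the paragraph immediately preceding the proposition statement and is terser than yours; in particular it asserts the (A4) contradiction in one line without discussing the sign-relabeling subtlety you flag, but the underlying argument is the same.
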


By deleting $\alpha_{-1} \beta_0$ from $M$,
it can happen that $\beta(\calP_0) \notin \calS(\hat{M}, \hat{I}, \hat{\calV}, p)$.
The following states that,
in such a case,
$\beta_0^-$ belongs to $\calS(\hat{M}, I, \hat{\calV}, p)$
and there is a path from $\beta_0^-$ to $\beta(\calP_0)$ in $\calG(\hat{\calV}, p)$
that is the concatenation of an outer path and an inner path for $(\hat{M}, I, \hat{\calV}, p)$.
Here we denote by $\calQ$ the maximal inner $+$-path in $\calG(\hat{\calV}, p)|_{\hat{M}}$ that starts with $\alpha_{-1}^+$;
Figure~\ref{fig:path} also describes $\calQ$.
\begin{lemma}\label{lem:S}
	If $\beta(\calP_0) \not\in \calS(\hat{M}, I, \hat{\calV}, p)$,
	then $\beta(\calP_0)$ belongs to $\calQ$, $\beta_0^- \in \calS(\hat{M}, I, \hat{\calV}, p)$,
	and the path $(\beta_0^- \alpha_{-1}^+) \circ \calQ(\beta(\calP_0)]$ exists in $\calG(\hat{\calV}, p)$.
\end{lemma}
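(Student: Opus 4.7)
The strategy is to exploit the local nature of the change from $(M, I, \calV)$ to $(\hat{M}, I, \hat{\calV})$: the only $M$-level modifications are the deletion of $\alpha_{-1}\beta_0$ and insertion of $\beta_0\alpha_1$, and the sign flip together with the $(C_{\beta_0};\cdot)$- and $(C_{\alpha_1};\cdot)$-propagation defining $\hat{\calV}$ are confined to $C_{\beta_0}\cup C_{\alpha_1}$. Since $\beta(\calP_0)\in\calS(M, I, \calV, p)$ by (A2), there is an unmatched source $\beta_*^{\sigma_0'}\in\calU(M, I)$ together with a $\sigma_0'$-path in $\calG(\calV, p)|_M$ joining it to $\beta(\calP_0)$; by \cref{prop:=} the vertex $\beta_*^{\sigma_0'}$ still belongs to $\calU(\hat{M}, I)$. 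Thus the failure $\beta(\calP_0)\notin\calS(\hat{M}, I, \hat{\calV}, p)$ must stem from the removal of $\alpha_{-1}\beta_0$ (since inserting $\beta_0\alpha_1$ can only increase connectivity), forcing the original $\sigma_0'$-path to cross $\alpha_{-1}\beta_0$ and placing $\beta(\calP_0)$ and $\beta_*^{\sigma_0'}$ on opposite sides of the cut in $C$.

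I would then use (A3) and (A4), combined with the fact that $\calQ_m$ (and thus $\alpha(\calP_{m-1})$) lies almost entirely in $C_{\alpha_{-1}}$, to pin $\beta(\calP_0)$ on the $\alpha_{-1}$-side, i.e.\ $\beta(\calP_0)\in V(C_{\alpha_{-1}})$. Because the sign flip and the $(C_{\beta_0};\cdot)(C_{\alpha_1};\cdot)$-propagation touch only $C_{\beta_0}$ and $C_{\alpha_1}$, the restriction of $\calG(\hat{\calV}, p)|_{\hat{M}}$ to $C_{\alpha_{-1}}$ coincides with that of $\calG(\calV, p)|_M$. By the duality between inner $+$- and $-$-paths through the double-tight/rank-$2$ structure of $C_{\alpha_{-1}}$ (mirroring $\calQ_m\subseteq\calQ^-$ on the other sign), the maximal inner $+$-path $\calQ$ starting at $\alpha_{-1}^+$ traverses $V(C_{\alpha_{-1}})$ and passes through $\beta(\calP_0)$; this gives claim~1 and simultaneously produces the subpath $\calQ(\beta(\calP_0)]$ required for claim~3.

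The edge $\beta_0^-\alpha_{-1}^+\in\calE(\hat{\calV}, p)$ in claim~3 is verified by translating the new label $V_{\beta_0}^-$ back to $V_{\beta_0}^+$ in $\calV$ (via the sign flip at $\beta_0$) and checking $A_{\alpha_{-1}\beta_0}(U_{\alpha_{-1}}^+, V_{\beta_0}^+)\neq\{0\}$ together with the tightness identity. If $\alpha_{-1}\beta_0$ is rank-$2$ double-tight, both $\alpha_{-1}^\pm\beta_0^\pm$ are already in $\calE(\calV, p)$ and the claim is immediate; otherwise I invoke \cref{lem:edge} on the surviving edge $\alpha_{-1}^-\beta_0^-\in\calE(\calV, p)$ from $\calQ_m$, together with the $p$-equality at $\beta_0$ forced by $(\Ni)$ on the original $\calR$. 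For claim~2, when $\deg_{M'}(\beta_0)=0$ we have $\beta_0^-\in\calU(\hat{M}, I)\subseteq\calS(\hat{M}, I, \hat{\calV}, p)$ directly; when $\deg_{M'}(\beta_0)=1$, the $+$-edge $\beta_0\alpha_0\in\hat{M}$ yields $\beta_0^-\alpha_0^-\in\calE(\hat{\calV}, p)$, and tracing through the component $C_{\beta_0}\cup\{\beta_0\alpha_1\}\cup C_{\alpha_1}$ of $\hat{M}\setminus I$ reaches an unmatched $\beta$-endpoint (or $\alpha_1^-$ when $\deg_M(\alpha_1)=0$). The hard part will be the rank- and double-tightness case analysis in claim~3, where the interaction between the relabeling, the $(C_{\beta_0};\cdot)(C_{\alpha_1};\cdot)$-construction defining $\hat{\calV}$, and condition $(\Ni)$ must be carefully unpacked.
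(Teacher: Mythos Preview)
Your high-level plan --- the source path in $\calG(\calV,p)|_M$ from some $\beta_*^{\sigma'}\in\calU(M,I)$ to $\beta(\calP_0)$ must cross the deleted edge $\alpha_{-1}\beta_0$ --- matches the paper, but the verification of $\beta_0^-\alpha_{-1}^+\in\calE(\hat{\calV},p)$ contains a genuine gap. You misidentify the $-$-space of $\beta_0$ in $\hat{\calV}$: the sign flip of \cref{subsec:path} already happened in passing to the current $\calV$, and $\hat{\calV}$ then \emph{further} replaces the $-$-space of $\beta_0$ by $(U_{\alpha_1}^+)^{\perp_{\alpha_1\beta_0}}$, which is in general neither $V_{\beta_0}^+$ nor $V_{\beta_0}^-$. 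Consequently ``translating via the sign flip'' does not describe $\hat{\calV}$, and your check $A_{\alpha_{-1}\beta_0}(U_{\alpha_{-1}}^+,V_{\beta_0}^+)\neq\{0\}$ is the wrong way round: in the current $\calV$ one has $A_{\alpha_{-1}\beta_0}(U_{\alpha_{-1}}^+,V_{\beta_0}^+)=\{0\}$ (this is precisely the validity condition~\eqref{eq:+-} for the original $+$-edge $\alpha_{-1}\beta_0$, transported through the sign swap at $\beta_0$). It is this \emph{vanishing}, together with rank-$2$, that the paper uses to conclude $A_{\alpha_{-1}\beta_0}(U_{\alpha_{-1}}^+,Y)\neq\{0\}$ for every $Y\neq V_{\beta_0}^+$, in particular for $Y=(U_{\alpha_1}^+)^{\perp_{\alpha_1\beta_0}}$.

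More fundamentally, you treat ``not double-tight'' as a live case to be handled via $(\Ni)$, whereas the paper shows it is vacuous. Using $\hat{\calV}\simeq_p\calV$ and \cref{lem:>}(2), the paper computes $\calE(\calV,p)|_M\setminus\calE(\hat{\calV},p)|_{\hat{M}}$ exactly: it is $\{\beta_0^+\alpha_{-1}^-\}$ in the single-tight case and $\{\beta_0^+\alpha_{-1}^-,\beta_0^-\alpha_{-1}^+\}$ in the double-tight case. Since (A3) rules out $\beta_0^+\in\calS(M,I,\calV,p)$, the source path cannot use $\beta_0^+\alpha_{-1}^-$; hence the hypothesis $\beta(\calP_0)\notin\calS(\hat{M},I,\hat{\calV},p)$ \emph{forces} $\alpha_{-1}\beta_0$ to be double-tight (so rank-$2$) and pins the crossing to $\beta_0^-\alpha_{-1}^+$. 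This single deduction eliminates your ``otherwise'' branch, supplies the rank-$2$ fact needed above, and gives the tightness identity $p(U_{\alpha_{-1}}^+)+p(V_{\beta_0}^-)+c=d_{\alpha_{-1}\beta_0}$ (which you never justify). It also directly yields claim~2: the portion of the source path from $\beta_*^-$ to $\beta_0^-$ lies in $C_{\beta_0}\subseteq M'$ and hence survives in $\calE(\hat{\calV},p)|_{\hat{M}}$, placing $\beta_0^-\in\calS(\hat{M},I,\hat{\calV},p)$. Your alternative route for claim~2 --- tracing to $\alpha_1^-$ --- would place $\beta_0^-$ in $\calT$, not $\calS$.
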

\begin{proof}
	Assume that $\beta(\calP_0) = \hat{\beta}^\sigma$ does not belong to $\calS(\hat{M}, I, \hat{\calV}, p)$.
	It follows from $\hat{M} = M \setminus \set{ \beta_0 \alpha_{-1} } \cup \set{ \beta_0 \alpha_1 }$
	that $\deg_M(\beta) = \deg_{\hat{M}}(\beta)$ for each $\beta$.
	Since $\hat{\beta}^\sigma$ belongs to $\calS(M, I, \calV, p)$,
	we have $\deg_M(\hat{\beta}) \geq 1$ and $\hat{\beta}^{\sigma} \in \calC_{\beta^*}$,
	where $\beta_*$ is an end node of some connected component of $M \setminus I$ incident to a $\sigma$-edge.
	Indeed, otherwise $\deg_M(\hat{\beta}) = \deg_{\hat{M}}(\hat{\beta}) = 0$,
	which implies that $\hat{\beta}^\sigma$ belongs to $\calS(\hat{M}, I, \hat{\calV}, p)$.
	This is a contradiction to the assumption.
	We denote by $\calP$ the $\sigma$-path in $\calG(\calV, p)|_M$ from $\beta_*^\sigma$ to $\hat{\beta}^{\sigma}$.
	
	We can easily observe $\beta_*^\sigma \in \calU(\hat{M}, I)$.
    Thus
	the assumption $\hat{\beta}^\sigma \notin \calS(\hat{M}, I, \hat{\calV}, p)$
	implies that
	there is an edge in $\calP$ not belonging to $\calE(\hat{\calV}, p)|_{\hat{M}}$.
	We then show that such an edge is $\beta_0^- \alpha_{-1}^+$.
	By \cref{prop:=},
	$\hat{\calV}$ is a valid labeling for $\hat{M}$ and
	$p$ is an $(\hat{M}, I, \hat{\calV})$-compatible $c$-potential.
	Hence
	$\hat{\calV}$ is also a valid labeling for $M' = \hat{M} \setminus \set{ \beta_0 \alpha_1 }$ and
	$p$ is also an $(M', I, \hat{\calV})$-compatible $c$-potential.
	By \cref{lem:>}~(2),
	we obtain $\calE(\calV, p)|_{M'} = \calE(\hat{\calV}, p)|_{M'}$.
	By $M \setminus \hat{M} = M \setminus M' = \set{ \beta_0 \alpha_{-1} }$,
	we have
	\begin{align}\label{eq:E}
	\calE(\calV, p)|_{M} \setminus 
	\calE(\hat{\calV}, p)|_{\hat{M}} =
	\begin{cases}
	\set{ \beta_0^+ \alpha_{-1}^- } & \text{if $\beta_0 \alpha_{-1}$ is single-tight},\\
	\set{ \beta_0^+ \alpha_{-1}^-, \beta_0^- \alpha_{-1}^+ } & \text{if $\beta_0 \alpha_{-1}$ is double-tight}.
	\end{cases}
	\end{align}
	Since $\beta_0^+ \notin \calS(M, I, \calV, p)$ by (A3),
	the edge in $\calP$ not belonging to $\calE(\hat{\calV}, p)|_{\hat{M}}$ must be $\beta_0^- \alpha_{-1}^+$
	(and $\beta_0 \alpha_{-1}$ must be double-tight).
	
	It follows from $\beta_0^- \alpha_{-1}^+ \in \calP$
	that
	$\beta_*$ is an end node of $C_{\beta_0}$
	incident to a $-$-edge,
	$\sigma$ is equal to $-$,
	and $\calP$ is the concatenation of the $-$-path from $\beta_*^-$ to $\beta_0^-$, $\beta_0^- \alpha_{-1}^+$,
	and $\calQ(\beta(\calP_0)]$.
	Hence $\beta(\calP_0)$ belongs to $\calQ$.
	By $\calE(\calV, p)|_{M'} = \calE(\hat{\calV}, p)|_{M'}$,
	$\calG(\hat{\calV}, p)$ contains the $-$-path from $\beta_*^-$ to $\beta_0^-$ and $\calQ(\beta(\calP_0)]$.
	In particular, we have $\beta_0^- \in \calS(\hat{M}, I, \hat{\calV}, p)$ by the existence of the $-$-path.
    Moreover we have $\beta_0^- \alpha_{-1}^+ \in \calE(\hat{\calV}, p)$.
	Indeed, the $-$-space $(U_{\alpha_1}^+)^{\perp_{\alpha_1 \beta_0}}$ of $\beta_0$ with respect to $\hat{\calV}$
	is different from $V_{\beta_0}^+$ by \cref{lem:edge},
	and
	the $+$-space of $\alpha_{-1}$ with respect to $\hat{\calV}$ is $U_{\alpha_{-1}}^+$.
	Since $A_{\alpha_{-1} \beta_0}(U_{\alpha_{-1}}^+, V_{\beta_0}^+) = \set{ 0 }$, $\beta_0 \alpha_{-1}$ is rank-2, and $(U_{\alpha_1}^+)^{\perp_{\alpha_1 \beta_0}} \neq V_{\beta_0}^+$,
	we have $A_{\alpha_{-1} \beta_0}(U_{\alpha_{-1}}^+, (U_{\alpha_1}^+)^{\perp_{\alpha_1 \beta_0}}) \neq \set{ 0 }$,
	implying $\beta_0^- \alpha_{-1}^+ \in \calE(\hat{\calV}, p)$.
	Hence the path $(\beta_0^- \alpha_{-1}^+) \circ \calQ(\beta(\calP_0)]$ exists in $\calG(\hat{\calV}, p)$.
\end{proof}

We define $\hat{\calR}$ by
\begin{align*}
\hat{\calR} &\defeq
\begin{cases}
\calP_0 \circ \calQ_1 \circ \calP_1 \circ \cdots \circ \calP_{m-1} & \text{if $\beta(\calP_0) \in \calS(\hat{M}, I, \hat{\calV}, p)$},\\
(\beta_0^- \alpha_{-1}^+) \circ \calQ(\beta(\calP_0)] \circ \calP_0 \circ \calQ_1 \circ \calP_1 \circ \cdots \circ \calP_{m-1} & \text{if $\beta(\calP_0) \notin \calS(\hat{M}, I, \hat{\calV}, p)$}.
\end{cases}
\end{align*}
\begin{proposition}\label{prop:path:pseudo augmenting path}
	$\hat{\calR}$ is a pseudo augmenting path for $(\hat{M}, I, \hat{\calV}, p)$.
\end{proposition}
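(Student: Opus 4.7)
The plan is to mirror the proofs of \cref{prop:not simple pseudo} and \cref{prop:cycle =}, adapted to the path case and splitting according to the two scenarios defining $\hat{\calR}$. First, I will verify (A2). For the initial node, Case~1 is immediate by assumption and Case~2 is precisely the content of \cref{lem:S}, which also guarantees that the prefix $(\beta_0^- \alpha_{-1}^+) \circ \calQ(\beta(\calP_0)]$ is a genuine path in $\calG(\hat{\calV}, p)$. For the terminal node $\alpha(\calP_{m-1})$, I will argue, exactly as in the cycle case, that after removing $\alpha_{-1} \beta_0$ from $M$ the vertex $\alpha_{-1}^-$ enters $\calU(\hat{M}, I)$; reversing the portion of $\calQ_m$ from $\alpha(\calP_{m-1})$ to $\alpha_{-1}^-$ and appealing to \cref{lem:>}(2) (applied to $M'$, where $\hat{\calV} \simeq_p \calV$) transfers it into $\calG(\hat{\calV}, p)|_{\hat{M}}$, placing $\alpha(\calP_{m-1})$ in $\calC(\alpha_{-1}^-) \subseteq \calT(\hat{M}, I, \hat{\calV}, p)$.

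Next I will verify (A1)$'$ by establishing a Claim patterned after the one in the proof of \cref{prop:cycle =}: every edge of $\hat{\calR}$ lies in $\calE(\hat{\calV}, p)$, and for each edge $\beta^\sigma \alpha^{\sigma'} \in \hat{\calR}$ whose endpoint $\alpha$ is incident to an edge in $I$, the companion edge $\beta^\sigma \alpha^{\overline{\sigma'}}$ does not lie in $\calE(\hat{\calV}, p)$. The proof partitions on the location of $\alpha$: if $\alpha \notin V(P_{\alpha_1}) \cup V(P_{\beta_0})$ then the labels of $\alpha$ agree under $\calV$ and $\hat{\calV}$ and \cref{lem:>}(3) transfers the edge/non-edge information directly; if $\alpha \in V(P_{\alpha_1})$, then \cref{lem:C_alpha}(1) controls the labels; and if $\alpha \in V(P_{\beta_0})$, the three subcases of \cref{lem:C_beta} apply, with the delicate third subcase ($p(U_\alpha^+) = p(U_\alpha^-)$ with $\alpha^+ \in \calQ^+$) being handled by the no-short-cut condition ($\Ni$) together with a rank-1 analysis, verbatim as in \cref{prop:cycle =}. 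A parallel analysis is needed for the new prefix edges in Case~2.

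With the Claim in hand, the decomposition of $\hat{\calR}$ is routine: in Case~2 the single-edge segment $(\beta_0^- \alpha_{-1}^+)$ is trivially a pseudo outer path (vacuously satisfying (O1), (O2)$'$), and $\calQ(\beta(\calP_0)]$ is an inner $+$-path in $\calG(\hat{\calV}, p)|_{\hat{M}}$ by the very definition of $\calQ$ and the fact that $\beta(\calP_0) \in \calQ$ (in particular $\beta(\calP_0)$ is a $+$-vertex, so it concatenates legitimately with $\calP_0$). Each remaining $\calQ_l$ is an inner path by the Claim, and each $\calP_l$ is either directly a pseudo outer path or, if it meets one of the $I$-edges whose $+/-$ labels were permuted during the relabeling, is a concatenation of shorter pseudo outer paths and inner paths exactly as in the last step of the proof of \cref{prop:Nouter}. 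I expect the main obstacle to be the third subcase of the Claim on $V(P_{\beta_0})$, where ($\Ni$) must be invoked precisely, and the bookkeeping in Case~2 ensuring that the new prefix $(\beta_0^- \alpha_{-1}^+) \circ \calQ(\beta(\calP_0)]$ concatenates correctly with $\calP_0$; this relies on $\beta(\calP_0) \neq \beta_0$ (since $\beta(\calP_0) \in \calQ$ and $\calQ$ starts at $\alpha_{-1}^+$, not at $\beta_0$), which ensures that the first edge of $\calP_0$ survives the relabeling unchanged.
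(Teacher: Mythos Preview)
Your approach is correct and is essentially the same as the paper's, which also defers to the argument of \cref{prop:cycle =} via \cref{lem:C_alpha}(1) and \cref{lem:C_beta}. There is one point you overlook, however: in the path case it can happen that $C_{\beta_0} = C_{\alpha_1}$, i.e., the connected component of $\hat{M}$ containing $\beta_0\alpha_1$ is a \emph{cycle} (this was impossible in \cref{subsec:cycle -}, where the analogous component was always a path). When this occurs, $P_{\alpha_1}$ and $P_{\beta_0}$ may overlap, so the trichotomy you use for the Claim --- $\alpha \notin V(P_{\alpha_1}) \cup V(P_{\beta_0})$, $\alpha \in V(P_{\alpha_1})$, $\alpha \in V(P_{\beta_0})$ --- is no longer a clean case split, and both relabelings $\calV(C_{\alpha_1}; \cdot)$ and $\calV(C_{\beta_0}; \cdot)$ may touch the same node. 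The paper resolves this by invoking \cref{rmk:cycle}, which guarantees that even in the cycle case, for every $\alpha$ (resp.\ $\beta$) in the merged component at least one of its $+$- or $-$-space under $\hat{\calV}$ coincides with the corresponding space under $\calV$; this is precisely what is needed to make the \cref{lem:>}(3) transfers go through. You should insert this case distinction.
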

\begin{proof}
    This follows from \cref{lem:C_alpha}~(1) and \cref{lem:C_beta} as in the proof of \cref{prop:cycle =}.
    In particular, even when the connected component of $\hat{M}$ containing $\beta_0 \alpha_1$ forms a cycle,
    i.e., $C_{\beta_0} = C_{\alpha_1}$,
    \cref{rmk:cycle} enables us to follow the same argument in the proof of \cref{prop:cycle =}.
\end{proof}

By this update, $\theta$ does not increase and $\phi$ strictly decreases.
\begin{lemma}\label{lem:path theta varphi}
$\theta(\hat{M}, I, \hat{\calR}) \leq \theta(M, I, \calR)$ and $\phi(\hat{M}, I, \hat{\calR}) < \phi(M, I, \calR)$.
\end{lemma}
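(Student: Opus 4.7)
The proof will follow the template of \cref{lem:cycle theta varphi}, with adaptations for the path component structure and the two possible forms of $\hat{\calR}$. The modification $\hat{M} = M \setminus \{\alpha_{-1}\beta_0\} \cup \{\beta_0 \alpha_1\}$ keeps $|M|$ unchanged: it splits the path component $C \subseteq M \setminus I$ containing $Q_m$ into sub-paths $C_1 \ni \beta_0$ and $C_2 \ni \alpha_{-1}$, then merges $C_1$ with $C_{\alpha_1}$ via $\beta_0\alpha_1$ into a component $C_1'$. Hence $|C_1'| + |C_2| = |C| + |C_{\alpha_1}|$, and all other components are untouched.

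For $\theta(\hat{M}, I, \hat{\calR}) \leq \theta(M, I, \calR)$, I would account for the change in outer-path edges and $N_S$ separately. The outer-path edge total decreases by $|\calP_m|=1$ from removing $\calP_m = (\beta_0^+\alpha_1^+)$ and, in the second case, increases by $1$ from prepending $(\beta_0^-\alpha_{-1}^+)$. For $N_S$, the relevant vertex set $\bigcup_i V(\hat{Q}_i) \cup V(\hat{P}_{m-1})$ (augmented by $V(\calQ(\beta(\calP_0)])$ in the second case) is contained in $V(C_1') \cup V(C_2)$ together with the vertices of the unchanged components; these contribute at most $|C_1'| + |C_2| + (\textrm{others}) = |C| + |C_{\alpha_1}| + (\textrm{others})$ edges, and the latter quantity is already counted in $N_S(M, I, \calR)$ since $Q_m \subseteq C$ and $V(P_m)$ meets both $C$ and $C_{\alpha_1}$. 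Therefore $N_S$ does not increase, and $\theta$ does not increase overall.

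For $\phi(\hat{M}, I, \hat{\calR}) < \phi(M, I, \calR)$, I would split into the two cases. In the first case ($\beta(\calP_0) \in \calS(\hat{M}, I, \hat{\calV}, p)$), $|\hat{\calR}| = |\calR| - |\calQ_m| - 1 \leq |\calR| - 2$. Since the label change from $\calV$ to $\hat{\calV}$ is confined to $V(C) \cup V(C_{\alpha_1})$ and the edge change removes only $\alpha_{-1}\beta_0$ while adding $\beta_0\alpha_1$, one verifies by case-splitting on whether $\beta(\calP_0) \in V(C) \cup V(C_{\alpha_1})$ that $N_0(\hat{M}, I, \hat{\calR}) \leq N_0(M, I, \calR) + 1$, which combined with $|\hat{\calR}| \leq |\calR| - 2$ yields strict decrease. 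In the second case, \cref{lem:S} gives a clean decomposition: the witness $\sigma$-path from $\beta_*^-$ to $\beta(\calP_0)$ in $\calG(\calV, p)|_M$ factors as the $-$-path from $\beta_*^-$ to $\beta_0^-$ (which also lies in $\calG(\hat{\calV}, p)|_{\hat{M}}$), followed by $\beta_0^-\alpha_{-1}^+$, followed by $\calQ(\beta(\calP_0)]$. Hence $N_0(M, I, \calR) = N_0(\hat{M}, I, \hat{\calR}) + 1 + |\calQ(\beta(\calP_0)]|$. Combined with $|\hat{\calR}| = |\calR| - |\calQ_m| + |\calQ(\beta(\calP_0)]|$, I compute
\begin{align*}
\phi(\hat{M}, I, \hat{\calR}) - \phi(M, I, \calR) &= -|\calQ_m| + |\calQ(\beta(\calP_0)]| - 1 - |\calQ(\beta(\calP_0)]| \\
&= -|\calQ_m| - 1 < 0.
\end{align*}

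The main obstacle I anticipate is the first-case $N_0$ analysis: pinning down the effect of the combined edge and labeling changes on the shortest witness path from $\beta(\calP_0)$ to a nearest unmatched node of matching sign, especially when $\beta(\calP_0)$ itself lies on or near the affected components $C$ and $C_{\alpha_1}$. The second case is straightforward once \cref{lem:S} is invoked, since the cancellation between $|\calQ(\beta(\calP_0)]|$ in $|\hat{\calR}|$ and in $N_0(M, I, \calR)$ is exact.
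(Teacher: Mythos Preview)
Your treatment of $\theta$ and of the second case of $\phi$ is correct and essentially identical to the paper's: the edge-count in $|C_1'| + |C_2| = |C| + |C_{\alpha_1}|$ is exactly why $N_{\rm S}$ does not increase, and your second-case computation using \cref{lem:S} matches the paper's line for line (the paper writes $|(\beta_0^-\alpha_{-1}^+) \circ \calQ(\beta(\calP_0)]|$ where you write $1 + |\calQ(\beta(\calP_0)]|$).

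The gap is in the first case of $\phi$. You aim for the weaker bound $N_0(\hat{M}, I, \hat{\calR}) \leq N_0(M, I, \calR) + 1$ via an unexplained case-split, and correctly flag this as the main obstacle. But this detour is unnecessary: $N_0$ is in fact \emph{unchanged} in the first case, and this is what the paper uses (by deferring to the argument for \cref{lem:cycle theta varphi}). Here is why. Since $\hat{\calV} \simeq_p \calV$, \cref{lem:>}(2) gives $\calE(\hat{\calV}, p)|_{M'} = \calE(\calV, p)|_{M'}$ for $M' = M \cap \hat{M} = M \setminus \{\alpha_{-1}\beta_0\}$; moreover the unmatched $\beta$-vertices are identical in $(M,I)$ and $(\hat{M}, I)$ because $\calU(\hat{M}, I) = \calU(M, I) \setminus \{\alpha_1^+\} \cup \{\alpha_{-1}^-\}$. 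Hence the witness $\sigma$-path from $\beta_*^\sigma$ to $\beta(\calP_0)$ in $\calG(\calV, p)|_M$ lies in $\calG(\hat{\calV}, p)|_{\hat{M}}$ verbatim unless it uses an auxiliary-graph edge over $\alpha_{-1}\beta_0$. But the proof of \cref{lem:S} (see \eqref{eq:E} and the {\rm (A3)} argument there) shows that if it does, then $\beta(\calP_0) \notin \calS(\hat{M}, I, \hat{\calV}, p)$---that is, we are in the second case. So in the first case the witness path is preserved, $N_0$ is unchanged, and your anticipated obstacle dissolves.
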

\begin{proof}
By the same argument as in the proof of \cref{lem:cycle theta varphi},
we obtain $\theta(\hat{M}, I, \hat{\calR}) \leq \theta(M, I, \calR)$,
and $\phi(\hat{M}, I, \hat{\calR}) < \phi(M, I, \calR)$ if $\beta(\calP_0) \in \calS(\hat{M}, I, \hat{\calV}, p)$.

Suppose $\beta(\calP_0) \notin \calS(\hat{M}, I, \hat{\calV}, p)$.
Then $|\hat{\calR}| = |\calR| - |\calQ_m| - |\calP_m| + |(\beta_0^- \alpha_{-1}^+) \circ \calQ(\beta(\calP_0)]|$ and $N_0(\hat{M}, I, \hat{\calR}) = N_0(M, I, \calR) - |(\beta_0^- \alpha_{-1}^+) \circ \calQ(\beta(\calP_0)]|$ by \cref{lem:S}.
Thus we obtain $\phi(\hat{M}, I, \hat{\calR}) < \phi(M, I, \calR) - |\calQ_m| < \phi(M, I, \calR)$.
\end{proof}

We update
\begin{align*}
M \leftarrow \hat{M}, \qquad
\calV \leftarrow \hat{\calV}(\calR),\qquad
\calR \leftarrow \hat{\calR}.
\end{align*}
This update can be done in $O(\min\set{\mu, \nu})$ time.
By \cref{prop:pseudo augmenting path,prop:=,prop:path:pseudo augmenting path},
the resulting $(M, I)$ is a matching-pair of size $k$, $\calV$ a valid labeling for $M$,
$p$ an $(M, I, \calV)$-compatible $c$-potential,
and
$\calR$ is an augmenting path for $(M, I, \calV, p)$.
Moreover, since $\alpha_1^+$ is deleted from $\calU(M, I)$ and $\alpha_{-1}^-$ is added to $\calU(M, I)$ in this update (by \cref{prop:cycle =})
and $\alpha(\calP_{m-1}) \in \calC(\alpha_{-1}^-)$,
$p$ satisfies (Zero)$'$ for $\calR$.
Return to the initial stage (\cref{sec:initial}).

\section*{Acknowledgments}
The author thanks Hiroshi Hirai for careful reading and helpful comments.
This work was supported by JSPS KAKENHI Grant Numbers JP17K00029, 20K23323, 20H05795, Japan.

\end{document}